\documentclass{amsart}
\usepackage[all,ps,cmtip]{xy}
\usepackage{amsmath, amssymb}
\usepackage{amscd}

\newtheorem{theorem}{Theorem}[section]
\newtheorem{lemma}[theorem]{Lemma}

\newtheorem{corollary}[theorem]{Corollary}
\newtheorem{claim}[theorem]{Claim}
\newtheorem{proposition}[theorem]{Proposition}

\theoremstyle{definition}
\newtheorem{definition}[theorem]{Definition}
\newtheorem{example}[theorem]{Example}

\newtheorem{question}[theorem]{Question}

\theoremstyle{remark}
\newtheorem{remark}[theorem]{Remark}

\numberwithin{equation}{section}

\DeclareMathOperator{\rank}{rk}

\DeclareMathOperator{\Sym}{S}

\DeclareMathOperator{\Td}{Td}

\DeclareMathOperator{\codim}{codim}

\DeclareMathOperator{\Hom}{Hom}

\DeclareMathOperator{\Coh}{Coh}

\DeclareMathOperator{\D}{D}

\DeclareMathOperator{\Spec}{Spec}

\DeclareMathOperator{\ch}{ch}

\DeclareMathOperator{\pr}{pr}

\DeclareMathOperator{\chr}{char}

\DeclareMathOperator{\reg}{reg}

\DeclareMathOperator{\Gal}{Gal}

\DeclareMathOperator{\Fil}{Fil}

\DeclareMathOperator{\T}{T}

\DeclareMathOperator{\gr}{gr}

\makeatletter
\@namedef{subjclassname@2020}{\textup{2020}
Mathematics Subject Classification}
\makeatother

\begin{document}

\title{Frobenius type positivity of Hodge bundles and applications}

\author{Hao Max Sun}

\address{Department of Mathematics, Shanghai Normal University, Shanghai 200234, People's Republic of China}

\email{hsun@shnu.edu.cn, hsunmath@gmail.com}



\subjclass[2020]{Primary 14F06, Secondary 14F17, 14F40}

\date{December 9, 2025}

\keywords{Frobenius type positivity, Hodge bundle, vanishing theorem, slope inequality, Chern character}

\dedicatory {In memory of Professor Eckart Viehweg}

\begin{abstract}
We define two new Frobenius type positivity notions, $F^t$-ampleness and $F^tGG$-ampleness, for coherent sheaves, show that they are stronger than ordinary ampleness and establish some of their basic properties such as positivity of top Chern characters and Kawamata-Viehweg type vanishing theorem. The analogous Frobenius type semipositivity notions and their basic properties have also been established.

By Illusie's decomposition of the relative de Rham complexes, we prove the $F^1GG$-semipositivity of Hodge bundles for semistable morphisms of varieties. This strengthens and generalizes various previous semipositivity results of Fujita, Kawamata, Koll\'ar, Viehweg, Fujino and others. As a consequence we obtain the positivity of top Chern characters of Hodge bundles and Kawamata-Viehweg-Koll\'ar type vanishing theorems for some semistable morphisms. Our results are also applied to study the images of Fano varieties under semistable morphisms and the slope inequality for a family of semistable curves.
\end{abstract}

\maketitle

\setcounter{tocdepth}{1}
\tableofcontents

\section{Introduction}
Let $f:X \rightarrow Y$ be a surjective morphism between complex smooth projective varieties. The study of the (semi)positivity of the Hodge bundles $R^if_*\omega_{X/Y}$ and $f_*\omega^{\otimes m}_{X/Y}$ originates from different areas such as birational geometry, moduli theory and Hodge theory.

By the theory of variations of Hodge structure, Griffiths \cite{Gri} firstly showed that $f_*\omega_{X/Y}$ is a semipositive locally free sheaf in the sense of Griffiths when $f$ is smooth. Moreover, Berndtsson \cite{Ber} (see also \cite{Schu}) proved that $f_*\omega_{X/Y}$ is semipositive in the sense of Nakano by $L^2$ method. Without the smoothness assumption of $f$, Fujita \cite{Fujita} proved the nefness of $f_*\omega_{X/Y}$ when $Y$ is a curve. This has been generalized by Kawamata for $f_*\omega^{\otimes m}_{X/Y}$ when $m\geq2$ \cite{Kaw82} and to the case that $\dim Y\geq2$ \cite{Kaw81}. Viehweg \cite{Vie82, Vie83} showed that $f_*\omega^{\otimes m}_{X/Y}$ is weakly positive by his mysterious covering trick. These positivity results are known even in the log-case \cite{Fujino18, KP17}. Similar results for higher direct images of $\omega_{X/Y}$ have also been established by Koll\'ar \cite{Kol1, Kol2}, Fujino \cite{Fujino04}, Fujino-Fujisawa \cite{FF14} and others. Zuo \cite{Zuo} showed the negativity of kernels of Kodaira-Spencer maps on Hodge bundles.

In this paper, we will introduce two new positivity notions for coherent sheaves that we call the $F^t$-ampleness and $F^tGG$-ampleness, here $t$ is a fixed positive integer. They are variants of the Frobenius amplitude introduced by Arapura \cite{Ara1} (see also \cite{Nakashima, Sun, Hart1}). Roughly speaking, the Frobenius amplitude cohomologically measure the positivity of a coherent sheaf $\mathcal{E}$ on $Y$ under the $t$-iterated Frobenius pull backs $(F_Y^t)^*$ for $t\gg0$. This
definition is most natural in positive characteristic, but our positivity notions are only defined in characteristic zero. They
measure the positivity of $(F^t_{Y_p})^*\mathcal{E}_p$ under a general reduction modulo $p$ of $(Y, \mathcal{E})$. Hence Arapura's positivity notions can be considered as the limits of ours. We also introduce Frobenius type semipositivity by relaxing the conditions for $F^t$-ampleness and $F^tGG$-ampleness.

We now list our positivity notions (see Definition \ref{def2.2}, \ref{def2.7}, \ref{defFt} and \ref{defFtGG} for the detailed definitions).
Let $\mathcal{E}$ be a coherent sheaf and $\mathcal{O}_Y(1)$ a very ample line bundle on $Y$.
\begin{enumerate}
  \item We say $\mathcal{E}$ is $F^t$-ample if for any locally free sheaf $\mathcal{F}$ we have $$H^i(Y_p, \mathcal{F}_p\otimes(F^t_{Y_p})^*\mathcal{E}_p)=0$$ for any $i>0$ and a general reduction modulo $p$ of $(Y, \mathcal{O}_Y(1), \mathcal{E}, \mathcal{F})$.
  \item We say $\mathcal{E}$ is $F^tGG$-ample if for any locally free sheaf $\mathcal{F}$, we have $$\mathcal{F}_p\otimes(F^t_{Y_p})^*\mathcal{E}_p$$ is globally generated for a general reduction modulo $p$.
  \item We say $\mathcal{E}$ is $F^t$-semipositive if there is an $m_0$ such that $$H^i(Y_p, \mathcal{O}_{Y_p}(m)\otimes(F^t_{Y_p})^*\mathcal{E}_p)=0$$ for any $i>0$, $m>m_0$ and a general reduction modulo $p$.
   \item We say $\mathcal{E}$ is (weakly) $F^tGG$-semipositive if there is an $m$ such that $$\mathcal{O}_{Y_p}(m)\otimes(F^t_{Y_p})^*\mathcal{E}_p$$ is (generically) globally generated for a general reduction modulo $p$.
\end{enumerate}
These new Frobenius type positivity notions can be consider as a generalization of the ordinary ampleness and nefness. We have the following implications (Lemma \ref{ample}, \ref{lemma3.8} and Proposition \ref{nef}):
\begin{enumerate}
  \item $\mathcal{E}$ is locally free and $F^tGG$-ample $\Rightarrow$ $\mathcal{E}$ is ample;
  \item $\mathcal{E}$ is locally free and $F^tGG$-semipositive $\Rightarrow$ $\mathcal{E}$ is nef;
  \item $\mathcal{E}$ is weakly $F^tGG$-semipositive $\Rightarrow$ $\mathcal{E}$ is weakly positive;
  \item $\mathcal{E}$ is $F^t$-ample $\Rightarrow$ $\mathcal{E}$ is $F^t$-semipositive $\Rightarrow$ $\mathcal{E}$ is $F^tGG$-semipositive;
  \item $\mathcal{E}$ is $F^t$-ample $\Rightarrow$ $\mathcal{E}$ is $F^tGG$-ample $\Rightarrow$ $\mathcal{E}$ is $F^tGG$-semipositive.
\end{enumerate}
Example \ref{eg2.13} and \ref{eg3.16} show that an ample vector bundle may not be weakly $F^tGG$-semipositive. Thus our positivity notions are stronger than the ordinary ones. They also share many properties with ampleness and nefness. They behave well under pullbacks (Theorem \ref{pullback} and Lemma \ref{pullback2}), restrictions (Theorem \ref{restriction}) and tensor products (Theorem \ref{tensor2}).

Exploiting the method of Deligne-Illusie, we obtain Kodaira-Akizuki-Nakano type vanishing theorem (Theorem \ref{thm2.13}) and Bott type vanishing theorem (Theorem \ref{Bott}) for $F^t$-ample sheaves. For $F^t$-semipositive sheaves we prove Fujita type vanishing theorems (Theorem \ref{Fujita1} and \ref{Fujita2}) and Kawamata-Viehweg type vanishing theorem (Theorem \ref{KV}). The numerical properties of $F^t$-semipositive sheaves are also studied. We obtain the positivity of high Chern characters and Bogomolov type inequality of $F^t$-semipositive sheaves (Theorem \ref{Chern1} and \ref{Bog}).

\subsection*{Main results}
The main results of this paper are the Frobenius type positivity of Hodge bundles. Now we state them.
\begin{theorem}\label{main}
Let $f: X\rightarrow Y$ be a semistable morphism between complex smooth projective varieties, and $\mathcal{L}$ a ample line bundle on $X$. Then
\begin{enumerate}
  \item $R^if_*\omega_{X/Y}$ is $F^1GG$-semipositive for any $i\geq0$;
  \item $f_*(\omega_{X/Y}\otimes\mathcal{L})$ is $F^1GG$-ample.
\end{enumerate}
In particular, $R^if_*\omega_{X/Y}$ is nef and $f_*(\omega_{X/Y}\otimes\mathcal{L})$ is ample or zero.
\end{theorem}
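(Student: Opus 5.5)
We would prove Theorem \ref{main} by spreading out $f:X\to Y$ and $\mathcal{L}$ over a finitely generated $\mathbb{Z}$-algebra and working with a general reduction $f_p:X_p\to Y_p$ with $p$ large. For such $p$ the morphism $f_p$ is still semistable, $X_p,Y_p$ lift to $W_2(k)$, and the relevant direct images commute with base change, so $(R^if_*\omega_{X/Y})_p=R^if_{p*}\omega_{X_p/Y_p}$. The key input is Illusie's decomposition of the relative de Rham complex of a semistable morphism (log smooth with reduced fibres). Consider the relative Frobenius diagram $X_p\xrightarrow{F_{X/Y}}X'_p=X_p\times_{Y_p,F_{Y_p}}Y_p\xrightarrow{W}X_p$, with $f'_p:X'_p\to Y_p$ the base change, so $F_{Y_p}\circ f'_p=f_p\circ W$. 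For $p>\dim X-\dim Y$, Illusie's theorem gives a quasi-isomorphism $F_{X/Y*}\Omega^\bullet_{X_p/Y_p}(\log)\simeq\bigoplus_j\Omega^j_{X'_p/Y_p}(\log)[-j]$. Dualizing, or using the top-degree piece, $\omega_{X'_p/Y_p}=W^*\omega_{X_p/Y_p}$ appears as a direct summand of $F_{X/Y*}$ of a complex whose terms are built from $\Omega^j_{X_p/Y_p}$.

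For (1) the plan is as follows. Flat base change along $F_{Y_p}$ gives
\[
(F_{Y_p})^*R^if_{p*}\omega_{X_p/Y_p}\cong R^if'_{p*}\omega_{X'_p/Y_p}.
\]
We use the decomposition to realize the right side as a direct summand of $R^{i+d}f_{p*}$ of the relative de Rham complex, with the shift chosen appropriately, where $d$ is the fibre dimension. Equivalently, using the Cartier isomorphism in top degree, the trace map $F_{X/Y*}\omega_{X_p/Y_p}\to\omega_{X'_p/Y_p}$ is split surjective. Hence $R^if'_{p*}\omega_{X'_p/Y_p}$ is a direct summand of $R^if_{p*}\omega_{X_p/Y_p}$ viewed through $F_{X/Y}$; more precisely we get a surjection $R^if_{p*}\omega_{X_p/Y_p}\twoheadrightarrow (F_{Y_p})^*R^if_{p*}\omega_{X_p/Y_p}$. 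Since $R^if_*\omega_{X/Y}$ is a fixed coherent sheaf in characteristic zero, Serre's theorem gives an $m$ with $\mathcal{O}_Y(m)\otimes R^if_*\omega_{X/Y}$ globally generated, and this persists under general reduction. Twisting the surjection by $\mathcal{O}_{Y_p}(m)$ then yields global generation of $\mathcal{O}_{Y_p}(m)\otimes(F_{Y_p})^*R^if_{p*}\omega_{X_p/Y_p}$, with $m$ independent of $p$. This is $F^1GG$-semipositivity.

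For (2) we argue in the same way with $\omega_{X/Y}\otimes\mathcal{L}$. The summand statement for the trace gives a surjection
\[
f_{p*}(\omega_{X_p/Y_p}\otimes\mathcal{L}_p^{p})\twoheadrightarrow (F_{Y_p})^*f_{p*}(\omega_{X_p/Y_p}\otimes\mathcal{L}_p),
\]
using $F_{X/Y}^*W^*\mathcal{L}_p=\mathcal{L}_p^{p}$ and the projection formula. Given a locally free $\mathcal{F}$, we choose $m$ with $\mathcal{F}\otimes\mathcal{O}_Y(m)$ globally generated and write $\mathcal{F}$ as a quotient of copies of $\mathcal{O}_Y(-m)$. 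It then suffices to show that $f_{p*}(\omega_{X_p/Y_p}\otimes\mathcal{L}_p^p)\otimes\mathcal{O}_{Y_p}(-m)$ is globally generated for $p\gg0$. By Castelnuovo--Mumford regularity, this needs
\[
H^k\bigl(X_p,\omega_{X_p}\otimes\mathcal{L}_p^p\otimes f_p^*(\omega_{Y_p}^{-1}(-m-k))\bigr)=0 \quad\text{for } k>0,
\]
together with vanishing of the higher direct images. For $p$ large, $\mathcal{L}_p^p\otimes f_p^*(\text{fixed sheaf})$ is ample, so both follow from Kodaira-type vanishing for $X_p$, which lifts to $W_2$ (Deligne--Illusie), or even from Serre vanishing, uniform in $p$ via the spread-out. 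This is the main obstacle: the required bounds must hold uniformly for a general reduction, even though the twist $\mathcal{L}_p^p$ grows with $p$. I expect to handle it via Fujita-type uniform vanishing applied to $\mathcal{L}^{p}\otimes$ nef, with the nef part coming from $f^*$ of a fixed sheaf.

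The "in particular" clauses then follow from the implications already listed for $F^tGG$-notions: $F^1GG$-semipositive locally free sheaves are nef, and $F^1GG$-ample ones are ample (Lemma \ref{ample}, Proposition \ref{nef}). Local freeness of $R^if_*\omega_{X/Y}$ holds by Koll\'ar's results, and local freeness of $f_*(\omega_{X/Y}\otimes\mathcal{L})$ by semistability, base change and Kodaira vanishing on fibres.
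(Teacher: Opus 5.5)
\textbf{There is a genuine gap.} It is the step where you pass from Illusie's decomposition to the claim that the trace (Cartier) map $F_{X/Y*}\omega_{X_p/Y_p}\to\omega_{X'_p/Y_p}$ is split surjective. From that you get the surjection $R^if_{p*}\omega_{X_p/Y_p}\twoheadrightarrow(F_{Y_p})^*R^if_{p*}\omega_{X_p/Y_p}$.

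Illusie's theorem splits the whole complex $F_{X/Y*}\Omega^\bullet_{X_p/Y_p}(\log)$ in the derived category. The summand $\Omega^d_{X'_p/Y_p}(\log)[-d]$ maps into $F_{X/Y*}\Omega^\bullet$, not into its last term $F_{X/Y*}\Omega^d[-d]$. So it does not give a sheaf map $\Omega^d_{X'_p/Y_p}\to F_{X/Y*}\Omega^d_{X_p/Y_p}$ splitting the Cartier operator. Such a splitting is a relative $F$-splitting condition, and it fails in general.

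Take $X=E\times Y\to Y$ with $E$ the CM curve $y^2=x^3-x$. At the infinitely many primes $p\equiv 3 \pmod 4$, $E_p$ is supersingular, so the Cartier operator on $H^0(E_p,\omega_{E_p})$ vanishes. Then the trace-induced map $f_{p*}\omega_{X_p/Y_p}\to(F_{Y_p})^*f_{p*}\omega_{X_p/Y_p}$ is zero onto a line bundle, on every nonempty open of $\operatorname{Spec}A$. So your proof of (1) fails, and your justification of the surjection in (2) is invalid. A minor further point: a global decomposition over $Y_p$ needs a lift of $F_{Y_p}$, which need not exist. Only the local version is available, but that suffices for surjectivity of sheaf maps.

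\textbf{How to repair it (the paper's route).} Your first idea was the right one. In (1), use as source the de Rham sheaf $R^{d+i}f_{p*}\Omega^\bullet_{X_p/Y_p}(\log)$, not $R^if_{p*}\omega$.
\begin{itemize}
\item Degeneration of the conjugate spectral sequence, together with the Cartier isomorphism, makes $R^if'_{p*}\omega_{X'_p/Y_p}=(F_{Y_p})^*R^if_{p*}\omega_{X_p/Y_p}$ a quotient of it.
\item Hodge degeneration gives it a filtration whose graded pieces $R^lf_{p*}\Omega^{d+i-l}_{X_p/Y_p}(\log)$ are reductions of fixed characteristic-zero locally free sheaves.
\item Hence one fixed twist $\mathcal{O}(m)$ makes it $0$-regular, so globally generated, for almost all $s$.
\end{itemize}

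In (2), your surjection $f_{p*}(\omega_{X_p/Y_p}\otimes\mathcal{L}_p^{p})\twoheadrightarrow(F_{Y_p})^*f_{p*}(\omega_{X_p/Y_p}\otimes\mathcal{L}_p)$ is in fact true for $p\gg0$, but the reason is the ampleness of $\mathcal{L}$.
\begin{itemize}
\item Illusie's relative Akizuki--Nakano vanishing $R^jf_{p*}(\mathcal{L}_p\otimes\Omega^i_{X_p/Y_p}(\log))=0$ for $i+j>d$ kills the contributions of the lower cohomology sheaves $W^*(\mathcal{L}_p\otimes\Omega^j)[-j]$, $j<d$. So $R^df_{p*}(\mathcal{L}_p^p\otimes\Omega^\bullet)\to f'_{p*}W^*(\mathcal{L}_p\otimes\omega_{X_p/Y_p})$ is onto.
\item Serre vanishing $R^{>0}f_{p*}(\mathcal{L}_p^p\otimes\Omega^j)=0$ makes $f_{p*}(\mathcal{L}_p^p\otimes\omega_{X_p/Y_p})$ surject onto $R^df_{p*}(\mathcal{L}_p^p\otimes\Omega^\bullet)$.
\end{itemize}
The composite is exactly $f'_{p*}$ of the twisted trace. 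Your final regularity step is then correct. The uniformity you flag as the main obstacle is not one: relative Serre vanishing on the spread-out family applies once $p$ exceeds a fixed bound.
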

Since an ample vector bundle can not be weakly $F^1GG$-semipositive, the theorem strengthens and generalizes various previous semipositivity results of Fujita, Kawamata, Koll\'ar, Viehweg, Fujino–Fujisawa and others. By Viehweg's covering trick, one can obtain weaker positivity of Hodge bundles without the semistable assumption:
\begin{corollary}\label{main-cor}
Let $f: X\rightarrow Y$ be a surjective morphism between complex smooth projective varieties. Let $\mathcal{L}$ be a semiample line bundle on $X$. Then $$(f_*(\omega_{X/Y}\otimes\mathcal{L}))^{**}$$ is weakly $F^1GG$-semipositive. In particular, $f_*(\omega_{X/Y}\otimes\mathcal{L})$ is weakly $F^1GG$-semipositive if $f$ is flat.
\end{corollary}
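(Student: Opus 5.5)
The plan is to reduce to the semistable case of Theorem \ref{main} via Viehweg's covering trick together with weak semistable reduction in the sense of Abramovich--Karu, and then descend generic global generation along a finite cover.

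\emph{Step 1: reduce to $\mathcal{L}=\mathcal{O}_X$ up to a cyclic cover.} Since $\mathcal{L}$ is semiample, some power $\mathcal{L}^{N}$ is base point free. Choose a general smooth divisor $H\in|\mathcal{L}^N|$ and take the cyclic cover $\pi:X'\to X$ obtained by extracting an $N$-th root of $H$. After resolving, $X'$ is smooth, and $\pi_*\omega_{X'}$ contains $\omega_X\otimes\mathcal{L}$ as a direct summand: it is the eigensheaf for the $\mu_N$-action. Hence $f_*(\omega_{X/Y}\otimes\mathcal{L})$ is a direct summand of $f'_*\omega_{X'/Y}$, where $f'=f\circ\pi$. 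Weak $F^1GG$-semipositivity passes to direct summands, because a quotient of a generically globally generated sheaf is generically globally generated. So it suffices to treat $\mathcal{L}=\mathcal{O}_X$.

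\emph{Step 2: semistable reduction.} By Abramovich--Karu weak semistable reduction, there is a generically finite (after flattening, finite flat over a big open set) morphism $\tau:Y'\to Y$ from a smooth projective $Y'$, together with a semistable model $g:X''\to Y'$ birational to the main component of $X\times_YY'$. The model may have mild singularities; if needed, replace weak semistability by a resolution that is semistable over a big open subset, or use Kawamata's unipotent reduction. Theorem \ref{main} gives that $g_*\omega_{X''/Y'}$ is $F^1GG$-semipositive. By the standard base change comparison (Viehweg), there is an injection
\[
g_*\omega_{X''/Y'}\hookrightarrow \tau^*\bigl(f_*\omega_{X/Y}\bigr)^{**}
\]
which is an isomorphism over a dense open subset of $Y'$. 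This is the key point: rational singularities of $X\times_YY'$ give the inclusion in one direction, and equality holds generically.

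\emph{Step 3: descent.} I then prove a descent lemma: if $\tau:Y'\to Y$ is surjective and generically finite, and $\tau^*\mathcal{E}$ (modulo torsion, reflexive hull) is weakly $F^1GG$-semipositive, then $\mathcal{E}^{**}$ is weakly $F^1GG$-semipositive. After reduction modulo $p$, $(F_{Y'_p})^*$ commutes with $\tau_p^*$. Generic global generation of $\mathcal{O}_{Y'_p}(m')\otimes\tau_p^*F^*\mathcal{E}_p$ then gives a generically surjective map
\[
\tau_{p*}\mathcal{O}_{Y'_p}(m')\otimes H^0\to \tau_{p*}\tau_p^* F^*\mathcal{E}_p .
\]
Composing with the trace map $\tau_{p*}\tau_p^*\to\mathrm{id}$, which is generically split when $p$ does not divide $\deg\tau$ (true for general $p$), and twisting by $\mathcal{O}_{Y_p}(m)$ with $m\gg0$, chosen uniformly in $p$ by spreading out, so that $\mathcal{O}_{Y_p}(m)\otimes\tau_{p*}\mathcal{O}_{Y'_p}(m')$ is globally generated, yields generic global generation of $\mathcal{O}_{Y_p}(m)\otimes F^*\mathcal{E}_p$. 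Combining this with the generic isomorphism of Step 2 proves the first claim. For the flat case, $f_*(\omega_{X/Y}\otimes\mathcal{L})$ is torsion free, and generic statements are insensitive to passing to the reflexive hull, so the conclusion holds directly.

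The main obstacle is Step 2. Theorem \ref{main} requires an honest semistable morphism of smooth varieties, whereas semistable reduction only produces one after alteration and possibly with toroidal singularities. I would first try to work over the big open set where $g$ is genuinely semistable, and check that the definition of weak $F^1GG$-semipositivity, being a generic condition, tolerates discarding codimension-two loci.
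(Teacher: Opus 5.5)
Your overall architecture matches the paper's: a cyclic cover to reduce to $\mathcal{L}=\mathcal{O}_X$, a base change to a semistable family, Viehweg's inclusion $g_*\omega_{X''/Y'}\subset\tau^*f_*\omega_{X/Y}$ with generic equality, trace descent along the cover (this is Lemma~\ref{pullback2}), and extension over codimension two by reflexivity.

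\textbf{Step 2 is a genuine gap.} You flag it yourself, and none of your patches closes it. Theorem~\ref{main} rests on Illusie's decomposition and needs an honest $E$-semistable morphism between smooth schemes. Abramovich--Karu only give a toroidal morphism with reduced fibres, and its total space can be singular.
\begin{itemize}
\item These singularities typically lie over the generic points of the discriminant divisor. A local model is $xy=t^2$ times an affine space, as at the $A_k$-nodes of a stable family of curves. So the locus where $g$ is genuinely semistable may be only the smooth locus of $g$, which is not big.
\item Then ``weak $F^1GG$-semipositivity is a generic condition'' does not save you. Generation on an open set is by \emph{global} sections of $\mathcal{O}(m)\otimes F^*(\cdot)$, and sections over a non-big open set do not extend.
\item Resolving the toroidal singularities while keeping reduced fibres needs unimodular subdivisions at height one. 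In general these exist only after a further base change, which is why Mumford's semistable reduction needs one.
\item Kawamata's unipotent reduction only makes the local monodromies unipotent; it does not give semistability.
\end{itemize}

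The missing ingredient is semistable reduction in codimension one, \cite[Prop.~6.1]{Vie83}. It gives an open $Y_0\subset Y$ with $\codim(Y\setminus Y_0)\geq 2$, a finite cover $\tau:Y_0'\to Y_0$ with $Y_0'$ smooth, and a desingularization $X_0'$ of $X\times_YY_0'$ such that $X_0'\to Y_0'$ is semistable. The paper then:
\begin{itemize}
\item runs the proof of Theorem~\ref{direct image} over the quasi-projective $Y_0'$;
\item descends along the finite $\tau$ with Lemma~\ref{pullback2};
\item concludes with $(f_*\omega_{X/Y})^{**}\cong j_*f_{0,*}\omega_{X_0/Y_0}$.
\end{itemize}
Alternatively, the semistable reduction theorem of Adiprasito--Liu--Temkin \cite{ALT} gives a genuinely semistable smooth model over a smooth projective alteration of $Y$. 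Theorem~\ref{main} then applies over a projective base, and your Step~3 finishes, working over the big open set where $\tau$ is finite.

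\textbf{The flat case also contains an error.} Weak $F^1GG$-semipositivity is \emph{not} insensitive to passing to the reflexive hull. Again, generation is by global sections of $\mathcal{O}(m)\otimes F^*\mathcal{E}_s$, and sections of $\mathcal{O}(m)\otimes F^*\mathcal{E}^{**}_s$ need not lie in the subsheaf. For example, let $\mathcal{I}_Z$ be the ideal of a point of $\mathbb{P}^2$. Its $t$-th Frobenius pullback is the Frobenius power $\mathcal{I}_Z^{[p^t]}$, and $H^0(\mathcal{O}(m)\otimes\mathcal{I}_Z^{[p^t]})=0$ as soon as $p^t>m$. Yet $\mathcal{I}_Z^{**}=\mathcal{O}_{\mathbb{P}^2}$ is $F^tGG$-semipositive.

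What you actually need is that $f_*(\omega_{X/Y}\otimes\mathcal{L})$ is reflexive when $f$ is flat. This holds because flatness makes preimages of codimension-two subsets have codimension at least two, so sections extend by normality of $X$. The paper gets this from \cite[Cor.~5.26]{Horing}.
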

This strengthens \cite[Proposition 2.43]{Vie95} of Viehweg. When $Y$ is a toric variety, we obtain stronger positivity of Hodge bundles.
\begin{theorem}\label{main-toric}
Let $f: X\rightarrow Y$ be a smooth morphism between complex smooth projective varieties. Assume further that $Y$ is a toric variety.
Then $R^jf_*\Omega^i_{X/Y}$ is $F^1$-semipositive for any $i,j\geq0$.
\end{theorem}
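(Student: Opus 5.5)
We reduce modulo a general large prime $p$ and verify the cohomological vanishing in the definition of $F^1$-semipositivity directly. Choose a model of $(X,Y,f,\mathcal{O}_Y(1))$ over a finitely generated $\mathbb{Z}$-algebra. For a general closed point we then have $f_p:X_p\to Y_p$ smooth, $Y_p$ a smooth projective toric variety over a perfect field $k$ of characteristic $p>\dim X$, and the whole situation lifting to $W_2(k)$. Cohomology and base change commutes with reduction for general $p$, and the relative Hodge sheaves $R^jf_*\Omega^i_{X/Y}$ are locally free, because $f$ is smooth and the relative Hodge-to-de Rham spectral sequence degenerates in characteristic zero. We must therefore find $m_0$, independent of $p$, such that
$$H^q\bigl(Y_p,\mathcal{O}_{Y_p}(m)\otimes F_{Y_p}^*R^jf_{p*}\Omega^i_{X_p/Y_p}\bigr)=0$$
for all $q>0$ and $m>m_0$.

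The key input is the relative Deligne--Illusie decomposition. Since $f_p$ is smooth and lifts to $W_2$, Illusie's theorem gives a quasi-isomorphism in the derived category of $Y_p$:
$$\bigoplus_{i}\Omega^i_{X'_p/Y_p}[-i]\simeq F_{X/Y*}\Omega^\bullet_{X_p/Y_p}.$$
Here $X'_p=X_p\times_{Y_p,F_{Y_p}}Y_p$ is the Frobenius twist. Flat base change gives $R^jf'_*\Omega^i_{X'/Y}=F_{Y_p}^*R^jf_{p*}\Omega^i_{X_p/Y_p}$. Applying $Rf'_*$ and taking cohomology sheaves, we conclude that $\bigoplus_{i+j=n}F_{Y_p}^*R^jf_*\Omega^i_{X_p/Y_p}$ is isomorphic to the relative de Rham cohomology $\mathcal{H}^n_{dR}(X_p/Y_p)$. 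In particular each $F^*_{Y_p}R^jf_*\Omega^i$ is a direct summand of $\mathcal{H}^{i+j}_{dR}(X_p/Y_p)$, so it suffices to prove the vanishing for $\mathcal{O}(m)\otimes\mathcal{H}^n_{dR}(X_p/Y_p)$.

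Next we use the Gauss--Manin connection and the toric structure. The sheaf $\mathcal{H}^n_{dR}$ carries the Gauss--Manin connection. Its $p$-curvature is nilpotent, with nilpotence order at most $n+1$, because it is given by the Katz formula in terms of the Kodaira--Spencer maps on the conjugate filtration. The conjugate filtration has graded pieces $F^*_{Y_p}R^jf_*\Omega^i$. We want a cohomological bound on $H^q(Y_p,\mathcal{O}(m)\otimes F^*\mathcal{G})$ for the vector bundles $\mathcal{G}=R^jf_*\Omega^i$. Here we use that $Y_p$ is toric, so its Frobenius splits in the strong sense: $F_{Y_p*}\mathcal{O}_{Y_p}(mp)$ decomposes as $\bigoplus\mathcal{O}(D_\ell)$ with $D_\ell$ ranging over a fixed finite set of toric divisors, roughly $\lfloor mD/p\rfloor$-type divisors independent of $p$ (Thomsen's splitting). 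Write $m=ap+b$ with $0\le b<p$. By the projection formula,
$$H^q(Y_p,\mathcal{O}(m)\otimes F^*\mathcal{G})=H^q\bigl(Y_p,\mathcal{G}\otimes F_*\mathcal{O}(m)\bigr)=\bigoplus_\ell H^q\bigl(Y_p,\mathcal{G}\otimes\mathcal{O}(E_\ell)\bigr).$$
Each $E_\ell$ is a toric divisor lying in a bounded neighborhood of $(m/p)H$. When $m/p$ is large, Serre vanishing applied to the finitely many twists of $\mathcal{G}$ in characteristic zero, spread out, handles these terms. The problematic range is $m/p$ bounded, that is, $a$ small. There the summands $\mathcal{O}(E_\ell)$ are nearly trivial twists, and Serre vanishing does not apply.

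This small-$a$ range is the main obstacle. To handle it I would use the de Rham side. The vanishing for $\mathcal{O}(m)\otimes\mathcal{H}^n_{dR}$ reduces, via the Leray spectral sequence for the de Rham complex, to vanishing of $H^q(X_p,f^*\mathcal{O}(m)\otimes\Omega^\bullet_{X_p/Y_p})$ in the appropriate degrees. That in turn follows from relative Kodaira--Akizuki--Nakano-type vanishing for the ample line bundle $f^*\mathcal{O}(m)$ twisted by the toric boundary. Concretely, we would apply Bott vanishing on the toric base, $H^q(Y,\Omega^r_Y(m))=0$ for $q>0$ and $m>0$, together with the filtration of $\Omega^\bullet_{X}$ by $f^*\Omega^r_Y\otimes\Omega^\bullet_{X/Y}$, combined with Deligne--Illusie on $X_p$. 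The delicate point is showing that the resulting $m_0$ is uniform in $p$. I would first try to obtain this by making all Serre-vanishing bounds depend only on the characteristic-zero data via semicontinuity.
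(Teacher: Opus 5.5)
Your first step, using the relative Deligne--Illusie decomposition to make $F^*_{Y_p}R^jf_{p*}\Omega^i_{X_p/Y_p}$ a direct summand of $\mathcal{H}^{i+j}_{dR}(X_p/Y_p)$ and reducing to $\mathcal{O}(m)\otimes\mathcal{H}^n_{dR}$, is how the paper begins. However, the justification you give for the decomposition is wrong, and the toric hypothesis ends up used in the wrong place.

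A $W_2$-lift of $f_p$ lifts $X_p$ over a lift $\widehat{Y}$ of $Y_p$. Theorem \ref{Illusie-de} instead needs a lift of the Frobenius twist $X'_p$ over $\widehat{Y}$, and Illusie obtains this from a lift $\widehat{F}_Y$ of $F_{Y_p}$. Supplying that Frobenius lift is exactly what toricness does in the paper: reductions of toric varieties admit Frobenius lifts to $W_2$. If a lift of $f_p$ alone sufficed, the argument below would prove the statement for every smooth family, contrary to Remark \ref{Mumford}.

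You spend toricness instead on splitting $F_{Y_p*}\mathcal{O}(m)$ into line bundles, and that only restates the problem. As you observe yourself, for bounded $m/p$ it turns the required vanishing into $H^q(\mathcal{G}\otimes\mathcal{O}(E_\ell))=0$ for divisors $E_\ell$ near $0$, which is not automatic.

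The genuine gap is that you never close the argument once you have reduced to $\mathcal{H}^n_{dR}$, and your closing sketch does not work. First, $f^*\mathcal{O}(m)$ is not ample on $X_p$ unless $f$ is finite. Second, even a vanishing of $\mathbb{H}^q(X_p,f^*\mathcal{O}(m)\otimes\Omega^\bullet_{X_p/Y_p})$ would not force the Leray $E_2$-terms $H^a(Y_p,\mathcal{O}(m)\otimes\mathcal{H}^b_{dR})$ to vanish.

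The missing idea is to use the \emph{other} filtration on the same sheaf. By Hodge-to-de Rham degeneration (Corollary \ref{Illusie-sp}(3), or the local freeness and base change you already invoked), $\mathcal{H}^n_{dR}(X_p/Y_p)$ has a filtration with graded pieces $R^{n-a}f_{p*}\Omega^a_{X_p/Y_p}$. These carry no Frobenius twist; they are reductions of finitely many fixed characteristic-zero bundles. The argument then runs as follows:
\begin{enumerate}
\item Serre vanishing on $Y$ plus semicontinuity gives a single $m_0$ with $H^q(Y_p,\mathcal{O}(m)\otimes R^{n-a}f_{p*}\Omega^a_{X_p/Y_p})=0$ for all $q>0$, $m>m_0$ and almost all $p$.
\item The long exact sequences of the filtration give the same vanishing for $\mathcal{O}(m)\otimes\mathcal{H}^n_{dR}$.
\item Since $\mathcal{O}(m)\otimes F^*_{Y_p}R^jf_{p*}\Omega^i_{X_p/Y_p}$ is a direct summand, it inherits the vanishing.
\end{enumerate}
This is the whole of the paper's argument. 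The Gauss--Manin connection, $p$-curvature and the splitting of $F_{Y_p*}\mathcal{O}(m)$ are not needed, and the uniformity in $p$ that you flagged as delicate comes for free.
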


As a corollary, we obtain the following Bott type vanishing theorem and positivity of Chern characters.
\begin{corollary}\label{main-toric-cor}
Under the situation of Theorem \ref{main-toric}, we have $$H^b(Y, \Omega^a_Y\otimes R^jf_*\Omega^i_{X/Y}\otimes \mathcal{L})=0$$ for any $b>0$ and ample line bundle $\mathcal{L}$. And for any closed subvariety $D\subset Y$ of dimension $e$, we have $D\cdot\ch_e(R^jf_*\Omega^i_{X/Y})\geq0$.
\end{corollary}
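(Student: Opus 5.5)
The corollary should follow formally from Theorem \ref{main-toric}, together with the vanishing theorems and numerical properties of $F^t$-semipositive sheaves stated in the introduction. By Theorem \ref{main-toric}, the sheaf $\mathcal{E}:=R^jf_*\Omega^i_{X/Y}$ is $F^1$-semipositive. Since $f$ is smooth, relative Hodge theory (degeneration and base change) makes $\mathcal{E}$ locally free.

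\textbf{The vanishing.} I would use the Bott type vanishing theorem for toric varieties (Theorem \ref{Bott}), in its version for $F^t$-semipositive sheaves twisted by an ample line bundle. If Theorem \ref{Bott} is only stated for $F^t$-ample sheaves, I would first show that $\mathcal{E}\otimes\mathcal{L}$ is $F^1$-ample when $\mathcal{L}$ is ample; this is the main step. The argument in characteristic $p$ runs as follows. Since $(F_{Y_p})^*(\mathcal{E}_p\otimes\mathcal{L}_p)=(F_{Y_p})^*\mathcal{E}_p\otimes\mathcal{L}_p^{p}$, we need
\[H^b(Y_p,\mathcal{F}_p\otimes(F_{Y_p})^*\mathcal{E}_p\otimes\mathcal{L}_p^{p})=0\]
for every locally free $\mathcal{F}$ and general $p$. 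Take a resolution of $\mathcal{F}$ by sums of $\mathcal{O}_Y(-m)$ with $m$ large. The terms $\mathcal{O}(-m)\otimes\mathcal{L}^p$ are fine once $p$ is large relative to $m$, because $\mathcal{L}^p\otimes\mathcal{O}(-m)\otimes\mathcal{O}(-m_0)$ is then a positive multiple of an ample class. This lets us apply $F^1$-semipositivity after absorbing the twist, with a Castelnuovo--Mumford type argument controlling uniformity in $p$. That uniformity in $p$ is the expected obstacle.

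Once $F^1$-ampleness is available, I would run Deligne--Illusie on the toric variety $Y$. For toric $Y_p$, the sheaf $F_*\Omega^a_{Y_p}$ is a direct summand of a sum of $\Omega^a$ twisted appropriately; in fact $\Omega^a_{Y_p}$ is a direct summand of $F_*\Omega^a_{Y_p}$ by the toric Frobenius splitting. Projection formula then gives
\[H^b(\Omega^a\otimes\mathcal{E}\otimes\mathcal{L})\hookrightarrow H^b(\Omega^a\otimes F^*(\mathcal{E}\otimes\mathcal{L})),\]
and the right-hand side vanishes by $F^1$-ampleness with $\mathcal{F}=\Omega^a$. Semicontinuity then lifts the vanishing to characteristic zero.

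\textbf{Chern characters.} This is a direct application of Theorem \ref{Chern1}: $F^t$-semipositive sheaves satisfy $D\cdot\ch_e\geq0$ for every $e$-dimensional subvariety $D$. If Theorem \ref{Chern1} is only stated on all of $Y$, I would first restrict, using Theorem \ref{restriction} and pullback to a resolution of $D$ (Theorem \ref{pullback}), so that the sheaf stays $F^1$-semipositive on a smooth $e$-dimensional variety. Then I would apply the top-degree statement there.
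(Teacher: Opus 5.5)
Your proposal is correct and follows the paper's route. The paper first shows $F^1$-ampleness of $R^jf_*\Omega^i_{X/Y}\otimes\mathcal{L}$, then applies Theorem \ref{Bott}, whose proof is exactly your toric Frobenius-splitting injection. It then applies Theorem \ref{Chern1} directly, which already covers arbitrary closed subschemes $D$, so your restriction/resolution fallback is not needed.

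The step you flag as the main obstacle is already Theorem \ref{tensor}, applied to the line bundle $\mathcal{L}$, which is $F^1$-ample by Lemma \ref{lemma2.9}. Its proof gives a uniform Castelnuovo--Mumford resolution of $\mathcal{L}_s^{p_s}$ by sums of large twists of $\mathcal{O}(1)$. That resolution is what your sketch really needs. $F^1$-semipositivity alone only controls twists by $\mathcal{O}(k)$, not by an arbitrary ample bundle such as $\mathcal{L}^p\otimes\mathcal{O}(-m)$.
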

One sees that Bott's vanishing follows when we take $i=j=0$ and $f$ to be the identity morphism of $Y$. When the cotangent bundle of $Y$ is $F^1$-semipositive, one can also get $F^1$-semipositivity of Hodge bundles. This assumption is satisfied if $Y$ is an abelian variety or a product variety of non-rational curves (see Example \ref{eg6.17}).
\begin{theorem}\label{main1}
Let $f: X\rightarrow Y$ be an $E$-semistable morphism between complex smooth projective varieties. If $\Omega^1_Y$ is $F^1$-semipositive, then both $R^if_*\omega_X$ and $R^if_*\omega_X(E_X)$ are $F^1$-semipositive.
\end{theorem}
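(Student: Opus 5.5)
We would reduce modulo a general prime $p$ and use Illusie's decomposition of the relative de Rham complex, as in the proof of Theorem \ref{main}. Choose a model of $(f\colon X\to Y, E, \mathcal{O}_Y(1))$ over a finitely generated $\mathbb{Z}$-algebra. For a general closed point, with $p>\dim X$ and everything liftable to $W_2$, the relative Frobenius $F_{X_p/Y_p}\colon X_p\to X_p'$ of the $E$-semistable (log smooth) morphism gives Illusie's quasi-isomorphism
$$F_{X_p/Y_p*}\Omega^\bullet_{X_p/Y_p}(\log E_{X_p})\simeq\bigoplus_q \Omega^q_{X_p'/Y_p}(\log E_{X'_p})[-q].$$
Taking $Rf'_*$ and the Hodge filtration, and using degeneration of the relative log Hodge-to-de Rham spectral sequence (local freeness of the Hodge sheaves, via Deligne--Illusie and semistability), we obtain that $(F^1_{Y_p})^*R^if_*\omega_{X/Y}(E_X)_p$ and its $\omega_{X/Y}$ analogue are graded pieces of a filtration on the relative log de Rham cohomology $\mathcal{H}^{i+n}_{dR}$, where $n=\dim X-\dim Y$. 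This is the Frobenius base-change input. Here it will be needed in its \emph{absolute} form: to reach $R^if_*\omega_X=R^if_*\omega_{X/Y}\otimes\omega_Y$, we twist by $\omega_Y$ and use the Gauss--Manin connection $\nabla$ on $\mathcal{H}_{dR}$ to pass to the absolute de Rham complex $\Omega^\bullet_{Y_p}\otimes\mathcal{H}_{dR}$.

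The key step is to show that for $m\gg0$ and $a>0$,
$$H^a\big(Y_p,\mathcal{O}_{Y_p}(m)\otimes(F_{Y_p})^*(R^if_*\omega_X)_p\big)=0.$$
We would use the Katz/Ogus description: the Frobenius pullback of a Hodge graded piece is expressed, through the conjugate filtration, by the de Rham complex $\Omega^\bullet_{Y_p}\otimes \mathcal{H}_{dR}$, which is a direct image of $\Omega^\bullet_{X_p}(\log E)$. Its terms involve $\Omega^b_{Y_p}\otimes R^jf_*\Omega^c_{X/Y}(\log E)$. We then filter by the Koszul filtration $f^*\Omega^b_Y\otimes\Omega^{c}_{X/Y}$ of $\Omega^\bullet_X(\log E)$, so the top piece is $\omega_X(E_X)$ or $\omega_X$. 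Next we invoke $F^1$-semipositivity of $\Omega^1_Y$ together with the tensor product theorem (Theorem \ref{tensor2}, applied to wedge powers). This makes all terms $\mathcal{O}(m)\otimes F^*(\Omega^b_Y\otimes\cdots)$ acyclic, except possibly the top Hodge piece. A spectral sequence/hypercohomology comparison then transfers vanishing to the top piece. Concretely, we would run the Deligne--Illusie argument for $Y'_p\leftarrow X_p$ with coefficients in $\mathcal{O}_{Y_p}(m)$ pulled back by absolute Frobenius, in the style of the proof of Theorem \ref{Fujita1}. Hypercohomology of $\mathcal{O}(mp)\otimes\Omega^\bullet_{X_p}(\log E)$ equals that of the Frobenius-pulled-back graded complex, and Serre vanishing for $\mathcal{O}(mp)$ kills the high-degree parts.

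The main obstacle is this middle step: controlling the positive-degree cohomology of the non-top terms uniformly in $p$, and making the twist independent of $p$ ($m_0$ must not depend on $p$). For the uniformity we would first try the Fujita-type vanishing of Theorem \ref{Fujita1}, whose $m_0$ is uniform in a general reduction. We would then deduce the $R^if_*\omega_X(E_X)$ case and the $R^if_*\omega_X$ case identically, using respectively $\Omega^\bullet_X(\log E)$ and its twist by $\mathcal{O}(-E)$, which is also compatible with Illusie's decomposition for semistable pairs.
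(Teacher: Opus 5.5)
There is a genuine gap, exactly at the step you flag as the main obstacle. The non-top terms you need to be acyclic are $\mathcal{O}_{Y_p}(m)\otimes F_{Y_p}^*\bigl(\Omega^b_{Y_p}\otimes R^jf_*\Omega^c_{X/Y}(\log E_X/E)_p\bigr)$. These are Frobenius pullbacks of the Hodge bundles themselves, and they are not uniformly acyclic.

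Theorem~\ref{tensor2} needs \emph{both} factors to be $F^1$-semipositive, and the Hodge bundles are not. By Remark~\ref{Mumford}, already $f_*\omega_{X/Y}$ can fail. For a non-isotrivial semistable family of curves over a curve, $R^1f_*\mathcal{O}_X\cong(f_*\omega_{X/Y})^*$ has negative degree. Hence $h^1(\mathcal{O}_{Y_p}(m)\otimes F_{Y_p}^*R^1f_*\mathcal{O}_{X_p})$ grows linearly in $p$ for every fixed $m$.

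Theorem~\ref{Fujita1} does not help. It is a characteristic-zero statement whose hypothesis is again a bound on $\phi^t_{sp}$ of the coefficient sheaf.

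The Katz/conjugate-filtration picture has the same defect. It realizes $F^*_{Y_p}$ of a Hodge piece as a subquotient of $\mathcal{H}_{dR}$. That is enough for global generation, which is why Theorem~\ref{direct image}(1) only yields $F^1GG$-semipositivity. But transferring vanishing of $H^a$ to that piece requires vanishing for the other conjugate pieces, which are again $F^*_{Y_p}$ of Hodge bundles.

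The Deligne--Illusie computation with $f^*\mathcal{O}(mp)$ on $X_p$ also misses the target. It produces cohomology of $f^*\mathcal{O}(m)\otimes\Omega^i_{X_p}(\log E_{X_p})$, not of $\mathcal{O}_{Y_p}(m)\otimes F^*_{Y_p}R^if_*\omega_{X_p}$.

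The missing idea is to keep the Hodge bundles inside $F_{Y*}$ rather than under $F_Y^*$. The paper applies Theorem~\ref{Illusie-Xie} to the Gauss--Manin complex itself:
$\bigoplus_i\gr^i\Omega^\bullet_{Y_s}(\log E_s)(\mathbb{H}^!_s)\simeq F_{Y_s*}\Omega^\bullet_{Y_s}(\log E_s)(\mathbb{H}^!_s)$.
Its top Kodaira--Spencer piece is $\bigoplus_iR^if_{s*}\omega_{X_s}(E_{X_s})[-e]$, resp.\ $\bigoplus_iR^if_{s*}\omega_{X_s}[-e]$ for $\mathbb{H}^\dagger$ (via Xie). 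Applying $\mathcal{O}(m)\otimes F^*_{Y_s}$ bounds $h^a(\mathcal{O}(m)\otimes F^*_{Y_s}R^if_{s*}\omega_{X_s}(E_{X_s}))$ by a sum of terms $h^{a+e-j}(\mathcal{O}(m)\otimes F^*_{Y_s}F_{Y_s*}\mathcal{W}_s)$. Here $\mathcal{W}=\Omega^j_Y(\log E)\otimes R^rf_*\Omega^c_{X/Y}(\log E_X/E)$ is a fixed bundle from characteristic zero.

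X.~Sun's filtration of $F^*_{Y_s}F_{Y_s*}\mathcal{W}_s$ has graded pieces $\mathcal{W}_s\otimes\T^l\Omega^1_{Y_s}$. Each is resolved by $\mathcal{W}_s\otimes\Sym^{l-jp_s}\Omega^1_{Y_s}\otimes F^*_{Y_s}\Omega^j_{Y_s}$, which reduces everything to Claim~\ref{claim6.14}. In that claim $\mathcal{W}$ is \emph{not} Frobenius-twisted, so its bounded regularity is absorbed by a fixed $m$. The only positivity required is that of $\Sym^i\Omega^1_Y$ (through Theorem~\ref{KV}) and of $F^*_{Y_s}\Omega^j_{Y_s}$. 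This is where the $F^1$-semipositivity of $\Omega^1_Y$ really enters. Without this push--pull mechanism, or an equivalent one, your argument has no way to produce an $m_0$ independent of $p$.
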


As a corollary, we obtain Kawamata-Viehweg-Koll\'ar type vanishing theorems.
\begin{corollary}\label{main1-cor}
Under the situation of the theorem above, we have
\begin{eqnarray*}
&& H^b(X, \Omega_Y^a(\log E)(\lceil A\rceil)\otimes R^if_*\omega_X)\\
&=& H^b(X, \Omega_Y^a(\log E)(\lceil A\rceil-E)\otimes R^if_*\omega_X)\\
  &=&0
\end{eqnarray*}
for $a+b>\dim Y$, where $A$ is an ample $\mathbb{Q}$-divisor on $Y$ such that $\lceil A\rceil-A\leq E$. And for any closed subvariety $D\subset Y$ of dimension $e$, we have $$D\cdot\ch_e(R^if_*\omega_X)\geq0,~ D\cdot\ch_e(\mathcal{O}_Y(E)\otimes R^if_*\omega_X)\geq0.$$
\end{corollary}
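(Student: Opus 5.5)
This corollary is a formal consequence of Theorem \ref{main1} combined with the vanishing and Chern character results proved earlier for $F^t$-semipositive sheaves. I will apply those general results to $\mathcal{E}=R^if_*\omega_X$ and to $\mathcal{E}=R^if_*\omega_X(E_X)$. The second sheaf equals $\mathcal{O}_Y(E)\otimes R^if_*\omega_X$ by the projection formula, since $E_X=f^*E$ for an $E$-semistable morphism (the fibres over $E$ being reduced normal crossing).

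For the vanishing statement, I will invoke the Kawamata--Viehweg type vanishing theorem (Theorem \ref{KV}) in its logarithmic form. The form I expect is: for an $F^t$-semipositive sheaf $\mathcal{E}$ on $Y$, a simple normal crossing divisor $E$, and an ample $\mathbb{Q}$-divisor $A$ with $\lceil A\rceil-A\le E$, we have
\[
H^b(Y,\Omega^a_Y(\log E)(\lceil A\rceil-E')\otimes\mathcal{E})=0 \quad\text{for } a+b>\dim Y,
\]
where $E'$ is either $0$ or $E$. Applying it with $\mathcal{E}=R^if_*\omega_X$ and $E'=E$ gives the second vanishing directly.

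The first vanishing I obtain in one of two ways. One option is to apply the same theorem to $\mathcal{E}=R^if_*\omega_X(E_X)=R^if_*\omega_X\otimes\mathcal{O}_Y(E)$ with $E'=E$, since the twists by $E$ cancel. The other is to apply the $E'=0$ version to $R^if_*\omega_X$. Both sheaves are $F^1$-semipositive by Theorem \ref{main1}, so either route works.

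The mechanism behind Theorem \ref{KV}, if it has to be re-derived, is the following. Take a general reduction modulo $p$. Use the Deligne--Illusie decomposition of $F_*\Omega^\bullet_{Y_p}(\log E_p)$ twisted by a line bundle whose $p$-th power absorbs $\lceil A\rceil$; the condition $\lceil A\rceil-A\le E$ makes this rounding work. Then use $F^1$-semipositivity, which gives $H^{>0}(\mathcal{O}(m)\otimes F^*\mathcal{E}_p)=0$ for $m$ large. Iterating Frobenius pushes the ample twist up until the semipositivity vanishing applies.

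For the Chern character inequalities, I will apply Theorem \ref{Chern1}. It gives $D\cdot\ch_e(\mathcal{E})\ge0$ for any $F^t$-semipositive $\mathcal{E}$ and any $e$-dimensional subvariety $D$. It applies to both $R^if_*\omega_X$ and $\mathcal{O}_Y(E)\otimes R^if_*\omega_X$, which are $F^1$-semipositive by Theorem \ref{main1}.

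The main obstacle is making sure the hypotheses of Theorem \ref{KV} line up exactly: the log twist, the rounding condition $\lceil A\rceil-A\le E$, and the $E$ versus $0$ twist. If Theorem \ref{KV} is only stated without log poles, I would reduce to it by Kawamata's covering trick: pass to a cyclic cover branched along $E$ on which $A$ becomes integral, and then descend the vanishing by taking eigensheaves.
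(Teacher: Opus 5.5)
Your proposal is correct and matches the paper's proof, which simply cites Theorem \ref{KV} and Theorem \ref{Chern1}. The route that works is to apply Theorem \ref{KV} with $G=E$ to $R^if_*\omega_X$ for the second vanishing, and to $R^if_*\omega_X(E_X)\cong\mathcal{O}_Y(E)\otimes R^if_*\omega_X$ for the first; both sheaves are $F^1$-semipositive by Theorem \ref{main1}.

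Drop your claim that the ``$E'=0$'' route also works. Theorem \ref{KV} requires $\lceil A\rceil-A\le G$, so $G=0$ is allowed only for integral $A$, and the vanishing genuinely fails otherwise. For example, take $\mathcal{E}=\mathcal{O}_Y$ on a one-point blow-up $\pi\colon Y\to S$, with $E=C$ exceptional and $A=\pi^*H-\epsilon C$; then $H^1(\omega_Y(C)\otimes\mathcal{O}_Y(\pi^*H))\neq0$.
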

In general $f_*\omega_{X/Y}$ may not be $F^1$-semipositive under the situation of Theorem \ref{main} (see Remark \ref{Mumford}). The main tools in the proof of Theorem \ref{main}, \ref{main-toric}, and \ref{main1} are Illusie's decomposition of the relative de Rham complexes $\Omega^{\bullet}_{X/Y}(\log E_X/E)$ and de Rham complexes with coefficients in the Gauss-Manin systems $\Omega^{\bullet}_{Y}(\log E)(\mathbb{H})$.

\subsection*{Applications}
Our positivity results can be used to investigate the image of Fano varieties under semistable morphisms and the slope inequality for a family of semistable curves.
\begin{theorem}[Theorem \ref{Fano}]\label{App-Fano}
Let $f: X\rightarrow Y$ be a surjective morphism between
complex smooth projective varieties. If $f$ is semistable and $-K_X$ is ample, then $-K_Y$ is also ample. If $-K_X$ is semiample, then $-K_Y$ is pseudoeffective.
\end{theorem}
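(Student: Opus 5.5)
The plan is to apply Theorem \ref{main}(2) with the ample line bundle $\mathcal{L}=\omega_X^{-1}$. We have $\omega_{X/Y}\otimes\omega_X^{-1}=f^*\omega_Y^{-1}$. Since $f$ is surjective with connected fibres (semistable morphisms are flat with reduced connected fibres; if needed, pass to the Stein factorization, which is étale here), the projection formula gives $f_*(\omega_{X/Y}\otimes\mathcal{L})=\omega_Y^{-1}\otimes f_*\mathcal{O}_X=\omega_Y^{-1}$. This sheaf is nonzero, so by the ``in particular'' clause of Theorem \ref{main} it is ample, i.e. $-K_Y$ is ample. More precisely, $F^1GG$-ampleness of $\omega_Y^{-1}$ gives global generation of $\mathcal{O}_{Y_p}(-1)\otimes (F_{Y_p})^*\omega_{Y_p}^{-1}=\mathcal{O}_{Y_p}(-1)\otimes\omega_{Y_p}^{-p}$ for general $p$. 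Lemma \ref{ample} then yields ampleness of $\omega_Y^{-1}$ in characteristic zero.

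For the semiample case, I would use the weaker positivity statements. Here $\mathcal{L}=\omega_X^{-1}$ is semiample, so Corollary \ref{main-cor} applies: $f$ is semistable, hence flat, so $f_*(\omega_{X/Y}\otimes\mathcal{L})=\omega_Y^{-1}$ is weakly $F^1GG$-semipositive. By Lemma \ref{lemma3.8} it is weakly positive. A weakly positive line bundle is pseudoeffective: for an ample $H$ and every $a>0$, $\omega_Y^{-ab}\otimes H^{b}$ is generically globally generated for some $b>0$, hence effective, so $-aK_Y+H$ is pseudoeffective for all $a$, and letting $a\to\infty$ shows $-K_Y$ is pseudoeffective.

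The main obstacle is the identification $f_*\mathcal{O}_X=\mathcal{O}_Y$, i.e. connectedness of the fibres, which must come from the definition of semistable morphism used in the paper. If that definition allows disconnected fibres, I would factor $f$ through its Stein factorization $X\to Y'\to Y$. Since $f$ has reduced fibres and is flat, $Y'\to Y$ is finite étale. Applying the argument to $X\to Y'$ gives $-K_{Y'}$ ample (respectively pseudoeffective). Because $K_{Y'}$ is the pullback of $K_Y$ under the finite étale map, ampleness (respectively pseudoeffectivity) descends to $-K_Y$.
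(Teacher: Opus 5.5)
Your argument for the first assertion is correct and is the paper's argument (Theorem \ref{main}(2) with $\mathcal{L}=\omega_X^{-1}$). You are in fact more careful than the paper, which tacitly uses $f_*\mathcal{O}_X=\mathcal{O}_Y$. Your Stein factorization repair is valid for semistable $f$: such an $f$ is proper and flat with geometrically reduced fibres, so $Y'\to Y$ is finite \'etale and $X\to Y'$ is again semistable.

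The gap is in the second assertion. As stated, semistability is assumed only in the first sentence. The claim that semiampleness of $-K_X$ forces $-K_Y$ to be pseudoeffective is made for an arbitrary surjective $f$ between smooth projective varieties. The paper proves it in that generality from Corollary \ref{direct image1}, which has no semistability hypothesis. Your proof uses ``$f$ is semistable, hence flat'' to drop the double dual, and your fallback for disconnected fibres uses that the Stein factorization is \'etale. For general $f$ neither is available: $Y'\to Y$ may be ramified and $Y'$ singular. So you can neither apply Corollary \ref{main-cor} to $X\to Y'$ nor use $K_{Y'}=g^*K_Y$.

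The repair is short. The first (reflexive hull) assertion of Corollary \ref{main-cor}, applied with $\mathcal{L}=\omega_X^{-1}$, needs no flatness. It says that $(f_*(\omega_{X/Y}\otimes\omega_X^{-1}))^{**}=\omega_Y^{-1}\otimes(f_*\mathcal{O}_X)^{**}$ is weakly $F^1GG$-semipositive. Write $f_*\mathcal{O}_X=g_*\mathcal{O}_{Y'}$ for the Stein factorization $g\colon Y'\to Y$. Since $Y$ is normal and we are in characteristic zero, $\frac{1}{\deg g}$ times the trace splits $\mathcal{O}_Y\to g_*\mathcal{O}_{Y'}$. Hence $\omega_Y^{-1}$ is a direct summand, hence a quotient, of that sheaf, and so is weakly $F^1GG$-semipositive by Lemma \ref{lemma3.8}(5). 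Proposition \ref{nef}(3) then gives weak positivity; you cited Lemma \ref{lemma3.8} for this step, but it does not contain it. Your last argument then yields pseudoeffectivity. The same splitting, combined with the fact that quotients of $F^1GG$-ample sheaves are $F^1GG$-ample, also handles disconnected fibres in the ample case without passing to $Y'$.
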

Koll\'ar, Miyaoka and Mori \cite[Corollary 2.9]{KMM} proved that, under the assumption that $f$ is smooth, if $-K_X$ is ample, then so is $-K_Y$. Hence Theorem \ref{Fano} shows their result under a weaker assumption. We refer to \cite{BC, CZ, FG12, FG14} for the related results when $-K_X$ has weaker positivity.

Let $f: X\rightarrow Y$ be a family of curves of genus $g$ from a normal variety $X$ of dimension $n$ to a smooth variety $Y$.
Zhang \cite{Z1, Z2} proved the slope inequality
\begin{equation}\label{zhang}
K^n_{X/Y}\geq 2n!\left(\frac{g-1}{g+n-2}\right)\ch_{n-1}(f_*\omega_{X/Y})
\end{equation}
under one of the following assumptions:
\begin{enumerate}
  \item $K_{X/Y}$ is nef and $Y$ is a surface.
  \item $K_{X/Y}$ is nef, $f$ is semistable and $Y$ is the image of a finite morphism from an abelian
variety or a smooth toric Fano variety.
\end{enumerate}
Using the $F^1$-semipositivity of Hodge bundles, we can give the following inequality.
\begin{theorem}[Theorem \ref{slope}]\label{App-slope}
Let $f: X\rightarrow Y$ be a semistable morphism over $\mathbb{C}$ with $\dim X=n$. Assume the general fiber of $f$ is a smooth curve of genus $g$, $K_X$ is nef and $\Omega^1_Y$ is $F^1$-semipositive, then $$K^n_X\geq 2n!\left(\frac{g-1}{g+n-2}\right)\ch_{n-1}(f_*\omega_X).$$
\end{theorem}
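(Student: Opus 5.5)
The plan is to follow Zhang's approach to the slope inequality \eqref{zhang}, replacing his positivity input (nefness of $f_*\omega_{X/Y}$, or its Frobenius-type positivity over abelian or toric bases) with the $F^1$-semipositivity of $f_*\omega_X$. By Theorem \ref{main1} with $i=0$, $f_*\omega_X$ is $F^1$-semipositive, since $\Omega^1_Y$ is assumed $F^1$-semipositive and a semistable morphism is $E$-semistable for a suitable $E$.

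\textbf{Step 1: positivity of $f_*\omega_X$.} Let $\mathcal{E}=f_*\omega_X$, a vector bundle of rank $g$. Theorem \ref{Chern1} gives $D\cdot\ch_e(\mathcal{E})\ge 0$ for every subvariety $D$ of dimension $e$. It also gives the Bogomolov-type and higher Chern character inequalities, in particular $\ch_e(\mathcal{E}\otimes\mathcal{O}(-\tfrac1g\det\mathcal{E}))$-type inequalities. The \emph{twisted} sheaf is the object that actually enters the comparison in Step 2.

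\textbf{Step 2: compare $K_X$ with $\mathcal{E}$.} Consider the natural map $f^*\mathcal{E}\to\omega_X$. Since $f$ is semistable with fibers of genus $g$, this map is surjective away from the hyperelliptic and degenerate loci; for $g\geq 2$ the fibers are stable, so $\omega_{X/Y}$ is relatively globally generated. This induces a morphism $\phi\colon X\to\mathbb{P}(\mathcal{E})$ over $Y$ with $\phi^*\mathcal{O}_{\mathbb{P}(\mathcal{E})}(1)=\omega_X$, generically of relative degree $2g-2$ onto its image, or degree $2$ onto a rational normal curve in the hyperelliptic case.

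Following Zhang (after Xiao and Cornalba--Harris), I would use the Harder--Narasimhan filtration of $\mathcal{E}$ with respect to an ample class, or more simply work on $\mathbb{P}(\mathcal{E})$, to write
\[
K_X^n=(\phi^*\xi)^n,\qquad \xi=\mathcal{O}_{\mathbb{P}(\mathcal{E})}(1).
\]
Decompose $\xi=\xi_0+\tfrac1g\pi^*c_1(\mathcal{E})$ with $\xi_0$ the normalized tautological class. Expanding the top self-intersection, $\pi_*\xi^{g-1+k}$ is expressed through Segre classes of $\mathcal{E}$, and the identity $s(\mathcal{E})=\ch$-polynomial relates these to the $\ch_{n-1}$ term. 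The intersection of the image curve class is bounded using nefness of $K_X$; this is what yields the factor $(g-1)/(g+n-2)$ in Zhang's computation. The higher Chern character terms in the expansion are then controlled by the inequalities from Step 1. This is where $F^1$-semipositivity is needed rather than mere nefness: nef bundles do not have nonnegative $\ch_e$ in general.

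\textbf{Step 3: reduce to Zhang's setting.} I expect to cite Zhang's argument essentially verbatim. Zhang proves his inequality \eqref{zhang} for exactly these inputs: $K$ nef, semistable $f$, and $\ch_e$-positivity of the twisted Hodge bundles obtained on toric or abelian bases. Since $\omega_X=\omega_{X/Y}\otimes f^*\omega_Y$ and $\mathcal{E}=f_*\omega_{X/Y}\otimes\omega_Y$, the same computation runs with $X/Y$ replaced by $X$.

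\textbf{Main obstacle.} The hardest part is verifying that every positivity statement Zhang uses follows from $F^1$-semipositivity of $\mathcal{E}$ alone, including for the twists $\mathcal{E}\otimes\mathcal{O}(D)$ and restrictions to subvarieties appearing in his induction on $n$. For these I would use Theorems \ref{restriction} and \ref{tensor2} together with Theorem \ref{Chern1}.
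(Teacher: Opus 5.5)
\textbf{There is a genuine gap.} Nothing in your Steps 1--2 produces an inequality between $K_X^n$ and $\ch_{n-1}(f_*\omega_X)$.
\begin{itemize}
\item Writing $K_X^n=(\phi^*\xi)^n$ and expanding through Segre classes of $\mathcal{E}$ gives at most an identity. That identity involves the degree of $\phi$, its image and base-locus corrections.
\item The constant $2n!\,\frac{g-1}{g+n-2}$ does not come from such an intersection count. It is the reciprocal of the coefficient $\frac{1}{2n!}+\frac{n-1}{n!(2g-2)}=\frac{g+n-2}{2n!(g-1)}$ in Zhang's relative Noether-type upper bound for $h^0$ \cite[Theorem 1.6]{Z2}.
\item The positivity you want to feed in is not available. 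Theorem \ref{Chern1} only gives $D\cdot\ch_e(\mathcal{E})\ge 0$, and Theorem \ref{Bog} gives \emph{upper} bounds. $F^1$-semipositivity is not preserved by twisting with the negative $\mathbb{Q}$-line bundle $-\frac1g\det\mathcal{E}$, so the ``twisted'' inequalities you invoke do not follow.
\item Step 3 misreads Zhang. The abelian/toric hypotheses behind \eqref{zhang} supply endomorphisms of $Y$ that scale an ample class, and the family is base-changed along them. Over a general $Y$ with $F^1$-semipositive $\Omega^1_Y$ no such endomorphism exists in characteristic zero, so his argument cannot be cited verbatim.
\item Minor: hyperellipticity affects whether the canonical map is an embedding, not surjectivity of $f^*\mathcal{E}\to\omega_X$. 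In any case that map plays no role.
\end{itemize}

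\textbf{The missing idea is to use Frobenius as the scaling endomorphism.}
\begin{enumerate}
\item Reduce modulo $p_s$. Form $X'_s=Y_s\times_{F_{Y_s}}X_s$ and its normalization $\sigma_s\colon X''_s\to X'_s$.
\item Apply Zhang's $h^0$-bound to $f''_s\colon X''_s\to Y_s$ and $L_{m,s}=\sigma_s^*g_s^*K_{X_s}+m(f''_s)^*H_s$. Since $\deg g_s=p_s^{n-1}$ and $F_{Y_s}^*H_s=p_sH_s$, you get $L_{m,s}^n=p_s^{n-1}K_{X_s}^n+O(p_s^{n-2})$. All correction terms $B_i$ are $O(p_s^{n-2})$.
\item Flat base change, followed by pulling back to the normalization, gives $h^0\big((F^*_{Y_s}f_{s,*}\omega_{X_s})(mH_s)\big)\le h^0(L_{m,s})$.
\item The $F^1$-semipositivity of $f_*\omega_X$ (Theorem \ref{direct image2}) is used precisely to kill the higher cohomology of $(F^*_{Y_s}f_{s,*}\omega_{X_s})(mH_s)$ for one $m$ independent of $s$. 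Riemann--Roch then gives $h^0=p_s^{n-1}\ch_{n-1}(f_*\omega_X)+O(p_s^{n-2})$.
\item Divide by $p_s^{n-1}$ and let $p_s\to\infty$ to obtain the inequality.
\end{enumerate}
Your proposal lacks exactly this use of $F^1$-semipositivity: uniform cohomology vanishing after Frobenius pullback, combined with an $h^0$ upper bound on the Frobenius base change. Nonnegativity of Chern characters is not what drives the argument.
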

In Remark \ref{Mumford}, one sees that $\ch_{n-1}(f_*\omega_{X/Y})$ may be negative or zero. Hence the inequality (\ref{zhang}) could be trivial in some cases. Our assumption that $\Omega^1_Y$ is $F^1$-semipositive guarantees the non-negativity of $\ch_{n-1}(f_*\omega_{X})$. Thus Theorem \ref{App-slope} seems nontrivial.

We pose the below question on the relation between Frobenius type positivity and Nakano positivity:
\begin{question}
Let $\mathcal{E}$ be a vector bundle on a complex smooth projective variety. Do the following implications hold?
\begin{enumerate}
  \item $\mathcal{E}$ is $F^t$-ample $\Rightarrow \mathcal{E}$ is Nakano positive;
  \item $\mathcal{E}$ is Nakano positive $\Rightarrow \mathcal{E}$ is $F^tGG$-ample.
\end{enumerate}
\end{question}

\subsection*{Organization of the paper}
Our paper is organized as follows. In Section \ref{S2}, we define the notions of $F^t$-ampleness and $F^tGG$-ampleness. We establish some of their basic properties. Then in Section \ref{S3} we introduce the notions of $F^t$-semipositivity and $F^tGG$-semipositivity and study their behaviour under pullbacks (Theorem \ref{pullback} and Lemma \ref{pullback2}), restrictions (Theorem \ref{restriction}) and tensor products (Theorem \ref{tensor2}). In Section \ref{S4} we prove Fujita type vanishing theorems (Theorem \ref{Fujita1} and \ref{Fujita2}) and Kawamata-Viehweg type vanishing theorem (Theorem \ref{KV}) for $F^t$-semipositive sheaves. Then we investigate the numerical properties of $F^t$-semipositive sheaves in Section \ref{S5}. We obtain the positivity of high Chern characters and Bogomolov type inequality of $F^t$-semipositive sheaves (Theorem \ref{Chern1} and \ref{Bog}). The main results and their applications will be proved in Section \ref{S6}.

\subsection*{Notation}
Given a field $k$, we write $\chr(k)$ for its characteristic and $W_2(k)$ the ring of Witt vectors of length two of $k$. Let $X$ be a smooth projective variety defined over $k$. The function field of $X$ is denoted by $K(X)$. We denote by $T_X$ and $\Omega_X^1$ the tangent bundle and cotangent bundle of $X$, respectively. $K_X$ and $\omega_X$ denote the canonical divisor and canonical sheaf of $X$, respectively. For a scheme $S$, we denote by $\D(S)$ the derived category of coherent sheaves of $\mathcal{O}_S$-modules on $S$. For a $\mathbb{Q}$-divisor $D$ on $X$, we write $\lceil D\rceil$ for the round-up of $D$.
We write $\ch(\mathcal{E})$ and $c(\mathcal{E})$ for the Chern character and Chern class
of a coherent sheaf $\mathcal{E}$ on $X$, respectively. We denote by $c_i(X):=c_i(T_X)$ for the $i$-th Chern class of $X$. We also write $H^j(\mathcal{F})$ ($j\in \mathbb{Z}_{\geq0}$) for the cohomology groups of a sheaf
$\mathcal{F}\in\Coh(X)$ and $h^j(\mathcal{F})$ for the dimension of $H^j(\mathcal{F})$. For a
sheaf $\mathcal{G}\in\Coh(X)$, we denote by $\mathcal{G}^*:=\mathcal{H}om(\mathcal{G}, \mathcal{O}_X)$ the
dual sheaf of $\mathcal{G}$ and by $\Delta(\mathcal{G}):=\ch^2_1(\mathcal{G})-2\ch_0(\mathcal{G})\ch_2(\mathcal{G})$
the discriminant of $\mathcal{G}$.

\subsection*{Acknowledgments}

The author would like to acknowledge with gratitude the profound influence of Professor Eckart Viehweg. The author's interest in positivity, moduli spaces and Hodge theory originated from a series of lectures delivered by Professor Viehweg at the Chinese-German Conference on Complex Geometry (2006) and the Algebraic Geometry Summer School at East China Normal University (2007). His enthusiasm for mathematics and his visionary perspective on algebraic geometry have been a lasting source of inspiration throughout the author's career.

The author would also like to thank Professor Kang Zuo for his interest and valuable discussions. Finally, the author expresses his appreciation to Professor Zhijie Chen, Professor Rong Du and Professor Sheng-Li Tan for their help and encouragement over the years. The author was supported by National Natural Science Foundation of China (Grant No. 12371047, 11771294) and Natural Science Foundation of Shanghai (Grant No. 23ZR1447000).

\section{$F^t$-ample and $F^tGG$-ample sheaves}\label{S2}
In this section, we define the notions of $F^t$-amplitude and $F^tGG$-ampleness of coherent sheaves in characteristic zero for a fixed positive integer $t$ and establish some of their basic properties. We let $k$ be a field with $\chr(k)=0$ and $X$ be a projective scheme over $k$ throughout this section. Most results and proofs in this section are parallel to that of \cite{Ara1}.

A diagram over $\Spec k$ is defined to be a collection of $k$-schemes $X_i$, morphism $X_i\rightarrow X_j$, coherent sheaves $\mathcal{E}_{i,l}$ on $X_i$ and morphisms between the pullbacks and pushforwards of these sheaves. Given a diagram $D$ over $\Spec k$, an arithmetic thickening (or simply just thickening) of it is a finitely generated $\mathbb{Z}$-subalgebra $A\subset k$ and a diagram $\widetilde{D}$ over $\Spec A$, so that $D$ is isomorphic to the fiber product $\widetilde{D}\times_{\Spec A}\Spec k$. We say a thickening $\widetilde{D}_0\rightarrow\Spec A_0$ refines the thickening $\widetilde{D}\rightarrow\Spec A$ if there is a homomorphism $A\rightarrow A_0$ and an isomorphism between $\widetilde{D}_0$ and $\widetilde{D}\times_{\Spec A}\Spec A_0$.

We have the following standard facts (see \cite[Lemma 1.1]{Ara1}).

\begin{lemma}\label{lemma2.1}
Any diagram over $\Spec k$ has a thickening. Any two thickenings over $\Spec k$ have a common refinement.
\end{lemma}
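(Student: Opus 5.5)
The plan is the standard "spreading out" argument via direct limits of rings. Write $k$ as the filtered union of its finitely generated $\mathbb{Z}$-subalgebras $A\subset k$, so that $\Spec k=\varprojlim \Spec A$. We use the finite presentation results of EGA IV, \S 8: the category of finitely presented schemes over $k$ (with coherent sheaves and morphisms) is the direct limit of the corresponding categories over the $A$'s.

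\textbf{Existence.} A diagram $D$ involves finitely many schemes $X_i$, morphisms, coherent sheaves $\mathcal{E}_{i,l}$ and morphisms between pullbacks and pushforwards; the definition implicitly requires this finiteness.
\begin{enumerate}
\item Each $X_i$ is of finite type over $k$, hence finitely presented since $k$ is noetherian. Cover it by finitely many affine opens $\Spec k[x_1,\dots,x_n]/(g_1,\dots,g_r)$ with finitely many gluing data, each given by finitely many polynomials. Taking $A$ to be the $\mathbb{Z}$-algebra generated by all coefficients appearing gives models $\widetilde{X}_i$ over $A$ with $\widetilde{X}_i\times_A k\cong X_i$.
\item Each morphism $X_i\to X_j$ is given locally by finitely many polynomials. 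Enlarging $A$ to contain their coefficients, and the coefficients needed to check the compatibilities, the morphisms descend.
\item Each coherent sheaf is, on each affine piece, the cokernel of a matrix of functions, with finitely many gluing isomorphisms. These descend after enlarging $A$ by finitely many elements.
\item The morphisms between sheaves, pullbacks and pushforwards descend the same way. Here we use that pushforward along a projective (or, more generally, finite type) morphism commutes with flat base change $A\to k$, after possibly localizing $A$ by inverting one element so that the relevant pushforwards are flat/coherent over $A$ (generic flatness).
\end{enumerate}
Since only finitely many elements of $k$ are ever adjoined, the resulting $A$ is finitely generated. This is the main step, and the only real bookkeeping is ensuring that compatibilities hold over $A$ and not just over $k$.

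\textbf{Common refinement.} Given thickenings $\widetilde{D}_1/A_1$ and $\widetilde{D}_2/A_2$, let $A'\subset k$ be the subalgebra generated by $A_1$ and $A_2$; it is still finitely generated. The base changes $\widetilde{D}_1\times_{A_1}A'$ and $\widetilde{D}_2\times_{A_2}A'$ both become isomorphic to $D$ over $k$. By EGA IV 8.8.2 (limit of Hom sets for finitely presented objects, applied to schemes, sheaves and morphisms), this isomorphism is defined over some finitely generated $A_0$ with $A'\subset A_0\subset k$, compatibly with all the maps in the diagram. Then $\widetilde{D}_0:=\widetilde{D}_1\times_{A_1}A_0$ refines both thickenings.

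The main obstacle is purely formal: invoking the limit theorems for finitely presented sheaves and morphisms, rather than for schemes alone, and checking that there are only finitely many data. This is exactly \cite[Lemma 1.1]{Ara1}, which we could simply cite.
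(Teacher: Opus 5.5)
Your proposal is correct and follows essentially the paper's route: the paper gives no argument of its own and simply cites Arapura's Lemma~1.1, which is exactly the standard EGA~IV~\S8 spreading-out and limit argument you describe. One small cleanup in your step (4): flat base change along $A\to k$ needs no generic flatness, since $A\to\operatorname{Frac}(A)\to k$ is already flat, whereas coherence of the pushforwards does need properness rather than mere finite type (this is automatic here, since all schemes in the paper are projective).
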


For a thickening $(\widetilde{X}, \widetilde{\mathcal{E}})\rightarrow\Spec A$ of $(X, \mathcal{E})$, let $k_s=A/s$, $p_s=\chr(k_s)$ and $k_{\bar{s}}$ be the algebraic closure of $k_s$ for each closed point $s\in \Spec A$. We write $X_s$ for the fiber and $\mathcal{E}_s=\widetilde{\mathcal{E}}|_{X_s}$ for each closed point $s\in \Spec A$ or a geometric point $s$ lying over a closed point. We denote by $F_s$ the absolute Frobenius morphism of $X_s$ and denote $(F^t_s)^*\mathcal{E}_s$ by $\mathcal{E}_s^{(p^t_s)}$. We say that a property holds for almost all $s\in \Spec A$ if it holds for all closed point $s$ in a nonempty open subset of $\Spec A$.

We now give the notion of $F^t$-amplitude which provides a cohomological measure of the positivity of a sheaf.
\begin{definition}\label{def2.2}
Let $\mathcal{F}$ be a coherent sheaf on $X$. The $F^t$-amplitude $\phi^t_a(\mathcal{F})$ of $\mathcal{F}$ is the smallest nonnegative integer such that for any locally free sheaf $\mathcal{E}$ and thickening $(\widetilde{X}, \widetilde{\mathcal{F}}, \widetilde{\mathcal{E}})\rightarrow\Spec A$ of $(X, \mathcal{F}, \mathcal{E})$, there exists a nonempty open subset $U$ of $\Spec A$ such that
$H^i(X_s, \mathcal{E}_s\otimes\mathcal{F}_s^{(p^t_s)})=0$ for all $i>\phi^t_a(\mathcal{F})$ and closed point $s\in U$. We say $\mathcal{F}$ is $F^t$-ample if $\phi^t_a(\mathcal{F})=0$.
\end{definition}

Taking $t\rightarrow+\infty$, one obtains Arapura's definition of Frobenius amplitude in characteristic zero:

\begin{definition}\label{def2.3}
Let $\mathcal{F}$ be a coherent sheaf on $X$. The $F^{\infty}$-amplitude $\phi^{\infty}_a(\mathcal{F})$ of $\mathcal{F}$ is the smallest nonnegative integer such that for any locally free sheaf $\mathcal{E}$ and thickening of $(X, \mathcal{F}, \mathcal{E})$ over $\Spec A$, there exist a nonempty open subset $U$ of $\Spec A$ and integers $N_s$ for every $s\in U$ such that
$H^i(X_s, \mathcal{E}_s\otimes\mathcal{F}_s^{(p^n_s)})=0$ for all $i>\phi^{\infty}_a(\mathcal{F})$, closed point $s\in U$ and $n\geq N_s$. We say $\mathcal{F}$ is $F^{\infty}$-ample if $\phi^{\infty}_a(\mathcal{F})=0$.
\end{definition}

\begin{remark}\label{rm2.4}
In \cite{Ara1}, Arapura used the term $F$-amplitude for what we now call $F^{\infty}$-amplitude. The
reason for this change is to distinguish our notion $F^t$-amplitude from his more clearly. Our definition of $F^{\infty}$-amplitude appears to be somewhat different from Arapura's, but they are equivalent by Lemma \ref{lemma2.1}. An $F^{\infty}$-ample sheaf is also termed a cohomologically $p$-ample sheaf.
\end{remark}

\begin{remark}\label{rm2.5}
In the definitions of $F^{t}$-amplitude and $F^{\infty}$-amplitude we let $\bar{s}$ be the geometric point lying over $s$. By the proof of \cite[Lemma C.5]{Ara1} and \cite[Lemma 3.8]{Ke}, one sees that $$H^i(X_{\bar{s}}, \mathcal{E}_{\bar{s}}\otimes\mathcal{F}_{\bar{s}}^{(p^n_{\bar{s}})})=H^i(X_s, \mathcal{E}_s\otimes\mathcal{F}_s^{(p^n_s)})\otimes_{k_s}k_{\bar{s}}.$$
Therefore the cohomological conditions in these definitions can be tested for general geometric points.
\end{remark}

\begin{definition}
Let $U$ be an open subset of a scheme $Y$. A coherent sheaf $\mathcal{E}$ on $Y$ is said to be globally generated on $U$ if the evaluation map
$H^0(Y, \mathcal{E})\otimes\mathcal{O}_Y\rightarrow\mathcal{E}$ is surjective on $U$. We say $\mathcal{E}$ is globally generated if it is globally generated on $Y$, and $\mathcal{E}$ is generically globally generated if it is globally generated on a dense open subset of $Y$.
\end{definition}

One can also extend the concept of $p$-ampleness introduced by Hartshorne \cite{Hart1} to characteristic zero using thickenings:
\begin{definition}\label{def2.7}
Let $\mathcal{F}$ be a coherent sheaf on $X$. We say $\mathcal{F}$ is $F^tGG$-ample if for any locally free sheaf $\mathcal{E}$ and thickening $(\widetilde{X}, \widetilde{\mathcal{F}}, \widetilde{\mathcal{E}})\rightarrow\Spec A$ of $(X, \mathcal{F}, \mathcal{E})$, there exists a nonempty open subset $U$ of $\Spec A$ such that $\mathcal{E}_s\otimes\mathcal{F}_s^{(p^t_s)}$ is globally generated for every closed point $s\in U$.
\end{definition}

From the definition, one sees immediately that any quotient sheaf of an $F^tGG$-ample sheaf is $F^tGG$-ample. The following lemma shows that $F^t$-amplitude and $F^tGG$-ampleness can be only tested by tensoring with ample line bundles.
\begin{lemma}\label{lemma2.6}
Let $\mathcal{L}$ be an ample line bundle on $X$ and $\mathcal{F}$ be a coherent sheaf on $X$.
Then $\phi^t_a(\mathcal{F})\leq N$ if and only if for any integer $m$ and thickening of $(X, \mathcal{F}, \mathcal{L})$ over $\Spec A$, there exists a nonempty open subset $U$ of $\Spec A$ such that $$H^i(X_s, \mathcal{L}_s^m\otimes\mathcal{F}_s^{(p^t_s)})=0$$ for all $i>N$ and closed point $s\in U$. The analogical statement holds for $F^tGG$-ampleness of $\mathcal{F}$.
\end{lemma}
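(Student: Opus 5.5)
The forward direction is immediate, since $\mathcal{L}^m$ is locally free. For the converse, fix a locally free $\mathcal{E}$ and a thickening $(\widetilde{X},\widetilde{\mathcal{F}},\widetilde{\mathcal{E}})\to\Spec A$. The plan is to resolve $\mathcal{E}$ on the left by sums of negative powers of $\mathcal{L}$ and use dimension shifting, in the spirit of \cite[Lemma 2.4]{Ara1}. Let $n=\dim X$. Since $\mathcal{L}$ is ample and $\mathcal{E}$ coherent, there is an exact sequence
$$0\to\mathcal{K}\to\mathcal{L}^{-m_{n}}{}^{\oplus r_n}\to\cdots\to\mathcal{L}^{-m_0}{}^{\oplus r_0}\to\mathcal{E}\to 0$$
on $X$, where each term $\mathcal{P}_j:=(\mathcal{L}^{-m_j})^{\oplus r_j}$ is obtained by choosing $m_j\gg0$ so that the previous kernel twisted by $\mathcal{L}^{m_j}$ is globally generated. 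All sheaves involved are locally free, including $\mathcal{K}$.

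I then pass to a common refinement of the thickening of $(X,\mathcal{F},\mathcal{E},\mathcal{L})$ together with this resolution, using Lemma \ref{lemma2.1}. After shrinking $\Spec A$, the resolution stays exact on every fibre $X_s$. This uses generic flatness: the kernels are flat over a dense open set, and so restriction to fibres preserves exactness. Because all terms are locally free, tensoring with $\mathcal{F}_s^{(p^t_s)}$ keeps the sequence exact. Here I only need local freeness of the terms, not exactness of Frobenius pullback of $\mathcal{F}$.

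Next I split the complex into short exact sequences $0\to\mathcal{K}_{j+1}\to\mathcal{P}_j\to\mathcal{K}_j\to0$ with $\mathcal{K}_0=\mathcal{E}$. By hypothesis, applied to the finitely many integers $-m_j$, there is a common nonempty open $U$ on which $H^i(X_s,\mathcal{P}_{j,s}\otimes\mathcal{F}_s^{(p_s^t)})=0$ for all $i>N$. The long exact sequences then give
$$H^i(\mathcal{E}_s\otimes\mathcal{F}_s^{(p^t_s)})\hookrightarrow H^{i+1}(\mathcal{K}_{1,s}\otimes\mathcal{F}_s^{(p^t_s)})\hookrightarrow\cdots\hookrightarrow H^{i+n+1}(\mathcal{K}_s\otimes\mathcal{F}_s^{(p^t_s)})=0$$
for $i>N$. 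The last group vanishes because its degree exceeds $\dim X_s$. This proves $\phi^t_a(\mathcal{F})\le N$.

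For the $F^tGG$ statement the forward direction is again trivial. For the converse, choose $m\gg0$ so that $\mathcal{E}\otimes\mathcal{L}^{m}$ is globally generated, giving a surjection $\mathcal{O}_X^{\oplus r}\to\mathcal{E}\otimes\mathcal{L}^m$, that is, a surjection $(\mathcal{L}^{-m})^{\oplus r}\to\mathcal{E}$. Spread this surjection over a refined thickening; it remains surjective on fibres over an open set. Tensoring with $\mathcal{F}_s^{(p^t_s)}$ preserves surjectivity. The hypothesis with the integer $-m$ makes $\mathcal{L}_s^{-m}\otimes\mathcal{F}_s^{(p^t_s)}$ globally generated on an open $U$, and a quotient of a globally generated sheaf is globally generated. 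Hence $\mathcal{E}_s\otimes\mathcal{F}_s^{(p^t_s)}$ is globally generated for $s\in U$.

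The main obstacle is the bookkeeping of thickenings. The hypothesis is stated for a thickening of $(X,\mathcal{F},\mathcal{L})$, whereas the given thickening includes $\mathcal{E}$ and we have also added the resolution maps. Lemma \ref{lemma2.1} supplies a common refinement over some $A_0$. We then have to transfer the open set: its image in $\Spec A$ must contain a nonempty open set. I plan to handle this by noting that refinements correspond to dominant maps $\Spec A_0\to\Spec A$ of finite type, so images of nonempty opens contain nonempty opens by Chevalley. I would also note that fibres at points of $\Spec A_0$ are base changes of those of $\Spec A$ by field extensions, so vanishing and global generation transfer via Remark \ref{rm2.5}.
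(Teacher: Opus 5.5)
Your proposal is correct and follows the same route as the paper: a left resolution of $\mathcal{E}$ of length $\dim X+1$ by sums of $\mathcal{L}^{-m_j}$, spread over $\Spec A$, then dimension shifting on the fibres after tensoring with $\mathcal{F}_s^{(p_s^t)}$. Your single-surjection argument for the $F^tGG$ case and your Chevalley/field-extension treatment of passing between refined thickenings fill in details that the paper leaves implicit.
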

\begin{proof}
The proof is similar to that of \cite[Corollary 2.3]{Ara1}. The only-if-direction is obvious. To prove the if-direction, we take a locally free sheaf $\mathcal{E}$ on $X$ and let $d=\dim X$.
By Serre's theorem, we can obtain exact sequences of locally free sheaves:
\begin{eqnarray*}
&0\rightarrow\mathcal{E}_1\rightarrow V_0\otimes\mathcal{L}^{-m_0}\rightarrow\mathcal{E}\rightarrow0,&\\
&0\rightarrow\mathcal{E}_2\rightarrow V_1\otimes\mathcal{L}^{-m_1}\rightarrow\mathcal{E}_1\rightarrow0,&\\
&\vdots&\\
&0\rightarrow\mathcal{E}_{d+1}\rightarrow V_d\otimes\mathcal{L}^{-m_d}\rightarrow\mathcal{E}_d\rightarrow0,&
\end{eqnarray*}
where $V_i$ is a vector space and $m_i$ is a positive integer for $0\leq i\leq d$.

Choose a thickening of these sequences over $\Spec A$. For a closed point $s\in\Spec A$, we consider the restrictions of these sequences on the fiber $X_s$ and tensor them with $\mathcal{F}_s^{(p^t_s)}$. Applying the long exact sequence for cohomology shows that $$H^i(X_s, \mathcal{E}_s\otimes\mathcal{F}_s^{(p^t_s)})=0$$ for all $i>N$ and for almost all $s\in \Spec A$. Hence $\phi^t_a(\mathcal{F})\leq N$.
\end{proof}

The proof also gives the following:
\begin{lemma}\label{lemma2.7}
Let $\mathcal{F}$ be a locally free sheaf and $\mathcal{E}$ a coherent sheaf on $X$. Then for any thickening of $(X, \mathcal{F}, \mathcal{E})$ over $\Spec A$, we have $H^i(X_s, \mathcal{E}_s\otimes\mathcal{F}_s^{(p^t_s)})=0$ for $i>\phi^t_a(\mathcal{F})$ and almost all $s\in \Spec A$. The analogical statement holds for $F^tGG$-ample locally free sheaves.
\end{lemma}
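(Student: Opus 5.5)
The plan is to rerun the resolution argument from the proof of Lemma \ref{lemma2.6}, with the roles of the two sheaves switched. Now $\mathcal{F}$ is locally free, and the arbitrary coherent sheaf is $\mathcal{E}$. Fix an ample line bundle $\mathcal{L}$ on $X$ and let $d=\dim X$. By Serre's theorem, $\mathcal{E}$ admits a resolution by sums of negative powers of $\mathcal{L}$:
\begin{eqnarray*}
&0\rightarrow\mathcal{E}_1\rightarrow V_0\otimes\mathcal{L}^{-m_0}\rightarrow\mathcal{E}\rightarrow0,&\\
&\vdots&\\
&0\rightarrow\mathcal{E}_{d+1}\rightarrow V_d\otimes\mathcal{L}^{-m_d}\rightarrow\mathcal{E}_d\rightarrow0.&
\end{eqnarray*}
Here the $\mathcal{E}_j$ are coherent, but no longer necessarily locally free. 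Using Lemma \ref{lemma2.1}, I choose a common thickening over $\Spec A$ of $(X,\mathcal{F},\mathcal{E},\mathcal{L})$ together with all these sequences. After shrinking $\Spec A$, I may assume $\widetilde{\mathcal{F}}$ is locally free, all $\widetilde{\mathcal{E}}_j$ are flat over $A$, and the sequences stay exact on every fiber $X_s$ (generic flatness).

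The key point is that $\mathcal{F}_s^{(p_s^t)}$ is locally free, being the Frobenius pullback of a locally free sheaf. So tensoring the restricted sequences with it keeps them exact. This is exactly why the lemma assumes $\mathcal{F}$ is locally free and not $\mathcal{E}$.

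Apply the definition of $\phi^t_a(\mathcal{F})$ to the locally free sheaves $V_j\otimes\mathcal{L}^{-m_j}$, for the finitely many $j$. This gives an open set on which $H^i(X_s,\mathcal{L}_s^{-m_j}\otimes\mathcal{F}_s^{(p^t_s)})=0$ for all $i>\phi^t_a(\mathcal{F})$. One may need to refine the thickening to match the definition; Lemma \ref{lemma2.1} allows this, and intersecting finitely many nonempty opens keeps a nonempty open. Set $\phi=\phi^t_a(\mathcal{F})$. The long exact sequences then give
\[
H^i(X_s,\mathcal{E}_s\otimes\mathcal{F}_s^{(p^t_s)})\cong H^{i+1}(X_s,(\mathcal{E}_1)_s\otimes\mathcal{F}_s^{(p^t_s)})\cong\cdots\cong H^{i+d+1}(X_s,(\mathcal{E}_{d+1})_s\otimes\mathcal{F}_s^{(p^t_s)})
\]
for $i>\phi$. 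The last group vanishes because $i+d+1>\dim X_s$ (Grothendieck vanishing). Strictly, the first maps are surjections onto the next group only for $i\geq \phi$; the chain is valid for $i>\phi$ as required.

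For the $F^tGG$ statement, I use only the first step. Tensoring $V_0\otimes\mathcal{L}^{-m_0}\rightarrow\mathcal{E}\rightarrow0$ with $\mathcal{F}^{(p^t_s)}_s$ gives a surjection
\[
V_0\otimes\mathcal{L}_s^{-m_0}\otimes\mathcal{F}_s^{(p^t_s)}\twoheadrightarrow\mathcal{E}_s\otimes\mathcal{F}_s^{(p^t_s)}.
\]
Applying Definition \ref{def2.7} with the locally free sheaf $\mathcal{L}^{-m_0}$, the source is globally generated for almost all $s$. A quotient of a globally generated sheaf is globally generated, so the target is globally generated as well.

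The main obstacle is bookkeeping rather than mathematics. One must ensure that a single thickening and a single open set handle all finitely many sheaves in the resolution simultaneously, and that exactness persists on fibers. Both follow from Lemma \ref{lemma2.1} and generic flatness after shrinking $\Spec A$.
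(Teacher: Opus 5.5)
Your proposal is correct and follows essentially the same route as the paper, which obtains this lemma directly from the resolution argument in the proof of Lemma \ref{lemma2.6}: resolve the coherent sheaf $\mathcal{E}$ by sums of $\mathcal{L}^{-m_j}$, use that $\mathcal{F}_s^{(p^t_s)}$ is locally free so tensoring keeps the fiberwise sequences exact, and chase the long exact sequences, or take just the first surjection for the $F^tGG$ case. Your extra bookkeeping (generic flatness, shrinking $\Spec A$ so that $\widetilde{\mathcal{F}}$ is locally free and the fibers have dimension $d$) is exactly what the paper leaves implicit.
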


The concept of $F^t$-amplitude and $F^tGG$-ampleness behave well under base change.
\begin{lemma}\label{lemma2.8}
Let $K$ be an extension of $k$ and $\mathcal{F}$ a coherent sheaf on $X$. Then $\mathcal{F}$ is $F^tGG$-ample if and only if $\mathcal{F}\otimes_kK$ is $F^tGG$-ample, and $\phi^t_a(\mathcal{F})=\phi^t_a(\mathcal{F}\otimes_kK)$.
\end{lemma}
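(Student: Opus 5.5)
The plan is to compare thickenings of $(X,\mathcal{F})$ over $k$ with thickenings of $(X_K,\mathcal{F}_K)$ over $K$, where $X_K=X\otimes_kK$ and $\mathcal{F}_K=\mathcal{F}\otimes_kK$. Lemma \ref{lemma2.6} lets us test both properties using only powers of one ample line bundle, which will be crucial. So fix a very ample $\mathcal{L}$ on $X$; then $\mathcal{L}_K$ is ample on $X_K$, and by Lemma \ref{lemma2.6} it suffices to study the vanishing of $H^i(X_s,\mathcal{L}_s^m\otimes\mathcal{F}_s^{(p^t_s)})$ for $i>N$, and the global generation of $\mathcal{L}_s^m\otimes\mathcal{F}_s^{(p^t_s)}$, for all integers $m$.

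The key observation is that a thickening $(\widetilde{X},\widetilde{\mathcal{F}},\widetilde{\mathcal{L}})\to\Spec A$ with $A\subset k$ finitely generated is also a thickening of $(X_K,\mathcal{F}_K,\mathcal{L}_K)$, since $A\subset k\subset K$ and the base change to $K$ factors through $k$. The closed fibres are the same: for $s\in\Spec A$, the objects $X_s$, $\mathcal{F}_s$, $\mathcal{L}_s$ do not depend on whether we regard the diagram over $k$ or over $K$.

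\emph{Direction from $k$ to $K$.} Suppose $\phi^t_a(\mathcal{F})\le N$ (respectively, $\mathcal{F}$ is $F^tGG$-ample). Take any thickening $\widetilde{D}\to\Spec B$ of $(X_K,\mathcal{F}_K,\mathcal{L}_K)$ with $B\subset K$. Choose a thickening over some $A\subset k$ of $(X,\mathcal{F},\mathcal{L})$ coming from $k$; its base change is a thickening over $K$. By Lemma \ref{lemma2.1} applied over $K$, the two thickenings have a common refinement over some $\Spec A_0$. The hypothesis over $k$ gives vanishing (respectively, global generation) on a dense open subset of $\Spec A$. We must transport this along $\Spec A_0\to\Spec A$ and then along $\Spec A_0\to\Spec B$.

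The pullback step, along $\Spec A_0\to\Spec A$, is the main obstacle. The map $A\to A_0$ need not be injective, so the preimage of a dense open set could a priori be empty. I would handle this in two moves. First, refinements over $K$ can be chosen with $A\subset A_0\subset K$ (enlarge $A$ by the finitely many elements needed), so the map is injective and dominant. Then the preimage of a nonempty open set is nonempty open, and on closed points the fibres match via residue field extension $k_s\subset k_{s_0}$. Vanishing and global generation are preserved by flat base change of fields, as in Remark \ref{rm2.5}, and Frobenius pullback commutes with this base change. Finally, to descend to $\Spec B$ itself, note that the image of a nonempty open subset of $\Spec A_0$ under the dominant finite-type map $\Spec A_0\to\Spec B$ contains a nonempty open set by Chevalley. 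Since the fibres over $\Spec B$ and over $\Spec A_0$ agree after field extension, the property holds on that open set, again using Remark \ref{rm2.5}.

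\emph{Direction from $K$ to $k$.} This is easier. Any thickening over $A\subset k$ of $(X,\mathcal{F},\mathcal{L})$ is already a thickening over $K$, so the hypothesis on $\mathcal{F}_K$ directly yields a nonempty open $U\subset\Spec A$ on which the required vanishing or global generation holds. This gives $\phi^t_a(\mathcal{F})\le\phi^t_a(\mathcal{F}_K)$ and the $F^tGG$ implication. Combining the two directions yields the claimed equality of amplitudes and the equivalence of $F^tGG$-ampleness.
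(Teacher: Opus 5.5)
Your proof is correct and rests on the same key observation as the paper's: a thickening of $(X,\mathcal{F},\mathcal{L})$ over $A\subset k$ is also a thickening of $(X_K,\mathcal{F}_K,\mathcal{L}_K)$, combined with Lemma \ref{lemma2.6}. The paper simply says the conclusion then ``follows from the definition,'' implicitly relying on independence of the choice of thickening (Lemma \ref{lemma2.1}, Remark \ref{rm2.4}). Your argument for the direction from $k$ to $K$ (common refinement, Chevalley, and field-extension invariance as in Remark \ref{rm2.5}) is exactly the justification of that implicit step.
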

\begin{proof}
Let $\mathcal{L}$ be an ample line bundle on $X$. Let $$f: X^{\prime}:=X\times_{\Spec k}\Spec K\rightarrow X$$ be the base change. Then $f^*\mathcal{L}$ is also ample on $X^{\prime}$. Choose a thickening $(\widetilde{X}, \widetilde{\mathcal{F}}, \widetilde{\mathcal{L}})$ of $(X, \mathcal{F}, \mathcal{L})$ over $\Spec A$. Since $A\subset k\subset K$. one sees that $(\widetilde{X}, \widetilde{\mathcal{F}}, \widetilde{\mathcal{L}})$ is also a thickening of $(X^{\prime}, f^*\mathcal{F}, f^*\mathcal{L})$. Hence the conclusion follows from the definition of $F^tGG$-ampleness and $\phi^t_a$.
\end{proof}

The $F^t$-ampleness and ampleness are equivalent for line bundles:
\begin{lemma}\label{lemma2.9}
A line bundle $\mathcal{L}$ on $X$ is $F^t$-ample if and only if it is ample.
\end{lemma}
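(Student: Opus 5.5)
The key observation is that Frobenius pullback of a line bundle raises it to the $p$-th power: on $X_s$ we have $(F_s^t)^*\mathcal{L}_s\cong\mathcal{L}_s^{p_s^t}$. So $F^t$-ampleness of $\mathcal{L}$ says that $H^i(X_s,\mathcal{E}_s\otimes\mathcal{L}_s^{p_s^t})=0$ for $i>0$ and almost all $s$. I will prove the two implications separately, using Lemma \ref{lemma2.6} for the direction ``ample $\Rightarrow$ $F^t$-ample'' and a semicontinuity/specialization argument for the converse.

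\emph{Ample $\Rightarrow$ $F^t$-ample.} By Lemma \ref{lemma2.6}, applied with $\mathcal{L}$ itself as the auxiliary ample line bundle, it suffices to show the following. Fix an integer $m$ and a thickening $(\widetilde X,\widetilde{\mathcal{L}})$ over $\Spec A$. Then $H^i(X_s,\mathcal{L}_s^{m+p_s^t})=0$ for $i>0$ and almost all $s$.

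After shrinking $\Spec A$, $\widetilde{\mathcal{L}}$ is relatively ample over $\Spec A$ (ampleness spreads out to an open set). Relative Serre vanishing on the projective morphism $\widetilde X\to\Spec A$ then gives an $n_0$ such that $H^i(X_s,\mathcal{L}_s^n)=0$ for all $n\ge n_0$, all $i>0$ and all $s$ in a neighborhood. Concretely, $R^i\pi_*\widetilde{\mathcal{L}}^n=0$ for $n\ge n_0$; combined with cohomology and base change, after shrinking so that everything is flat, this gives the fiberwise vanishing.

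Since the residue characteristics $p_s$ are unbounded, we may shrink $U$ further to exclude the finitely many primes with $p_s^t<n_0-m$. Then $m+p_s^t\ge n_0$ for every $s\in U$, and the vanishing holds. The only care needed is the uniformity of $n_0$ in $s$. I would obtain it by choosing a relative embedding $\widetilde X\hookrightarrow\mathbb{P}^N_A$ via $\widetilde{\mathcal{L}}^r$ and resolving $\mathcal{L}^j$ ($0\le j<r$) by sums of $\mathcal{O}(-a)$ over $A$. This gives a uniform bound, in the same way as in the proof of Lemma \ref{lemma2.6}.

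\emph{$F^t$-ample $\Rightarrow$ ample.} Suppose $\mathcal{L}$ is $F^t$-ample, and fix an ample $\mathcal{H}$ on $X$. I will show that $\mathcal{L}$ is positive on every curve $C\subset X$. Once that holds, ampleness follows by a cohomological criterion, which I would phrase as follows to avoid Nakai–Moishezon subtleties.

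For any coherent $\mathcal{G}$, Lemma \ref{lemma2.7} (which applies since $\mathcal{L}$ is locally free) gives $H^i(X_s,\mathcal{G}_s\otimes\mathcal{L}_s^{p_s^t})=0$ for $i>0$ and almost all $s$. Take $\mathcal{G}=\mathcal{O}_C\otimes\mathcal{H}^{-m}$ with $m$ large and $C$ a curve. Suppose $\deg\mathcal{L}|_C\le 0$. Then $\mathcal{L}_s|_{C_s}$ has degree $\le0$ on the curve $C_s$, so the degree of $\mathcal{H}_s^{-m}\otimes\mathcal{L}_s^{p_s^t}$ on $C_s$ is $\le -m\deg\mathcal{H}|_C$, which is very negative. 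By Riemann–Roch, $h^1$ of this sheaf on $C_s$ is then positive once $m\deg\mathcal{H}|_C$ exceeds the arithmetic genus bound. This contradicts the vanishing, so $\mathcal{L}\cdot C>0$ for every curve.

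Applying the same argument to $\mathcal{O}_V\otimes\mathcal{H}^{-m}$ for higher-dimensional subvarieties $V$ shows $\mathcal{L}^{\dim V}\cdot V>0$, using asymptotic Riemann–Roch in $p_s^t$ together with the vanishing of all $h^i$ for $i>0$. Hence $\mathcal{L}$ is ample by Nakai–Moishezon. This converse direction, and specifically the bookkeeping that makes the Euler characteristic estimate uniform in $s$, is where I expect the main obstacle. As an alternative, I would try to show directly that $\mathcal{L}_s$ is ample for almost all $s$ via a Serre-type criterion, then lift ampleness to the generic fiber by openness of ampleness in families.
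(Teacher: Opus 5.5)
\textbf{The direction ``ample $\Rightarrow$ $F^t$-ample'' is correct.} It is essentially the paper's argument: Serre vanishing, then discard the finitely many $s$ with $p_s^t$ too small. Reducing to powers of $\mathcal{L}$ via Lemma~\ref{lemma2.6} and making $n_0$ uniform in $s$ is, if anything, more careful than the paper. Your curve computation in the converse is also fine.

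\textbf{The gap.} It is the sentence asserting that ``the same argument'' gives $\mathcal{L}^{\dim V}\cdot V>0$ when $\dim V=e\ge 2$. After shrinking $\Spec A$ so that $\widetilde V$ is flat, you have $P_m(n):=\chi(V_s,\mathcal{L}_s^{n}\otimes\mathcal{H}_s^{-m})=\chi(V,\mathcal{L}^{n}\otimes\mathcal{H}^{-m})$ for all $n,m$ and all $s$ at once. So uniformity in $s$ is not the real difficulty. The vanishing then gives $P_m(p_s^t)=h^0\ge 0$.

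However, the leading coefficient of $P_m$ is $(\mathcal{L}^e\cdot V)/e!$, which does not depend on $m$. Unlike on a curve, twisting by $\mathcal{H}^{-m}$ only changes lower-order terms, so you only get $\mathcal{L}^e\cdot V\ge 0$. To exclude $\mathcal{L}^e\cdot V=0$ you need the $n^{e-1}$ coefficient, which is $\frac{1}{(e-1)!}\bigl(\mathcal{L}^{e-1}\cdot\tau_{e-1}(\mathcal{O}_V)-m\,\mathcal{L}^{e-1}\cdot\mathcal{H}\cdot V\bigr)$, where $\tau_{e-1}$ is the codimension-one part of the Todd class of $V$. This is negative for $m\gg0$ only once you know $\mathcal{L}^{e-1}\cdot\mathcal{H}\cdot V>0$.

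That requires an induction on $\dim V$. Curves are the base case, and for very ample $\mathcal{H}$ the class $\mathcal{H}\cdot V$ is an effective cycle of $(e-1)$-dimensional subvarieties. With this induction inserted, your Nakai--Moishezon route does close.

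\textbf{The paper's converse needs none of this.} Take $\mathcal{O}_X(1)$ very ample and apply $F^t$-ampleness to the finitely many sheaves $\mathcal{O}_X(-i-1)$, $1\le i\le\dim X$. This gives $H^i(X_s,\mathcal{L}_s^{p_s^t}(-i-1))=0$ for $i>0$ and almost all $s$. Thus $\mathcal{L}_s^{p_s^t}(-1)$ is $0$-regular, hence globally generated by Mumford's theorem. So $\mathcal{L}_s^{p_s^t}$, and therefore $\mathcal{L}_s$, is ample for almost all $s$. Ampleness then spreads from a closed fiber to the generic fiber by openness of ampleness, and then to $X$.

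This is essentially the alternative you mention at the end, and it is the better route. It uses finitely many twists and no intersection theory. The repaired numerical argument needs infinitely many test sheaves $\mathcal{O}_V\otimes\mathcal{H}^{-m}$ plus an induction, and yields nothing more for this lemma.
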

\begin{proof}
Let $\mathcal{E}$ be a locally free sheaf on $X$. If $\mathcal{L}$ is ample, by Serre's vanishing theorem one sees that there exists an integer $N$ such that
$$H^i(X, \mathcal{L}^{n}\otimes\mathcal{E})=0$$ for any $i>0$ and $n>N$. Taking a thickening of $(X, \mathcal{L}, \mathcal{E})$ over $\Spec A$, we have
$$H^i(X_s, \mathcal{L}_s^{(p_s^t)}\otimes\mathcal{E}_s)=0$$ for $i>0$ and for a general closed point $s\in\Spec A$ with $p_s^t>N$. Therefore $\mathcal{L}$ is $F^t$-ample.

Now we assume that $\mathcal{L}$ is $F^t$-ample. Let $\mathcal{O}_X(1)$ be a very ample line bundle on $X$. Choosing a thickening of $(X, \mathcal{L}, \mathcal{O}_X(1))$ over $\Spec A$, one sees that $$H^i(X_s, \mathcal{L}_s^{(p_s^t)}(-i-1))=0$$ for $i>0$ and a general $s\in\Spec A$. Hence $\mathcal{L}_s^{(p_s^t)}(-1)$ is globally generated by \cite[Theorem 1.8.5]{Laz}. This implies that $\mathcal{L}_s^{(p_s^t)}$ is ample. Therefore $\mathcal{L}_s$ is ample for general $s\in\Spec A$. So is $\mathcal{L}$.
\end{proof}

The proof also gives:
\begin{lemma}\label{ample}
Let $\mathcal{E}$ be a coherent sheaf on $X$.
\begin{enumerate}
  \item If $\mathcal{E}$ is $F^t$-ample, then it is $F^tGG$-ample.
  \item If $\mathcal{E}$ is $F^tGG$-ample and locally free, then it is ample.
\end{enumerate}
\end{lemma}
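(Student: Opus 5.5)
For part (1) I would follow the second half of the proof of Lemma \ref{lemma2.9}. Let $\mathcal{E}$ be $F^t$-ample and $\mathcal{G}$ locally free; we must show that $\mathcal{G}_s\otimes\mathcal{E}_s^{(p_s^t)}$ is globally generated for almost all $s$. Fix a very ample $\mathcal{O}_X(1)$ and set $d=\dim X$. Apply Lemma \ref{lemma2.7} (or Definition \ref{def2.2} directly) to the finitely many locally free sheaves $\mathcal{G}(-i)$, $1\le i\le d$. After passing to a common refinement of thickenings (Lemma \ref{lemma2.1}) and intersecting the finitely many open sets, we get
$$H^i(X_s,\mathcal{G}_s(-i)\otimes\mathcal{E}_s^{(p_s^t)})=0$$
for all $i>0$ and almost all $s$. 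For $i>d$ this vanishing is automatic. Hence $\mathcal{G}_s\otimes\mathcal{E}_s^{(p_s^t)}$ is $0$-regular in the sense of Castelnuovo--Mumford with respect to $\mathcal{O}_{X_s}(1)$, and Mumford's theorem \cite[Theorem 1.8.5]{Laz} gives global generation. Regularity makes sense for coherent sheaves, so we do not need $\mathcal{E}$ to be locally free here.

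For part (2), let $\mathcal{E}$ be locally free and $F^tGG$-ample. Take $\mathcal{G}=\mathcal{O}_X(-1)$. For almost all $s$, the sheaf $\mathcal{E}_s^{(p_s^t)}(-1)$ is globally generated, so it is nef. Tensoring with the ample line bundle $\mathcal{O}_{X_s}(1)$ shows that $(F_s^t)^*\mathcal{E}_s$ is ample, since a nef bundle tensored with an ample line bundle is ample. Ampleness of a vector bundle in characteristic $p$ descends along the finite surjective Frobenius morphism $F_s^t$ (e.g. \cite[Prop.~6.1.8]{Laz}, or directly: $\mathcal{O}_{\mathbb{P}(\mathcal{E}_s^{(p^t)})}(1)$ is the pullback of $\mathcal{O}_{\mathbb{P}(\mathcal{E}_s)}(1)$ under a finite surjective map). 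Therefore $\mathcal{E}_s$ is ample for almost all closed $s$.

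The final step is to lift ampleness of $\mathcal{E}_s$ for general $s$ to ampleness of $\mathcal{E}$ in characteristic zero. I would argue via the tautological line bundle on $\mathbb{P}(\widetilde{\mathcal{E}})\to\Spec A$. Ampleness of a line bundle on a projective family is an open condition on the base (EGA III, 4.7.1). The closed points where it holds form a dense set, so the open ample locus is nonempty and hence contains the generic point. Ampleness then passes from the generic fiber to $X$ by flat base change to $k$. This mirrors the final sentence of the proof of Lemma \ref{lemma2.9}, and I expect this descent from general closed fibers to $X$ to be the main point needing care. It is standard, but one must make sure the openness statement is applied to a projective morphism over $\Spec A$ (shrinking $A$ if necessary).
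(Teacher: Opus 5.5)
Your proposal is correct and follows the paper's argument, which is simply the second half of the proof of Lemma \ref{lemma2.9}. For (1), vanishing of $H^i(\mathcal{G}_s(-i)\otimes\mathcal{E}_s^{(p_s^t)})$ plus Mumford regularity gives global generation. For (2), global generation of $\mathcal{E}_s^{(p_s^t)}(-1)$ gives ampleness of $\mathcal{E}_s^{(p_s^t)}$, hence of $\mathcal{E}_s$, hence of $\mathcal{E}$; your Frobenius-descent and EGA openness steps spell out what the paper leaves implicit.

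One small correction: in (1) the right citation is Definition \ref{def2.2} itself, since the twisting sheaves $\mathcal{G}(-i)$ are locally free. Lemma \ref{lemma2.7} does not apply, because it requires the $F^t$-ample sheaf to be locally free.
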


The following is a characterization of $F^t$-ampleness for curves.
\begin{lemma}\label{lemma2.11}
Assume that $X$ is a smooth projective curve defined over $k$. Let $\mathcal{F}$ be a locally free sheaf on $X$, then the following are equivalent
\begin{enumerate}
  \item $\mathcal{F}$ is $F^t$-ample;
  \item $\mathcal{F}$ is $F^tGG$-ample;
  \item $\mathcal{F}$ is ample;
  \item $\mathcal{F}$ is $F^{\infty}$-ample.
\end{enumerate}
\end{lemma}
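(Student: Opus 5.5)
We prove the implications (1)$\Rightarrow$(2)$\Rightarrow$(3)$\Rightarrow$(1), and then the equivalence (3)$\Leftrightarrow$(4).

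\emph{The formal implications.} (1)$\Rightarrow$(2) is Lemma \ref{ample}(1), and (2)$\Rightarrow$(3) is Lemma \ref{ample}(2). For (4)$\Rightarrow$(3) we use Arapura's result that a locally free $F^{\infty}$-ample sheaf is ample; Remark \ref{rm2.4} identifies his notion with ours.

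\emph{The step (3)$\Rightarrow$(1).} Let $\mathcal{F}$ be an ample vector bundle on the curve $X$. By Lemma \ref{lemma2.8} we may assume $k$ is algebraically closed, and by Lemma \ref{lemma2.6} we only need to test $H^1$ against $\mathcal{L}^m$ for one fixed ample line bundle $\mathcal{L}$ and every integer $m$. By Serre duality,
\[
H^1(X_s,\mathcal{L}_s^m\otimes\mathcal{F}_s^{(p_s^t)})^\vee=\Hom\bigl(\mathcal{F}_s^{(p_s^t)},\,\omega_{X_s}\otimes\mathcal{L}_s^{-m}\bigr).
\]
So it suffices to show that every quotient line bundle of $\mathcal{F}_s^{(p_s^t)}$ has degree greater than $2g-2-m\deg\mathcal{L}$ for general $s$.

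By Hartshorne's characterization of ample bundles on curves in characteristic zero, ampleness means that every quotient bundle of $\mathcal{F}$ has positive degree. Equivalently, the minimal slope $\mu_{\min}(\mathcal{F})$ of the last Harder--Narasimhan factor is positive. The plan is as follows.
\begin{enumerate}
\item Spread out the Harder--Narasimhan filtration of $\mathcal{F}$ over a thickening.
\item Use that for almost all $s$ the restricted filtration is still the Harder--Narasimhan filtration with the same slopes. Semistability is an open condition in flat families, and the slopes are constant.
\item Control the Frobenius pullback: $\mu_{\min}(F^{t*}\mathcal{F}_s)\geq p_s^t\mu_{\min}(\mathcal{F}_s)-C$, with $C$ depending only on $g$ and $\rank\mathcal{F}$. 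This follows from the bound on the destabilization of Frobenius pullbacks of semistable bundles on curves (Shepherd-Barron, Sun, Langer): for semistable $E$, $\mu_{\max}(F^*E)-\mu_{\min}(F^*E)\leq(\rank E-1)(2g-2)$. Apply this iteratively for $t$ steps and to each Harder--Narasimhan factor.
\end{enumerate}
Then any quotient line bundle of $\mathcal{F}_s^{(p_s^t)}$ has degree at least $p_s^t\mu_{\min}(\mathcal{F})-C'$, where $C'$ is independent of $s$. Since $\mu_{\min}(\mathcal{F})>0$ and $p_s\to\infty$ on any nonempty open subset of $\Spec A$, for fixed $m$ this exceeds $2g-2-m\deg\mathcal{L}$ for almost all $s$. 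Hence the $H^1$ vanishes, and $\mathcal{F}$ is $F^t$-ample.

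\emph{The equivalence (3)$\Leftrightarrow$(4).} The same estimate, now with $n\ge N_s$, gives (3)$\Rightarrow$(4). Indeed, $\mu_{\min}(F^{n*}\mathcal{F}_s)\geq p_s^n\mu_{\min}(\mathcal{F}) - C_n$, where $C_n$ grows like $p_s^{n-1}\cdot$const. This is still dominated by the main term once $p_s$ is large, which holds after shrinking $U$. Alternatively, one can cite Arapura's theorem that on curves $F^{\infty}$-ampleness is equivalent to ampleness.

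\emph{Main obstacle.} The key difficulty is the uniform lower bound for $\mu_{\min}$ of Frobenius pullbacks in step 3. It has to grow linearly in $p_s^t$ with an error independent of $s$, and this is exactly where the curve hypothesis is used. A first attempt is the explicit bound $\mu_{\min}(F^*E)\geq p\,\mu_{\min}(E)-(r-1)(2g-2)$, iterated $t$ times. Since $t$ is fixed, the accumulated error is $O(p^{t-1})$, which is negligible compared to $p^t\mu_{\min}(\mathcal{F})$.
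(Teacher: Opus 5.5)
Your proposal is correct and follows the paper's proof. The formal implications come from Lemma \ref{ample}, (3)$\Leftrightarrow$(4) comes from Arapura, and (3)$\Rightarrow$(1) comes from Serre duality together with a lower bound $\mu^-(\mathcal{F}_s^{(p_s^t)})\geq p_s^t\mu^-(\mathcal{F})-O(p_s^{t-1})$. The paper takes that bound directly from Langer's estimate (Theorem \ref{Langer2}) for the $t$-fold Frobenius, whereas you iterate the one-step Shepherd-Barron/Sun bound, which is an equally good route.

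Two small corrections. First, iterating gives an error of $\frac{p_s^t-1}{p_s-1}(r-1)(2g-2)$, so the constant is not independent of $s$ as you claim in step 3; your final paragraph states this correctly. Second, for $g=0$ the inequality as you wrote it is false, so replace $2g-2$ by $0$ there; this is harmless, since Frobenius pullback preserves semistability on $\mathbb{P}^1$.
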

\begin{proof}
The equivalence of the last two statements follows from \cite[Prop. 5.4]{Ara1}. By Lemma \ref{ample}, we only need to show $(3) \Rightarrow (1)$. We may assume that $k$ is algebraically closed from Lemma \ref{lemma2.8}.

Suppose that $\mathcal{F}$ is ample. We shall show $\mathcal{F}$ is $F^t$-ample. Let $\mathcal{L}$ be an ample line bundle on $X$ and $(\widetilde{X}, \widetilde{\mathcal{F}}, \widetilde{\mathcal{L}})$ be a thickening of $(X, \mathcal{F}, \mathcal{L})$ over $\Spec A$. For a general closed point $s$ of $\Spec A$, by Langer's estimation (see Theorem \ref{Langer2}), one sees that
$$\mu^{-}(\mathcal{F}_{s})-\frac{\mu^{-}(\mathcal{F}_{s}^{(p_{s}^t)})}{p_{s}^t}\leq\frac{r-1}{p_{s}-1}N,$$ here $r=\rank \mathcal{F}$ and $N$ is a constant depending only on the genus of $X$. Thus one infers $$\mu^{-}(\mathcal{F}_{s}^{(p_{s}^t)})\geq p_{s}^t\left(\mu^{-}(\mathcal{F}_{s})-\frac{r-1}{p_{s}-1}N\right)
=p_{s}^t\left(\mu^{-}(\mathcal{F})-\frac{r-1}{p_{s}-1}N\right).$$ Since $\mu^{-}(\mathcal{F})>0$, one sees that $\mu^{-}(\mathcal{F}_{s}^{(p_{s}^t)})$ tends to infinity when $p_{s}\rightarrow\infty$.
It follows that
\begin{eqnarray*}
H^1(X_{s}, \mathcal{L}_{s}^m\otimes\mathcal{F}_{s}^{(p^t_{s})})
=\Hom(\mathcal{F}_{s}^{(p^t_{s})}, \mathcal{L}_{s}^{-m}\otimes\omega_{X_{s}})^{*}=0
\end{eqnarray*}
for almost all $s$. Therefore $\mathcal{F}$ is $F^t$-ample by Lemma \ref{lemma2.6}.
\end{proof}

In general an $F^tGG$-ample coherent sheaf may not be $F^t$-ample, and an ample locally free sheaf may not be $F^tGG$-ample.
\begin{example}\label{eg2.13}
Let $Y$ be a smooth complex projective variety of of dimension $n$, let $H$ be a very ample divisor on $Y$, and let $\mathcal{F}_0$ be any vector bundle on $Y$ of rank $n+f$ with $f\geq n$. By \cite[Theorem 6.3.65]{Laz} one sees that for any $d\gg0$ the cokernel of a sufficiently general vector bundle map $$u:\mathcal{O}_Y(-dH)^{\oplus n}\rightarrow \mathcal{F}_0$$ is an ample vector bundle of rank $f$.

Taking $Y=\mathbb{P}^2$, for $d\gg0$ there are ample vector bundles $\mathcal{E}_1$ and $\mathcal{E}_2$ having presentations of the following forms:
\begin{eqnarray*}
 &&0\rightarrow\mathcal{O}_{\mathbb{P}^2}(-d)^{\oplus2}\rightarrow\mathcal{O}_{\mathbb{P}^2}(-1)^{\oplus4}\rightarrow\mathcal{E}_1\rightarrow0;\\
 &&0\rightarrow\mathcal{O}_{\mathbb{P}^2}(-d)^{\oplus2}\rightarrow\mathcal{O}_{\mathbb{P}^2}(1)^{\oplus4}\rightarrow\mathcal{E}_2\rightarrow0.
\end{eqnarray*}
One notices that $\mathcal{O}_{\mathbb{P}^2}(1)^{\oplus4}$ is $F^tGG$-ample, so is $\mathcal{E}_2$. Choose a thickening of these two sequences over $\Spec A$. For a general closed point $s\in \Spec A$, we consider the $t$-iterated Frobenius pull backs of the restrictions of these exact sequences on the fiber $Y_s$. The long exact sequences for cohomology give $h^0(Y_s, \mathcal{E}^{(p_s^t)}_{1,s})=0$ and
$$h^1(Y_s, \mathcal{E}^{(p_s^t)}_{2,s})=2h^2(\mathbb{P}^2, \mathcal{O}_{\mathbb{P}^2}(-dp_s^t))=2h^0(\mathbb{P}^2, \mathcal{O}_{\mathbb{P}^2}(dp_s^t-3))>0$$ for almost all $s$. Therefore $\mathcal{E}_1$ is not $F^tGG$-ample, and $\mathcal{E}_2$ is not $F^t$-ample.
\end{example}

\begin{theorem}\label{thm2.12}
Let $\mathcal{E}, \mathcal{E}_i, \mathcal{E}^j$ be coherent sheaves on $X$, where $1\leq i, j\leq m$. Assume either that $X$ is smooth or that these sheaves are locally free. Then the following statements hold.
\begin{enumerate}
  \item Given an exact sequence $0\rightarrow\mathcal{E}_1\rightarrow\mathcal{E}\rightarrow\mathcal{E}_2\rightarrow0$, we have $\phi^t_{a}(\mathcal{E})\leq\max\{\phi^t_{a}(\mathcal{E}_1), \phi^t_{a}(\mathcal{E}_2)\}$.
  \item Let $0\rightarrow\mathcal{E}_m\rightarrow\cdots\rightarrow\mathcal{E}_0\rightarrow\mathcal{E}\rightarrow0$ be an exact sequence such that $\phi^t_{a}(\mathcal{E}_i)\leq i+l$ for each $i$, then $\phi^t_{a}(\mathcal{E})\leq l$.
  \item Let $0\rightarrow\mathcal{E}\rightarrow\mathcal{E}^0\rightarrow\cdots\rightarrow\mathcal{E}^m\rightarrow0$ be an exact sequence such that $\phi^t_{a}(\mathcal{E}^i)\leq l-i$ for each $i$, then $\phi^t_{a}(\mathcal{E})\leq l$.
  \item Let $f: Y\rightarrow X$ be a proper morphism of projective schemes such that $d$ is the maximum dimension of the closed fibers. If $\mathcal{E}$ is locally free then $\phi^t_{a}(f^*\mathcal{E})\leq \phi^t_{a}(\mathcal{E})+d$.
  \item If $f: Y\rightarrow X$ is an \'etale morphism of smooth projective schemes, then $\phi^t_{a}(f_*\mathcal{E})= \phi^t_{a}(\mathcal{E})$.
\end{enumerate}
\end{theorem}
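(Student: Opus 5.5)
The plan follows Arapura's argument \cite[Theorem 2.4]{Ara1}, with the Frobenius power $p^t_s$ held fixed throughout. The key input is exactness: on each fiber $X_s$ the functor $\mathcal{G}\mapsto\mathcal{G}^{(p^t_s)}=(F^t_s)^*\mathcal{G}$ is exact. When the sheaves are locally free this is automatic. When $X$ is smooth, $X_s$ is smooth for almost all $s$, and Kunz's theorem says $F_s$ is flat, so $(F^t_s)^*$ is exact on coherent sheaves.

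For the smooth case I also need a replacement for Lemma \ref{lemma2.7}, since the sheaves need not be locally free. By Lemma \ref{lemma2.6} it suffices to test against $\mathcal{L}^m$ with $\mathcal{L}$ ample. Tensoring with a line bundle is exact, so the same exactness argument applies there.

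For (1), I fix $m$ and choose a thickening of the exact sequence together with $\mathcal{L}$. Restricting to $X_s$, applying $(F^t_s)^*$ and tensoring with $\mathcal{L}^m_s$ gives a short exact sequence for almost all $s$. The long exact cohomology sequence then kills $H^i$ for $i>\max\{\phi^t_a(\mathcal{E}_1),\phi^t_a(\mathcal{E}_2)\}$ on the intersection of the two open sets.

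For (2), I split the resolution into short exact sequences $0\to\mathcal{K}_{j+1}\to\mathcal{E}_j\to\mathcal{K}_j\to0$, with $\mathcal{K}_0=\mathcal{E}$ and $\mathcal{K}_m=\mathcal{E}_m$. The vanishing $H^i(\mathcal{L}_s^m\otimes\mathcal{K}^{(p^t_s)}_{j,s})=0$ for $i>j+l$ then follows by descending induction on $j$. Each step uses $H^i(\mathcal{E}_j)\to H^i(\mathcal{K}_j)\to H^{i+1}(\mathcal{K}_{j+1})$, and all of this is done on one common thickening; only finitely many open sets are involved, so their intersection is nonempty. Part (3) is dual: I split into $0\to\mathcal{K}^j\to\mathcal{E}^j\to\mathcal{K}^{j+1}\to0$ and use $H^{i-1}(\mathcal{K}^{j+1})\to H^i(\mathcal{K}^j)\to H^i(\mathcal{E}^j)$.

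For (4), I take an ample $\mathcal{M}$ on $Y$ and compare $H^i(Y_s,\mathcal{M}^m_s\otimes f_s^*\mathcal{E}_s^{(p^t_s)})$ through the Leray spectral sequence. Since $f_s\circ F_Y=F_X\circ f_s$, we have $f_s^*\mathcal{E}_s^{(p^t_s)}=(f^*\mathcal{E})_s^{(p^t_s)}$, and the projection formula gives $R^qf_{s*}(\mathcal{M}^m_s)\otimes\mathcal{E}_s^{(p^t_s)}$. I thicken the finitely many coherent sheaves $R^qf_*\mathcal{M}^m$ on $X$, which vanish for $q>d$. Generic flatness and cohomology and base change identify their restrictions with $R^qf_{s*}\mathcal{M}_s^m$ for almost all $s$. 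Lemma \ref{lemma2.7} then gives the vanishing of $H^j(X_s,\ldots)$ for $j>\phi^t_a(\mathcal{E})$. The $E_2$-terms vanish in total degree $>\phi^t_a(\mathcal{E})+d$, which proves (4).

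For (5), an étale $f$ is finite here (it is a proper quasi-finite morphism), and it commutes with Frobenius: the relative Frobenius of an étale morphism is an isomorphism, so $F_Y^*\circ f^*\cong f^*\circ F_X^*$. Flat base change along $F_X$ then gives $(F^t_X)^*f_*\mathcal{E}\cong f_*(F^t_Y)^*\mathcal{E}$ on $X_s$. Combining this with the projection formula and finiteness of $f$,
$$H^i(X_s,\mathcal{G}_s\otimes(f_*\mathcal{E})_s^{(p^t_s)})=H^i(Y_s,f_s^*\mathcal{G}_s\otimes\mathcal{E}_s^{(p^t_s)}).$$
This yields $\phi^t_a(f_*\mathcal{E})\le\phi^t_a(\mathcal{E})$ by testing with $\mathcal{G}=\mathcal{L}^m$, since $f^*\mathcal{L}$ is ample. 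For the reverse inequality, any locally free $\mathcal{G}'$ on $Y$ is a direct summand-free quotient situation. I would instead use that $\mathcal{G}'$ is a quotient of $f^*f_*\mathcal{G}'$ and resolve, or more simply test with $f^*\mathcal{L}^m$: by the analogue of Lemma \ref{lemma2.6}, ample line bundles suffice, and $f^*\mathcal{L}$ is ample on $Y$. So the displayed equality gives both directions.

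The main obstacle I expect is the base-change bookkeeping in (4) and (5). In (4) I must ensure that the thickened higher direct images and the Frobenius commutation agree fiberwise for almost all $s$. In (5) I must justify that étaleness persists on the fibers, so that $F$ commutes with $f_*$ there.
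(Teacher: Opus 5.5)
Your proposal is correct and is essentially the argument the paper intends: the paper's proof simply defers to Arapura's Theorem 1, and you carry that argument out with $p_s^t$ fixed. You use exactness of $(F_s^t)^*$ (Kunz in the smooth case), Lemma \ref{lemma2.6} to test against powers of an ample line bundle, the Leray spectral sequence with the projection formula for (4), and $(F^t_{X_s})^*f_{s,*}\cong f_{s,*}(F^t_{Y_s})^*$ for finite étale $f$ in (5). The detour in (5) about quotients of $f^*f_*\mathcal{G}'$ is unnecessary; testing against $(f^*\mathcal{L})^m$ via Lemma \ref{lemma2.6} already gives the reverse inequality.
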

\begin{proof}
The proof is similar to that of \cite[Theorem 1]{Ara1}. We leave the details to the interested reader.
\end{proof}

We give the following estimate on the $F^t$-amplitude of ample vector bundles, which is a variant of \cite[Theorem 5]{Ara1}.

\begin{theorem}\label{est-ample}
Assume that $X$ is integral. Let $\mathcal{E}$ be an ample vector bundle on $X$ of rank $r$. Then $\phi^t_a(\mathcal{E})<r$.
\end{theorem}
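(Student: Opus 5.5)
Following the strategy of \cite[Theorem 5]{Ara1}, the plan is to pass to the projective bundle $\pi: P=\mathbb{P}(\mathcal{E})\rightarrow X$ with tautological bundle $\mathcal{O}_P(1)$, which is ample because $\mathcal{E}$ is ample. Since $\mathcal{O}_P(1)$ is an ample line bundle, there is an integer $N$ such that $H^i(P, \mathcal{O}_P(n)\otimes\pi^*\mathcal{G})=0$ for all $i>0$ and $n\geq N$. Here $\mathcal{G}$ ranges over the finitely many twists $\mathcal{L}^m\otimes(\det\mathcal{E})^{-1}\otimes\omega$-type sheaves that we need, with $\mathcal{L}$ a fixed ample line bundle on $X$. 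By Lemma \ref{lemma2.6} it is enough to prove that $H^i(X_s,\mathcal{L}_s^m\otimes\mathcal{E}_s^{(p_s^t)})=0$ for $i\geq r$, for each fixed $m$ and almost all $s$.

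First I would choose a thickening of $(X,\mathcal{E},\mathcal{L},P,\mathcal{O}_P(1))$ over $\Spec A$. Serre vanishing on $P$ for the finitely many sheaves $\mathcal{O}_P(j)\otimes\pi^*\mathcal{L}^m$ with $0\leq j<N$, together with semicontinuity and generic flatness, then shows that for almost all $s$ we have $H^i(P_s,\mathcal{O}_{P_s}(n)\otimes\pi_s^*\mathcal{L}^m_s)=0$ for all $i>0$ and $n\geq N$. To get this uniformly in $n$ (not just for finitely many $n$), I would use Castelnuovo--Mumford regularity with respect to $\mathcal{O}_P(1)$ on the fibre $P_s$: it suffices to have vanishing for finitely many twists, and this holds generically. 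The key point in positive characteristic is the identity
$$\mathcal{O}_{P_s}(1)^{\otimes p^t}=F_{P_s}^{t*}\mathcal{O}_{P_s}(1),\qquad \mathbb{P}(\mathcal{E}_s^{(p^t)})\text{ compared with }P_s.$$
More usefully, $\pi_{s*}\mathcal{O}_{P_s}(p^t)=\Sym^{p^t}\mathcal{E}_s$, and $\mathcal{E}_s^{(p^t)}=F^{t*}\mathcal{E}_s$ is a subsheaf of $\Sym^{p^t}\mathcal{E}_s$. So I would instead realize $\mathcal{E}_s^{(p^t_s)}$ on the Frobenius-twisted bundle $P'_s=\mathbb{P}(\mathcal{E}_s^{(p^t)})$, whose $\mathcal{O}(1)$ is the pullback of $\mathcal{O}_{P_s}(p^t)$ under the relative Frobenius $P'_s\rightarrow P_s$, covering $F^t$ on $X_s$. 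Hence $\mathcal{O}_{P'_s}(1)\otimes\pi'^*\mathcal{L}^m_s$ has vanishing higher cohomology once $p_s^t\geq N$. A cleaner way to say this is that it is the pullback of the line bundle $\mathcal{O}_{P_s}(p^t)\otimes\pi^*\mathcal{L}_s^{m}$ along a finite flat map, together with a pushforward argument that matches the higher cohomology.

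Next comes the transfer to $X_s$. We have $R\pi'_*\mathcal{O}_{P'_s}(1)=\mathcal{E}_s^{(p^t)}$, so
$$H^i(X_s,\mathcal{L}_s^m\otimes\mathcal{E}_s^{(p^t)})=H^i(P'_s,\mathcal{O}_{P'_s}(1)\otimes\pi'^*\mathcal{L}_s^m).$$
The right-hand side I would bound by pushing forward along the finite morphism $\Phi:P'_s\rightarrow P_s$, which is the base change of $F^t_{X_s}$. This gives $H^i(P_s,\Phi_*\mathcal{O}_{P'_s}\otimes \mathcal{O}(p^t)\otimes\ldots)$. The sheaf $\Phi_*\mathcal{O}_{P'_s}$ is not a twist of $\mathcal{O}$, however, so I expect this step to be the \textbf{main obstacle}. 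To handle it, I would fall back on Arapura's actual argument, which uses the fibre-dimension bound from Theorem \ref{thm2.12}(4). The line bundle $\mathcal{O}_P(1)$ is ample, hence $F^t$-ample by Lemma \ref{lemma2.9}, so $\phi^t_a(\mathcal{O}_P(1))=0$. The fibres of $\pi$ have dimension $r-1$. I would then use the Koszul/Euler-type resolution relating $\pi^*\mathcal{E}$ and $\mathcal{O}_P(1)$, namely $0\rightarrow\Omega_{P/X}(1)\rightarrow\pi^*\mathcal{E}\rightarrow\mathcal{O}_P(1)\rightarrow0$ and its exterior powers, together with Theorem \ref{thm2.12}(2),(3), to bound $\phi^t_a(\pi^*\mathcal{E})$ and then descend. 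Descent is the part needing care: since $R\pi_*\mathcal{O}_P=\mathcal{O}_X$, the projection formula gives $H^i(X_s,\mathcal{G}_s\otimes\mathcal{E}_s^{(p^t)})=H^i(P_s,\pi_s^*\mathcal{G}_s\otimes(\pi^*\mathcal{E})_s^{(p^t)})$, because Frobenius commutes with $\pi^*$. So $\phi^t_a(\mathcal{E})\leq\phi^t_a(\pi^*\mathcal{E})$.

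It thus remains to show $\phi^t_a(\pi^*\mathcal{E})\leq r-1$. The Euler sequence has $\pi^*\mathcal{E}$ in the middle, and $\phi^t_a(\mathcal{O}_P(1))=0$. For the kernel $\Omega_{P/X}(1)$, I would iterate with the Koszul complex
$$0\rightarrow\Omega^{r-1}_{P/X}(r-1)\rightarrow\cdots\rightarrow\pi^*\wedge^{2}\mathcal{E}\otimes\ldots$$
or, more simply, use that $\Omega_{P/X}(1)$ is a quotient of $\pi^*\wedge^2\mathcal{E}\otimes\mathcal{O}_P(-1)$. That quotient description is circular, so my first attempt instead would be induction on $r$ via the tautological resolution on $P$: the Koszul complex $0\rightarrow\wedge^{r}\pi^*\mathcal{E}^{\ldots}$ resolves $\mathcal{O}_P(1)$-twists with terms of amplitude increasing by one per step, and Theorem \ref{thm2.12}(2) then yields the bound $r-1$. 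The bookkeeping that makes the amplitudes line up with the index shift $i+l$ is where I expect to spend the effort.
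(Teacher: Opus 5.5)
\textbf{There is a genuine gap: the inequality your fallback plan reduces to is false in general.} The step $\phi^t_a(\mathcal{E})\le\phi^t_a(\pi^*\mathcal{E})$ is fine. The problem is the next claim, $\phi^t_a(\pi^*\mathcal{E})\le r-1$, so no bookkeeping with the Euler sequence or Koszul complexes can prove it.

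To see this, take a locally free $\mathcal{G}'$ on $X$ and put $\mathcal{G}=\omega_{P/X}\otimes\pi^*\mathcal{G}'$. We have $R\pi_*\omega_{P/X}\cong\mathcal{O}_X[-(r-1)]$, also fibrewise over $\Spec A$, and $(\pi^*\mathcal{E})_s^{(p_s^t)}=\pi_s^*\mathcal{E}_s^{(p_s^t)}$. The projection formula then gives $H^i(P_s,\mathcal{G}_s\otimes(\pi^*\mathcal{E})_s^{(p_s^t)})\cong H^{i-r+1}(X_s,\mathcal{G}'_s\otimes\mathcal{E}_s^{(p_s^t)})$.
\begin{itemize}
\item Hence $\phi^t_a(\pi^*\mathcal{E})\ge\phi^t_a(\mathcal{E})+r-1$. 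Together with Theorem \ref{thm2.12}(4), in fact $\phi^t_a(\pi^*\mathcal{E})=\phi^t_a(\mathcal{E})+r-1$.
\item So passing to $\pi^*\mathcal{E}$ adds the fibre dimension rather than absorbing it. Your target inequality would force $\mathcal{E}$ to be $F^t$-ample.
\item Concretely, the rank-$2$ ample bundle $\mathcal{E}_2$ of Example \ref{eg2.13} has $\phi^t_a(\mathcal{E}_2)\ge 1$, hence $\phi^t_a(\pi^*\mathcal{E}_2)\ge 2>r-1$.
\item The Koszul terms $\wedge^k\pi^*\mathcal{E}\otimes\mathcal{O}_P(1-k)$ are anti-ample twists of the very bundles you are trying to control, so they could not have helped anyway.
\end{itemize}

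Your first, abandoned route has the same defect. The relative Frobenius goes $P_s\to P'_s=\mathbb{P}(\mathcal{E}_s^{(p_s^t)})$ and pulls $\mathcal{O}_{P'_s}(1)$ back to $\mathcal{O}_{P_s}(p_s^t)$. The map $P'_s\to P_s$ is the base change of $F^t_{X_s}$ and pulls $\mathcal{O}_{P_s}(1)$ back to $\mathcal{O}_{P'_s}(1)$. If all higher cohomology of $\mathcal{O}_{P'_s}(1)\otimes\pi'^*\mathcal{L}^m_s$ vanished, $\mathcal{E}$ would again be $F^t$-ample, contradicting Example \ref{eg2.13}.

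\textbf{The missing idea} is a descent from $P$ to $X$ in which the $r-1$ fibre dimensions are the \emph{price} of the transfer, not a gain. The paper itself only defers to \cite[Theorem 5]{Ara1}. One way to do this, in the Deligne--Illusie spirit of that paper, is as follows for $t=1$.
\begin{itemize}
\item Let $\Psi\colon P_s\to P'_s=\mathbb{P}(\mathcal{E}_s^{(p_s)})$ be the relative Frobenius over $X_s$. Set $C^\bullet=\Psi_*\Omega^\bullet_{P_s/X_s}\otimes\mathcal{O}_{P'_s}(1)\otimes\pi'^*\mathcal{G}_s$, a complex of $\mathcal{O}_{P'_s}$-modules.
\item The relative Cartier isomorphism applies because $P_s\to X_s$ is smooth, even if $X_s$ is singular. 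It gives $\mathcal{H}^j(C^\bullet)\cong\Omega^j_{P'_s/X_s}(1)\otimes\pi'^*\mathcal{G}_s$.
\item Bott vanishing on the $\mathbb{P}^{r-1}$-fibres gives $R\pi'_*\Omega^j_{P'_s/X_s}(1)=0$ for $j\ge 1$. So the conjugate spectral sequence yields $\mathbb{H}^m(C^\bullet)\cong H^m(X_s,\mathcal{G}_s\otimes\mathcal{E}_s^{(p_s)})$.
\item The Hodge spectral sequence bounds the same group by $\sum_{j=0}^{r-1}h^{m-j}(P_s,\Omega^j_{P_s/X_s}\otimes\pi_s^*\mathcal{G}_s\otimes\mathcal{O}_{P_s}(p_s))$.
\item This sum vanishes for $m\ge r$ and almost all $s$. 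The reason is that $\mathcal{O}_P(1)$ is $F^1$-ample (Lemma \ref{lemma2.9}) and the $\Omega^j_{P/X}\otimes\pi^*\mathcal{G}$ are finitely many fixed bundles.
\end{itemize}
This proves $\phi^1_a(\mathcal{E})\le r-1$.

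General $t$ reduces to this case because $\phi^{t+1}_a\le\phi^t_a$. Take $\mathcal{L}$ very ample. Then $H^i(X_s,\mathcal{L}_s^m\otimes\mathcal{E}_s^{(p_s^{t+1})})=H^i(X_s,F_{s*}\mathcal{L}_s^m\otimes\mathcal{E}_s^{(p_s^t)})$. For almost all $s$, $F_{s*}\mathcal{L}^m_s$ is $(\dim X+1)$-regular. Resolving it as in Lemma \ref{lemma3.3} and chasing the resulting complex gives the required vanishing from the $F^t$ case.
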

\begin{proof}
The proof is the same as that of \cite[Theorem 5]{Ara1}.
\end{proof}

Exploiting the method of Deligne-Illusie, one obtains Kodaira-Akizuki-Nakano type vanishing theorem. It can also be considered as a variant of \cite[Cor. 8.5]{Ara1}.
\begin{theorem}\label{thm2.13}
Assume that $X$ is smooth with $\dim X=d$. Let $D$ be a simple normal crossings divisor and $\mathcal{E}$ be a coherent sheaf on $X$. Then we have $$H^i(X, \Omega^j_X(\log D)(-D)\otimes \mathcal{E})=0$$ for
$i+j>d+\phi_a^{t}(\mathcal{E})$. In particular, one has $$H^i(X, \Omega^j_X\otimes \mathcal{E})=0$$ for
$i+j>d+\phi_a^{t}(\mathcal{E})$.
\end{theorem}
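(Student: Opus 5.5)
The plan is to reduce to positive characteristic and apply Deligne--Illusie for log de Rham complexes twisted by a Frobenius pull-back. Throughout, write $\phi=\phi^t_a(\mathcal{E})$. Fix $i,j$ with $i+j>d+\phi$. Choose a thickening $(\widetilde{X},\widetilde{D},\widetilde{\mathcal{E}})\to\Spec A$ such that $\widetilde{X}$ is smooth projective over $A$ and $\widetilde{D}$ is a relative simple normal crossings divisor. By semicontinuity and flatness (after shrinking $\Spec A$), it is enough to show
\[H^i(X_s,\Omega^j_{X_s}(\log D_s)(-D_s)\otimes\mathcal{E}_s)=0\]
for almost all closed points $s$. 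After localizing $A$, we may assume that every $p_s>d$ and that the pair $(X_s,D_s)$ lifts to $W_2(k_s)$. Indeed, the thickening over $A$ gives a lift over $A/p^2$, and hence over $W_2$ after étale localization, as in Deligne--Illusie.

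The key input is the Cartier isomorphism for the log complex twisted by $-D$. The complex $\Omega^\bullet_{X_s}(\log D_s)(-D_s)$ is a complex of $\mathcal{O}_{X'_s}$-modules via the relative Frobenius $F'\colon X_s\to X'_s$. Its cohomology sheaves are $\Omega^q_{X'_s}(\log D'_s)(-D'_s)$, and with the $W_2$-lifting and $p>d$, Deligne--Illusie gives a quasi-isomorphism
\[F'_*\Omega^\bullet_{X_s}(\log D_s)(-D_s)\simeq\bigoplus_q\Omega^q_{X'_s}(\log D'_s)(-D'_s)[-q].\]
We then iterate. Let $F^t$ be the $t$-fold absolute Frobenius, and for brevity write $M_s=\Omega^\bullet_{X_s}(\log D_s)(-D_s)$. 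Tensoring with $\mathcal{E}_s$ via the projection formula, we need to compare hypercohomology of $M_s\otimes(\text{Frobenius pullback of }\mathcal{E}_s)$ with the Hodge pieces. Concretely, I would run the argument of Arapura's \cite[Cor.~8.5]{Ara1}, which iterates the decomposition $t$ times. It compares
\[\sum_{a+b=n}h^b\bigl(\Omega^a_{X_s}(\log D_s)(-D_s)\otimes\mathcal{E}_s\bigr)\quad\text{and}\quad\sum_{a+b=n}h^b\bigl(\Omega^a_{X_s}(\log D_s)(-D_s)\otimes\mathcal{E}_s^{(p_s^t)}\bigr)\]
using the Hodge-to-de Rham spectral sequence for $M_s\otimes\mathcal{E}_s^{(p^t_s)}$. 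The differentials of this twisted complex are $\mathcal{O}$-linear up to Frobenius, since the pullback of $\mathcal{E}_s$ carries the canonical Frobenius-descent connection. This yields
\[\sum_{a+b=n}h^b\bigl(\Omega^a(\log D)(-D)\otimes\mathcal{E}_s\bigr)\le\sum_{a+b=n}h^b\bigl(\Omega^a(\log D)(-D)\otimes\mathcal{E}_s^{(p_s^t)}\bigr).\]

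It remains to show the right-hand side vanishes for $n>d+\phi$. Each $\Omega^a_X(\log D)(-D)$ is locally free, so Definition \ref{def2.2}, applied with $\mathcal{E}=\Omega^a_X(\log D)(-D)$ (locally free), gives
\[H^b\bigl(\Omega^a(\log D)(-D)\otimes\mathcal{E}_s^{(p_s^t)}\bigr)=0\]
for $b>\phi$ and almost all $s$. This takes care of the pieces with $b>\phi$. The pieces with $b\le\phi$ would then need $a>d$, which is impossible. So every piece with $a+b=n>d+\phi$ vanishes, giving the vanishing in characteristic $p$ and hence over $k$. The ``in particular'' statement is the case $D=0$.

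The main obstacle is the iteration step that passes from $\mathcal{E}^{(p^t)}$ down to $\mathcal{E}$. Each application of the Cartier decomposition trades one Frobenius twist for a direct sum of Hodge pieces. One must check that the twisted complex $M_s\otimes\mathcal{E}_s^{(p^{k}_s)}$ pushes forward to a decomposable complex with terms involving $\mathcal{E}_s^{(p^{k-1}_s)}$. This requires the lifting of the pair together with the projection formula $F'_*(M\otimes F'^*G)=F'_*M\otimes G$, and the uniformity of the open set $U$ over all $t$ steps. Since $t$ is fixed, finitely many shrinkings of $\Spec A$ suffice.
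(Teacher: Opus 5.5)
Your proposal is correct and follows essentially the same route as the paper. Both push forward the $(-D)$-twisted log de Rham complex tensored with $\mathcal{E}_s^{(p_s^m)}$ under Frobenius, decompose it via Deligne--Illusie and the projection formula, bound hypercohomology by the Hodge-to-de Rham spectral sequence, descend from $\mathcal{E}_s^{(p_s^t)}$ to $\mathcal{E}_s$ in $t$ steps, and conclude by semicontinuity. The one difference is that you assert the decomposition of $F_*\Omega^\bullet_{X_s}(\log D_s)(-D_s)$ directly, whereas the paper derives it from Hara's quasi-isomorphism $\Omega^\bullet_{X_s}(\log D_s)\simeq\Omega^\bullet_{X_s}(\log D_s)((p_s-1)D_s)$ twisted by $\mathcal{O}(-p_sD_s)=F_s^*\mathcal{O}(-D_s)$.
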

\begin{proof}
Choose a thickening $(\widetilde{X}, \widetilde{D}, \widetilde{\mathcal{E}})\rightarrow\Spec A$ of $(X, D, \mathcal{E})$. For a general closed point $s$ of $\Spec A$, there is a quasi-isomorphism $$\Omega^{\bullet}_{X_{s}}(\log D_s)\cong\Omega^{\bullet}_{X_{s}}(\log D_s)((p_s-1)D_s)$$ by \cite[3.3]{Hara}.
Tensoring both sides with $\mathcal{E}^{(p_{s}^{t})}_{s}(-p_sD_s)$ yields
$$\Omega^{\bullet}_{X_{s}}(\log D_s)\otimes F_s^*(\mathcal{E}^{(p_{s}^{t-1})}_{s}(-D_s))\cong\Omega^{\bullet}_{X_{s}}(\log D_s)\otimes\mathcal{E}^{(p_{s}^{t})}_{s}(-D_s).$$
From \cite[Lemma 5.6]{Ara1} and \cite[\S4.2]{DI}, it follows that
\begin{eqnarray*}
 F_{s,*}(\Omega^{\bullet}_{X_{s}}(\log D_s)\otimes\mathcal{E}^{(p_{s}^{t})}_{s}(-D_s)) &\cong& F_{s,*}(\Omega^{\bullet}_{X_{s}}(\log D_s))\otimes \mathcal{E}^{(p_{s}^{t-1})}_{s}(-D_s) \\
   &\cong&\Big(\bigoplus_{j=0}^n\Omega^j_{X_{s}}(\log D_s)[-j]\Big)\otimes\mathcal{E}^{(p_{s}^{t-1})}_{s}(-D_s).
\end{eqnarray*}
These isomorphisms together with the spectral sequence
$$H^i(\Omega^j_{X_{s}}(\log D_s)(-D_s)\otimes\mathcal{E}^{(p_{s}^t)}_{s})\Rightarrow
\mathbb{H}^{i+j}(\Omega^{\bullet}_{X_{s}}(\log D_s)(-D_s)\otimes\mathcal{E}^{(p_{s}^t)}_{s})$$
give that
\begin{eqnarray*}
 \bigoplus_{j=0}^d H^{i-j}(\Omega^j_{X_{s}}(\log D_s)(-D_s)\otimes\mathcal{E}^{(p_{s}^{t-1})}_{s})  &=& \mathbb{H}^i(F_{s,*}(\Omega^{\bullet}_{X_{s}}(\log D_s)(-D_s)\otimes\mathcal{E}^{(p_{s}^{t})}_{s})) \\
   &=& \mathbb{H}^i(\Omega^{\bullet}_{X_{s}}(\log D_s)(-D_s)\otimes\mathcal{E}^{(p_{s}^{t})}_{s})\\
   &\leq&\bigoplus_{j=0}^d H^{i-j}(\Omega^j_{X_{s}}(\log D_s)(-D_s)\otimes\mathcal{E}^{(p_{s}^{t})}_{s})\\
   &=&0
\end{eqnarray*}
for $i>d+\phi_a^{t}(\mathcal{E})$ and almost all $s$. Induction yields
$$\bigoplus_{j=0}^d H^{i-j}(\Omega^j_{X_{s}}(\log D_s)(-D_s)\otimes\mathcal{E}^{(p_{s}^{m})}_{s})=0$$ for any $i>d+\phi_a^{t}(\mathcal{E})$, $0\leq m\leq t$ and almost all $s$. The conclusion follows from the semicontinuity.
\end{proof}

We can also give a generalization of Bott's vanishing for toric schemes. We refer to \cite[3.3]{BTLM} for the definition of toric schemes.
\begin{theorem}\label{Bott}
Assume that $X$ is a smooth projective toric $k$-scheme. We have $$H^i(X, \Omega^j_X\otimes\mathcal{E})=0$$ for any $i>\phi_a^t(\mathcal{E})$ and $j\geq0$.
\end{theorem}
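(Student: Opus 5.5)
We follow the Deligne–Illusie strategy of Theorem \ref{thm2.13}, but use the toric Frobenius splitting in place of the de Rham decomposition. The point is that a smooth toric scheme lifts compatibly with Frobenius, so the Cartier decomposition works on each $\Omega^j$ separately rather than on the whole complex. This removes the shift by $j$ that appears in the Kodaira–Akizuki–Nakano bound.

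First choose a thickening $(\widetilde{X},\widetilde{\mathcal{E}})\rightarrow\Spec A$ of $(X,\mathcal{E})$, with $\widetilde{X}$ a smooth projective toric scheme over $A$; this is possible by the construction of toric schemes in \cite{BTLM}, since they are defined over $\mathbb{Z}$ from a fan. For a closed point $s$, the fiber $X_s$ is a smooth projective toric variety over $k_s$. The key input is that its absolute Frobenius $F_s$ is covered by the toric endomorphism ``multiplication by $p_s$'' on the fan lattice. From this one gets the standard toric splitting: for every $j$, $\Omega^j_{X_s}$ is a direct summand of $F_{s,*}\Omega^j_{X_s}$. Explicitly, on torus-invariant charts $F_{s,*}\Omega^j$ decomposes by characters modulo $p_s$, and the weight-zero component is $\Omega^j$. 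This is the Buch–Thomsen–Lauritzen–Mehta lemma; I would cite \cite{BTLM} for it.

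The main step is then a projection formula. For any coherent $\mathcal{G}$ on $X_s$,
$$F_{s,*}\big(\Omega^j_{X_s}\otimes F_s^*\mathcal{G}\big)\cong F_{s,*}\Omega^j_{X_s}\otimes\mathcal{G}.$$
For locally free $\mathcal{G}$ this is the usual projection formula. For coherent $\mathcal{G}$ it still holds because $\Omega^j_{X_s}$ is locally free and $F_s$ is finite and flat on the smooth $X_s$; this is the same use of \cite[Lemma 5.6]{Ara1} as in the proof of Theorem \ref{thm2.13}. Since $F_s$ is affine, the direct summand property gives the injection
$$H^i(X_s,\Omega^j_{X_s}\otimes\mathcal{E}_s^{(p_s^{m-1})})\hookrightarrow H^i(X_s,\Omega^j_{X_s}\otimes\mathcal{E}_s^{(p_s^{m})})$$
for all $m\geq1$. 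Apply this with $m=t,t-1,\dots,1$ and compose.

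Next, take $\mathcal{F}=\Omega^j_{\widetilde X}$, which is locally free, in the definition of $\phi_a^t(\mathcal{E})$ (Definition \ref{def2.2}, or Lemma \ref{lemma2.7} for coherent $\mathcal{E}$). This gives $H^i(X_s,\Omega^j_{X_s}\otimes\mathcal{E}_s^{(p_s^{t})})=0$ for $i>\phi_a^t(\mathcal{E})$ and almost all $s$. There are only finitely many $j$, so we may shrink $U$ to work for all of them at once. The chain of injections then yields $H^i(X_s,\Omega^j_{X_s}\otimes\mathcal{E}_s)=0$ for $i>\phi^t_a(\mathcal{E})$ and almost all $s$.

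Finally, pass back to characteristic zero by upper semicontinuity on $\Spec A$, after shrinking $A$ so that $\widetilde{\mathcal{E}}$ is flat over it. The generic fiber of $\Spec A$ is dense and contains closed points $s$ where the vanishing holds, so $H^i(X,\Omega^j_X\otimes\mathcal{E})=0$.

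I expect the main obstacle to be justifying the splitting uniformly and integrally. One must check that the toric Frobenius lift exists over $A$ and that the splitting $\Omega^j\to F_*\Omega^j$ is compatible with the reductions, or simply holds on each $X_s$ for all $s$. If citing \cite{BTLM} directly is awkward, I would instead verify the splitting by hand on the affine toric charts $\Spec k_s[\sigma^\vee\cap M]$, using the weight decomposition of $F_*$. A second, minor point is the projection formula for non-locally-free $\mathcal{E}$; this needs the flatness of $F_s$, which holds because $X_s$ is smooth.
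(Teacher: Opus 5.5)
Your proposal is correct and follows the paper's proof essentially step for step: take the split injection $\Omega^j_{X_s}\hookrightarrow F_{s,*}\Omega^j_{X_s}$ from \cite{BTLM}, tensor it with $\mathcal{E}_s^{(p_s^{m-1})}$ and apply the projection formula, chain the resulting injections up to $H^i(X_s,\Omega^j_{X_s}\otimes\mathcal{E}_s^{(p_s^t)})=0$, and conclude by semicontinuity. Two small corrections: the projection formula only needs $F_s$ to be affine, not flat, and Definition \ref{def2.2} applies directly because the test sheaf $\Omega^j_X$ is locally free, so Lemma \ref{lemma2.7} plays no role here.
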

\begin{proof}
Choose a thickening $(\widetilde{X}, \widetilde{\mathcal{E}})$ over $\Spec A$. By \cite[Theorem 2]{BTLM}, we have a split sequence $$0\rightarrow\Omega^j_{X_s}\rightarrow F_{s,*}\Omega^j_{X_s}$$ for almost all $s$. Tensoring with $\mathcal{E}_s$, we obtain an injection
$$H^i(X_s, \Omega^j_{X_s}\otimes \mathcal{E}_s)\hookrightarrow H^i(X_s, \Omega^j_{X_s}\otimes\mathcal{E}^{(p_s)}_s)\hookrightarrow\cdots\hookrightarrow H^i(X_s, \Omega^j_{X_s}\otimes\mathcal{E}^{(p_s^t)}_s)=0$$ for almost all $s$. Hence one gets the desired vanishing by semicontinuity.
\end{proof}

\section{$F^t$-semipositive and $F^tGG$-semipositive sheaves}\label{S3}
Let $t$ be a fixed positive integer. We introduce, in this section, the notions of $F^t$-semipositivity and $F^tGG$-semipositivity and establish some basic properties. The notions are obtained by relaxing the conditions for $F^{t}$-ampleness and $F^tGG$-ampleness. We still let $k$ be a field with $\chr(k)=0$ and $X$ a projective scheme over $k$ in this section. We freely use the notations in Section \ref{S2}. Most results in this section are parallel to that of \cite{Sun}.

\subsection{Partial Castelnuovo–Mumford regularity}\label{S3.1}
Castelnuovo–Mumford regularity gives a quantitative measure of the algebraic complexity of a coherent sheaf. We will recall the definition and basic properties (e. g. \cite[sec. 2]{Ke}).

\begin{definition}
Let $\mathcal{O}_X(1)$ be a very ample line bundle on $X$, and $u$ a nonnegative integer. We say a coherent sheaf $\mathcal{E}$ on $X$ is $(m, u)$-regular if $$H^i(X, \mathcal{E}(m-i))=0$$ for $i>u$. The $u$-regularity $\reg_u(\mathcal{E})$ of $\mathcal{E}$ is the minimum $m$ such that $\mathcal{E}$ is $(m, u)$-regular. If $\mathcal{E}$ is $(m, 0)$-regular, we simply say $\mathcal{E}$ is $m$-regular.
\end{definition}

The $0$-regularity is the usual Castelnuovo–Mumford regularity (see \cite[sec. 1.8]{Laz}). when $u>0$, we say $u$-regularity partial Castelnuovo–Mumford regularity. Here we use $u$-regularity instead of just $0$-regularity because it is necessary for the study of $F^t$-amplitude. This concept of $u$-regularity shares a basic property with $0$-regularity.

\begin{lemma}\label{pCM}
Let $\mathcal{E}$ be a coherent sheaf on $X$. If $\mathcal{E}$ is $(m, u)$-regular, then $\mathcal{E}$ is $(n, u)$-regular for all
$n\geq m$.
\end{lemma}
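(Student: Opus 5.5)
It suffices to show that $(m,u)$-regularity implies $(m+1,u)$-regularity; induction on $n$ then gives the statement. I would follow Mumford's argument for ordinary Castelnuovo--Mumford regularity (\cite[Theorem 1.8.3]{Laz}), reducing by a hyperplane section. First I reduce to $k$ infinite. Base change to the algebraic closure is flat, so it commutes with cohomology and preserves regularity in both directions. Next I choose a hyperplane $H\in|\mathcal{O}_X(1)|$ that avoids all associated points of $\mathcal{E}$. This is possible because there are only finitely many associated points and $k$ is infinite. With this choice the multiplication by the section defining $H$ is injective on $\mathcal{E}$, so we get an exact sequence
$$0\rightarrow\mathcal{E}(-1)\rightarrow\mathcal{E}\rightarrow\mathcal{E}_H\rightarrow0.$$

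Twisting by $\mathcal{O}_X(m-i)$ and taking the long exact sequence gives
$$H^i(\mathcal{E}(m-i))\rightarrow H^i(\mathcal{E}_H(m-i))\rightarrow H^{i+1}(\mathcal{E}(m-i-1)).$$
For $i>u$ both outer terms vanish by $(m,u)$-regularity, since $i+1>u$ as well. Hence $\mathcal{E}_H$ is $(m,u)$-regular on $H$, with respect to $\mathcal{O}_H(1)$. The argument runs by induction on $\dim\operatorname{Supp}\mathcal{E}$. When the support is zero-dimensional, all $H^i$ with $i>0$ vanish and the claim is trivial. Since $\dim\operatorname{Supp}\mathcal{E}_H<\dim\operatorname{Supp}\mathcal{E}$, the induction hypothesis applies and shows that $\mathcal{E}_H$ is $(n,u)$-regular for all $n\ge m$.

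Now consider the sequence twisted by $\mathcal{O}_X(m+1-i)$:
$$H^i(\mathcal{E}(m-i))\rightarrow H^i(\mathcal{E}(m+1-i))\rightarrow H^i(\mathcal{E}_H(m+1-i)).$$
For $i>u$ the left term vanishes by hypothesis. The right term vanishes because $\mathcal{E}_H$ is $(m+1,u)$-regular. Hence $H^i(\mathcal{E}(m+1-i))=0$ for all $i>u$, that is, $\mathcal{E}$ is $(m+1,u)$-regular.

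The only point needing care is that the condition $i>u$ is preserved in each step. In the first step we need $H^{i+1}(\mathcal{E}(m-(i+1)))=0$, which holds because $i+1>i>u$. So the argument works verbatim for partial regularity. Unlike the case $u=0$, no global generation statement is needed, because we only claim the vanishing part of Mumford's theorem. The main obstacle is the choice of $H$ avoiding the associated points, which is where the reduction to an infinite field is used.
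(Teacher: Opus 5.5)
Your proposal is correct and follows essentially the same route as the paper, which only says the proof is the same as for ordinary Castelnuovo--Mumford regularity (citing Keeler): Mumford's induction on $\dim\operatorname{Supp}\mathcal{E}$ via a hyperplane section avoiding the associated points of $\mathcal{E}$. You have also correctly checked the one point specific to partial regularity, namely that the index shift $i\mapsto i+1$ in the connecting map preserves the condition $i>u$.
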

\begin{proof}
The proof is essentially the same as that of $0$-regularity (cf. \cite[Lemma 2.2]{Ke}).
\end{proof}

The related notion of the level $\lambda(\mathcal{E})$ of a sheaf was introduced in \cite[sec. 1]{Ara2}. It measures the deviation from $0$-regular sheaves. The following lemma is helpful for us to compute the cohomology groups of a sheaf.

\begin{lemma}\label{lemma3.3}
Let $\mathcal{E}$ be a $m$-regular sheaf on $X$ with respect to a very ample line bundle $\mathcal{O}_X(1)$. Then for any $N\geq0$, there exist vector spaces $V_i$ and a resolution
$$V_N\otimes\mathcal{O}_X(-m-NR)\rightarrow\cdots \rightarrow V_1\otimes\mathcal{O}_X(-m-R)\rightarrow V_0\otimes\mathcal{O}_X(-m)\rightarrow\mathcal{E}\rightarrow0$$ where $R=\max\{1, \reg_0(\mathcal{O}_X)\}$. In particular, a $0$-regular sheaf is globally generated.
\end{lemma}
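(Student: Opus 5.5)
This is the standard Mumford-type resolution, and I would build it one step at a time from the fact that an $m$-regular sheaf is globally generated after twisting by $m$.

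\textbf{Step 1 (global generation).} First I show that $\mathcal{E}(m)$ is globally generated and that the multiplication maps $H^0(\mathcal{E}(m))\otimes H^0(\mathcal{O}_X(1))\to H^0(\mathcal{E}(m+1))$ are surjective. Replacing $\mathcal{E}$ by $\mathcal{E}(m)$, we may assume $\mathcal{E}$ is $0$-regular. The argument is Mumford's (cf. \cite[Theorem 1.8.5]{Laz}). Embed $X\subset\mathbb{P}^M$ by $\mathcal{O}_X(1)$ and push $\mathcal{E}$ forward; this changes no cohomology. Then resolve $\mathcal{O}_{\mathbb{P}^M}$-modules by the Koszul complex of $V\otimes\mathcal{O}(-1)$, where $V=H^0(\mathcal{O}_{\mathbb{P}^M}(1))$, and chase cohomology. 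Then $\mathcal{E}(n)$ is globally generated for $n\gg0$ by Serre, and the surjectivity of multiplication gives global generation of $\mathcal{E}$ itself. This is also the "in particular" clause.

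\textbf{Step 2 (first syzygy).} Set $V_0=H^0(\mathcal{E}(m))$ and let $\mathcal{K}_1=\ker(V_0\otimes\mathcal{O}_X(-m)\to\mathcal{E})$. The aim is to show that $\mathcal{K}_1$ is $(m+R)$-regular. Twisting the sequence $0\to\mathcal{K}_1\to V_0\otimes\mathcal{O}_X(-m)\to\mathcal{E}\to0$ by $m+R-i$ and taking cohomology, there are two cases.
\begin{itemize}
\item For $i\ge2$, $H^i(\mathcal{K}_1(m+R-i))$ sits between $H^{i-1}(\mathcal{E}(m+R-i))$ and $H^i(\mathcal{O}_X(R-i))\otimes V_0$. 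The first vanishes because $\mathcal{E}$ is $(m+R-1)$-regular, using Lemma~\ref{pCM} and $R\ge1$. The second vanishes because $\mathcal{O}_X$ is $R$-regular, as $R\ge\reg_0(\mathcal{O}_X)$.
\item For $i=1$, we need $H^0(V_0\otimes\mathcal{O}_X(R-1))\to H^0(\mathcal{E}(m+R-1))$ to be surjective. Then $H^1(\mathcal{K}_1(m+R-1))$ injects into $V_0\otimes H^1(\mathcal{O}_X(R-1))=0$. The surjectivity follows by iterating the multiplication surjectivity of Step 1 $R-1$ times, since $V_0\otimes H^0(\mathcal{O}_X(R-1))$ maps onto $H^0(\mathcal{E}(m+R-1))$.
\end{itemize}

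\textbf{Step 3 (induction).} Apply Step 2 to $\mathcal{K}_1$ with $m$ replaced by $m+R$, then to $\mathcal{K}_2$ with $m+2R$, and so on up to $N$. Splicing the short exact sequences gives the stated resolution with $V_j=H^0(\mathcal{K}_j(m+jR))$.

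\textbf{Main obstacle.} The delicate point is the $i=1$ case of Step 2. If we only use $0$-regularity of $\mathcal{O}_X$ there, the twist shift must be $R$ rather than $1$, and one has to make sure that $H^1(\mathcal{O}_X(R-1))=0$ and that the multiplication surjectivity propagates. Both follow from Step 1 applied to $\mathcal{E}$ and to $\mathcal{O}_X$.
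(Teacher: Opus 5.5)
Your proposal is correct and complete. The paper gives no proof of its own and simply cites \cite[Corollary 3.2]{Ara1}; your argument is the standard one behind that result: Mumford's theorem gives global generation and surjective multiplication maps, you show that the kernel of $H^0(\mathcal{E}(m))\otimes\mathcal{O}_X(-m)\to\mathcal{E}$ is $(m+R)$-regular, and then you iterate.

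One small simplification: in the $i=1$ case, the vanishing of $H^1(\mathcal{O}_X(R-1))$ follows directly from $R\ge\reg_0(\mathcal{O}_X)$ together with Lemma~\ref{pCM}. You do not need Step~1 for it; Step~1 is only used for the surjectivity of the multiplication map.
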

\begin{proof}
See \cite[Corollary 3.2]{Ara1}.
\end{proof}

\begin{proposition}\label{tensor-CM}
Let $\mathcal{O}_X(1)$ be a very ample line bundle on $X$. Let $\mathcal{E}$ and $\mathcal{F}$ be two coherent sheaves on $X$ such that one of them is locally free, then
$$\reg_u(\mathcal{E}\otimes\mathcal{F})\leq\reg_0(\mathcal{E})+\reg_u(\mathcal{F})+(\dim X-u-1)(R-1)$$ where $R=\max\{1, \reg_0(\mathcal{O}_X)\}$.
\end{proposition}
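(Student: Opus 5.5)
The plan is to resolve one sheaf by sums of negative twists of $\mathcal{O}_X(1)$ using Lemma \ref{lemma3.3}, tensor the resolution with the other sheaf, and chase cohomology through the resulting short exact sequences. Put $d=\dim X$, $m=\reg_0(\mathcal{E})$, $r=\reg_u(\mathcal{F})$, and
$$n=m+r+(d-u-1)(R-1).$$
We must show $H^i(X,\mathcal{E}\otimes\mathcal{F}(n-i))=0$ for all $i>u$. If $u\geq d$ there is nothing to prove, so assume $u<d$.

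\emph{Step 1: the resolution.} By Lemma \ref{lemma3.3} applied with $N=d$ there is a resolution
$$V_d\otimes\mathcal{O}_X(-m-dR)\rightarrow\cdots\rightarrow V_0\otimes\mathcal{O}_X(-m)\rightarrow\mathcal{E}\rightarrow0.$$
Let $\mathcal{K}_0=\mathcal{E}$, and let $\mathcal{K}_{j+1}$ be the kernel of $V_j\otimes\mathcal{O}_X(-m-jR)\rightarrow\mathcal{K}_j$. This gives short exact sequences
$$0\rightarrow\mathcal{K}_{j+1}\rightarrow V_j\otimes\mathcal{O}_X(-m-jR)\rightarrow\mathcal{K}_j\rightarrow0.$$
We tensor these with $\mathcal{F}(n-i)$. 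If $\mathcal{F}$ is locally free, exactness is preserved trivially. If instead $\mathcal{E}$ is locally free, then all the $\mathcal{K}_j$ are locally free as well, being kernels of surjections of locally free sheaves. Hence every $\mathcal{T}or_1(\mathcal{K}_j,\mathcal{F})$ vanishes and the sequences again stay exact.

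\emph{Step 2: the cohomology chase.} The long exact sequences give, for each $j$,
$$H^{i+j}(V_j\otimes\mathcal{F}(n-i-m-jR))\rightarrow H^{i+j}(\mathcal{K}_j\otimes\mathcal{F}(n-i))\rightarrow H^{i+j+1}(\mathcal{K}_{j+1}\otimes\mathcal{F}(n-i)).$$
Iterating from $j=0$, the group $H^i(\mathcal{E}\otimes\mathcal{F}(n-i))$ vanishes provided two conditions hold. First, $H^{i+j}(\mathcal{F}(n-m-jR-i))=0$ for $0\leq j\leq d-i$. Second, the last term $H^{d+1}(\mathcal{K}_{d-i+1}\otimes\mathcal{F}(n-i))$ vanishes, which it does for dimension reasons.

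\emph{Step 3: the numerics (the only delicate point).} Rewrite the twist as
$$n-m-jR-i=\bigl(n-m-j(R-1)\bigr)-(i+j).$$
Since $i+j\geq i>u$, the required vanishing says exactly that $\mathcal{F}$ is $(n-m-j(R-1),u)$-regular. By Lemma \ref{pCM}, this follows from $(r,u)$-regularity once $n-m-j(R-1)\geq r$. We only need $j\leq d-i\leq d-u-1$, because $i\geq u+1$. Using $R\geq1$, we get
$$n-m-j(R-1)\geq n-m-(d-u-1)(R-1)=r.$$
This is exactly why the correction term $(\dim X-u-1)(R-1)$ appears in the bound, and the argument is then complete.

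The main thing to get right is this bookkeeping of the index shift: each step of the resolution raises the cohomological degree by one but lowers the twist by $R$, so only $R-1$ is lost per step. Moreover, only $d-u$ steps actually matter, since all cohomological degrees involved must exceed $u$ and be at most $d$. A secondary point is the exactness after tensoring in the case where only $\mathcal{E}$ is locally free, which the $\mathcal{T}or$ remark in Step 1 handles.
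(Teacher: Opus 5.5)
Your proof is correct: exactness after tensoring (via local freeness of the kernels $\mathcal{K}_j$ when $\mathcal{E}$ is locally free), the chase ending at $H^{d+1}=0$, and the bookkeeping $n-m-jR-i=\bigl(n-m-j(R-1)\bigr)-(i+j)$ with $j\le d-u-1$, combined with Lemma~\ref{pCM}, all check out. The paper gives no argument of its own here and only cites \cite[Prop.~2.8]{Ke}; your self-contained proof, which resolves $\mathcal{E}$ via Lemma~\ref{lemma3.3} and then chases cohomology, is essentially the standard argument behind that reference.
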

\begin{proof}
See \cite[Prop. 2.8]{Ke}.
\end{proof}

\subsection{Basic definitions and properties}

\begin{definition}\label{defFt}
Let $\mathcal{O}_X(1)$ be a very ample line bundle on $X$ and $\mathcal{E}$ a coherent sheaf on $X$. The $F^t$-semipositivity $\phi^t_{sp}(\mathcal{E})$ of $\mathcal{E}$ is the smallest nonnegative integer such that for any thickening $(\widetilde{X}, \mathcal{O}_{\widetilde{X}}(1), \widetilde{\mathcal{E}})\rightarrow\Spec A$ of $(X, \mathcal{O}_X(1), \mathcal{E})$, there exists a nonempty open subset $U$ of $\Spec A$ such that the $\phi^t_{sp}(\mathcal{E})$-regularity of $\mathcal{E}_s^{(p_s^t)}$ with respect to $\mathcal{O}_{X_s}(1)$ are bounded for all closed point $s\in U$.  We say $\mathcal{E}$ is $F^t$-semipositive if $\phi^t_{sp}(\mathcal{E})=0$.
\end{definition}

\begin{remark}\label{rm3.6}
It follows from Lemma \ref{pCM} and Lemma \ref{lemma2.1} that $\phi^t_{sp}(\mathcal{E})\leq N$ if and only if there exists an integer $m$ so that $H^{j}(\mathcal{E}_s^{(p_s^t)}(i))=0$ for any $i\geq m$, $j>N$ and almost all $s$ for a given thickening.
\end{remark}

Taking $t\rightarrow+\infty$, one obtains the definition of Frobenius semipositivity in \cite{Sun}:

\begin{definition}
Let $\mathcal{O}_X(1)$ be a very ample line bundle on $X$ and $\mathcal{E}$ a coherent sheaf on $X$. The $F^{\infty}$-semipositivity $\phi^{\infty}_{sp}(\mathcal{E})$ of $\mathcal{E}$ is the smallest nonnegative integer such that for any thickening of $(X, \mathcal{O}_X(1), \mathcal{E})$ over $\Spec A$, there exists a nonempty open subset $U$ of $\Spec A$ and integers $N_s$ for every $s\in U$ such that the $\phi^{\infty}_{sp}(\mathcal{E})$-regularity of $\mathcal{E}_s^{(p_s^n)}$ with respect to $\mathcal{O}_{X_s}(1)$ are bounded for all closed point $s\in U$ and $n\geq N_s$. We say $\mathcal{F}$ is $F^{\infty}$-semipositive if $\phi^{\infty}_{sp}(\mathcal{F})=0$.
\end{definition}

We used in \cite{Sun} the term $F$-semipositivity for what we now call $F^{\infty}$-semipositivity. The reason for this change is also to distinguish the notion $F^t$-semipositivity from $F$-semipositivity more clearly. Relaxing the condition for $F^tGG$-ampleness, one obtains the notion of $F^tGG$-semipositive sheaves and weakly $F^tGG$-semipositive sheaves:

\begin{definition}\label{defFtGG}
Let $U$ be an open subscheme of a quasi-projective $k$-scheme $Y$. Let $\mathcal{O}_Y(1)$ be a very ample line bundle on $Y$ and $\mathcal{E}$ a coherent sheaf on $Y$. We say $\mathcal{E}$ is weakly $F^tGG$-semipositive on $U$, if for any thickening $(\widetilde{U}\subset\widetilde{Y}, \mathcal{O}_{\widetilde{Y}}(1), \widetilde{\mathcal{E}})$ over $\Spec A$, there exists an integer $m$ such that $\mathcal{E}_s^{(p_s^t)}\otimes\mathcal{O}_{X_s}(m)$ is globally generated on $U_s$ for almost all $s\in \Spec A$. We say $\mathcal{E}$ is weakly $F^tGG$-semipositive if $\mathcal{E}$ is weakly $F^tGG$-semipositive on some dense open subscheme. $\mathcal{E}$ is called $F^tGG$-semipositive if $\mathcal{E}$ is weakly $F^tGG$-semipositive on $Y$.
\end{definition}


The proposition below is an analog of \cite[Cor. 3.9]{Ara1} and \cite[Prop. 4.4]{Sun}.
\begin{proposition}
 The above definitions are independent of the choice of very ample line bundles.
\end{proposition}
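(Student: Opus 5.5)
Let $\mathcal{O}_X(1)$ and $\mathcal{O}_X(1)'$ be two very ample line bundles on $X$. We must show three things. First, the $F^t$-semipositivity $\phi^t_{sp}(\mathcal{E})$ does not depend on the choice. Second, weak $F^tGG$-semipositivity on an open $U$ does not depend on the choice. Third, by consequence, neither do weak $F^tGG$-semipositivity and $F^tGG$-semipositivity. The key point is that each very ample bundle is dominated by a multiple of the other, and that the relevant bounds can be made uniform in $s$.

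Since $\mathcal{O}_X(1)$ is ample, we may fix $a>0$ such that $\mathcal{M}:=\mathcal{O}_X(a)\otimes\mathcal{O}_X(1)'^{-1}$ is globally generated (indeed very ample). Choose a thickening of $(X,\mathcal{O}_X(1),\mathcal{O}_X(1)',\mathcal{E},\mathcal{M})$ together with the surjection $V\otimes\mathcal{O}_X\to\mathcal{M}$. By Lemma \ref{lemma2.1} this thickening refines any given thickenings. For almost all $s$, the sheaf $\mathcal{M}_s$ is then globally generated and $\mathcal{O}_{X_s}(a)=\mathcal{O}_{X_s}(1)'\otimes\mathcal{M}_s$.

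\textbf{GG case.} Suppose $\mathcal{E}_s^{(p_s^t)}\otimes\mathcal{O}_{X_s}(1)'^{m}$ is globally generated on $U_s$ for almost all $s$. Tensoring with the globally generated sheaf $\mathcal{M}_s^{m}$ shows that $\mathcal{E}_s^{(p_s^t)}\otimes\mathcal{O}_{X_s}(am)$ is globally generated on $U_s$. Here we use that a tensor product of a sheaf generated on $U_s$ with a globally generated sheaf is generated on $U_s$. By symmetry this proves the GG statements.

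\textbf{Regularity case.} By Remark \ref{rm3.6}, $\phi^t_{sp}(\mathcal{E})\le N$ with respect to $\mathcal{O}_X(1)'$ means there is an $m$ such that $H^j(\mathcal{E}_s^{(p_s^t)}\otimes\mathcal{O}_{X_s}(1)'^{i})=0$ for all $i\ge m$, $j>N$ and almost all $s$. We want the same vanishing with $\mathcal{O}_{X_s}(1)$. I would proceed in two steps.

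\emph{Step 1.} Resolve $\mathcal{O}_X(b)$, for $0\le b<a$, by sums of powers of $\mathcal{O}_X(1)'$, using Serre as in Lemma \ref{lemma2.6}. This means building exact sequences
\[0\to\mathcal{K}_{d+1}\to W_d\otimes\mathcal{O}_X(1)'^{-n_d}\otimes\mathcal{O}_X(b)\to\cdots\to W_0\otimes\mathcal{O}_X(1)'^{-n_0}\otimes\mathcal{O}_X(b)\to\mathcal{O}_X(b)\to0\]
written instead in the more useful form: a left resolution of the line bundle $\mathcal{O}_X(b)$ by direct sums $\bigoplus\mathcal{O}_X(1)'^{c}$, of length $\dim X+1$, with locally free terms. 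Thicken these sequences as well. Tensoring with $\mathcal{E}_s^{(p_s^t)}\otimes\mathcal{O}_{X_s}(aq)$ and using $\mathcal{O}(aq)=\mathcal{O}(1)'^{q}\otimes\mathcal{M}^q$ is awkward, since $\mathcal{M}$ is not a power of $\mathcal{O}(1)'$. The cleaner route is therefore to choose $a$ with $\mathcal{O}_X(a)\cong\mathcal{O}_X(1)'^{a'}$ only when the Picard group allows it, which fails in general.

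\emph{Step 2 (instead).} Use Proposition \ref{tensor-CM} uniformly in $s$. Write the $u$-regularity with respect to $\mathcal{O}_X(1)'$. The sheaf $\mathcal{G}_s:=\mathcal{O}_{X_s}(b)\otimes\mathcal{M}_s^{q}$ has bounded $0$-regularity with respect to $\mathcal{O}_{X_s}(1)'$ for fixed $q,b$, uniformly in $s$ by semicontinuity. Then $\mathcal{E}_s^{(p_s^t)}(aq+b)=\mathcal{E}_s^{(p_s^t)}\otimes\mathcal{O}(1)'^{q}\otimes\mathcal{G}_s$ has bounded $N$-regularity. The problem is that $q$ grows. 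To handle this, I show that $\reg'_0(\mathcal{M}_s^q)\le 0$ for all $q$, since globally generated powers of a globally generated bundle have regularity bounded by $\dim X$ uniformly. More precisely, the Koszul complex of $V\otimes\mathcal{O}\to\mathcal{M}$ gives a resolution of $\mathcal{M}^q$ whose terms are trivial bundles twisted by $\mathcal{M}^{q-k}$. I would rather bound it via $\mathcal{M}=\mathcal{O}(a)\otimes\mathcal{O}(1)'^{-1}$ with $\mathcal{O}(1)$ ample, choosing $a$ large so that $\mathcal{M}$ is $0$-regular with respect to $\mathcal{O}(1)'$ and $\mathcal{O}_X$ is too. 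In that case Mumford's theorem shows tensor powers of $0$-regular bundles stay $0$-regular when $\mathcal{O}_X$ is $0$-regular; in general Proposition \ref{tensor-CM} gives a constant loss $(\dim X)(R-1)$ per tensor factor.

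This uniform control of $\reg'(\mathcal{M}_s^q)$ in $q$ is the main obstacle. I would first try to arrange $R=1$ after replacing $\mathcal{O}(1)'$ by a high power, which is harmless because the definition only concerns vanishing for large twists. Then iterating Proposition \ref{tensor-CM} loses nothing, and the vanishing for all large $aq+b$ follows, giving $\phi^t_{sp}$ with respect to $\mathcal{O}(1)$ at most $N$. By symmetry, equality holds.
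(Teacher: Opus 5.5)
Your argument for the $F^tGG$ notions is fine. After enlarging $m$ to be nonnegative, the identity $\mathcal{O}_{X_s}(am)=\mathcal{O}_{X_s}(1)'^{\,m}\otimes\mathcal{M}_s^{m}$ with $\mathcal{M}_s$ globally generated transfers generation on $U_s$, and that is all that case needs.

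The regularity case has a genuine gap. You aim at vanishing for \emph{all} twists $\mathcal{O}_{X_s}(n)$, $n\ge m$, on a single open set of $s$. That forces a bound on $\reg'_0(\mathcal{M}_s^q)$ uniform in both $q$ and $s$, and the device you propose for it fails.

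Arranging $R=1$ means $H^i(X,\mathcal{O}(1)'^{\,1-i})=0$ for $1\le i\le\dim X$. In particular it needs $H^1(X,\mathcal{O}_X)=0$, so it is impossible for every very ample bundle on, e.g., an abelian surface. Passing to a high power $\mathcal{O}(1)'^{\,k}$ makes things worse when $d=\dim X\ge 2$: $H^d(X,\mathcal{O}(1)'^{\,k(1-d)})$ is dual to $H^0(X,\omega^\circ_X\otimes\mathcal{O}(1)'^{\,k(d-1)})$, which is nonzero for $k\gg0$. Already on $\mathbb{P}^2$ with $\mathcal{O}(1)'=\mathcal{O}(3)$ one has $H^2(\mathcal{O}(-3))\neq 0$, so $R=2$.

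With $R\ge2$ and $d\ge2$, the loss term $(\dim X-1)(R-1)$ in Proposition \ref{tensor-CM} is positive, so iterating it $q$ times gives no uniform bound. Your alternative claim, that powers of a globally generated bundle have regularity bounded by $\dim X$, is not justified either. As written, the argument does not close.

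The missing observation is that no uniformity in $q$ is needed. Definition \ref{defFt} only asks for bounded $N$-regularity: a single $m_1$ with $H^j(X_s,\mathcal{E}_s^{(p_s^t)}\otimes\mathcal{O}_{X_s}(m_1-j))=0$ for $N<j\le d$ and almost all $s$. This involves finitely many fixed line bundles. Lemma \ref{pCM} then supplies all larger twists, which is exactly why Remark \ref{rm3.6} holds.

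For these finitely many bundles, your abandoned Step~1 works:
\begin{enumerate}
\item Take $m_1$ so large that each $\mathcal{O}_X(m_1-j)$, $1\le j\le d$, has very negative $0$-regularity with respect to $\mathcal{O}(1)'$.
\item Resolve each by Lemma \ref{lemma3.3}, with terms $V_i\otimes\mathcal{O}(1)'^{\,c_i}$ whose exponents all exceed the vanishing threshold for $\mathcal{E}_s^{(p_s^t)}$.
\item Thicken, tensor with $\mathcal{E}_s^{(p_s^t)}$, and chase the cohomology.
\end{enumerate}
This is precisely the paper's proof. Alternatively, one application of Proposition \ref{tensor-CM} to each of the finitely many twists, together with semicontinuity, also suffices.
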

\begin{proof}
Let $\mathcal{E}$ be a coherent sheaf on $X$. Let $\mathcal{L}_1$ and $\mathcal{L}_2$ be two very ample line bundles on $X$. We write $\phi^{t}_{sp, i}(\mathcal{E})$ for the $F^t$-semipositivity of $\mathcal{E}$ with respect to $\mathcal{L}_i$, where $i=1$ or $2$. Let $R_0$ be the $0$-regularity of  $\mathcal{O}_X$ with respect to $\mathcal{L}_2$ and $d=\dim X$.

Choose a thickening of $(X, \mathcal{L}_1, \mathcal{L}_2, \mathcal{E})$ over $\Spec A$. By the definition of $\phi^{t}_{sp, 2}(\mathcal{E})$, there exists an integer $m_2$ such that $$H^i(X_s, \mathcal{E}_s^{(p_s^t)}\otimes\mathcal{L}^{m-i}_{2, s})=0$$ for any $m\geq m_2$, $i>\phi^{t}_{sp, 2}(\mathcal{E})$ and almost all $s$. By Serre's vanishing theorem,
we can take an integer $m_1$ such that the $0$-regularity of $\mathcal{L}^{m}_1$ with respect to $\mathcal{L}_2$ is less than $-m_2-dR$ for any $m\geq m_1-d$, where $R=\max\{1, R_0\}$. By semicontinuity and Lemma \ref{lemma3.3}, for any $1\leq j\leq d$ and general $s\in\Spec A$ we obtain an exact sequence $$0\rightarrow\mathcal{E}_{d+1,s}\rightarrow\mathcal{E}_{d, s}\rightarrow\cdots\rightarrow\mathcal{E}_{0,s}\rightarrow\mathcal{L}^{m_1-j}_{1, s}\rightarrow0,$$
where $\mathcal{E}_{i, s}=V_{i}\otimes\mathcal{L}^{m_2+dR-iR}_{2, s}$ for $i\leq d$. Tensoring this by $\mathcal{E}_s^{(p_s^t)}$ yields an exact complex
$$\cdots\rightarrow\mathcal{E}_{1, s}\otimes\mathcal{E}_s^{(p_s^t)}\rightarrow\mathcal{E}_{0, s}\otimes\mathcal{E}_s^{(p_s^t)}
\rightarrow\mathcal{L}_{1, s}^{m_1-j}\otimes\mathcal{E}_s^{(p_s^t)}\rightarrow0$$ for almost all $s$.
By chasing through this exact complex, one obtains $$H^j(X_s, \mathcal{E}_s^{(p_s^t)}\otimes\mathcal{L}^{m_1-j}_{1, s})=0$$ for any $j>\phi^{t}_{sp, 2}(\mathcal{E})$. This implies that $\phi^{t}_{sp, 1}(\mathcal{E})\leq\phi^{t}_{sp, 2}(\mathcal{E})$. Similarly, one can show that $\phi^{t}_{sp, 2}(\mathcal{E})\leq\phi^{t}_{sp, 1}(\mathcal{E})$. Therefore $\phi^{t}_{sp, 2}(\mathcal{E})=\phi^{t}_{sp, 1}(\mathcal{E})$. This gives the conclusion for Definition \ref{defFt}.

One can prove the conclusion for Definition \ref{defFtGG} by a similar method.
\end{proof}

From the definitions above, one immediately obtains that:
\begin{lemma}\label{lemma3.8}
Let $\mathcal{E}$ be a coherent sheaf on $X$. Then we have
\begin{enumerate}
  \item $\phi^t_{sp}(\mathcal{E})\leq \phi^t_{a}(\mathcal{E})$;
  \item $\mathcal{E}$ is $F^t$-ample $\Rightarrow$ $\mathcal{E}$ is $F^t$-semipositive $\Rightarrow$ $\mathcal{E}$ is $F^tGG$-semipositive;
  \item $\mathcal{E}$ is $F^tGG$-ample $\Rightarrow$ $\mathcal{E}$ is $F^tGG$-semipositive;
  \item $\mathcal{E}$ is $F^tGG$-semipositive $\Rightarrow$ $\mathcal{E}$ is weakly $F^tGG$-semipositive;
  \item $\mathcal{E}$ is (weakly) $F^tGG$-semipositive $\Rightarrow$ so is any quotient sheaf of $\mathcal{E}$.
\end{enumerate}
\end{lemma}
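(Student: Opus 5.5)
The plan is to check the five items one at a time, unwinding Definitions \ref{def2.2}, \ref{def2.7}, \ref{defFt} and \ref{defFtGG}. The two tools are Remark \ref{rm3.6}, which rewrites $F^t$-semipositivity as a uniform vanishing statement, and Lemma \ref{lemma3.3}, which says regular sheaves are globally generated. Throughout, fix a very ample $\mathcal{O}_X(1)$ and a thickening of $(X,\mathcal{O}_X(1),\mathcal{E})$ over $\Spec A$.

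For (1), set $N=\phi^t_a(\mathcal{E})$. Lemma \ref{lemma2.6} gives $H^j(X_s,\mathcal{E}_s^{(p_s^t)}(m))=0$ for all $j>N$ and almost all $s$, where the open set may depend on $m$. Remark \ref{rm3.6} only needs vanishing for $m$ in a bounded range. Indeed, the $N$-regularity condition $H^j(\mathcal{E}_s^{(p_s^t)}(m_0-j))=0$ for $j>N$ involves only the finitely many twists $m_0-j$ with $N<j\le\dim X$. Intersecting the finitely many open sets therefore makes $\mathcal{E}_s^{(p_s^t)}$ $(m_0,N)$-regular for almost all $s$, and Lemma \ref{pCM} propagates this to all larger twists. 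Hence $\phi^t_{sp}(\mathcal{E})\le N$.

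For (2), the first implication is the case $N=0$ of (1). For the second, suppose $\mathcal{E}$ is $F^t$-semipositive. Then there is $m$ such that $\mathcal{E}_s^{(p_s^t)}$ is $m$-regular for almost all $s$, so $\mathcal{E}_s^{(p_s^t)}(m)$ is $0$-regular. By Lemma \ref{lemma3.3} it is globally generated, which is $F^tGG$-semipositivity. The one point to verify is that Lemma \ref{lemma3.3} applies on each fibre $X_s$ with the restricted very ample bundle. It does, since the statement is fibrewise over a field.

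For (3), take $\mathcal{F}=\mathcal{O}_X(m)$ in Definition \ref{def2.7} for any single $m$, for instance $m=0$. This gives global generation of $\mathcal{O}_{X_s}(m)\otimes\mathcal{E}_s^{(p_s^t)}$ for almost all $s$, as required.

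For (4), use $U=Y$ itself, which is a dense open subscheme.

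For (5), let $\mathcal{E}\to\mathcal{Q}\to 0$ be a quotient and thicken the surjection as well. By Lemma \ref{lemma2.1}, after shrinking $\Spec A$ the map $\mathcal{E}_s\to\mathcal{Q}_s$ remains surjective. The Frobenius pullback $(F_s^t)^*$ is right exact, so $\mathcal{E}_s^{(p_s^t)}(m)\to\mathcal{Q}_s^{(p_s^t)}(m)$ is also surjective. Global generation on $U_s$ then passes to the quotient: compose the evaluation map of $\mathcal{E}_s^{(p_s^t)}(m)$ with this surjection and note that the image of global sections lies in $H^0$ of the quotient.

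The only mildly delicate step is (1), specifically making the open set uniform in the twist. Restricting to the finitely many relevant twists and then applying Lemma \ref{pCM} resolves it.
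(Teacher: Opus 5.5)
Your proposal is correct and matches the paper, which gives no separate proof and only asserts that the lemma follows immediately from the definitions. Your item-by-item unwinding is exactly what ``immediately'' amounts to: finitely many twists plus Lemma~\ref{pCM} for (1), Mumford regularity via Lemma~\ref{lemma3.3} for the second implication in (2), the trivial test bundle for (3), and right exactness of $(F_s^t)^*$ for (5). One notational slip: in Definition~\ref{def2.7} the tested sheaf is $\mathcal{F}$ and the locally free test sheaf is $\mathcal{E}$, so in (3) you mean taking the test sheaf to be $\mathcal{O}_X(m)$.
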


\begin{lemma}\label{lemma3.10}
Let $K$ be an extension of $k$ and $\mathcal{E}$ a coherent sheaf on $X$. Then we have $\phi^t_{sp}(\mathcal{E})=\phi^t_{sp}(\mathcal{E}\otimes_kK)$.
\end{lemma}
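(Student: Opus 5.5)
I will follow the argument of Lemma \ref{lemma2.8} essentially verbatim, using the reformulation of $F^t$-semipositivity in Remark \ref{rm3.6}. Let $\mathcal{O}_X(1)$ be a very ample line bundle on $X$ and let $f\colon X':=X\times_{\Spec k}\Spec K\rightarrow X$ be the base change. Then $f^*\mathcal{O}_X(1)$ is very ample on $X'$. Since $\phi^t_{sp}$ does not depend on the choice of very ample line bundle (by the preceding proposition), I may compute $\phi^t_{sp}(\mathcal{E}\otimes_kK)$ with respect to $f^*\mathcal{O}_X(1)$.

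First I fix a thickening $(\widetilde{X}, \mathcal{O}_{\widetilde{X}}(1), \widetilde{\mathcal{E}})\rightarrow\Spec A$ of $(X,\mathcal{O}_X(1),\mathcal{E})$. Since $A\subset k\subset K$, the same diagram over $\Spec A$ is also a thickening of $(X', f^*\mathcal{O}_X(1), f^*\mathcal{E})$. The fibers $X_s$, the sheaves $\mathcal{E}_s^{(p_s^t)}$ and the twists $\mathcal{O}_{X_s}(i)$ over closed points $s\in\Spec A$ are literally the same for both. By Remark \ref{rm3.6}, the condition $\phi^t_{sp}\leq N$ amounts to the existence of $m$ with $H^j(X_s,\mathcal{E}_s^{(p_s^t)}(i))=0$ for all $i\geq m$, $j>N$ and almost all $s$, for one given thickening. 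This condition is therefore identical for $\mathcal{E}$ and for $\mathcal{E}\otimes_kK$ when tested on this common thickening.

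The one point needing care is that Remark \ref{rm3.6} lets me test on a single thickening, whereas Definition \ref{defFt} quantifies over all thickenings. A thickening of the data over $K$ need not come from $k$, since its ring may satisfy $A'\subset K$ but $A'\not\subset k$. I will handle this with Lemma \ref{lemma2.1}: any thickening of $(X',\ldots)$ and the thickening coming from $k$ have a common refinement. The vanishing condition for almost all $s$ passes along refinements, because $\Spec A_0\rightarrow\Spec A$ is dominant with nonempty open preimages and cohomology commutes with the field extensions $k_s\subset k_{s_0}$ (flat base change, as in Remark \ref{rm2.5}). So the condition on one thickening is equivalent to the condition on every thickening.

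With that in place, the inequality $\phi^t_{sp}(\mathcal{E})\leq N$ holds if and only if $\phi^t_{sp}(\mathcal{E}\otimes_kK)\leq N$, and hence the two invariants are equal.
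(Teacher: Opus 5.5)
Your proposal is correct and is essentially the paper's argument: the paper proves this by saying it is similar to Lemma \ref{lemma2.8}, namely that a thickening over $A\subset k$ is simultaneously a thickening of the base-changed data over $K$, and Remark \ref{rm3.6} lets one test the vanishing condition on that single thickening. Your extra paragraph on refinements is more than the paper supplies. Note, though, that passing from a common refinement back down to an arbitrary thickening over $K$ needs two further facts that your stated justification does not cover: the image of a nonempty open set under $\Spec A_0\rightarrow\Spec A'$ contains a nonempty open set (Chevalley), and closed points lift to closed points. Dominance alone does not give this, but it is exactly what Remark \ref{rm3.6} already packages.
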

\begin{proof}
The proof is similar to that of Lemma \ref{lemma2.8}.
\end{proof}

\begin{definition}
A locally free sheaf $\mathcal{E}$ on $X$ is nef if for any finite morphism $f:C\rightarrow X$ from a smooth irreducible projective curve $C$ to $X$, and any quotient line bundle $\mathcal{Q}$ of $f^*\mathcal{E}$, one has $\deg \mathcal{Q}\geq0$. We say $\mathcal{E}$ is arithmetically nef if there is a thickening $(\widetilde{X}, \widetilde{\mathcal{E}})$ over $\Spec A$ such that $\mathcal{E}_s$ is nef for every closed point $s\in\Spec A$.
\end{definition}

Viehweg introduced the notion of weak positivity as a generalization of nefness of locally free sheaves. We use the following modified version given by Koll\'ar \cite[Notation (vii)]{Kol3}.

\begin{definition}\label{wp}
Assume that $X$ is integral. Let $U$ be an open subscheme of $X$. A coherent sheaf $\mathcal{G}$ on $X$ is said to be weakly positive on $U$ if for an ample divisor $H$ and for a given integer $\alpha>0$ there exist some $\beta>0$ such that $(\Sym^{\alpha\beta}\mathcal{G})^{**}\otimes\mathcal{O}_Y(\beta H)$ is globally generated on $U$. We call $\mathcal{G}$ is weakly positive if it is weakly positive on a certain dense open subscheme of $X$.
\end{definition}

\begin{remark}
Definition \ref{wp} is independent of the choice of $H$ (e.g., \cite[Lemma 2.14]{Vie95}).
\end{remark}

\begin{proposition}\label{nef}
Let $\mathcal{E}$ be a coherent sheaf on $X$.
\begin{enumerate}
  \item If $\mathcal{E}$ is $F^tGG$-semipositive and locally free, then it is nef.
  \item If $\mathcal{E}$ is an arithmetically nef line bundle, then it is $F^t$-semipositive.
  \item If $\mathcal{E}$ is weakly $F^tGG$-semipositive and $X$ is smooth, then $\mathcal{E}$ is weakly positive.
\end{enumerate}
\end{proposition}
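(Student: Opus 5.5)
For (1), I will reduce to degree computations on curves in positive characteristic. Fix a finite morphism $f\colon C\to X$ from a smooth projective curve and a quotient line bundle $f^*\mathcal{E}\to\mathcal{Q}$. Then I choose a thickening of $(X,\mathcal{O}_X(1),\mathcal{E},C,f,\mathcal{Q})$ over $\Spec A$. By $F^tGG$-semipositivity there is an $m$ such that $\mathcal{E}_s^{(p_s^t)}(m)$ is globally generated for almost all $s$. Pulling back to $C_s$ and using that Frobenius commutes with morphisms ($F^t\circ f_s=f_s\circ F^t$), I get that $f_s^*\mathcal{E}_s^{(p_s^t)}\otimes f_s^*\mathcal{O}(m)=(F^t_{C_s})^*f_s^*\mathcal{E}_s\otimes f_s^*\mathcal{O}(m)$ is globally generated. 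Its quotient $\mathcal{Q}_s^{p_s^t}\otimes f_s^*\mathcal{O}(m)$ is then globally generated, so it has nonnegative degree. This gives
\[
p_s^t\deg\mathcal{Q}+m\deg f^*\mathcal{O}_X(1)\ge 0,
\]
where degrees are constant in the family. Letting $p_s\to\infty$ forces $\deg\mathcal{Q}\ge0$, and $m$ is independent of $s$, which is the key point. The only thing to check is that the thickening of $\mathcal{Q}$ stays a quotient line bundle on almost all fibers, which is standard.

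For (2), I plan to use Fujita's vanishing theorem in positive characteristic, in the form of Keeler's uniform version (\cite{Ke}). For a nef line bundle $\mathcal{L}_s$ and coherent $\mathcal{F}$, there is an $m_0$ with $H^i(\mathcal{F}(m)\otimes \mathcal{N})=0$ for all $m\ge m_0$, all $i>0$, and all nef $\mathcal{N}$. Here $m_0$ depends only on bounded data in a flat family, so it can be taken uniform in $s$ for a thickening. Applying this with $\mathcal{N}=\mathcal{E}_s^{p_s^t}$, which is nef since $\mathcal{E}_s$ is nef, gives $H^i(\mathcal{E}_s^{(p_s^t)}(m))=0$ for $m\ge m_0$. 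By Remark~\ref{rm3.6} this is exactly $\phi^t_{sp}=0$. The main obstacle is the uniformity of $m_0$ in $s$. I would handle it via Keeler's relative Fujita vanishing over $\Spec A$ (\cite[Theorem 1.5]{Ke}), whose bound depends on regularity data that is constant on an open set of $\Spec A$.

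For (3), take the dense open $U$ and the integer $m$ with $\mathcal{E}_s^{(p_s^t)}(m)$ globally generated on $U_s$. My plan is to show weak positivity of $\mathcal{E}_s$ on $U_s$ in characteristic $p$ and then lift the conclusion back to characteristic zero by semicontinuity. Given $\alpha$, set $q=p_s^t$. Global generation on $U_s$ of $\Sym^{\alpha}(\mathcal{E}_s^{(q)})(\alpha m)$ follows from taking symmetric powers of generated sheaves. There is a natural map $\Sym^\alpha(F^{t*}\mathcal{E})=F^{t*}\Sym^\alpha\mathcal{E}\to\Sym^{\alpha q}\mathcal{E}$, but it goes the wrong direction for what I need. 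Instead I argue directly in characteristic zero using the general point. Weak positivity of $\mathcal{G}$ on a smooth $X$ is equivalent to the following: for every $\alpha$ there is $\beta$ with $\Sym^{\alpha\beta}\mathcal{G}\otimes\mathcal{O}(\beta H)$ generically generated. By Viehweg's criterion (\cite[Lemma 2.14–2.16]{Vie95}), this reduces to showing that, for every finite cover and every curve through a general point, quotients have nonnegative degree. I would then run the curve argument from (1) restricted to curves meeting $U$, where the generated quotients again yield $q\deg\mathcal{Q}+m\deg\ge0$. The hardest step here is matching Koll\'ar's definition with a curve-type criterion. As a fallback, I would use Viehweg's characterization that $\mathcal{G}$ is weakly positive if $\mathcal{G}(\epsilon H)$ is "generically nef" in his covering sense, combined with the uniform bound $m$ independent of $q$.
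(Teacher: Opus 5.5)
Your part (1) is exactly the paper's argument. Your part (2) matches what the paper does: it defers to Arapura's Prop.~B.1, i.e.\ Fujita vanishing made uniform over $\Spec A$. One clarification on the uniformity. Apply the relative Fujita theorem on $\widetilde{X}\to\Spec A$ to the line bundles $\widetilde{\mathcal{E}}^{\otimes p^t}$ on the total space. These are relatively nef because every closed fibre $\mathcal{E}_s$ is nef, and their restriction to $X_s$ is $\mathcal{E}_s^{(p_s^t)}$ when $p=p_s$. Then descend to the fibres by cohomology and base change. A bundle defined only on one fibre does not fit the relative statement.

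Part (3) has a genuine gap. You claim that, by Viehweg's criterion, weak positivity reduces to nonnegativity of degrees of quotients on (covers of) curves through a general point. This is not what \cite[Lemma 2.14--2.16]{Vie95} says, and no such criterion is available. Running the argument of (1) on curves meeting $U$ only shows that $\mathcal{E}$ is nef on those curves. That is a generic-nefness statement, and it does not produce the sections of $(\Sym^{\alpha\beta}\mathcal{E})^{**}(\beta H)$ that the definition requires. Already for line bundles, passing from such a curve condition to pseudo-effectivity needs the deep duality with movable curves, and for higher rank no equivalence of this kind is known. Your fallback is not an existing theorem either.

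You correctly saw that $F^{t*}\Sym^\alpha\mathcal{E}\to\Sym^{\alpha p^t}\mathcal{E}$ goes the wrong way. The missing idea is to return from $F^{t*}\mathcal{G}$ to $\mathcal{G}$ by pushing forward along $F^t$ and using the trace map. The steps are:
\begin{enumerate}
\item Reduce to $\mathcal{E}$ torsion free and locally free on $U$, and set $\mathcal{G}=(\Sym^\alpha\mathcal{E})^{**}$. The hypothesis gives a map $\bigoplus\mathcal{O}_{X_s}\to\mathcal{G}_s^{(p_s^t)}(\alpha m_0H_s)$ that is surjective on $U_s$.
\item Twist by $\mathcal{O}_{X_s}(K_{X_s}+p_s^t(nH_s-K_{X_s})-\alpha m_0H_s)$ and apply $F^t_{s,*}$.
\item Use the projection formula together with the surjective trace $F^t_{s,*}\omega_{X_s}^{1-p_s^t}\to\mathcal{O}_{X_s}$; this is where smoothness is used. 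You obtain a map $\bigoplus F^t_{s,*}\mathcal{O}_{X_s}(K_{X_s}+p_s^t(nH_s-K_{X_s})-\alpha m_0H_s)\to\mathcal{G}_s(nH_s)$, surjective on $U_s$.
\item Choose $n$ so that $(n-d-1)H-K_X$ is ample. Uniform Serre vanishing then makes the source $0$-regular, hence globally generated, for $p_s\gg0$.
\item Since $n$ is independent of $\alpha$ and $s$, semicontinuity shows that $(\Sym^\alpha\mathcal{E})^{**}(nH)$ is generated on $U$ for every $\alpha$. Taking $\beta=n$ gives weak positivity.
\end{enumerate}
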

\begin{proof}
(1) Let $f:C\rightarrow X$ be a finite morphism from a smooth irreducible projective curve $C$ to $X$ and $\mathcal{Q}$ be a quotient line bundle of $f^*\mathcal{E}$. Let $\mathcal{O}_X(1)$ be a very ample line bundle. Take a thickening $(\widetilde{f}:\widetilde{C}\rightarrow\widetilde{X}, \mathcal{O}_{\widetilde{X}}(1),\widetilde{f}^*\widetilde{\mathcal{E}}\rightarrow\widetilde{\mathcal{Q}})$ over $\Spec A$. If $\mathcal{E}$ is $F^tGG$-semipositive and locally free, there exists an integer $m_0$ such that $\mathcal{E}_s^{(p_s^t)}(m_0)$ is globally generated for almost all $s$. So is $\mathcal{Q}_s^{(p_s^t)}(m_0)$. This implies
\begin{eqnarray*}
  \deg(\mathcal{Q}_s^{(p_s^t)}(m_0))=p_s^t\deg \mathcal{Q}+m_0\deg f^*\mathcal{O}_{X}(1)\geq0
\end{eqnarray*}
for almost all $s$. Taking $p_s\rightarrow\infty$, one infers $\deg\mathcal{Q}\geq0$. Hence $\mathcal{E}$ is nef.

(2) The proof of the second statement is the same as that of \cite[Prop. B.1]{Ara1}.

(3) The proof of the third statement is similar to that of \cite[Prop. 4.7]{Ejiri}. Since a torsion sheaf is always generically globally generated, we can assume $\mathcal{E}$ is torsion free. Let $\dim X=d$. Let $H$ be a very ample divisor on $X$. By definition, there is an integer $m_0$, an open subscheme $U$ of $X$ and a thickening $(\widetilde{U}\subset\widetilde{X}, \widetilde{H}, \widetilde{\mathcal{E}})$ over $\Spec A$ such that $\mathcal{E}$ is locally free on $U$ and $\mathcal{E}_s^{(p_s^t)}(m_0H_s)$ is globally generated on $U_s$ for almost all $s$. For a given positive integer $\alpha$, set $\mathcal{G}:=(\Sym^{\alpha}\mathcal{E})^{**}$. One obtains a morphism
$$\bigoplus\mathcal{O}_{X_s}\rightarrow \mathcal{G}_s^{(p_s^t)}(\alpha m_0H_s)$$
which is surjective on $U_s$ for almost all $s$.

Choose an integer $n$ such that $B:=(n-d-1)H-K_X$ is ample. Then we have the following surjective morphism on $U_s$
\begin{eqnarray*}
 && \bigoplus F_{s, *}^t\mathcal{O}_{X_s}(K_{X_s}+p_s^t(nH_s-K_{X_s})-\alpha m_0H_s)\\
  &\rightarrow &  F_{s, *}^t\mathcal{G}_s^{(p_s^t)}(K_{X_s}+p_s^t(nH_s-K_{X_s}))\\
  &\cong&\mathcal{G}_s\otimes F_{s, *}^t\mathcal{O}_{X_s}(K_{X_s}+p_s^t(nH_s-K_{X_s}))\\
   &\cong&\mathcal{G}_s(nH_s)\otimes F_{s, *}^t\mathcal{O}_{X_s}((1-p^t_s)K_{X_s})\\
   &\rightarrow &\mathcal{G}_s(nH_s),
\end{eqnarray*}
where the isomorphism in fourth and fifth line is induced by the projection formula, and the last morphism is induced by the trace map.

On the other hand, by Serre vanishing, we have
\begin{eqnarray*}
  &&H^i(X_s, \mathcal{O}_{X_s}(-iH_s)\otimes F_{s, *}^t\mathcal{O}_{X_s}(K_{X_s}+p_s^t(nH_s-K_{X_s})-\alpha m_0H_s)) \\
   &\cong& H^i(X_s, F_{s, *}^t\mathcal{O}_{X_s}(K_{X_s}+p_s^t(nH_s-K_{X_s}-iH_s)-\alpha m_0H_s))\\
   &\cong& H^i(X_s, F_{s, *}^t\mathcal{O}_{X_s}(K_{X_s}+p_s^t(B_s+(d+1-i)H_s)-\alpha m_0H_s))\\
   &=&0,
\end{eqnarray*}
for $i>0$ and $p_s\gg0$. Hence $F_{s, *}^t\mathcal{O}_{X_s}(K_{X_s}+p_s^t(nH_s-K_{X_s})-\alpha m_0H_s)$ is globally generated by Lemma \ref{lemma3.3}. This implies that $\mathcal{G}_s(nH_s)$ is globally generated on $U_s$ for almost all $s$. Thus $\mathcal{G}(nH)=(\Sym^{\alpha}\mathcal{E})^{**}(nH)$  is globally generated on $U$. This completes the proof.
\end{proof}

The following examples show that an ample vector bundle can not be weakly $F^tGG$-semipositive, and an $F^tGG$-semipositive sheaf can not be $F^t$-semipositive.
\begin{example}\label{eg3.16}
Like the construction in Example \ref{eg2.13}, there are ample vector bundles $\mathcal{E}_1$ and $\mathcal{E}_3$ on $\mathbb{P}^2$ having presentations of the following forms:
\begin{eqnarray*}
 &&0\rightarrow\mathcal{O}_{\mathbb{P}^2}(-d)^{\oplus2}\rightarrow\mathcal{O}_{\mathbb{P}^2}(-1)^{\oplus4}\rightarrow\mathcal{E}_1\rightarrow0;\\
 &&0\rightarrow\mathcal{O}_{\mathbb{P}^2}(-d)^{\oplus2}\rightarrow\mathcal{O}_{\mathbb{P}^2}^{\oplus4}\rightarrow\mathcal{E}_3\rightarrow0.
\end{eqnarray*}
One notices that $\mathcal{O}_{\mathbb{P}^2}^{\oplus4}$ is $F^tGG$-semipositive, so is $\mathcal{E}_3$. Choose a thickening of these two sequences over $\Spec A$. For a general closed point $s\in \Spec A$, we consider the $t$-iterated Frobenius pull backs of the restrictions of these exact sequences on the fiber $Y_s$. Let $m$ be a given positive integer. The long exact sequences for cohomology give $h^0(Y_s, \mathcal{E}^{(p_s^t)}_{1,s})=0$ and
$$h^1(Y_s, \mathcal{E}^{(p_s^t)}_{3,s}(m))=2h^2(\mathbb{P}^2, \mathcal{O}_{\mathbb{P}^2}(m-dp_s^t))=2h^0(\mathbb{P}^2, \mathcal{O}_{\mathbb{P}^2}(dp_s^t-3-m))>0$$ when $p_s\gg0$. Therefore $\mathcal{E}_1$ is not weakly $F^tGG$-semipositive, and $\mathcal{E}_3$ is not $F^t$-semipositive.
\end{example}

\begin{proposition}[$F^t$-semipositivity of pullbacks]\label{pullback}
Let $f: X\rightarrow Y$ be a morphism of projective schemes over $k$. Let $\mathcal{E}$ be a locally free sheaf on $Y$. Then we have $\phi^t_{sp}(f^*\mathcal{E})\leq \phi^t_{sp}(\mathcal{E})$.
\end{proposition}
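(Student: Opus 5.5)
Set $N=\phi^t_{sp}(\mathcal{E})$ and fix very ample line bundles $\mathcal{O}_Y(1)$ on $Y$ and $\mathcal{O}_X(1)$ on $X$. The main tool is the observation that Frobenius commutes with pullback: on fibers, $(F^t_{X_s})^*f_s^*\mathcal{E}_s\cong f_s^*(F^t_{Y_s})^*\mathcal{E}_s$. So $(f^*\mathcal{E})_s^{(p_s^t)}=f_s^*\mathcal{E}_s^{(p_s^t)}$, and it suffices to bound the $N$-regularity of $f_s^*\mathcal{G}_s$, where $\mathcal{G}_s:=\mathcal{E}_s^{(p_s^t)}$, uniformly in $s$.

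\emph{Step 1: resolve $\mathcal{G}_s$ by uniform split bundles.} By Remark \ref{rm3.6}, after thickening $(X,Y,f,\mathcal{O}_X(1),\mathcal{O}_Y(1),\mathcal{E})$ over $\Spec A$, there is $m_0$ with $H^j(Y_s,\mathcal{G}_s(i))=0$ for $i\ge m_0$, $j>N$ and almost all $s$. As in Lemma \ref{lemma3.3}, dualize the picture. Because $\mathcal{E}$ is locally free, $\mathcal{G}_s$ is locally free. The ample bundle on $X$ can be replaced by $f^*\mathcal{O}_Y(1)\otimes\mathcal{O}_X(1)$, so we may assume $\mathcal{O}_X(1)=f^*\mathcal{O}_Y(1)\otimes\mathcal{M}$ with $\mathcal{M}$ very ample; this is allowed by the independence of the very ample bundle proved above.

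Rather than resolving $\mathcal{G}_s$ directly (its $0$-regularity is not bounded), use a Čech/Leray argument. By the projection formula,
\[
H^j(X_s,f_s^*\mathcal{G}_s\otimes\mathcal{O}_{X_s}(m))
\]
is computed by the Leray spectral sequence with $E_2^{a,b}=H^a(Y_s,\mathcal{G}_s(m)\otimes R^bf_{s*}\mathcal{M}_s^m)$.

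\emph{Step 2: control the higher direct images.} Choose $m_1$ such that for $m\ge m_1$ we have $R^bf_*\mathcal{M}^m=0$ for $b>0$ (relative Serre vanishing). Also require that $f_*\mathcal{M}^m$ admits a left resolution of length $\dim Y$ by sums of $\mathcal{O}_Y(-c)$, with $c$ independent of $m$ after twisting. This can be arranged by cohomology and base change over the thickening, so that it holds on fibers for almost all $s$ simultaneously with $p_s$.

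It is cleaner to handle one $m$ at a time. For $j>N$ we need $H^j(Y_s,\mathcal{G}_s\otimes\mathcal{O}_{Y_s}(m)\otimes f_{s*}\mathcal{M}_s^m)=0$ for $m\gg0$ uniformly in $s$. Since $\mathcal{G}_s$ is locally free, resolve the coherent sheaf $\mathcal{O}_{Y}(m)\otimes f_*\mathcal{M}^m$ as in the proof of Lemma \ref{lemma2.6}:
\[
\cdots\to V_1\otimes\mathcal{O}_Y(a_1)\to V_0\otimes\mathcal{O}_Y(a_0)\to \mathcal{O}_Y(m)\otimes f_*\mathcal{M}^m\to0,
\]
with $a_i\ge m_0+i$ and length $\dim Y+1$ (truncating with a kernel term). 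Then tensor with $\mathcal{G}_s$ and chase cohomology, exactly as in the proof of the proposition above. The terms $H^{j+i}(\mathcal{G}_s(a_i))$ vanish for $j>N$. The leftover kernel term lives in degree above $\dim Y$, so its cohomology vanishes. This gives vanishing for $j>N$.

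\emph{Main obstacle.} The difficulty is making $a_i\ge m_0+i$ for all $m\gg0$ at once, which requires the $0$-regularity of $\mathcal{O}_Y(m)\otimes f_*\mathcal{M}^m$ to be at most $-m_0-(\dim Y)R$. I would handle this by choosing $\mathcal{M}$ more cleverly: replace $\mathcal{O}_X(1)$ by $f^*\mathcal{O}_Y(k)\otimes\mathcal{M}$ with $k$ large, so that the twist by $\mathcal{O}_Y(km)$ dominates. One needs Mumford regularity of $f_*\mathcal{M}^m$ to grow at most linearly in $m$ with slope smaller than $k$. This follows from Proposition \ref{tensor-CM} together with a bound on $\reg(f_*\mathcal{M}^m)$ that is linear in $m$, obtained from the graded algebra $\bigoplus_m f_*\mathcal{M}^m$ being finitely generated. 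Finally, since the bound is achieved on a fixed thickening, it transfers to almost all $s$.
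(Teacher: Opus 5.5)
Your route is essentially the paper's. Both arguments use $F_{Y_s}\circ f_s=f_s\circ F_{X_s}$ to identify $(f^*\mathcal{E})_s^{(p_s^t)}$ with $f_s^*\mathcal{G}_s$. Both kill the higher direct images of a relatively ample twist, and then use the projection formula and Leray to move the required vanishing to $Y_s$, where the hypothesis on $\mathcal{G}_s$ applies. The differences are cosmetic. The paper keeps $\mathcal{O}_X(1)$ fixed and adds an independent twist $f_s^*\mathcal{O}_{Y_s}(r)$. It bounds regularity on $Y_s$ by Proposition \ref{tensor-CM}, and then removes $f^*\mathcal{O}_Y(r_2)$ by a second application of Proposition \ref{tensor-CM} on $X$. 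You instead build the twist into the polarization $f^*\mathcal{O}_Y(k)\otimes\mathcal{M}$ and chase a split resolution on $Y$ by hand, which is just the proof of Proposition \ref{tensor-CM} unpacked.

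The step you flag as the main obstacle is not justified as written, for two reasons.

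First, you aim for vanishing for all $m\gg0$ at once. But each $m$ needs its own resolution of $\mathcal{O}_Y(km)\otimes f_*\mathcal{M}^m$, its own thickening, and its own generic base-change statements for $f_{s,*}\mathcal{M}_s^m$ and $R^bf_{s,*}\mathcal{M}_s^m$. Hence each $m$ comes with its own open set $U_m\subset\Spec A$, and nothing in your argument prevents $\bigcap_m U_m$ from containing no closed point.

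Second, a linear bound on $\reg_0(f_*\mathcal{M}^m)$ does not follow from finite generation of $\bigoplus_m f_*\mathcal{M}^m$. Finite generation only gives surjections onto $f_*\mathcal{M}^m$, and Castelnuovo--Mumford regularity is not controlled under quotients without control of the kernels.

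Both problems disappear once you use Lemma \ref{pCM}. Bounded $N$-regularity only requires $(M,N)$-regularity for one $M$, i.e.\ $H^j(X_s,f_s^*\mathcal{G}_s\otimes\mathcal{O}_{X_s}(M-j))=0$ for the finitely many $j$ with $N<j\le\dim X$.
\begin{itemize}
\item Fix $M=m_1+\dim X$ first. For the finitely many $m=M-j\ge m_1$, the numbers $\reg_0(f_*\mathcal{M}^m)$ are just constants.
\item Then choose $k$ large enough that $\reg_0(\mathcal{O}_Y(km)\otimes f_*\mathcal{M}^m)=\reg_0(f_*\mathcal{M}^m)-km\le -m_0-(\dim Y)R$ for all of them. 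Note that $m_0$ and $m_1$ do not depend on $k$.
\end{itemize}
Now only finitely many resolutions are thickened and finitely many open sets are intersected, and no growth estimate is needed.

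The growth estimate is in fact true: $\reg_0(f_*\mathcal{M}^m)\le\dim Y$ for $m\gg0$. This follows from Fujita vanishing applied to $\mathcal{M}^m\otimes f^*\mathcal{O}_Y(j)$, $j\ge0$, together with $R^{>0}f_*\mathcal{M}^m=0$. But you do not need it.

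With this repair your argument is complete. It also shows why the paper keeps the twist $f_s^*\mathcal{O}_{Y_s}(r)$ separate from the fixed twists $\mathcal{O}_{X_s}(m_0-i)$: that separation is exactly what avoids the issue you ran into.
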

\begin{proof}
Let $\mathcal{O}_X(1)$ (resp. $\mathcal{O}_Y(1)$) be a very ample line bundle on $X$ (resp. $Y$). Then there exists a positive integer $m_0$ such that
\begin{equation}\label{3.1}
 R^if_*(f^*\mathcal{E}\otimes\mathcal{O}_X(m-i))=R^if_*\mathcal{O}_X(m-i)\otimes\mathcal{E}=0
\end{equation}
for all $i>0$ and $m\geq m_0$. Choose a thickening
$(\widetilde{f}: \widetilde{X}\rightarrow \widetilde{Y}, \mathcal{O}_{\widetilde{X}}(1), \mathcal{O}_{\widetilde{Y}}(1), \widetilde{\mathcal{E}})$ over $\Spec A$. By semicontinuity one can find an integer $r_0$ such that $$r_0\geq\reg_0(f_{s,*}\mathcal{O}_{X_s}(m_0-i))$$ for $1\leq i\leq \dim X$ and almost all $s\in\Spec A$. From the definition of $\phi^t_{sp}(\mathcal{E})$, there exists an integer $r_1$ such that $\reg_u(\mathcal{E}_s^{(p_s^t)})\leq r_1$ for any $u\geq\phi^t_{sp}(\mathcal{E})$ and almost all $s$. Then by Proposition \ref{tensor-CM} we can take an integer $r_2$ such that
$$\reg_u(\mathcal{E}_s^{(p_s^t)}\otimes f_{s,*}\mathcal{O}_{X_s}(m_0-i))\leq r_2$$ for any $1\leq i\leq \dim X$, $u\geq\phi^t_{sp}(\mathcal{E})$ and almost all $s\in\Spec A$. Using the commutative diagram
$$\xymatrix{
  X_s  \ar[d]_{f_s} \ar[r]^{F^t_{X_s}}  & X_s \ar[d]^{f_s}            \\
 Y_s \ar[r]^{F^t_{Y_s}} & Y_s                       }$$
and (\ref{3.1}), one obtains
\begin{eqnarray*}
  &&H^i(X_s, (f_s^*\mathcal{E}_s)^{(p_s^t)}\otimes\mathcal{O}_{X_s}(m_0-i)\otimes f_s^*\mathcal{O}_{Y_s}(r-i)) \\
  &&=H^i(Y_s, \mathcal{E}_s^{(p_s^t)}\otimes f_{s, *}\mathcal{O}_{X_s}(m_0-i)\otimes\mathcal{O}_{Y_s}(r-i)) \\
 &&= 0
\end{eqnarray*}
for any $i>\phi^t_{sp}(\mathcal{E})$, $r\geq r_2$ and almost all $s$. In particular, $$H^i(X_s, (f_s^*\mathcal{E}_s)^{(p_s^t)}\otimes\mathcal{O}_{X_s}(m_0-i)\otimes f_s^*\mathcal{O}_{Y_s}(r))=0$$ when $r\geq r_2$. Thus
$\reg_u((f_s^*\mathcal{E}_s)^{(p_s^t)}\otimes f_s^*\mathcal{O}_{Y_s}(r_2))\leq m_0$ for $u\geq\phi^t_{sp}(\mathcal{E})$ and almost all $s$.
Using Proposition \ref{tensor-CM} and semicontinuity again, one obtains
\begin{eqnarray*}
  \reg_u((f_s^*\mathcal{E}_s)^{(p_s^t)}) &=& \reg_u((f_s^*\mathcal{E}_s)^{(p_s^t)}\otimes f_s^*\mathcal{O}_{Y_s}(r_2)\otimes f_s^*\mathcal{O}_{Y_s}(-r_2)) \\
   &\leq& m_0+\reg_0(f_s^*\mathcal{O}_{Y_s}(-r_2))+(\dim X-u-1)(R-1) \\
   &\leq& m_0+\reg_0(f^*\mathcal{O}_{Y}(-r_2))+ (\dim X-1)(R-1)
\end{eqnarray*}
for $u\geq\phi^t_{sp}(\mathcal{E})$ and almost all $s$, where $R=\max\{1, \reg_0(\mathcal{O}_X)\}$. This implies that $\reg_u((f_s^*\mathcal{E}_s)^{(p_s^t)})$ are bounded for any $u\geq\phi^t_{sp}(\mathcal{E})$ and almost all $s$. Hence the conclusion follows.
\end{proof}

\begin{proposition}[$F^t$-semipositivity of restrictions]\label{restriction}
Let $\mathcal{O}_X(1)$ be a very ample line bundle and $\mathcal{E}$ a coherent sheaf on $X$. Assume either that $\mathcal{E}$ is locally free or that $X$ is smooth and $k$ is infinite. Then for a general divisor $H$ in the linear system $|\mathcal{O}_X(1)|$ we have $$\mathcal{\phi}^t_{sp}(\mathcal{E}|_H)\leq\mathcal{\phi}^t_{sp}(\mathcal{E})\leq\mathcal{\phi}^t_{sp}(\mathcal{E}|_H)+1.$$
\end{proposition}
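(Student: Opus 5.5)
The plan is to use the standard restriction sequence for a general hyperplane section, compatible with Frobenius pullback. Choose $H\in|\mathcal{O}_X(1)|$ general, so that the sequence
$$0\rightarrow\mathcal{E}(-1)\rightarrow\mathcal{E}\rightarrow\mathcal{E}|_H\rightarrow0$$
is exact. If $\mathcal{E}$ is locally free this holds for any $H$. If $X$ is smooth and $k$ is infinite, take $H$ to contain no associated point of $\mathcal{E}$; then the first map is injective, and in fact $\mathcal{T}or_1(\mathcal{E},\mathcal{O}_H)=0$. Take a thickening of $(X,\mathcal{O}_X(1),H,\mathcal{E})$ and of this sequence over $\Spec A$. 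Then for almost all $s$, $H_s$ is again a divisor in $|\mathcal{O}_{X_s}(1)|$ and the sequence stays exact on $X_s$.

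The key point is to apply $(F_s^t)^*$ while preserving exactness. On $X_s$, $(F^t_s)^*\mathcal{O}_{X_s}(-H_s)=\mathcal{O}_{X_s}(-p_s^tH_s)$, and $(F_s^t)^*(\mathcal{E}_s|_{H_s})=\mathcal{E}_s^{(p_s^t)}|_{p_s^tH_s}$. In the locally free case the pulled-back sequence
$$0\rightarrow\mathcal{E}^{(p_s^t)}_s(-p_s^t)\rightarrow\mathcal{E}^{(p_s^t)}_s\rightarrow\mathcal{E}^{(p_s^t)}_s|_{p_s^tH_s}\rightarrow0$$
is exact. When $X$ is smooth, $F_s$ is flat by Kunz, so exactness is automatic. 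This is a problem, though: it relates $\mathcal{E}_s^{(p_s^t)}$ to its restriction to the thickened divisor $p_s^tH_s$, not to $H_s$. I would instead work directly with the sequence $0\rightarrow\mathcal{E}_s^{(p_s^t)}(-1)\rightarrow\mathcal{E}_s^{(p_s^t)}\rightarrow\mathcal{E}_s^{(p_s^t)}|_{H_s}\rightarrow0$, obtained by tensoring the hyperplane sequence of $H_s$ with $\mathcal{E}_s^{(p_s^t)}$. Its left exactness needs $\mathcal{T}or_1(\mathcal{E}_s^{(p_s^t)},\mathcal{O}_{H_s})=0$. For locally free $\mathcal{E}$ this is clear. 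In the smooth case, flatness of $F_s$ gives $\mathcal{T}or_1(\mathcal{E}_s^{(p_s^t)},\mathcal{O}_{H_s})\subset\mathcal{T}or_1(\mathcal{E}_s^{(p_s^t)},\mathcal{O}_{p_s^tH_s})=(F_s^t)^*\mathcal{T}or_1(\mathcal{E}_s,\mathcal{O}_{H_s})=0$. Here one uses that $\mathcal{E}^{(p_s^t)}_s$ is torsion-free along $H_s$ whenever $\mathcal{E}_s$ is, again by flatness. We also need $(\mathcal{E}|_H)_s^{(p_s^t)}=\mathcal{E}_s^{(p_s^t)}|_{H_s}$, where the Frobenius on $H_s$ is the restriction of that on $X_s$. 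This holds because absolute Frobenius commutes with closed immersions. The regularity of $\mathcal{E}|_{H}$ is computed with $\mathcal{O}_H(1)$, which is very ample.

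Both inequalities then follow by long exact sequences, using Remark \ref{rm3.6}. Write $\phi=\phi^t_{sp}(\mathcal{E})$. Suppose $H^j(\mathcal{E}_s^{(p_s^t)}(i))=0$ for all $i\ge m$, $j>\phi$ and almost all $s$. The sequence twisted by $i$ gives $H^j(\mathcal{E}_s^{(p_s^t)}|_{H_s}(i))=0$ for $j>\phi$ and $i\ge m+1$, since this group sits between $H^j(\mathcal{E}^{(p_s^t)}_s(i))$ and $H^{j+1}(\mathcal{E}^{(p_s^t)}_s(i-1))$. Hence $\phi^t_{sp}(\mathcal{E}|_H)\le\phi$.

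For the other inequality, suppose $H^j(\mathcal{E}_s^{(p_s^t)}|_{H_s}(i))=0$ for $j>\psi$, $i\ge m$. Then for $j>\psi+1$ the maps $H^j(\mathcal{E}_s^{(p_s^t)}(i-1))\rightarrow H^j(\mathcal{E}_s^{(p_s^t)}(i))$ are surjective for all $i\ge m$. The main obstacle is getting a uniform starting point, since Serre vanishing for $\mathcal{E}_s^{(p_s^t)}$ is not uniform in $s$. I would use Fujita-type vanishing here: $H^j(\mathcal{E}_s^{(p_s^t)}(i))=H^j(\mathcal{E}_s\otimes F^t_{s,*}\mathcal{O}_{X_s}(i))$, which can be handled after noting that, for $i\ge p_s^t m'$, it equals $H^j((\mathcal{E}_s(m'))^{(p_s^t)}\otimes\mathcal{O}(i-p_s^tm'))$. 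Alternatively, I would use the bounded regularity of $\mathcal{E}_s$ (semicontinuity) together with the resolution of Lemma \ref{lemma3.3} by sums of $\mathcal{O}(-m-kR)$, whose Frobenius pullbacks have cohomology vanishing in degrees $0<j<\dim X$. Either way this yields vanishing for $i\ge N_s$ for some $N_s$, possibly depending on $s$. Descending surjectivity from $N_s$ down to $m$ then gives $H^j(\mathcal{E}_s^{(p_s^t)}(i))=0$ for all $i\ge m$ and $j>\psi+1$. This proves $\phi\le\phi^t_{sp}(\mathcal{E}|_H)+1$.
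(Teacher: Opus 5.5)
Your proposal follows the paper's proof. The paper also gets exactness of $0\to\mathcal{E}_s^{(p_s^t)}(m-1)\to\mathcal{E}_s^{(p_s^t)}(m)\to\mathcal{E}_s^{(p_s^t)}(m)|_{H_s}\to0$ from flatness of $F_s$: multiplication by $u_s^{p_s^t}$ is injective on $\mathcal{E}_s^{(p_s^t)}$, and $\ker(u_s)\subset\ker(u_s^{p_s^t})$. Your $\mathcal{T}or_1$ inclusion is the same observation. Both inequalities are then read off from the long exact sequence, as you do.

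There is one slip in the second inequality. Surjectivity of $H^j(\mathcal{E}_s^{(p_s^t)}(i-1))\to H^j(\mathcal{E}_s^{(p_s^t)}(i))$ cannot carry vanishing downward from $N_s$. What you need is injectivity. Your hypothesis $j>\psi+1$ does supply it, because the kernel is the image of $H^{j-1}(\mathcal{E}_s^{(p_s^t)}|_{H_s}(i))$, which vanishes since $j-1>\psi$ and $i\ge m$.

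There is also no real uniformity obstacle. Only $m$ has to be independent of $s$. Ordinary Serre vanishing for the single coherent sheaf $\mathcal{E}_s^{(p_s^t)}$ on each fixed $X_s$ gives $N_s$, exactly as in the paper. The Fujita-type and resolution detours are unnecessary.
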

\begin{proof}
Take a thickening $(\widetilde{X}, \mathcal{O}_{\widetilde{X}}(1), \widetilde{H}, \widetilde{\mathcal{E}})$ over $\Spec A$ and let $s\in \Spec A$ be a general closed point. In the case that $X$ is smooth and $k$ is infinite, from \cite[Lemma 1.1.12]{HL}, one sees that for a general section $u\in H^0(X, \mathcal{O}_X(1))$ the natural morphism
$$\theta: \mathcal{E}(-1)\xrightarrow{\cdot u} \mathcal{E}$$ is injective. The smoothness of $X$ implies that $X_s$ is smooth and its Frobenius morphism $F_{s}$ is flat. Thus the morphism $$(F^t_{s})^*\theta_s: \mathcal{E}_s^{(p_s^t)}(-p_s^t)\xrightarrow{\cdot u_s^{p_s^t}}\mathcal{E}_s^{(p_s^t)}$$ is still injective. Consider the morphism $$\tau_{i, s}: \mathcal{E}_s^{(p_s^t)}(-i)\xrightarrow{\cdot u_s} \mathcal{E}_s^{(p_s^t)}(-i+1).$$ It turns out that $(F^t_{s})^*\theta_s$ is the composition of the $\tau_{i, s}$'s, here $1\leq i\leq p_s^t$. This implies that $\tau_{i, s}$ is injective for any $i$. In the case that $\mathcal{E}$ is locally free, the injectivity of $\tau_{i, s}$ is obvious. Therefore, in any case one obtains an exact sequence

\begin{equation}\label{3.2}
0\rightarrow\mathcal{E}_s^{(p_s^t)}(m-1)\rightarrow\mathcal{E}_s^{(p_s^t)}(m)\rightarrow\mathcal{E}_s^{(p_s^t)}(m)|_{H_s}\rightarrow0
\end{equation}
for any integer $m$.

By the definition of $\phi^t_{sp}(\mathcal{E})$, one sees that there exists an integer $m_0$ such that $$H^i(X_s, \mathcal{E}_s^{(p_s^t)}(m-i))=0$$
for all $i>\phi^t_{sp}(\mathcal{E})$, $m\geq m_0$ and general $s$. On the other hand, from the short exact sequence (\ref{3.2}), we obtain the induced long exact sequence
$$H^i(X_s, \mathcal{E}_s^{(p_s^t)}(m-i))\rightarrow H^i(H_s, \mathcal{E}_s^{(p_s^t)}(m-i)|_{H_s})\rightarrow H^{i+1}(X_s, \mathcal{E}_s^{(p_s^t)}(m-i-1)).$$
It follows that $H^i(H_s, \mathcal{E}_s^{(p_s^t)}(m-i)|_{H_s})=0$ for $i>\phi^t_{sp}(\mathcal{E})$ and $m\geq m_0$. Thus we infer that $\mathcal{\phi}^t_{sp}(\mathcal{E}|_{H})\leq\mathcal{\phi}^t_{sp}(\mathcal{E})$.

Similarly, from the definition of $\phi^t_{sp}(\mathcal{E}|_H)$, we deduce that there exists an integer $m_1$ such that $$H^i(H_s, \mathcal{E}_s^{(p_s^t)}(m-i)|_{H_s})=0$$
for all $i>\phi^t_{sp}(\mathcal{E}|_H)$ and $m\geq m_1$. The short exact sequence (\ref{3.1}) gives the long exact sequence
\begin{eqnarray*}
&&H^i(H_s, \mathcal{E}_s^{(p_s^t)}(m-i)|_{H_s})\rightarrow H^{i+1}(X_s, \mathcal{E}_s^{(p_s^t)}(m-i-1))\\
&&\rightarrow H^{i+1}(X_s, \mathcal{E}_s^{(p_s^t)}(m-i))\rightarrow H^{i+1}(H_s, \mathcal{E}_s^{(p_s^t)}(m-i)|_{H_s}).
\end{eqnarray*}
This implies that $$H^{i+1}(X_s, \mathcal{E}_s^{(p_s^t)}(m-i-1))\cong H^{i+1}(X_s, \mathcal{E}_s^{(p_s^t)}(m-i))$$ for any $i>\phi^t_{sp}(\mathcal{E}|_H)$ and $m\geq m_1$. Hence one sees that $$H^{i+1}(X_s, \mathcal{E}_s^{(p_s^t)}(m-i-1))\cong H^{i+1}(X_s, \mathcal{E}_s^{(p_s^t)}(m-i+b))$$ for any $i>\phi^t_{sp}(\mathcal{E}|_H)$, $m\geq m_1$ and $b\geq0$. By Serre's vanishing, one has $$H^{i+1}(X_s, \mathcal{E}_s^{(p_s^t)}(m-i+b))=0$$ for $b\gg0$. Therefore, we infer that $$H^{i+1}(X_s, \mathcal{E}_s^{(p_s^t)}(m-i-1))=0$$ for any $i>\phi^t_{sp}(\mathcal{E}|_{H})$ and $m\geq m_1$.
It follows that
$\mathcal{\phi}^t_{sp}(\mathcal{E})\leq\mathcal{\phi}^t_{sp}(\mathcal{E}|_H)+1$.
\end{proof}

\begin{lemma}\label{pullback2}
Let $\pi: Y\rightarrow Z$ be a finite morphism between smooth quasi-projective schemes over $k$. Let $\mathcal{E}$ be a coherent sheaf on $Z$. Then $\mathcal{E}$ is (weakly) $F^tGG$-semipositive if and only if $\pi^*\mathcal{E}$ is (weakly) $F^tGG$-semipositive.
\end{lemma}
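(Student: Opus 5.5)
Both directions will be reduced to statements in characteristic $p$ on a general fiber of a thickening. Fix very ample line bundles: $\mathcal{O}_Z(1)$ on $Z$, and on $Y$ the bundle $\mathcal{O}_Y(1):=\pi^*\mathcal{O}_Z(1)$, which is ample because $\pi$ is finite (replace it by a power if very ampleness is needed; the definitions do not depend on this choice). Take a thickening $(\widetilde{\pi}:\widetilde{Y}\to\widetilde{Z},\mathcal{O}_{\widetilde{Z}}(1),\widetilde{\mathcal{E}},\widetilde{U})$ over $\Spec A$, where $U\subset Z$ is the relevant dense open subset. After shrinking $\Spec A$ we may assume $\pi_s$ is finite and $Y_s,Z_s$ are smooth for all $s$. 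Frobenius commutes with pullback, so $(\pi_s^*\mathcal{E}_s)^{(p_s^t)}=\pi_s^*(\mathcal{E}_s^{(p_s^t)})$.

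\emph{Only if.} Suppose $\mathcal{E}_s^{(p_s^t)}(m)$ is globally generated on $U_s$. Pulling back a surjection $\bigoplus\mathcal{O}_{Z_s}\to\mathcal{E}_s^{(p_s^t)}(m)$ over $U_s$ gives a surjection $\bigoplus\mathcal{O}_{Y_s}\to\pi_s^*\mathcal{E}_s^{(p_s^t)}\otimes\mathcal{O}_{Y_s}(m)$ over $\pi_s^{-1}(U_s)$. This proves the claim, with the open set $\pi^{-1}(U)$. In the weak case, $\pi^{-1}(U)$ must be dense. This holds if $\pi$ is dominant on each component, which I assume is implicit (finite surjective between equidimensional smooth schemes). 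Otherwise one restricts to components and handles the rest separately.

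\emph{If.} This is the main step. Suppose $\pi_s^*\mathcal{E}_s^{(p_s^t)}\otimes\pi_s^*\mathcal{O}_{Z_s}(m)$ is globally generated on $V_s$. Shrink $V$ so that $V=\pi^{-1}(U)$ with $\pi|_V$ finite flat; a generically étale open subset suffices in characteristic $0$, and it can be taken dense for the weak version. For the full version we need all of $Z$; there we use that $\pi$ is finite surjective between smooth schemes, hence flat by miracle flatness, and we may work one component at a time. Push forward: $\pi_{s*}\mathcal{O}_{Y_s}\otimes\mathcal{E}_s^{(p_s^t)}(m)$ is globally generated on $U_s$. We need to get back to $\mathcal{E}_s^{(p_s^t)}$ itself. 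In characteristic $0$ the trace map splits $\mathcal{O}_Z\to\pi_*\mathcal{O}_Y$ on the locus where the degree is invertible. This locus is all of $Z$ when $\pi$ is flat, using $\frac{1}{\deg}\mathrm{tr}$. Thicken the splitting, so that $\mathcal{O}_{Z_s}$ is a direct summand of $\pi_{s*}\mathcal{O}_{Y_s}$ for almost all $s$. Then $\mathcal{E}_s^{(p_s^t)}(m)$ is a direct summand, in particular a quotient, of $\pi_{s*}\pi_s^*\mathcal{E}_s^{(p_s^t)}(m)$. This quotient is globally generated on $U_s$, provided the pushforward is.

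The remaining obstacle is that global generation of a sheaf $\mathcal{G}$ on $Y_s$ over $V_s$ does not directly give global generation of $\pi_{s*}\mathcal{G}$. The issue is that $\pi_{s*}\mathcal{O}_{Y_s}$ is not generated by global sections; since $\pi_s$ is affine, pushing forward a surjection $\bigoplus\mathcal{O}_{Y_s}\to\mathcal{G}$ only yields $\bigoplus\pi_{s*}\mathcal{O}_{Y_s}\to\pi_{s*}\mathcal{G}$ surjective on $U_s$. To fix this, choose $m_1$ such that $\pi_*\mathcal{O}_Y\otimes\mathcal{O}_Z(m_1)$ is globally generated. Thickening, the same holds on the fibers for almost all $s$, uniformly in $s$. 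Twisting by $m_1$ then shows that $\pi_{s*}\pi_s^*\mathcal{E}_s^{(p_s^t)}(m+m_1)$ is globally generated on $U_s$, and hence so is its summand $\mathcal{E}_s^{(p_s^t)}(m+m_1)$. The integer $m+m_1$ is independent of $s$, which gives (weak) $F^tGG$-semipositivity of $\mathcal{E}$.
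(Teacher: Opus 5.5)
Your proposal is correct and is essentially the paper's proof: you pull back the generating surjection for the ``only if'' direction. For the ``if'' direction you push forward along the affine map $\pi_s$, use the trace splitting of $\mathcal{O}_{Z_s}\to\pi_{s,*}\mathcal{O}_{Y_s}$ together with the projection formula to make $\mathcal{E}_s^{(p_s^t)}(m)$ a quotient, and absorb the failure of $\pi_{s,*}\mathcal{O}_{Y_s}$ to be globally generated with a fixed twist $m_1$. Your extra remarks only make explicit hypotheses that the paper leaves implicit: that $\pi$ is surjective and flat, which the paper assumes tacitly when it uses $\deg\pi_s$, and that $\pi^{-1}(U)$ is dense in the weak case.
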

\begin{proof}
We only show the conclusion for the $F^tGG$-semipositive case. The weakly $F^tGG$-semipositive case can be proved in the same way.

Let $\mathcal{O}_Z(1)$ be a very ample line bundle on $Z$ such that $\pi^*\mathcal{O}_Z(1)$ is still very ample. Choose a thickening $$(\widetilde{\pi}: \widetilde{Y}\rightarrow \widetilde{Z}, \mathcal{O}_{\widetilde{Z}}(1), \widetilde{\mathcal{E}})$$ over $\Spec A$ and a general closed point $s\in \Spec A$. We have the commutative diagram
$$\xymatrix{
  Y_s  \ar[d]_{\pi_s} \ar[r]^{F^t_{Y_s}}  & Y_s \ar[d]^{\pi_s}            \\
 Z_s \ar[r]^{F^t_{Z_s}} & Z_s.                       }$$

Suppose that $\mathcal{E}$ is $F^tGG$-semipositive. Then one can find an integer $m_0$ such that $(F^t_{Z_s})^*\mathcal{E}_s\otimes\mathcal{O}_{Z_s}(m_0)$ is globally generated for almost all $s$. So is $$\pi_s^*((F^t_{Z_s})^*\mathcal{E}_s\otimes\mathcal{O}_{Z_s}(m_0))\cong (F^t_{Y_s})^*\pi_s^*\mathcal{E}_s\otimes\pi_s^*\mathcal{O}_{Z_s}(m_0).$$ Thus $\pi^*\mathcal{E}$ is $F^tGG$-semipositive.

For the other direction, we suppose that $\pi^*\mathcal{E}$ is $F^tGG$-semipositive. Then there is surjective morphism
$$\bigoplus\mathcal{O}_{Y_s}\rightarrow (F^t_{Y_s})^*\pi_s^*\mathcal{E}_s\otimes\pi_s^*\mathcal{O}_{Z_s}(m)$$ for almost all $s$.
Since the trace map splits the natural inclusion $\mathcal{O}_{Z_s}\rightarrow\pi_{s,*}\mathcal{O}_{Y_s}$ when $\chr(A/s)\nmid \deg\pi_s$, the induced morphism
\begin{eqnarray*}
\bigoplus\pi_{s, *}\mathcal{O}_{Y_s}&\rightarrow&\pi_{s,*}((F^t_{Y_s})^*\pi_s^*\mathcal{E}_s\otimes\pi_s^*\mathcal{O}_{Z_s}(m))\\
&\cong&(F^t_{Y_s})^*\mathcal{E}_s\otimes\mathcal{O}_{Z_s}(m)\otimes\pi_{s, *}\mathcal{O}_{Y_s}\\
&\rightarrow&(F^t_{Y_s})^*\mathcal{E}_s\otimes\mathcal{O}_{Z_s}(m)
\end{eqnarray*}
is surjective for almost all $s$, here the isomorphism follows from \cite[Lemma 5.6]{Ara1}.
Taking an integer $m_1$ such that $\pi_*\mathcal{O}_Y\otimes\mathcal{O}_Z(m_1)$ is  globally generated, one sees that $\pi_{s, *}\mathcal{O}_{Y_s}\otimes\mathcal{O}_{Z_s}(m_1)$ is globally generated for almost all $s$. So is $$(F^t_{Y_s})^*\mathcal{E}_s\otimes\mathcal{O}_{Z_s}(m+m_1).$$ Therefore $\mathcal{E}$ is $F^tGG$-semipositive.
\end{proof}

The $F^t$-semipositivity can be governed by Castelnuovo–Mumford regularity:
\begin{proposition}\label{est-CM}
Let $\mathcal{O}_X(1)$ be a very ample line bundle and $\mathcal{E}$ an $m$-regular sheaf on $X$. Assume either that $X$ is smooth with $\dim X=d$ or that $\mathcal{E}$ is locally free. Then we have
$$\phi^t_{sp}(\mathcal{E}(m+uR))\leq\max\{d-u-1, 0\},$$ where $R=\max\{1, \reg_0(\mathcal{O}_X)\}$.
\end{proposition}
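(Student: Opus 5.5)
Replacing $\mathcal{E}$ by $\mathcal{E}(m)$, which is $0$-regular, we may assume $m=0$. By Lemma \ref{lemma3.3} with $N=d$ we get a resolution
$$V_d\otimes\mathcal{O}_X(-dR)\rightarrow\cdots\rightarrow V_1\otimes\mathcal{O}_X(-R)\rightarrow V_0\otimes\mathcal{O}_X\rightarrow\mathcal{E}\rightarrow0.$$
Let $\mathcal{K}$ be the kernel of the leftmost map, so that the sequence becomes exact with $\mathcal{K}$ in front. Twisting by $\mathcal{O}_X(uR)$ gives a resolution of $\mathcal{E}(uR)$ whose $i$-th term is $V_i\otimes\mathcal{O}_X((u-i)R)$.

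Next we choose a thickening of this whole exact complex over $\Spec A$, together with $\mathcal{O}_X(1)$, and apply $(F^t_s)^*$ on a general fiber $X_s$. Exactness is preserved in either case. If $X$ is smooth, $X_s$ is smooth and $F_s$ is flat. If $\mathcal{E}$ is locally free, all the syzygy sheaves are locally free, so the sequence is locally split. The terms become $V_i\otimes\mathcal{O}_{X_s}(p_s^t(u-i)R)$, and the pullback of $\mathcal{E}(uR)$ is $\mathcal{E}_s^{(p_s^t)}(p_s^tuR)$; note that this is exactly $(\mathcal{E}(uR))_s^{(p_s^t)}$.

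The goal is to bound $H^j(X_s,\mathcal{E}_s^{(p_s^t)}(p_s^tuR)(n))$ for $j>\max\{d-u-1,0\}$, uniformly for all $n\ge n_0$ and almost all $s$ (Remark \ref{rm3.6}). Split the complex into short exact sequences and chase cohomology. The contribution from the $i$-th term lands in $H^{j+i}(X_s,\mathcal{O}_{X_s}(p_s^t(u-i)R+n))$, and there are two cases.

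\textbf{Case $i\le u$.} The twist is at least $n$, so for $n\ge n_0$ Serre vanishing on $\mathcal{O}_X(1)$ applies uniformly in $s$. This uniformity comes from semicontinuity together with the fact that the regularity of $\mathcal{O}_{X_s}$ is bounded.

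\textbf{Case $i\ge u+1$.} Here $j+i\ge j+u+1>d$ because $j>d-u-1$, so $H^{j+i}$ vanishes for dimension reasons.

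The leftover kernel $\mathcal{K}$ contributes through $H^{j+d+1}=0$, again for dimension reasons. Since $j\ge1$, all the relevant degrees are positive, so the chase gives the vanishing.

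The step needing care is the uniformity in the first case. The bound $n_0$ must be independent of $s$ and of $p_s$, and this holds because the twist $p_s^t(u-i)R\ge 0$ only improves regularity: $\mathcal{O}_{X_s}(a)$ is $(R+\text{const})$-regular for all $a\ge0$, by Lemma \ref{pCM} applied to $\mathcal{O}_{X_s}$. Hence $H^{q}(\mathcal{O}_{X_s}(a+n))=0$ for $q>0$, $a\ge0$ and $n\ge R$. The second point to check is that the index bookkeeping in the chase yields exactly the threshold $\max\{d-u-1,0\}$.
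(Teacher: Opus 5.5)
Your proof is correct and is essentially the paper's argument: you resolve $\mathcal{E}(m+uR)$ via Lemma \ref{lemma3.3} by the sheaves $V_i\otimes\mathcal{O}_X((u-i)R)$, pull back by $(F^t_s)^*$ on a general fiber, and chase cohomology, with the terms $i\le u$ killed by the $R$-regularity of $\mathcal{O}_{X_s}$ and the terms $i\ge u+1$ (and the kernel) killed for dimension reasons. The only differences are cosmetic: the paper bounds the $\max\{d-u-1,0\}$-regularity of the pullback by $R$ directly (twisting by $R-i$), whereas you prove vanishing for all twists $n\ge R$ and invoke Remark \ref{rm3.6}; you also spell out the exactness of the Frobenius pullback and the kernel term, which the paper leaves implicit.
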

\begin{proof}
Choosing a thickening $(\widetilde{X}, \mathcal{O}_{\widetilde{X}}(1), \widetilde{\mathcal{E}})$ over $\Spec A$ and a general closed point $s\in \Spec A$, by semicontinuity and Lemma \ref{lemma3.3} we obtain the exact sequence
$$\cdots\rightarrow\mathcal{E}_{d, s}\rightarrow\cdots\rightarrow\mathcal{E}_{0, s}\rightarrow\mathcal{E}_s(m+uR)\rightarrow0,$$ where $\mathcal{E}_{i, s}=V_i\otimes\mathcal{O}_{X_s}((u-i)R)$.
Pulling back it via $(F_{s}^t)^*$, one gets the exact complex
$$\cdots\rightarrow V_u\otimes\mathcal{O}_{X_s}\rightarrow\cdots\rightarrow V_0\otimes\mathcal{O}_{X_s}(p_s^tuR)\rightarrow(\mathcal{E}_s(m+uR))^{(p_s^t)}\rightarrow0.$$
When $i\geq d-u$, one deduced that
$$h^i\Big((\mathcal{E}_s(m+uR))^{(p_s^t)}(R-i)\Big)\leq\sum_{j=0}^{u}h^{i+j}\Big(V_j\otimes\mathcal{O}_{X_s}(p_s^t(u-j)R+R-i)\Big)=0.$$
This completes the proof.
\end{proof}

The following theorem is a variant of \cite[Theorem 3]{Ara1} and \cite[Theorem 4.8]{Sun}.

\begin{theorem}\label{tensor}
Let $\mathcal{E}$ and $\mathcal{F}$ be two coherent sheaves on $X$ such that one of them is locally free and $\mathcal{E}$ is $F^t$-ample, then
$$\phi^t_a(\mathcal{E}\otimes\mathcal{F})\leq\phi^t_{sp}(\mathcal{F}).$$
\end{theorem}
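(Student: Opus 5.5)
Set $N=\phi^t_{sp}(\mathcal{F})$. By Lemma \ref{lemma2.6} it is enough to prove the following: for an ample (say very ample) line bundle $\mathcal{O}_X(1)$, any integer $m$, and a thickening of $(X,\mathcal{O}_X(1),\mathcal{E},\mathcal{F})$ over $\Spec A$, we have
$$H^i(X_s, \mathcal{O}_{X_s}(m)\otimes(\mathcal{E}\otimes\mathcal{F})_s^{(p_s^t)})=0$$
for all $i>N$ and almost all $s$. Pullback commutes with tensor product, so $(\mathcal{E}\otimes\mathcal{F})_s^{(p_s^t)}\cong\mathcal{E}_s^{(p_s^t)}\otimes\mathcal{F}_s^{(p_s^t)}$. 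This reduces the statement to a vanishing result for a tensor product of two Frobenius pullbacks. We control the $\mathcal{E}$-factor with $F^t$-ampleness and the $\mathcal{F}$-factor with bounded partial regularity.

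The plan follows \cite[Theorem 3]{Ara1} and \cite[Theorem 4.8]{Sun}. We resolve $\mathcal{E}$ by sums of negative twists and then use $F^t$-ampleness to handle the resulting terms.

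\textbf{Case where $\mathcal{F}$ is locally free.} By Remark \ref{rm3.6} there is an integer $m_1$ with $H^j(\mathcal{F}_s^{(p_s^t)}(l))=0$ for all $l\ge m_1$, $j>N$ and almost all $s$. Equivalently, $\reg_N(\mathcal{F}_s^{(p_s^t)})$ is uniformly bounded by some $r$. Take $d=\dim X$ and apply Lemma \ref{lemma3.3} to $\mathcal{E}(m)$, which is $m_2$-regular for some $m_2$. This gives a resolution
$$\cdots\to V_1\otimes\mathcal{O}_X(-m_2-R)\to V_0\otimes\mathcal{O}_X(-m_2)\to\mathcal{E}(m)\to0$$
of length $d$, ending in a kernel $\mathcal{K}$. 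We thicken this resolution. Since $\mathcal{F}_s^{(p_s^t)}$ is locally free, tensoring with it keeps the sequence exact. Apply $(F^t_s)^*$ only to the $\mathcal{E}$-part: the twists $\mathcal{O}(-m_2-jR)$ become $\mathcal{O}(-p_s^t(m_2+jR))$. These are very negative, which is the wrong direction for vanishing, so this ordering will not work.

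We therefore reverse the roles. Use Lemma \ref{lemma3.3} on $\mathcal{F}_s^{(p_s^t)}$ itself, keeping only the $N$-partial version, with uniform twists $\mathcal{O}(r)$ from the bounded regularity. This resolves $\mathcal{F}_s^{(p_s^t)}$ (up to the cohomological degree that matters) by $V_{j,s}\otimes\mathcal{O}_{X_s}(-r-jR)$ with $r$ independent of $s$. Tensor this resolution with $\mathcal{O}(m)\otimes\mathcal{E}_s^{(p_s^t)}$. By Lemma \ref{lemma2.7} applied to the finitely many fixed line bundles $\mathcal{O}(m-r-jR)$, $0\le j\le d$, the terms $H^{i+j}(\mathcal{E}_s^{(p_s^t)}(m-r-jR))$ vanish for $i+j>0$ and almost all $s$. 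The key point is that these twists do not depend on $s$; this is exactly what bounded regularity provides. Chasing the resulting complex gives vanishing of $H^i$ for $i>N$.

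\textbf{Main obstacle.} The $V_{j,s}$ and the final kernel depend on $s$, and the resolution only exists on $X_s$, not over $A$. So we need the partial-regularity analogue of Lemma \ref{lemma3.3}, namely an exact complex whose twists are uniform in $s$. We also need to control the cohomology of the last kernel, which can be taken to be at step $d+1$ so that its $H^{>d}$ vanishes automatically.

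The partial-regularity version should be handled as follows. Use $(r,N)$-regularity to show that the kernels $\mathcal{K}_j$ of the evaluation steps remain $(r+jR,N)$-regular in degrees $>N$, which is an induction using the long exact sequence. The required vanishing for $i>N$ then only involves $H^{i+j}$ terms with $i+j>N$.

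\textbf{Case where $\mathcal{E}$ is locally free.} This is symmetric. We use the same resolution of $\mathcal{F}_s^{(p_s^t)}$, and exactness after tensoring comes from $\mathcal{E}_s^{(p_s^t)}$ being locally free.
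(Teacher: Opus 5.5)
\textbf{There is a genuine gap.} Your argument breaks at the resolution of $\mathcal{F}_s^{(p_s^t)}$ by $V_{j,s}\otimes\mathcal{O}_{X_s}(-r-jR)$ with $r$ independent of $s$.

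Lemma~\ref{lemma3.3} needs genuine ($0$-)regularity. Its first step is surjectivity of $H^0(\mathcal{F}_s^{(p_s^t)}(r))\otimes\mathcal{O}_{X_s}(-r)\to\mathcal{F}_s^{(p_s^t)}$, i.e.\ global generation. A bound on $\reg_N$ with $N>0$ says nothing about $H^1,\dots,H^N$ and gives no such surjection, so your induction on the kernels never starts. For example, $\mathcal{F}=\mathcal{O}_{\mathbb{P}^1}(-1)$ has $\phi^t_{sp}(\mathcal{F})=1$, yet $\mathcal{F}_s^{(p_s^t)}(r)=\mathcal{O}(r-p_s^t)$ has no sections once $p_s^t>r$.

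The argument also proves too much, so it cannot be patched in this form. Every term it produces is $\mathcal{E}_s^{(p_s^t)}$ twisted by one of finitely many fixed line bundles, so all their higher cohomology vanishes. The chase would then give $H^i=0$ for \emph{all} $i>0$, i.e.\ $\mathcal{E}\otimes\mathcal{F}$ would be $F^t$-ample for every $\mathcal{F}$; the number $N$ never enters. With $\mathcal{E}=\mathcal{O}_{\mathbb{P}^1}(1)$ and $\mathcal{F}=\mathcal{O}_{\mathbb{P}^1}(-1)$ this would make $\mathcal{O}_{\mathbb{P}^1}$ $F^t$-ample, contradicting Lemma~\ref{lemma2.9}.

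\textbf{The fix.} Your first attempt had the right roles. Its mistake was resolving $\mathcal{E}$ in characteristic zero and then pulling back by Frobenius; instead, resolve $\mathcal{E}_s^{(p_s^t)}$ directly on $X_s$.
\begin{itemize}
\item Let $m_0$ be such that $H^j(\mathcal{F}_s^{(p_s^t)}(n))=0$ for $j>N$, $n\ge m_0$ and almost all $s$ (Remark~\ref{rm3.6}).
\item Fix the twist $b$ you want to test and set $c=b-m_0-dR$. Apply $F^t$-ampleness to the finitely many line bundles $\mathcal{O}_X(c-i)$, $1\le i\le d$. This gives $\reg_0(\mathcal{E}_s^{(p_s^t)})\le c$ for almost all $s$.
\item Lemma~\ref{lemma3.3} on $X_s$ then yields $\cdots\to V_{1,s}\otimes\mathcal{O}_{X_s}(-c-R)\to V_{0,s}\otimes\mathcal{O}_{X_s}(-c)\to\mathcal{E}_s^{(p_s^t)}\to0$. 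The twists are uniform in $s$; the $V_{j,s}$ may vary, which is harmless because you work fibrewise.
\item Tensor with $\mathcal{F}_s^{(p_s^t)}(b)$. This stays exact: by flatness if $\mathcal{F}$ is locally free, and by local splitting if $\mathcal{E}$ is, since then all kernels are locally free.
\item The $j$-th term becomes $V_{j,s}\otimes\mathcal{F}_s^{(p_s^t)}(m_0+(d-j)R)$, whose $H^{>N}$ vanishes. The $d$-th kernel contributes only $H^{i+d}=0$.
\item Chasing gives $H^i((\mathcal{E}\otimes\mathcal{F})_s^{(p_s^t)}(b))=0$ for $i>N$, and Lemma~\ref{lemma2.6} finishes.
\end{itemize}
This is exactly the paper's proof.
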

\begin{proof}
Let $\mathcal{O}_X(1)$ be a very ample line bundle on $X$ and $(\widetilde{X}, \widetilde{\mathcal{E}}, \widetilde{\mathcal{F}}, \mathcal{O}_{\widetilde{X}}(1))$ be a thickening of $(X, \mathcal{E}, \mathcal{F}, \mathcal{O}_X(1))$ over $\Spec A$.

By the definition of $\phi^t_{sp}(\mathcal{F})$, one sees that there is an integer $m_0$ such that
$$H^i(X_s, \mathcal{F}_s^{(p_s^t)}(m-i))=0$$ for all $i>\phi^t_{sp}(\mathcal{F})$, $m\geq m_0$ and general $s$. Let $d=\dim X$ and $R=\max\{1, \reg_0(\mathcal{O}_X)\}$.
Since $\mathcal{E}$ is $F^t$-ample, one sees that for any integer $b$ there is a nonempty open subset $U_b$ of $\Spec A$ such that $$\reg_0(\mathcal{E}_s^{(p_s^t)})\leq -m_0-dR+b$$ for any closed point $s\in U_b$. From Lemma \ref{lemma3.3}, we obtain an exact sequence $$\cdots\rightarrow\mathcal{E}_{d, s}\rightarrow\cdots\rightarrow\mathcal{E}_{0, s}\rightarrow\mathcal{E}_s^{(p_s^t)}\rightarrow0,$$
where $\mathcal{E}_{i, s}=V_i\otimes\mathcal{O}_{X_s}(m_0+dR-iR-b)$. Twisting through by $\mathcal{F}_s^{(p_s^t)}(b)$ yields an exact complex
$$\cdots\rightarrow\mathcal{E}_{1,s}\otimes\mathcal{F}_s^{(p_s^t)}(b)\rightarrow\mathcal{E}_{0, s}\otimes\mathcal{F}_s^{(p_s^t)}(b)
\rightarrow\mathcal{E}_s^{(p^t)}\otimes\mathcal{F}_s^{(p_s^t)}(b)\rightarrow0.$$
By chasing through this exact complex, it follows that
$$H^i(X_s, (\mathcal{E}_s\otimes\mathcal{F}_s)^{(p_s^t)}\otimes\mathcal{O}_{X_s}(b))=0$$ for any $i>\phi^t_{sp}(\mathcal{F})$ and closed point $s\in U_b$. Hence the conclusion follows from Lemma \ref{lemma2.6}.
\end{proof}

\begin{theorem}\label{tensor2}
Assume that $X$ is smooth. Let $\mathcal{E}$ and $\mathcal{F}$ be two locally free sheaves on $X$, then we have
\begin{enumerate}
  \item $\phi^t_{sp}(\mathcal{E}\otimes\mathcal{F})\leq\phi^t_{sp}(\mathcal{E})+\phi^t_{sp}(\mathcal{F})$;
  \item $\phi^t_{a}(\mathcal{E}\otimes\mathcal{F})\leq\phi^t_{a}(\mathcal{E})+\phi^t_{a}(\mathcal{F})$.
\end{enumerate}
\end{theorem}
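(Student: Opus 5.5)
The plan is to follow the proofs of \cite[Theorem 4.8]{Sun} and \cite[Theorem 3]{Ara1}. The key tool is the diagonal trick, reducing the statement to a Künneth computation on $X\times X$. Set $Z=X\times X$ with projections $\pr_1,\pr_2$, diagonal $\Delta\colon X\to Z$, and very ample line bundle $\mathcal{O}_Z(1)=\pr_1^*\mathcal{O}_X(1)\otimes\pr_2^*\mathcal{O}_X(1)$. Since $X$ is smooth, $\Delta(X)$ is a local complete intersection of codimension $d=\dim X$ in the smooth variety $Z$. Hence, after a twist, $\mathcal{O}_\Delta$ has a finite resolution by sums of line bundles of the form $\mathcal{O}_Z(-a)$. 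A Koszul-type resolution coming from a Beilinson-type construction, or simply Lemma \ref{lemma3.3} applied to the ideal sheaf of $\Delta$ on $Z$, gives
$$0\to \mathcal{K}\to V_{N}\otimes\mathcal{O}_Z(-m-NR)\to\cdots\to V_0\otimes\mathcal{O}_Z(-m)\to \mathcal{O}_\Delta\to 0$$
with $N$ as large as we like, say $N\geq 2d$. Then $\mathcal{K}$ has no cohomological influence in degrees $\leq 2d$.

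\textbf{Step 1.} Take a thickening of everything: $X$, $\mathcal{E}$, $\mathcal{F}$, $\mathcal{O}_X(1)$, and the resolution. For a general closed point $s$, note that Frobenius commutes with $\Delta$ and with the projections. Therefore
$$(\mathcal{E}_s\otimes\mathcal{F}_s)^{(p_s^t)}(m')=\Delta_s^*\big(\pr_1^*\mathcal{E}_s^{(p_s^t)}\otimes\pr_2^*\mathcal{F}_s^{(p_s^t)}\big)\otimes\mathcal{O}_{X_s}(m').$$
Tensor the resolution of $\mathcal{O}_{\Delta_s}$ with the locally free sheaf $\mathcal{G}_s:=\mathcal{E}_s^{(p_s^t)}(m_1)\boxtimes\mathcal{F}_s^{(p_s^t)}(m_2)$. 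The complex stays exact because $\mathcal{G}_s$ is locally free.

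\textbf{Step 2.} By the Künneth formula,
$$H^i\big(Z_s,\mathcal{G}_s(-a)\big)=\bigoplus_{j+k=i}H^j\big(\mathcal{E}_s^{(p_s^t)}(m_1-a)\big)\otimes H^k\big(\mathcal{F}_s^{(p_s^t)}(m_2-a)\big).$$
By the definition of $\phi^t_{sp}$ (Remark \ref{rm3.6}), there is an $m_0$, uniform in $s$, such that $H^j(\mathcal{E}_s^{(p_s^t)}(n))=0$ for $j>\phi^t_{sp}(\mathcal{E})$ and $n\geq m_0$, and similarly for $\mathcal{F}$. Hence if $i>\phi^t_{sp}(\mathcal{E})+\phi^t_{sp}(\mathcal{F})$, every summand has $j>\phi^t_{sp}(\mathcal{E})$ or $k>\phi^t_{sp}(\mathcal{F})$, and so it vanishes once the twists are at least $m_0$. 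Chasing the exact complex, exactly as in the proof of Theorem \ref{tensor}, gives
$$H^i\big(X_s,(\mathcal{E}_s\otimes\mathcal{F}_s)^{(p_s^t)}(n)\big)=0 \quad\text{for } i>\phi^t_{sp}(\mathcal{E})+\phi^t_{sp}(\mathcal{F}) \text{ and } n \text{ above a bound independent of } s.$$
The shifts needed in the chase, namely $i+\ell$ at the $\ell$-th term, are harmless because the vanishing range is upward closed in the degree. Remark \ref{rm3.6} then yields (1).

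\textbf{Step 3.} For (2) the argument is the same. By Lemma \ref{lemma2.6}, it suffices to test $H^i(\mathcal{O}_{X_s}(b)\otimes(\mathcal{E}_s\otimes\mathcal{F}_s)^{(p_s^t)})$ for each fixed integer $b$. Choose the resolution of $\mathcal{O}_\Delta$ twisted by $\pr_1^*\mathcal{O}(b)$. Then the Künneth factors are $H^j(\mathcal{E}_s^{(p_s^t)}\otimes\mathcal{L})$ and $H^k(\mathcal{F}_s^{(p_s^t)}\otimes\mathcal{L}')$ for finitely many fixed line bundles $\mathcal{L},\mathcal{L}'$. These vanish for $j>\phi^t_a(\mathcal{E})$ and $k>\phi^t_a(\mathcal{F})$ for almost all $s$, by Lemma \ref{lemma2.7}. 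Only finitely many sheaves are involved, so the intersection of the corresponding open sets is still nonempty open.

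\textbf{Main obstacle.} The main obstacle is Step 1: producing a resolution of $\mathcal{O}_\Delta$ on $Z$ by sums of negative line bundles that is defined over the thickening and remains exact on general fibers. The length must exceed $2d$ so that the leftover kernel does not contribute. This works for Lemma \ref{lemma3.3} applied on $Z$ together with semicontinuity, since the regularities of the relevant sheaves are bounded on an open subset of $\Spec A$. I would also check that the uniform bound $m_0$ is compatible with the negative twists $-m-\ell R$ appearing in the resolution. This is handled by first twisting $\mathcal{G}_s$ by a large multiple of $\mathcal{O}_Z(1)$, which is independent of $s$, before running the chase.
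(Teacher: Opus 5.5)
Your proposal is correct and is essentially the paper's own proof. Both resolve $\mathcal{O}_\Delta$ on $X\times X$ by sums of negative powers of $\mathcal{O}(1)\boxtimes\mathcal{O}(1)$ via Lemma \ref{lemma3.3} and semicontinuity, tensor with the suitably twisted box product $(\mathcal{E}_s\boxtimes\mathcal{F}_s)^{(p_s^t)}$, apply K\"unneth and chase the complex, and conclude with Remark \ref{rm3.6} for (1); for (2) you use Lemma \ref{lemma2.6}, which the paper leaves as ``similar.''

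Your resolution length $N\geq 2d=\dim(X\times X)$ and independent twists $m_1,m_2$ are exactly what the chase needs; the paper writes only $j\leq d$ and even twists $2m$. Your aside that $\mathcal{O}_\Delta$ has a finite resolution by sums of $\mathcal{O}_Z(-a)$ is false in general (e.g.\ for $X$ a curve of positive genus), but you never use it, since the truncated resolution with kernel $\mathcal{K}$ suffices.
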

\begin{proof}
We only give the proof of the first inequality. The proof of the second is similar to that of the first.

Let $Y=X\times X$ and let $p_i$ denote the projections. Given two coherent sheaves $\mathcal{E}_1$ and $\mathcal{E}_2$ on $X$, we write $\mathcal{E}_1\boxtimes\mathcal{E}_1=p_1^*\mathcal{E}_1\otimes p_2^*\mathcal{E}_2$. Choose a very ample line bundle $\mathcal{O}_X(1)$ and a thickening $(\widetilde{X}, \mathcal{O}_{\widetilde{X}}(1), \widetilde{\mathcal{E}}, \widetilde{\mathcal{F}})$ over $\Spec A$.
Let $\widetilde{\Delta}\subset \widetilde{Y}:=\widetilde{X}\times\widetilde{X}$ be the diagonal and $\widetilde{\mathcal{L}}=\mathcal{O}_{\widetilde{X}}(1)\boxtimes\mathcal{O}_{\widetilde{X}}(1)$.

By semicontinuity and Lemma \ref{lemma3.3}, we obtain the exact sequence
\begin{equation}\label{diag}
 \cdots\rightarrow\mathcal{G}_{d, s}\rightarrow\cdots\rightarrow\mathcal{G}_{0, s}\rightarrow\mathcal{O}_{\Delta_s}\rightarrow0
\end{equation}
for almost all $s\in\Spec A$, where $\mathcal{G}_{i, s}=V_i\otimes \mathcal{L}^{b_i}_s$ and $d=\dim X$. Hence using K\"unneth's formula and the definition of $F^t$-semipositivity, one sees that there exists an integer $N$ such that
\begin{eqnarray*}
&& H^i(\mathcal{G}_{j, s}\otimes (\mathcal{E}_s\boxtimes\mathcal{F}_s)^{(p_s^t)}\otimes\mathcal{L}^{m}_s) \\
&=&V_j\otimes H^i(\mathcal{E}_s^{(p_s^t)}(m+b_j)\boxtimes\mathcal{F}_s^{(p_s^t)}(m+b_j)) \\
&=&V_j\otimes\bigoplus_{l+n=i}H^l(\mathcal{E}_s^{(p_s^t)}(m+b_j))\otimes H^n(\mathcal{F}_s^{(p_s^t)}(m+b_j))\\
&=&0
\end{eqnarray*}
for any $i>\phi^t_{sp}(\mathcal{E})+\phi^t_{sp}(\mathcal{F})$, $j\leq d$, $m\geq N$ and almost all $s$. Tensoring (\ref{diag}) by $(\mathcal{E}_s\boxtimes\mathcal{F}_s)^{(p_s^t)}\otimes\mathcal{L}^{m}_s$ and chasing through the exact complex, one infers that
$$H^i((\mathcal{E}_s\otimes\mathcal{F}_s)^{(p_s^t)}(2m))=H^i(\mathcal{O}_{\Delta_s}\otimes
(\mathcal{E}_s\boxtimes\mathcal{F}_s)^{(p_s^t)}\otimes\mathcal{L}^{m}_s)=0$$ for $i>\phi^t_{sp}(\mathcal{E})+\phi^t_{sp}(\mathcal{F})$, $m\geq N$ and almost all $s$. Thus Remark \ref{rm3.6} gives the desired inequality.
\end{proof}

Since any symmetric or exterior power of a locally free sheaf is a direct summand of some tensor power of the sheaf, we have the following:
\begin{corollary}\label{exterior}
Any tensor, symmetric or exterior power of an $F^t$-ample ($F^t$-semipositive) locally free sheaf is $F^t$-ample ($F^t$-semipositive).
\end{corollary}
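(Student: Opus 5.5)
The plan is to reduce everything to Theorem \ref{tensor2} applied repeatedly, and then to pass to direct summands. Throughout, let $\mathcal{E}$ be a locally free sheaf with $\phi^t_a(\mathcal{E})=0$ (respectively $\phi^t_{sp}(\mathcal{E})=0$). We keep the standing smoothness hypothesis on $X$ that Theorem \ref{tensor2} needs.

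\emph{Tensor powers.} Induction on $n$ using Theorem \ref{tensor2} gives
$$\phi^t_a(\mathcal{E}^{\otimes n})\leq\phi^t_a(\mathcal{E}^{\otimes(n-1)})+\phi^t_a(\mathcal{E})=0,$$
and the same with $\phi^t_{sp}$. The same induction handles tensor products of several $F^t$-ample (resp. $F^t$-semipositive) locally free sheaves. For the $F^t$-ample tensor products one could instead use Theorem \ref{tensor} combined with Lemma \ref{lemma3.8}(1), which needs no smoothness: $\phi^t_a(\mathcal{E}\otimes\mathcal{F})\leq\phi^t_{sp}(\mathcal{F})\leq\phi^t_a(\mathcal{F})=0$.

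\emph{Symmetric and exterior powers.} Since $\chr(k)=0$, the symmetrization and antisymmetrization idempotents $\frac{1}{n!}\sum_{\sigma}(\pm1)\sigma$ exhibit $\Sym^n\mathcal{E}$ and $\wedge^n\mathcal{E}$ as direct summands of $\mathcal{E}^{\otimes n}$. Thus $\mathcal{E}^{\otimes n}\cong\mathcal{G}\oplus\mathcal{G}'$ with $\mathcal{G}=\Sym^n\mathcal{E}$ or $\wedge^n\mathcal{E}$. I then spread this splitting out over a thickening: the inclusion and projection maps, together with the relation that their composite is the identity, are part of the diagram. On each fiber $X_s$, Frobenius pullback commutes with direct sums, so
$$(\mathcal{E}_s^{\otimes n})^{(p_s^t)}\cong\mathcal{G}_s^{(p_s^t)}\oplus\mathcal{G}_s'^{(p_s^t)}.$$
Cohomology is additive, so the vanishing of $H^i(X_s,\mathcal{L}_s^m\otimes(\mathcal{E}_s^{\otimes n})^{(p_s^t)})$ for $i>0$ forces the vanishing for $\mathcal{G}_s$. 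The same additivity gives the boundedness of the $0$-regularity of $\mathcal{G}_s^{(p_s^t)}$. Lemma \ref{lemma2.6} and Remark \ref{rm3.6} then conclude.

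The only point requiring care is this spreading-out of the splitting. We must ensure that the thickening of $\mathcal{G}$ used in the definition is compatible with the split thickening. By Lemma \ref{lemma2.1}, any two thickenings have a common refinement, and the definitions quantify over all thickenings while only demanding an open set $U$. So after refining we may assume the thickening of $(X,\mathcal{O}_X(1),\mathcal{G})$ comes from the split one. Alternatively, we can take $A$ large enough that $n!$ is invertible in $A$, so that the idempotent is defined over $A$ itself. Everything else is formal.
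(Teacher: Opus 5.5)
Your proof is correct and follows the paper's own one-line argument. You iterate Theorem \ref{tensor2} to handle tensor powers, then use that in characteristic zero $\Sym^n\mathcal{E}$ and $\wedge^n\mathcal{E}$ are direct summands of $\mathcal{E}^{\otimes n}$; this splitting is preserved by Frobenius pullback and passes to cohomology and regularity. Your explicit treatment of spreading out the splitting (e.g.\ inverting $n!$ in $A$, which discards only finitely many characteristics) fills in a detail the paper leaves implicit. So does your remark that the $F^t$-ample tensor case also follows from Theorem \ref{tensor} without smoothness.
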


\section{$F^t$-semipositivity and vanishing theorems}\label{S4}
In this section, we give some generalizations of Fujita's vanishing theorem and the Kawamata-Viehweg vanishing theorem. We let $X$ be a projective scheme over a field $k$ with $\chr(k)=0$ throughout this section. We keep the same notations as that in the previous sections.

The following theorem is a variant of Fujita type vanishing theorem proved by Keeler \cite{Ke}.

\begin{theorem}\label{Fujita1}
Assume that $X$ is smooth. Let $\mathcal{L}$ be an ample line bundle on $X$. Given any locally free sheaf $\mathcal{F}$ on $X$, there exists an integer $m(\mathcal{F}, \mathcal{L})$ such that
$$H^i(X, \mathcal{F}\otimes\mathcal{L}^m\otimes\mathcal{G})=0$$ for all $i>u$, $m\geq m(\mathcal{F}, \mathcal{L})$ and any coherent sheaf $\mathcal{G}$ on $X$ with $\phi^t_{sp}(\mathcal{G})\leq u$.
\end{theorem}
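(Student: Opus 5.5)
The plan is to reduce modulo $p$ and use the Frobenius to move positivity from $\mathcal{G}$ onto its pull-back $\mathcal{G}_s^{(p_s^t)}$, where the hypothesis $\phi^t_{sp}(\mathcal{G})\leq u$ gives vanishing. The whole difficulty is uniformity: the twist in Remark \ref{rm3.6} depends on $\mathcal{G}$, whereas $m(\mathcal{F},\mathcal{L})$ may not. What saves us is that any fixed twist of $\mathcal{G}_s^{(p_s^t)}$ is eventually dominated by $p_s^t$ times a fixed positive twist.

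Since $\phi^t_{sp}$ does not depend on the choice of very ample bundle, we may replace $\mathcal{L}$ by a power $\mathcal{L}^a=\mathcal{O}_X(1)$ that is very ample. A bound for $\mathcal{F}\otimes\mathcal{L}^{r}$, $0\le r<a$, then gives the statement for all large $m$. Set $d=\dim X$ and $R=\max\{1,\reg_0(\mathcal{O}_X)\}$.

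\textbf{Step 1: resolution of $\mathcal{F}(m)$, depending only on $\mathcal{F}$ and $\mathcal{O}_X(1)$.} Choose $m(\mathcal{F},\mathcal{L})$ so that $\mathcal{F}(m)$ is $-c$-regular for all $m\ge m(\mathcal{F},\mathcal{L})$, where $c$ is a large constant fixed in Step 2. Lemma \ref{lemma3.3} then gives a resolution
$$\cdots\to V_1\otimes\mathcal{O}_X(c-R)\to V_0\otimes\mathcal{O}_X(c)\to\mathcal{F}(m)\to 0$$
of length $d$, with $\mathcal{F}$ locally free. Tensoring with $\mathcal{G}$ stays exact when $\mathcal{G}$ is locally free. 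In general one should use that $X$ is smooth and argue with $\mathcal{T}or$ and hypercohomology, or dualize. Chasing cohomology then reduces the claim to a uniform vanishing
$$H^{k}(X,\mathcal{G}(e))=0\quad\text{for }k>u,\ e\ge c-dR,$$
for every $\mathcal{G}$ with $\phi^t_{sp}(\mathcal{G})\le u$, with a threshold independent of $\mathcal{G}$.

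\textbf{Step 2: the uniform vanishing, via Frobenius.} Fix $\mathcal{G}$ and take a thickening of $(X,\mathcal{O}_X(1),\mathcal{G})$. By Remark \ref{rm3.6} there is $m_{\mathcal{G}}$ with $H^k(\mathcal{G}_s^{(p_s^t)}(n))=0$ for $k>u$, $n\ge m_{\mathcal{G}}$ and almost all $s$. Since $X_s$ is smooth, $F^t_s$ is flat and finite, and the projection formula gives
$$H^k\big(X_s,\mathcal{G}_s\otimes F^t_{s,*}\mathcal{N}\big)=H^k\big(X_s,\mathcal{G}_s^{(p_s^t)}\otimes\mathcal{N}\big)$$
for any line bundle $\mathcal{N}$. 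Take $\mathcal{N}=\omega_{X_s}^{1-p_s^t}(p_s^te)$. The trace map $F^t_{s,*}\mathcal{N}\to\mathcal{O}_{X_s}(e)$ is surjective, and its kernel is again of the form $F^t_{s,*}(\cdot)$ of a sheaf with a twist of order $p_s^t e$ minus a bounded correction (use the standard filtration of $F_*\mathcal{O}$ by Cartier operators and $B$-sheaves). Iterating gives a length-$d$ left resolution of $\mathcal{O}_{X_s}(e)$ by sheaves $F^t_{s,*}\mathcal{N}_j$ with
$$\mathcal{N}_j\cong\omega_{X_s}^{\text{bounded}}\otimes\Omega^{\text{bounded}}_{X_s}\otimes\mathcal{O}_{X_s}\big(p_s^te-O(1)\big).$$
Here "bounded" means independent of $s$. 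These sheaves come from a fixed finite list on $X$, so for $e\ge c-dR$ with $c$ chosen once (depending only on $X$, $K_X$, $\mathcal{O}_X(1)$) all twists are positive. Then $\mathcal{N}_j$ is of the form $(\text{fixed bundle})\otimes\mathcal{O}(n)$ with $n\ge m_{\mathcal{G}}$ as soon as $p_s\gg0$, by an amount depending on $\mathcal{G}$ but allowed to depend on $s$. Using Proposition \ref{tensor-CM} to absorb the fixed bundles, the spectral sequence of the resolution yields $H^k(X_s,\mathcal{G}_s(e))=0$ for $k>u$ and almost all $s$. Semicontinuity (Remark \ref{rm2.5}) then gives $H^k(X,\mathcal{G}(e))=0$.

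The main obstacle is Step 2: producing a resolution of $\mathcal{O}_{X_s}(e)$ by Frobenius pushforwards whose "error twists" are bounded independently of $p_s$. Only then is $c$ independent of $\mathcal{G}$ while $m_{\mathcal{G}}$ is beaten by the factor $p_s^t$. If the Cartier filtration is awkward, I would first try the cruder route of the proof of Proposition \ref{nef}(3). There one uses the surjection $F^t_{s,*}\big(\omega^{1-p_s^t}(p_s^t e)\big)\to\mathcal{O}(e)$, controls the kernel's higher cohomology by Serre vanishing uniformly in $s$ (as done there with $B=(n-d-1)H-K_X$), and uses descending induction on $k$ from $k=d$ to pass vanishing from $H^k$ of the middle term to $H^k$ of the quotient.
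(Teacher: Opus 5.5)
\textbf{Verdict: Step 2, the core of your argument, does not go through, so the proposal has a genuine gap.} Step 1 is fine. Your worry about tensoring with a non-locally-free $\mathcal{G}$ is unnecessary: a resolution of the locally free sheaf $\mathcal{F}(m)$ by locally free sheaves has locally free syzygies, so it stays exact after $\otimes\mathcal{G}$. The reduction to a vanishing $H^k(X,\mathcal{G}(e))=0$ for $k>u$, $e\ge e_0$, with $e_0$ independent of $\mathcal{G}$, is correct, and it is the whole content of the theorem.

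\textbf{Why Step 2 fails.} Take $t=1$. Then $F_{s,*}\mathcal{N}=F_{s,*}\omega_{X_s}\otimes\omega_{X_s}^{-1}(e)$ and the trace is the Cartier operator. Its kernel is $F_{s,*}B^d\otimes\omega_{X_s}^{-1}(e)$ with $B^d=d\Omega^{d-1}_{X_s}$, a locally free sheaf of rank $p_s^d-1$. This rank is not divisible by $p_s^d$, so the kernel is not $F_{s,*}$ of any coherent sheaf; likewise rank $p_s^{td}-1$ for general $t$. Hence the claimed left resolution of $\mathcal{O}_{X_s}(e)$ by sheaves $F^t_{s,*}\mathcal{N}_j$ with bounded error does not exist.

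If you unwind the kernel with the Cartier sequences $0\to F_*Z^j\to F_*\Omega^j\to F_*B^{j+1}\to0$ and $0\to F_*B^j\to F_*Z^j\xrightarrow{C}\Omega^j\to0$, you unavoidably meet the non-Frobenius terms $H^{k+1+d-j}(X_s,\Omega^j_{X_s}\otimes\omega_{X_s}^{-1}(e)\otimes\mathcal{G}_s)$ for $1\le j\le d-1$. These carry the twist $e$, not $p_s^te$. The hypothesis on $\mathcal{G}_s^{(p_s^t)}$ says nothing about them, and bounding them is itself a Kodaira--Akizuki--Nakano type problem for $\mathcal{G}$.

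\textbf{Why the fallback fails.} It has the same hole. Serre vanishing concerns a fixed sheaf under large twists. It cannot bound $H^{k+1}(X_s,\mathcal{K}_s\otimes\mathcal{G}_s)$ for a $p_s$-dependent kernel tensored with an arbitrary $\mathcal{G}$. In Proposition \ref{nef}(3) only surjectivity was needed, so no kernel ever had to be controlled. Your descending induction therefore gets only $k=d$ for free.

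For $t=1$ the induction could be closed by quantifying the inductive hypothesis over all fixed twisting bundles $\mathcal{W}$, since the bad terms sit in degree at least $k+2$. That is not what you wrote, and for $t>1$ intermediate Frobenius levels $\mathcal{G}_s^{(p_s^i)}$ with $0<i<t$ enter as well.

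\textbf{The missing input, as the paper supplies it.} Your heuristic that a fixed twist of $\mathcal{G}_s^{(p_s^t)}$ is beaten by $p_s^t$ times an ample twist is exactly Theorem \ref{tensor} combined with Lemma \ref{lemma2.9}. Together they give $\phi^t_a(\mathcal{A}\otimes\mathcal{G})\le\phi^t_{sp}(\mathcal{G})\le u$ for every ample line bundle $\mathcal{A}$.

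Feed this into Theorem \ref{thm2.13} with $j=d$. Its Deligne--Illusie splitting is precisely what disposes of the intermediate $\Omega^j$ terms above. The result is $H^i(X,\omega_X\otimes\mathcal{A}\otimes\mathcal{G})=0$ for all $i>u$ and every ample $\mathcal{A}$, with no threshold depending on $\mathcal{G}$.

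So the idea is to put $\omega_X$ next to $\mathcal{G}$ rather than resolve $\mathcal{O}(e)$ by Frobenius push-forwards. This bounds $\reg_u(\omega_X\otimes\mathcal{L}^m\otimes\mathcal{G})$ uniformly in $\mathcal{G}$ for $m\ge m_1$. Then $\omega_X^{-1}$ is absorbed into $\mathcal{F}$ by choosing $m_2$ with $\reg_0(\mathcal{F}\otimes\omega_X^{-1}\otimes\mathcal{L}^{m_2})<0$ and applying Proposition \ref{tensor-CM}. Equivalently, taking $\mathcal{A}=\omega_X^{-1}(e)$ gives exactly the uniform vanishing your Step 1 needs.
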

\begin{proof}
The proof is similar to that of \cite[Theorem 4.5]{Ke}. Let $\mathcal{O}_X(1)$ be a very ample line bundle on $X$. Let $d=\dim X$ and $R=\max\{1, \reg_0(\mathcal{O}_X)\}$. Assume that $\phi^t_{sp}(\mathcal{G})\leq u$. By Theorem \ref{tensor}, one sees that $\phi^t_{a}(\mathcal{L}^m\otimes\mathcal{G})\leq\phi^t_{sp}(\mathcal{G})$ for any $m\geq1$. This implies
$$H^j(X, \omega_X\otimes\mathcal{L}^m\otimes\mathcal{G})=0$$ for any $j>u$ and $m\geq1$ by Theorem \ref{thm2.13}. Hence one can take $m_1$ sufficiently large such that $$\reg_u(\omega_X\otimes\mathcal{L}^m\otimes\mathcal{G})<(u+1-d)(R-1)$$ for $m\geq m_1$. Taking $m_2$ sufficiently large, we can make $\reg_0(\mathcal{F}\otimes\omega^*_X\otimes\mathcal{L}^{m_2})<0$. Then by Proposition \ref{tensor-CM}, we have $\reg_u(\mathcal{F}\otimes\mathcal{L}^m\otimes\mathcal{G})<0$ for $m\geq m_1+m_2$. This immediately gives the theorem.
\end{proof}

We may have another generalization of Fujita's vanishing theorem without smoothness assumption.
\begin{theorem}\label{Fujita2}
Let $\mathcal{L}$ be an ample line bundle on $X$. Given any coherent sheaf $\mathcal{F}$ on $X$, there exists an integer $m(\mathcal{F}, \mathcal{L})$ such that
$$H^i(X, \mathcal{F}\otimes\mathcal{L}^m\otimes\mathcal{G})=0$$ for all $i>u$, $m\geq m(\mathcal{F}, \mathcal{L})$ and any locally free sheaf $\mathcal{G}$ on $X$ with $\phi^t_{sp}(\mathcal{G})\leq u$.
\end{theorem}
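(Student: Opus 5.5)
The plan is to follow Keeler's argument for the Fujita-type vanishing theorem without smoothness (cf. \cite[Theorem 4.5]{Ke}, non-smooth case). We reduce to a uniform bound on partial regularity and then resolve $\mathcal{F}$ by sums of negative twists of $\mathcal{L}$. Since the statement concerns $\mathcal{F}\otimes\mathcal{L}^m\otimes\mathcal{G}$ for a fixed $\mathcal{L}$, we may replace $\mathcal{L}$ by a power: the finitely many twists $\mathcal{F}\otimes\mathcal{L}^{r}$ with $0\le r<k$ can be treated separately. So we assume $\mathcal{L}=\mathcal{O}_X(1)$ is very ample. Put $d=\dim X$ and $R=\max\{1,\reg_0(\mathcal{O}_X)\}$.

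The first step is a uniform statement for $\mathcal{G}$ itself. If $\phi^t_{sp}(\mathcal{G})\le u$, we want $H^i(X,\mathcal{G}(n))=0$ for all $i>u$ and all $n\ge c$, where $c$ depends only on $X$ and $\mathcal{O}_X(1)$ and not on $\mathcal{G}$. To get this, take a thickening of $(X,\mathcal{O}_X(1),\mathcal{G})$. Since $\mathcal{G}$ is locally free, the same device as in Theorem \ref{tensor} applies: $\mathcal{O}_X(n)$ is $F^t$-ample for $n\ge 1$ by Lemma \ref{lemma2.9}, so Theorem \ref{tensor} gives $\phi^t_a(\mathcal{G}(n))\le\phi^t_{sp}(\mathcal{G})\le u$.

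Next we would like to pass from $\mathcal{G}_s^{(p_s^t)}$ back to $\mathcal{G}_s$. The issue is that, unlike in Theorem \ref{Fujita1}, we cannot use the Deligne--Illusie argument of Theorem \ref{thm2.13} without smoothness. Instead, I would argue directly with the definition via Lemma \ref{lemma2.6}, tested against $\mathcal{E}=\mathcal{O}_X$. Because $\mathcal{O}_X(n)^{(p_s^t)}=\mathcal{O}_{X_s}(p_s^t n)$, we get $H^i(X_s,\mathcal{G}_s^{(p_s^t)}(p_s^tn))=0$ for $i>u$ and general $s$. This is still not a statement about $\mathcal{G}_s(n)$.

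The honest route is therefore to avoid Frobenius descent altogether. The definition of $F^t$-amplitude is defined by vanishing of $H^i(X_s,\mathcal{E}_s\otimes\mathcal{F}_s^{(p_s^t)})$, and characteristic zero statements about $\mathcal{G}$ follow only through semicontinuity once vanishing holds for $\mathcal{G}_s$ itself. So I would instead prove the analogue of Theorem \ref{thm2.13} with $j=0$ in the weakest form needed: $H^i(X,\mathcal{G}\otimes\mathcal{M})=0$ for $i>\phi^t_a(\mathcal{M}\otimes\mathcal{G})$ after twisting by a sufficiently positive $\mathcal{M}$. Concretely, the target is to show that $\phi^t_a(\mathcal{G}(n))\le u$ forces $H^i(X,\mathcal{G}(n+c))=0$ for $i>u$.

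The main obstacle is exactly this descent from $(F^t)^*$ to the original sheaf on a singular $X$. I would try to handle it with the Frobenius trace. Embed $X\subset\mathbb{P}^N$ by $\mathcal{O}_X(1)$ and push forward $\mathcal{G}$, replacing $\mathcal{G}$ by a locally free resolution on $\mathbb{P}^N$ only where that is harmless. On $\mathbb{P}^N$, the splitting $\mathcal{O}\to F_*\mathcal{O}$ and the isomorphism $F_*^t\mathcal{O}(a)\cong\bigoplus\mathcal{O}(b_j)$ with $b_j\ge \lfloor a/p^t\rfloor-N$ give injections $H^i(\mathcal{G}_s(n))\hookrightarrow H^i(\mathcal{G}_s^{(p_s^t)}(p_s^tn)\otimes\text{(bounded twists)})$. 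If that splitting does not transfer to singular $X$, the fallback is to restrict to $\mathcal{G}$ locally free and argue as in Theorem \ref{Bott}: use $X_s$ Frobenius split for general $s$ whenever available, and otherwise first reduce to the normalization of $X$ by finite pushforward.

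Once the uniform bound $\reg_u(\mathcal{G}(c'))\le 0$ is established, the remaining step is routine. Resolve $\mathcal{F}$ as in Lemma \ref{lemma3.3} by $V_j\otimes\mathcal{O}_X(-m_0-jR)$ for $j=0,\dots,d$. Tensor this resolution with the locally free sheaf $\mathcal{G}(m)$, which preserves exactness, and chase cohomology through the resulting complex. This gives $H^i(\mathcal{F}\otimes\mathcal{G}(m))=0$ for $i>u$ once $m\ge m_0+dR+c'$, which depends only on $\mathcal{F}$ and $\mathcal{L}$.
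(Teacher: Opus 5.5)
There is a genuine gap, at exactly the step you flag as the main obstacle. Your final paragraph (resolving $\mathcal{F}$ by $V_j\otimes\mathcal{O}_X(-m_0-jR)$ and chasing) is fine. But it presupposes the uniform vanishing $H^i(X,\mathcal{G}(n))=0$ for $i>u$ and $n\ge c$, with $c$ independent of $\mathcal{G}$. That uniformity is the whole content of the theorem, and nothing in your proposal establishes it on a singular $X$. None of the devices you suggest works:
\begin{itemize}
\item The splitting of $\mathcal{O}_{\mathbb{P}^N}\to F_*\mathcal{O}_{\mathbb{P}^N}$ only relates $H^i(\mathcal{G}_s(n))$ to the cohomology of $F^*_{\mathbb{P}^N}(i_*\mathcal{G}_s)$. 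That sheaf lives on the Frobenius thickening of $X_s$ (it is a module over $\mathcal{O}_{\mathbb{P}^N}/I^{[p_s]}$) and merely surjects onto $i_*\mathcal{G}_s^{(p_s)}$. The hypothesis $\phi^t_{sp}(\mathcal{G})\le u$ gives no control over it, nor over the terms of a locally free resolution of $i_*\mathcal{G}$ on $\mathbb{P}^N$.
\item Frobenius splitting of $X_s$ is not available in general. A splitting of $\mathcal{O}_{X_s}\to F_*\mathcal{O}_{X_s}$ on a smooth $X_s$ gives a nonzero section of $\omega_{X_s}^{\otimes(1-p_s)}$, which is impossible already for curves of genus $\ge 2$. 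Passing to the normalization does not help.
\item A ``$j=0$'' version of Theorem \ref{thm2.13} is empty, since it only yields vanishing for $i>d+\phi^t_a$. Deligne--Illusie gives useful information only in the $j=d$ slot, i.e.\ for $\omega_X\otimes\mathcal{E}$, and it requires $X$ smooth.
\end{itemize}

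The idea you are missing is to pass to a resolution of singularities, where the smooth case applies, and to absorb the discrepancy by induction on $\dim X$.
\begin{enumerate}
\item The paper first reduces to $X$ integral. It filters $\mathcal{F}$ by powers of the nilradical and inducts on the number of components, using Proposition \ref{pullback} to keep $\phi^t_{sp}(\mathcal{G}|_Z)\le u$ on closed subschemes $Z$.
\item It then takes a resolution $f\colon Y\to X$ and an ample $\mathcal{H}$ on $Y$ with $R^if_*\mathcal{H}^l=0$ for $i>0$ and $l\ge l_0$.
\item By Proposition \ref{pullback} and Theorem \ref{tensor}, $\phi^t_{sp}(f^*(\mathcal{L}^m\otimes\mathcal{G}))\le u$. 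So Theorem \ref{Fujita1} on $Y$ applies; its bound does not depend on the sheaf, and this is where Deligne--Illusie enters, through $\omega_Y$. It gives $l_1\ge l_0$ with $H^j(X,f_*\mathcal{H}^{l_1}\otimes\mathcal{L}^m\otimes\mathcal{G})=H^j(Y,\mathcal{H}^{l_1}\otimes f^*(\mathcal{L}^m\otimes\mathcal{G}))=0$ for all $j>u$, $m\ge 0$ and all such $\mathcal{G}$.
\item There is an injection $f_*\mathcal{H}^{l_1}\hookrightarrow\mathcal{L}^{n_0}$ with torsion cokernel $\mathcal{Q}$. Applying the induction hypothesis to $\mathcal{Q}$ gives $H^j(X,\mathcal{L}^m\otimes\mathcal{G})=0$ for $j>u$ and $m\ge m_0+n_0$, uniformly in $\mathcal{G}$.
\end{enumerate}
At that point your resolution argument finishes the proof.
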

\begin{proof}
By Lemma \ref{lemma3.10}, we may assume that $k$ is algebraically closed. We use the induction on dimension. When $\dim X=0$ the theorem obviously holds.
\smallskip

\noindent\textbf{Step 1}. We can reduce the proof to the case that $X$ is reduced.

\smallskip
We assume that the theorem holds for reduced schemes. Let $f: X_{red}\rightarrow X$ be the natural immersion.
Let $\mathcal{N}$ be the nilradical of $\mathcal{O}_X$, so that $\mathcal{N}^r=0$ for some positive integer $r$. Consider the
filtration $$\mathcal{F}\supset\mathcal{N}\cdot\mathcal{F}\supset\mathcal{N}^2\cdot
\mathcal{F}\supset\cdots\supset\mathcal{N}^r\cdot\mathcal{F}=0.$$ The quotients $\mathcal{N}^j\mathcal{F}/\mathcal{N}^{j+1}\mathcal{F}$ are coherent $\mathcal{O}_{X_{red}}$-modules. Then there exist coherent sheaves $\mathcal{M}_j$ on $X_{red}$ such that $f_*\mathcal{M}_j=\mathcal{N}^j\mathcal{F}/\mathcal{N}^{j+1}\mathcal{F}$. By Proposition \ref{pullback}, one sees that
$$\phi^t_{sp}(f^*\mathcal{G})\leq\phi^t_{sp}(\mathcal{G})\leq u.$$ Therefore,
by the assumption,
$$H^i(X, (\mathcal{N}^j\mathcal{F}/\mathcal{N}^{j+1}\mathcal{F})\otimes\mathcal{L}^m\otimes\mathcal{G})=H^i(X_{red}, \mathcal{M}_j\otimes f^*\mathcal{L}^m\otimes f^*\mathcal{G})=0$$ for $i>u$ and $m\geq m(\mathcal{N}^j\mathcal{F}/\mathcal{N}^{j+1}\mathcal{F}, \mathcal{L})$.
From the exact sequences $$0\rightarrow\mathcal{N}^{j+1}\mathcal{F}\rightarrow\mathcal{N}^j\mathcal{F}\rightarrow
\mathcal{N}^j\mathcal{F}/\mathcal{N}^{j+1}\mathcal{F}\rightarrow0,$$ it follows that $$H^i(X, (\mathcal{N}^j\mathcal{F}\otimes\mathcal{L}^m\otimes\mathcal{G})=0$$ for any $i>u$ , $r\geq j\geq0$ and $m\geq m(\mathcal{N}^j\mathcal{F},\mathcal{L})$. When $j=0$ we obtain the desired
vanishing.

\smallskip

\noindent\textbf{Step 2}. We can reduce the proof to the case that $X$ is irreducible.

\smallskip
Assume that the theorem holds for reduced and irreducible schemes. Let $X=X_1\cup\cdots\cup X_n$ be its decomposition into irreducible
components and $\mathcal{I}_1$ be the ideal sheaf of $X_1$ in $X$. We consider the exact sequence
$$0\rightarrow\mathcal{I}_1\cdot\mathcal{F}\rightarrow\mathcal{F}\rightarrow\mathcal{F}/\mathcal{I}_1\cdot\mathcal{F}\rightarrow0.$$
Since $\mathcal{I}_1\mathcal{F}$ and $\mathcal{F}/\mathcal{I}_1\mathcal{F}$ are supported on $X_2\cup\cdots\cup X_n$ and $X_1$ respectively, by induction on $n$ and Proposition \ref{pullback}, we may assume that
$$H^i(X, (\mathcal{I}_1\mathcal{F})\otimes\mathcal{L}^m\otimes\mathcal{G})=H^i(X, (\mathcal{F}/\mathcal{I}_1\mathcal{F})\otimes\mathcal{L}^m\otimes\mathcal{G})=0$$ for $i>u$ and
$m\geq m(\mathcal{I}_1\mathcal{F}, \mathcal{F}/\mathcal{I}_1\mathcal{F}, H)$. The induced long exact sequence for cohomology of the above
exact sequence shows the required vanishing.

\smallskip

\noindent\textbf{Step 3}. We show Theorem \ref{Fujita2} for integral schemes.

\smallskip

We assume that $X$ is reduced and irreducible. Let $f: Y\rightarrow X$ be a resolution of singularities and $\mathcal{H}$ an ample line bundle on $Y$. We can take an integer $l_0$ such that $R^if_*\mathcal{H}^l=0$ for any $i>0$ and $l\geq l_0$. By Proposition \ref{pullback} and Theorem \ref{tensor}, one sees that
$$\phi^t_{sp}(f^*\mathcal{L}^m\otimes f^*\mathcal{G})\leq \phi^t_{sp}(\mathcal{L}^m\otimes\mathcal{G})\leq\phi^t_{sp}(\mathcal{G})$$ for any $m\geq0$.
From Theorem \ref{Fujita1}, it follows that
there exists $l_1\geq l_0$ such that $$H^j(Y, \mathcal{H}^l\otimes f^*(\mathcal{L}^m\otimes\mathcal{G}))=H^j(X, f_*\mathcal{H}^l\otimes\mathcal{L}^m\otimes\mathcal{G})=0$$ for any $j>u$, $m\geq0$, $l\geq l_1$ and locally free sheaf $\mathcal{G}$ with $\phi^t_{sp}(\mathcal{G})\leq u$.

Take an $n_0$ such that $(f_*\mathcal{H}^{l_1})^*\otimes\mathcal{L}^{n_0}$ is generated by global sections. Since $f_*\mathcal{H}^{l_1}$ is a rank one torsion free sheaf, one obtains an exact sequence
\begin{equation}\label{4.1}
0\rightarrow f_*\mathcal{H}^{l_1}\rightarrow \mathcal{L}^{n_0}\rightarrow\mathcal{Q}\rightarrow0.
\end{equation}
Note that $\mathcal{Q}$ is a torsion sheaf. By the induction and Proposition \ref{pullback}, one can choose an $m_0$ such that $H^j(\mathcal{Q}\otimes \mathcal{L}^m\otimes\mathcal{G})=0$ for any $j>u$, $m\geq m_0$ and locally free sheaf $\mathcal{G}$ with $\phi^t_{sp}(\mathcal{G})\leq u$. Hence the induced long exact sequence of (\ref{4.1}) gives $H^j(\mathcal{L}^m\otimes\mathcal{G})=0$ for $j>u$, $m\geq m_0+n_0$ and $\phi^t_{sp}(\mathcal{G})\leq u$. By Lemma \ref{lemma3.3}, we may find a resolution of $\mathcal{F}$ by direct sums of $\mathcal{L}^{-j}$. The desired vanishing follows by chasing through the resolution.

We finish the proof of the theorem.
\end{proof}

The following Kawamata-Viehweg type vanishing theorem is a generalization of Arapura-Matsuki-Patel-W\l odarczyk's vanishing theorem \cite[Theorem 1.3]{AMPW}.
\begin{theorem}\label{KV}
Assume that $X$ is smooth with $\dim X=d$ and $k$ is algebraically closed. Let $D=\sum_{j=1}^{l} D_j$ be a simple normal crossings divisor, $A$ an ample $\mathbb{Q}$-divisor and $\mathcal{E}$ a locally free sheaf on $X$. Suppose that $\lceil A\rceil-A\leq D$, and let $G$ be an integral divisor such that $\lceil A\rceil-A\leq G\leq D$.
Then we have $$H^i(X, \Omega^j_X(\log D)(\lceil A\rceil-G)\otimes \mathcal{E})=0$$ for
$i+j>d+\phi_{sp}^{t}(\mathcal{E})$.
\end{theorem}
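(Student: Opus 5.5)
We follow the strategy of \cite[Theorem 1.3]{AMPW}, which is itself modelled on the proof of Theorem \ref{thm2.13}. The two new ingredients are a Kawamata covering, which makes the $\mathbb{Q}$-divisor $A$ integral, and Theorem \ref{tensor}, which turns the ampleness of $A$ into a gain in amplitude.

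\textbf{Step 1: reduce to an integral ample divisor.} Write $A=\sum a_jD_j+B$ with $B$ having no components in $D$; we may perturb $A$ so that the fractional parts live only on the $D_j$. Choose $N$ with $NA$ integral. Take a Kawamata cover $\pi\colon Y\to X$: it is finite and flat, $Y$ is smooth, and $\pi^*D_j=N D_j'$ with $D'=\pi^*D_{\mathrm{red}}$ simple normal crossings. Then $\pi^*\Omega^j_X(\log D)\cong\Omega^j_Y(\log D')$, and $\pi^*A$ is an integral ample divisor up to the exceptional branch divisors. Moreover $\mathcal{O}_X(\lceil A\rceil-G)\otimes\Omega^j_X(\log D)$ is a direct summand of $\pi_*(\Omega^j_Y(\log D')(\pi^*A-G'))$ for a suitable reduced divisor $G'\le D'$, via the eigensheaf decomposition of $\pi_*\mathcal{O}_Y$ under the Galois group. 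Here $\chr k=0$, and $\pi$ is finite, so cohomology is preserved. By Proposition \ref{pullback} we have $\phi^t_{sp}(\pi^*\mathcal{E})\le\phi^t_{sp}(\mathcal{E})$. This reduces the problem to the case where $A$ is an integral ample divisor $L$ and $G$ is a reduced subdivisor of $D$. Replacing $D$ by $D$ minus the components not in $G$ only changes the log sheaf by residue sequences; alternatively one can run the argument of Step 2 directly with the twist $(-G)$.

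\textbf{Step 2: vanishing for integral $L$.} By Theorem \ref{tensor} and Lemma \ref{lemma2.9}, $\mathcal{O}_X(L)$ is $F^t$-ample, so $\phi^t_a(\mathcal{E}(L))\le\phi^t_{sp}(\mathcal{E})$. We now repeat the Deligne--Illusie argument of Theorem \ref{thm2.13} with $\mathcal{E}(L)$ in place of $\mathcal{E}$, for general closed points $s$ of a thickening. Hara's quasi-isomorphism $\Omega^\bullet_{X_s}(\log D_s)(-G_s)\simeq\Omega^\bullet_{X_s}(\log D_s)(-p_sG_s)$ is the version twisted by a subdivisor, which is also valid by \cite[3.3]{Hara}. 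Combining it with the decomposition $F_{s,*}\Omega^\bullet(\log D_s)\cong\bigoplus\Omega^j(\log D_s)[-j]$ and the projection formula, the hypercohomology spectral sequence gives, for $i>d+\phi^t_a(\mathcal{E}(L))$,
\[\bigoplus_j H^{i-j}\big(\Omega^j_{X_s}(\log D_s)(-G_s)\otimes(\mathcal{E}(L))_s^{(p_s^{m})}\big)\hookrightarrow\bigoplus_j H^{i-j}\big(\Omega^j_{X_s}(\log D_s)(-G_s)\otimes(\mathcal{E}(L))_s^{(p_s^{m+1})}\big).\]
Iterating from $m=0$ up to $m=t$, the right-hand side vanishes at level $t$ by the definition of $\phi^t_a$ (Lemma \ref{lemma2.7}). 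Hence the left-hand side vanishes at $m=0$. Semicontinuity then lifts the vanishing to characteristic zero, giving $H^i(\Omega^j_X(\log D)(L-G)\otimes\mathcal{E})=0$ for $i+j>d+\phi^t_{sp}(\mathcal{E})$.

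\textbf{Main obstacle.} The delicate step is Step 1. We must check that the covering trick is compatible with the rounding condition $\lceil A\rceil-A\le G\le D$, so that the correct eigensheaf is exactly $\Omega^j_X(\log D)(\lceil A\rceil-G)$. This requires a careful local computation of $\pi_*\Omega^j_Y(\log D')$ along the $D_j$, of the kind done in \cite{AMPW} and by Esnault--Viehweg. If that computation is awkward, a fallback is to avoid covers altogether. One would then run Hara's argument with the $\mathbb{Q}$-divisor directly: for $p_s\gg0$, write $p_s^{e}A=\lfloor p_s^e A\rfloor+\text{fraction}$ and use the quasi-isomorphism $\Omega^\bullet(\log D)(-G)\simeq\Omega^\bullet(\log D)(-p G+\text{bounded correction})$, as in the proof of \cite[Theorem 1.3]{AMPW}.
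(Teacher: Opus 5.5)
Your proof is correct. Its skeleton matches the paper's: a Kawamata cover makes $A$ integral, then Theorem \ref{tensor} gives $\phi^t_a(\mathcal{E}(L))\le\phi^t_{sp}(\mathcal{E})$, followed by a Deligne--Illusie argument. The genuine difference is how you treat $G\neq D$ for integral $A$.

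The paper proves only $G=D$ directly, where the claim is literally Theorem \ref{thm2.13} applied to $\mathcal{E}(A)$. It reaches a general $G\le D-D_1$ through the residue sequence $0\to\Omega^j_X(\log (D-D_1))(A-G)\otimes\mathcal{E}\to\Omega^j_X(\log D)(A-G)\otimes\mathcal{E}\to\Omega^{j-1}_{D_1}(\log (D-D_1)|_{D_1})((A-G)|_{D_1})\otimes\mathcal{E}|_{D_1}\to 0$. It inducts on the number of components and on $\dim X$, with Proposition \ref{pullback} supplying $\phi^t_{sp}(\mathcal{E}|_{D_1})\le\phi^t_{sp}(\mathcal{E})$.

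You instead rerun the proof of Theorem \ref{thm2.13} with the twist $-G$. The quasi-isomorphism you need, $\Omega^\bullet_{X_s}(\log D_s)(-p_sG_s)\hookrightarrow\Omega^\bullet_{X_s}(\log D_s)(-G_s)$, does hold. It is Hara's lemma with coefficient $p_s-1$ on the components of $G$ and $0$ elsewhere, twisted by $F_s^*\mathcal{O}(-G_s)=\mathcal{O}(-p_sG_s)$. In one variable, enlarging the complex from $x^{b}$ to $x^{b-1}$ adds the two-term complex $k\,x^{b-1}\to k\,x^{b-1}dx/x$ with differential multiplication by $b-1$. This is acyclic unless $p_s\mid b-1$, which never happens for $2\le b\le p_s$.

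What each route buys:
\begin{itemize}
\item Yours handles all reduced $G\le D$ in a single characteristic-$p$ computation, with no restriction to the $D_j$ and no induction on dimension.
\item The paper's uses Theorem \ref{thm2.13} as a black box and needs only characteristic-zero exact sequences, at the price of the double induction.
\end{itemize}

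Three minor points, none affecting the argument:
\begin{itemize}
\item No perturbation of $A$ is needed, since $\lceil A\rceil-A\le D$ already puts the fractional part on $D$.
\item The isomorphism $\pi^*\Omega^j_X(\log D)\cong\Omega^j_Y(\log D')$ fails along the auxiliary branch divisors of a Kawamata cover. All you actually use is the identification of the $\Gamma$-invariant part of $\pi_*(\Omega^j_Y(\log D')(\pi^*A-G'))$ with $\Omega^j_X(\log D)(\lceil A\rceil-G)$. That is \cite[Lemma 3.2.2]{AMPW}, exactly what the paper cites.
\item The hypercohomology spectral sequence gives an inequality of dimensions rather than an injection, which is enough for the vanishing.
\end{itemize}
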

\begin{proof}
We follow the strategy of the first proof of \cite[Theorem 1.3]{AMPW}.
\smallskip

\noindent\textbf{Step 1}. We prove Theorem \ref{KV} in the setting that $A$ is integral.

\smallskip

By Theorem \ref{tensor}, one sees that $\phi^t_{a}(\mathcal{E}(A))\leq\phi^t_{sp}(\mathcal{E})$. Thus the statement in the case of $G=D$ is nothing but Theorem \ref{thm2.13}. In the case that $$G\leq D^{\prime}:=D-D_1<D,$$ we proceed via induction on the number of the components in $D$ and the dimension of $X$. From the short exact sequence
\begin{eqnarray*}
0\rightarrow\Omega^j_X(\log D^{\prime})(A-G)\otimes \mathcal{E}\rightarrow\Omega^j_X(\log D)(A-G)\otimes \mathcal{E}\\
\rightarrow\Omega^{j-1}_{D_1}(\log (D^{\prime}|_{D_1}))((A-G)|_{D_1})\otimes \mathcal{E}|_{D_1}\rightarrow0,
\end{eqnarray*}
one obtains a long exact sequence in cohomology
\begin{eqnarray*}
\cdots\rightarrow H^i(X, \Omega^j_X(\log D^{\prime})(A-G)\otimes \mathcal{E})\rightarrow H^i(X, \Omega^j_X(\log D)(A-G)\otimes \mathcal{E})\\
\rightarrow H^i(D_1,\Omega^{j-1}_{D_1}(\log (D^{\prime}|_{D_1}))((A-G)|_{D_1})\otimes \mathcal{E}|_{D_1})\rightarrow\cdots.
\end{eqnarray*}
By Theorem \ref{pullback}, one deduces that $\phi_{sp}^{t}(\mathcal{E}|_{D_1})\leq \phi_{sp}^{t}(\mathcal{E})$. Therefore, if $i+j>d+\phi_{sp}^{t}(\mathcal{E})$, by induction one sees that both the first and the last term are 0. These imply that
$$H^i(X, \Omega^j_X(\log D)(A-G)\otimes \mathcal{E})=0$$ if
$i+j>d+\phi_{sp}^{t}(\mathcal{E})$.

\smallskip

\noindent\textbf{Step 2}. We prove Theorem \ref{KV} in the case that $A$ is fractional.

\smallskip
Using Kawamata's covering lemma (\cite[Lemma 3.2.1]{AMPW}) and \cite[Lemma 3.2.2]{AMPW}, one sees that there exist a smooth projective
variety $Y$ and a finite Galois covering $\pi: Y\rightarrow X$ with Galois group $\Gamma:=\Gal(K(Y)/K(X))$ such that $\pi^*A$ is integer, $D_Y$ has simple normal crossings and $$\left[\pi_*\Big(\Omega^j_Y(\log D_Y)(\pi^*A-G_Y)\Big)\right]^{\Gamma}=\Omega^j_X(\log D)(\lceil A\rceil-G).$$
Here $G_Y$ (resp. $D_Y$) is the pull-back of the divisor $G$ (resp. $D$) to $Y$ with the reduced structure. This implies
\begin{eqnarray*}
 \left[\pi_*\Big(\Omega^j_Y(\log D_Y)(\pi^*A-G_Y)\otimes\pi^*\mathcal{E}\Big)\right]^{\Gamma}&=&\left[\pi_*\Big(\Omega^j_Y(\log D_Y)(\pi^*A-G_Y)\Big)\right]^{\Gamma}\otimes\mathcal{E} \\
   &=&\Omega^j_X(\log D)(\lceil A\rceil-G)\otimes\mathcal{E}.
\end{eqnarray*}
Hence, from the consequence of Step 1, it follows that
\begin{eqnarray*}
  H^i(\Omega^j_X(\log D)(\lceil A\rceil-G)\otimes \mathcal{E}) &=& H^i(\Big[\pi_*\Big(\Omega^j_Y(\log D_Y)(\pi^*A-G_Y)\otimes\pi^*\mathcal{E}\Big)\Big]^{\Gamma}) \\
   &=& H^i(\pi_*\Big(\Omega^j_Y(\log D_Y)(\pi^*A-G_Y)\otimes\pi^*\mathcal{E}\Big))^{\Gamma}\\
   &=&H^i(\Omega^j_Y(\log D_Y)(\pi^*A-G_Y)\otimes\pi^*\mathcal{E})^{\Gamma}\\
   &=&0
\end{eqnarray*}
when $i+j>d+\phi_{sp}^{t}(\pi^*\mathcal{E})$.
Since $\phi_{sp}^{t}(\pi^*\mathcal{E})\leq\phi_{sp}^{t}(\mathcal{E})$ by Theorem \ref{pullback}, we obtain the desired conclusion.
\end{proof}

By Lemma \ref{nef}, one sees that $\mathcal{O}_X$ is $F^t$-semipositive. Hence taking $\mathcal{E}=\mathcal{O}_X$,
we recover the vanishing theorem of Arapura-Matsuki-Patel-W\l odarczyk:
\begin{theorem}\label{AMPW}
Assume that $X$ is smooth with $\dim X=d$ and $k$ is algebraically closed. Let $D=\sum_{j=1}^{l} D_j$ be a simple normal crossings divisor, $A$ an ample $\mathbb{Q}$-divisor on $X$. Suppose that $\lceil A\rceil-A\leq D$, and let $G$ be an integral divisor such that $\lceil A\rceil-A\leq G\leq D$.
Then we have $$H^i(X, \Omega^j_X(\log D)(\lceil A\rceil-G))=0$$ for
$i+j>d$.
\end{theorem}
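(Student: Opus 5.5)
The plan is to deduce Theorem \ref{AMPW} directly from Theorem \ref{KV} with $\mathcal{E}=\mathcal{O}_X$, as the sentence before the statement indicates. The only input to check is that $\phi^t_{sp}(\mathcal{O}_X)=0$.

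\textbf{Step 1: $\mathcal{O}_X$ is $F^t$-semipositive.} I would argue this directly rather than through Proposition \ref{nef}(2), since it is elementary. For any thickening $(\widetilde{X},\mathcal{O}_{\widetilde{X}}(1))$ over $\Spec A$, we have $(F^t_s)^*\mathcal{O}_{X_s}=\mathcal{O}_{X_s}$. Hence the $0$-regularity of $\mathcal{O}_{X_s}^{(p_s^t)}$ equals $\reg_0(\mathcal{O}_{X_s})$. By semicontinuity, this is bounded by a constant $m_0$, for instance $\reg_0(\mathcal{O}_X)$, for almost all $s$.

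Concretely: Serre vanishing gives $H^i(X,\mathcal{O}_X(m))=0$ for all $i>0$ and $m\geq m_0$. Semicontinuity over an open subset of $\Spec A$ gives the same vanishing on $X_s$ for almost all $s$. Remark \ref{rm3.6} then yields $\phi^t_{sp}(\mathcal{O}_X)=0$. Alternatively, $\mathcal{O}_X$ is an arithmetically nef line bundle, since each $\mathcal{O}_{X_s}$ is trivial, and Proposition \ref{nef}(2) applies.

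\textbf{Step 2: apply Theorem \ref{KV}.} The hypotheses of Theorem \ref{AMPW} coincide with those of Theorem \ref{KV}: $X$ is smooth of dimension $d$, $k$ is algebraically closed, $D$ is an snc divisor, and $A$ is an ample $\mathbb{Q}$-divisor with $\lceil A\rceil-A\leq G\leq D$. Taking $\mathcal{E}=\mathcal{O}_X$ gives
\[
H^i(X,\Omega^j_X(\log D)(\lceil A\rceil-G))=0 \quad\text{for } i+j>d+\phi^t_{sp}(\mathcal{O}_X)=d,
\]
which is the claim.

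I do not expect a real obstacle. The only point to verify is that the vanishing in Step 1 holds uniformly in $s$, and this is a standard consequence of flatness and semicontinuity on an open subset of $\Spec A$, with no Frobenius twist to control. All the substantive work, namely the Deligne--Illusie decomposition, the induction on components, and Kawamata covering, is already contained in Theorems \ref{thm2.13} and \ref{KV}.
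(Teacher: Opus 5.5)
Your proposal matches the paper's argument: the paper observes that $\mathcal{O}_X$ is $F^t$-semipositive by Proposition \ref{nef}(2), since it is an arithmetically nef line bundle, and then applies Theorem \ref{KV} with $\mathcal{E}=\mathcal{O}_X$. Your direct check of $\phi^t_{sp}(\mathcal{O}_X)=0$ also works, but to get uniformity in $m$ you should apply semicontinuity only to the finitely many groups $H^i(X_s,\mathcal{O}_{X_s}(m_0-i))$, $1\le i\le d$, and then use Lemma \ref{pCM}, rather than applying semicontinuity separately for each $m\geq m_0$ as your ``Concretely'' paragraph does.
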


\section{$F^t$-semipositivity and Chern characters}\label{S5}
In this section, we show some inequalities of Chern characters for $F^t$-semipositive sheaves. We let $X$ be a smooth projective scheme over a field $k$ with $\dim X=d$ in this section. We freely use the notations in Section \ref{S2} and \ref{S3}.

\begin{theorem}\label{Chern1}
Assume that $\chr(k)=0$. Let $\mathcal{E}$ be an $F^t$-semipositive sheaf on $X$, then $\ch_{d}(\mathcal{E})\geq0$. Moreover, if $\mathcal{E}$ is locally free, then for any closed subscheme $D\subset X$ of dimension $e$ we have $D\cdot\ch_e(\mathcal{E})\geq0$.
\end{theorem}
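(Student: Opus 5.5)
The plan is to compute $\chi(X_s,\mathcal{E}_s^{(p_s^t)}(m))$ by Riemann--Roch in two ways. One bound comes from the vanishing of higher cohomology, which gives nonnegativity. The other is the polynomial expansion in $p_s^t$, whose leading coefficient is $\ch_d(\mathcal{E})$.

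Fix a very ample $\mathcal{O}_X(1)$ with $H$ its divisor class, and a thickening $(\widetilde{X},\mathcal{O}_{\widetilde{X}}(1),\widetilde{\mathcal{E}})$ over $\Spec A$. Since $X$ is smooth, $\mathcal{E}$ has a finite locally free resolution, and we thicken it too. For general $s$, $X_s$ is smooth and $F_s$ is flat, so $(F_s^t)^*$ is exact. Hence $\ch(\mathcal{E}_s^{(p_s^t)})$ is computed from the resolution. Since $(F_s^t)^*$ multiplies Chern roots by $p_s^t$, we get $\ch_i(\mathcal{E}_s^{(p_s^t)})=p_s^{ti}\ch_i(\mathcal{E}_s)$ in numerical classes. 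By $F^t$-semipositivity and Remark \ref{rm3.6}, there is $m_0$ with $H^j(X_s,\mathcal{E}_s^{(p_s^t)}(m_0))=0$ for all $j>0$ and almost all $s$. Thus
$$0\le h^0=\chi(X_s,\mathcal{E}_s^{(p_s^t)}(m_0))=\int_{X_s}\Big(\sum_i p_s^{ti}\ch_i(\mathcal{E}_s)\Big)e^{m_0H_s}\Td(X_s).$$
Intersection numbers on $X_s$ agree with those on $X$ for almost all $s$, by flatness and constancy of intersection numbers in the family. So the right side is a polynomial in $q=p_s^t$ with coefficients independent of $s$, and its degree-$d$ coefficient is $\int_X\ch_d(\mathcal{E})$. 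Dividing by $q^d$ and letting $p_s\to\infty$ gives $\ch_d(\mathcal{E})\ge0$. The point that needs care is that Riemann--Roch in characteristic $p$ for the fibers holds and matches the generic computation. It does, because $\chi$ is locally constant in flat families, so one can compute it through the resolution and the thickening.

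For the second statement, take $\mathcal{E}$ locally free and $D\subset X$ a closed subscheme of dimension $e$. By Proposition \ref{pullback} applied to the closed immersion $\iota:D\to X$, $\iota^*\mathcal{E}$ is $F^t$-semipositive on $D$. Now $D$ may be singular, so I will not use the first part directly on $D$. Instead I repeat the same argument on $D$. Since $\mathcal{E}|_{D_s}$ is locally free, $(F^t)^*$ still scales Chern characters by $p^{ti}$. Singular Riemann--Roch (Baum--Fulton--MacPherson, or the Riemann--Roch for the closed immersion into $X$) gives
$$\chi(D_s,\mathcal{E}_s^{(p_s^t)}(m_0)|_{D_s})=\int \ch(\mathcal{E}_s^{(p_s^t)}(m_0))\cap \tau(\mathcal{O}_{D_s}).$$
Here $\tau(\mathcal{O}_{D_s})$ has top-dimensional part $[D_s]$. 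The leading $q^e$ coefficient is therefore $D\cdot\ch_e(\mathcal{E})$. The left side equals $h^0\ge0$ by the vanishing of higher cohomology from Remark \ref{rm3.6}. Alternatively, one can avoid singular Riemann--Roch by writing $\chi(\mathcal{O}_{D}\otimes\mathcal{E}^{(q)}(m_0))$ on $X$ via a locally free resolution of $\mathcal{O}_D$. The $q^e$ coefficient is then $\int_X \ch_e(\mathcal{E})\ch_{d-e}(\mathcal{O}_D)=D\cdot\ch_e(\mathcal{E})$, since $\ch_i(\mathcal{O}_D)=0$ for $i<d-e$ and $\ch_{d-e}(\mathcal{O}_D)=[D]$. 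This last form is cleaner and I would use it.

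The main obstacle is the uniformity in $s$. The Riemann--Roch polynomial must be the same for almost all $s$, and the Frobenius scaling identity must hold numerically on $X_s$ after specialization. Both follow from flatness of the thickening and of $F_s$, together with local constancy of Euler characteristics.
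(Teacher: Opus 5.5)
Your proposal is correct and follows essentially the paper's argument: pull back to $D$ via Proposition \ref{pullback}, use the vanishing of the higher cohomology of $(\iota^*\mathcal{E})_s^{(p_s^t)}(m)$ to identify $h^0$ with $\chi$, expand by Riemann--Roch, and let $p_s\to\infty$ after reading off the $p_s^{te}$ coefficient. The differences are minor. The paper applies singular Riemann--Roch (Fulton, Cor.\ 18.3.1) directly on $D_s$, while you prefer to compute on the smooth $X_s$ using a resolution of $\mathcal{O}_D$ and $\ch_{d-e}(\mathcal{O}_D)=[D]$, which is equivalent by Riemann--Roch for the embedding. Your locally free resolution together with flatness of Frobenius on the smooth $X_s$ supplies the detail that the paper leaves as ``by the same way'' for the first statement when $\mathcal{E}$ is not locally free.
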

\begin{proof}
We firstly prove the second statement. We assume that $\mathcal{E}$ is a locally free $F^t$-semipositive sheaf.
Let $i: D\hookrightarrow X$ be the embedding and $\mathcal{I}$ be the ideal sheaf of $D$. Choose a very ample line bundle $\mathcal{O}_X(1)$ and a thickening $(\widetilde{X}, \mathcal{O}_{\widetilde{X}}(1), \widetilde{D}, \widetilde{\mathcal{E}})$ over $\Spec A$.

By Theorem \ref{pullback}, one sees that $i^*\mathcal{E}$ is also $F^t$-semipositive. Then, by the definition of $F^t$-semipositivity one deduces that there exists an integer $m$ such that
$$H^j(D_s, i_s^*(\mathcal{E}^{(p_s^t)}_s)(m))=H^j(D_s, (i^*\mathcal{E})^{(p_s^t)}_s(m))=0$$
for any $j>0$ and almost all $s$. Therefore, from the Riemann-Roch theorem (\cite[Cor. 18.3.1]{Fulton}), if follows that
\begin{eqnarray*}
h^0(D_s, i_s^*(\mathcal{E}^{(p_s^t)}_s)(m)) &=& \chi(D_s, i_s^*(\mathcal{E}^{(p_s^t)}_s)(m)) \\
   &=& \ch(i_s^*(\mathcal{E}^{(p_s^t)}_s)(m))\Td(D_s) \\
   &=& i_s^*\ch(\mathcal{E}^{(p_s^t)}_s(m))\Td(D_s)\\
   &=& D_s\ch_e(\mathcal{E}^{(p_s^t)}_s(m))+\sum_{j=1}^eD_s\ch_{e-j}(\mathcal{E}^{(p_s^t)}_s(m))\Td_j(D_s)\\
   &=& p_s^{te}D_s\ch_e(\mathcal{E}_s)+O(p_s^{t(e-1)}).
\end{eqnarray*}
This implies that $$p_s^{te}D_s\ch_e(\mathcal{E}_s)+O(p_s^{t(e-1)})\geq0$$ for almost all $s$. Taking $p_s\rightarrow\infty$, one concludes that $D\ch_e(\mathcal{E})=D_s\ch_e(\mathcal{E}_s)\geq0$.

One can prove the first statement without the locally free assumption by the same way.
\end{proof}

Let $H$ be a fixed ample divisor on $X$. We define the slope $\mu_H$ of a
coherent sheaf $\mathcal{E}\in \Coh(X)$ by
\begin{eqnarray*}
\mu_H(\mathcal{E})= \left\{
\begin{array}{lcl}
+\infty,  & &\mbox{if}~\rank(\mathcal{E})=0,\\
&&\\
\frac{H^{d-1}\ch_1(\mathcal{E})}{\rank(\mathcal{E})}, & &\mbox{otherwise}.
\end{array}\right.
\end{eqnarray*}

\begin{definition}
A coherent sheaf $\mathcal{E}$ on $X$ is $\mu_{H}$-(semi)stable if, for all non-zero subsheaves
$\mathcal{F}\hookrightarrow \mathcal{E}$, we have
$$\mu_{H}(\mathcal{F})<(\leq)\mu_{H}(\mathcal{E}/\mathcal{F}).$$
\end{definition}
The Harder-Narasimhan filtrations (HN-filtrations, for short)
with respect to $\mu_H$-stability exist in $\Coh(X)$: given a
non-zero sheaf $\mathcal{E}\in\Coh(X)$, there is a filtration
$$0=\mathcal{E}_0\subset \mathcal{E}_1\subset\cdots\subset \mathcal{E}_m=\mathcal{E}$$
such that: $\mathcal{G}_i:=\mathcal{E}_i/\mathcal{E}_{i-1}$ is $\mu_H$-semistable, and
$\mu_H(\mathcal{G}_1)>\cdots>\mu_H(\mathcal{G}_m)$. We set $\mu^+_H(\mathcal{E}):=\mu_H(\mathcal{G}_1)$ and $\mu^-_H(\mathcal{E}):=\mu_H(\mathcal{G}_m)$.

We will use the following estimation theorem of Langer.
\begin{theorem}\label{Langer1}
Assume that $H$ is very ample. Let $\mathcal{E}$ be a rank $r$ torsion free sheaf on $X$. Then
\begin{eqnarray*}
h^0(X, \mathcal{E})\leq\left\{
\begin{array}{lcl}
rH^d\binom{\frac{\mu_H^+(\mathcal{E})}{H^d}+d+f(r)}{d},  & &\mbox{if}~\mu_H^+(\mathcal{E})\geq0,\\
&&\\
0, & &\mbox{otherwise},
\end{array}\right.
\end{eqnarray*}
where $f(r)=-1+\sum_{i=1}^{r}\frac{1}{i}$.
\end{theorem}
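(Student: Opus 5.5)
This is Langer's effective bound on global sections, and the plan is to reproduce his argument rather than derive it from the Frobenius machinery of this paper. The first step is a reduction to the $\mu_H$-semistable case. Take the Harder--Narasimhan filtration $0=\mathcal{E}_0\subset\cdots\subset\mathcal{E}_m=\mathcal{E}$ with factors $\mathcal{G}_i$ of rank $r_i$ and slopes $\mu_H(\mathcal{G}_i)\le\mu_H^+(\mathcal{E})$. Then
$$h^0(\mathcal{E})\le\sum_i h^0(\mathcal{G}_i).$$
If $\mu_H^+(\mathcal{E})<0$, every factor has negative slope, and the semistable case gives $h^0(\mathcal{G}_i)=0$. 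Otherwise I use three facts together: the binomial expression is increasing in $\mu$; we have $f(r_i)\le f(r)$; and the weights $r_iH^d$ add up to $rH^d$. These bound the sum by the stated quantity, provided the semistable case holds with $\mu_H(\mathcal{G})$ in place of $\mu_H^+$.

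For semistable $\mathcal{E}$ I proceed by induction on $d$. The case $d=0$ is trivial. The base case $d=1$ is a curve of degree $H$. There I induct on the rank: let $\mathcal{F}\subset\mathcal{E}$ be the saturation of the image of $H^0(\mathcal{E})\otimes\mathcal{O}_X\to\mathcal{E}$. One bounds $h^0$ of the generically globally generated sheaf $\mathcal{F}$, using that $\mathcal{F}$ has a line subbundle with a section, so its degree is nonnegative. Semistability of $\mathcal{E}$ forces $\mu(\mathcal{F})\le\mu(\mathcal{E})$. The harmonic sum $f(r)=-1+\sum_{i\le r}1/i$ is exactly what comes out of the recursion, where at each step one peels off a rank-one piece and averages slopes.

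For $d\ge2$, I choose a general smooth $H\in|H|$ such that $\mathcal{E}|_H$ is torsion free. Then I use the exact sequences $0\to\mathcal{E}(-(k+1)H)\to\mathcal{E}(-kH)\to\mathcal{E}(-kH)|_H\to0$. Summing them gives
$$h^0(\mathcal{E})\le\sum_{k\ge0}h^0\bigl(H,\mathcal{E}|_H(-kH)\bigr),$$
and the sum is finite because $\mu^+$ decreases by $H^d$ at each step. Applying the induction hypothesis on $H$, which has dimension $d-1$ and satisfies $H|_H^{d-1}=H^d$, and summing the binomials by the hockey-stick identity gives $\binom{\cdot+d}{d}$. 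This works as long as $\mu^+(\mathcal{E}|_H)\le\mu(\mathcal{E})$.

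That inequality is the main obstacle: restriction to a general hyperplane need not preserve semistability. Here I would use that in characteristic zero a general member of $|H|$ can be handled by Langer's effective restriction theorem, or, more simply, by Mehta--Ramanathan after replacing $H$ by a multiple and rescaling. One has to check that the constants still fit $f(r)$, for instance by absorbing the error into the $f(r)$ slack in the way Langer does. If this bookkeeping becomes awkward, I would simply invoke the statement as proved in Langer's paper on semistable sheaves in positive characteristic, in its characteristic-zero form, since it is used here only as a black box.
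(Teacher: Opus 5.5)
Your self-contained argument breaks down at the step you flag yourself. The induction on $d$ needs $\mu_H^+(\mathcal{E}|_D)\le\mu_H^+(\mathcal{E})$, or an error small enough to fit inside $f(r)$, for a general $D$ in the fixed system $|H|$. Neither of your suggested fixes provides this.
\begin{itemize}
\item Langer's restriction theorem brings in $\Delta(\mathcal{E})$ through its hypotheses or error terms, and $\Delta(\mathcal{E})$ does not appear in the bound you want.
\item Mehta--Ramanathan (or Flenner) requires $D\in|aH|$ with $a$ large.
\item Passing to $aH$ and ``rescaling'' cannot recover the stated inequality, because the bound is not homogeneous in $H$. With $x=\mu_H(\mathcal{E})/H^d$, the bound for $aH$ reads $ra^dH^d\binom{x/a+f(r)+d}{d}$. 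Its leading term $rH^dx^d/d!$ agrees with the one for $H$, but at $x=0$ it is larger by the factor $a^d$.
\end{itemize}
Incidentally, the harmonic sum $f(r)$ is not what comes out of the curve case: a semistable sheaf with $\mu\ge0$ on a curve already satisfies $h^0\le\deg\mathcal{E}+r$. So the whole proof rests on your fallback of quoting Langer.

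That fallback is essentially what the paper does, but it leaves out the only step the paper actually proves. The paper cites Langer's estimate for algebraically closed $k$ and reduces the general case to it. First, $h^0$ is unchanged under the flat base change $X_{\bar{k}}\to X$. Second, the Harder--Narasimhan filtration (hence $\mu_H^+$), the rank, torsion-freeness and $H^d$ are all compatible with field extension.

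This descent is not a formality here. In Section \ref{S5} the field $k$ is arbitrary. In the proof of Theorem \ref{Bog}, the estimate is applied to $\mathcal{E}_s^{(p_s^t)}(mH_s)$ on the reductions $X_s$, which live over the finite fields $A/s$ of characteristic $p_s$. A characteristic-zero black box therefore does not cover the way the statement is used. Moreover, your hyperplane induction cannot even start over a finite field, since a general smooth member of $|H|$ need not exist there until you pass to $\bar{k}$.

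The missing step is the descent: quote Langer's bound over an algebraically closed field in the characteristic where it is needed, then pass back to $k$ using the invariances above.
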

\begin{proof}
The result is showed in \cite[Theorem 3.3]{Langer2} when $k$ is algebraically closed. Since the dimension of the space of global sections and the Harder-Narasimhan filtration are stable under base field extension, this result holds for any field.
\end{proof}

By the same reason, another theorem of Langer on the semistability of Frobenius pull backs holds also for an arbitrary field of positive characteristic:

\begin{theorem}\label{Langer2}
Assume that $\chr(k)=p>0$. Let $B$ be a nef divisor on $X$ such that $T_X(B)$ is globally generated. Let $\mathcal{E}$ be a torsion free sheaf on $X$ of rank $r$. Then for any $n\geq0$, we have
$$\max\Big\{\frac{\mu_H^+((F_X^n)^*\mathcal{E})}{p^n}-\mu_H^+(\mathcal{E}), \mu_H^-(\mathcal{E})-\frac{\mu_H^-((F_X^n)^*\mathcal{E})}{p^n}\Big\}\leq\frac{r-1}{p-1}BH^{d-1}.$$
\end{theorem}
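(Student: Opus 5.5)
The plan is to reduce to the case of an algebraically closed base field, where the inequality is Langer's theorem \cite{Langer2}, exactly as was done for Theorem \ref{Langer1}. Let $K=\bar k$ and let $\pi: X_K:=X\times_{\Spec k}\Spec K\rightarrow X$ be the base change. Put $\mathcal{E}_K=\pi^*\mathcal{E}$ and let $H_K$, $B_K$ be the pullbacks of $H$ and $B$. I will check that every ingredient of the statement is compatible with $\pi$, and then apply Langer's result on $X_K$.

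\textbf{Step 1: hypotheses descend to $X_K$.} Since $X$ is smooth over $k$, $X_K$ is smooth over $K$. Since $\pi$ is flat, $\mathcal{E}_K$ is torsion free of rank $r$. Intersection numbers are invariant under field extension, so $H_K$ is ample and $B_K$ is nef, with $B_KH_K^{d-1}=BH^{d-1}$. We have $T_{X_K}=\pi^*T_X$, so $T_{X_K}(B_K)=\pi^*(T_X(B))$ is globally generated, being a pullback of a globally generated sheaf.

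\textbf{Step 2: Frobenius and slopes commute with $\pi$.} The absolute Frobenius commutes with every morphism of $\mathbb{F}_p$-schemes, so $F_X\circ\pi=\pi\circ F_{X_K}$. Hence
$$\pi^*(F_X^n)^*\mathcal{E}\cong(F_{X_K}^n)^*\mathcal{E}_K.$$
It remains to show that $\mu^{\pm}_{H_K}(\pi^*\mathcal{G})=\mu^{\pm}_H(\mathcal{G})$ for every torsion free $\mathcal{G}$ on $X$; I will apply this to $\mathcal{G}=\mathcal{E}$ and $\mathcal{G}=(F^n_X)^*\mathcal{E}$. This is the statement that the Harder--Narasimhan filtration is stable under base field extension (\cite[Theorem 1.3.7]{HL}).

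I expect this to be the only real obstacle, because $k$ may be imperfect and then $K/k$ is not Galois. The argument I would use goes through the maximal destabilizing subsheaf. The maximal destabilizing subsheaf of $\mathcal{G}_K$ is unique. Therefore it is invariant under all automorphisms of $K$ over $k$, which handles the separable part of the extension by Galois descent. For the purely inseparable part I would use faithfully flat descent, with the two pullbacks to $X_{K\otimes_kK}$ compared by the same uniqueness argument. The descended subsheaf is then $\mu_H$-maximal destabilizing for $\mathcal{G}$, since slopes of pullbacks equal slopes. Inducting along the filtration gives the claim, and it yields the $\mu^-$ statement as well.

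If $X_K$ is not connected, I would first note that the slopes are computed via the global intersection numbers with $H^{d-1}$. Alternatively one can work component by component after a finite extension; I would treat this as a routine bookkeeping point.

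\textbf{Step 3: conclude.} On $X_K$, Langer's theorem \cite{Langer2} gives the displayed bound for $\mathcal{E}_K$, namely
$$\max\Big\{\frac{\mu_{H_K}^+((F_{X_K}^n)^*\mathcal{E}_K)}{p^n}-\mu_{H_K}^+(\mathcal{E}_K), \mu_{H_K}^-(\mathcal{E}_K)-\frac{\mu_{H_K}^-((F_{X_K}^n)^*\mathcal{E}_K)}{p^n}\Big\}\leq\frac{r-1}{p-1}B_KH_K^{d-1}.$$
By Steps 1 and 2, each term here equals the corresponding term over $k$. This gives the stated inequality for $\mathcal{E}$ on $X$.
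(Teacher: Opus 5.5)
Your proposal is correct and follows the paper's route. The paper cites Langer's result over an algebraically closed field and transfers it to arbitrary $k$, because Frobenius pullback, intersection numbers, the hypotheses on $B$ and the Harder--Narasimhan filtration are compatible with base field extension; your Steps 1--3 spell this out. Two corrections: the Frobenius-pullback bound is \cite[Corollary 2.5]{Langer1}, not \cite{Langer2}, which is the source of the $h^0$ estimate in Theorem \ref{Langer1}. Also, your sketch of inseparable descent is loose because $K\otimes_kK$ is not a field, so uniqueness of the maximal destabilizing subsheaf does not apply there verbatim; rely instead on the known compatibility of the Harder--Narasimhan filtration with field extensions from \cite{HL}.
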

\begin{proof}
See \cite[Corollary 2.5]{Langer1}.
\end{proof}

Using the same method in \cite{Sun}, we can also conclude with Bogomolov type inequalities which give upper bounds of Chern characters of sheaves.
\begin{theorem}\label{Bog}
Assume that $\chr(k)=0$. Let $\mathcal{E}$ be a rank $r$ torsion free sheaf on $X$ with $\phi^t_{sp}(\mathcal{E})\leq1$. Then for any $1\leq e\leq d$, we have
$$H^{d-e}\ch_e(\mathcal{E})\leq\max\left\{\frac{r(\mu_H^+(\mathcal{E}))^e}{e!(H^d)^{e-1}}, 0\right\}.$$
\end{theorem}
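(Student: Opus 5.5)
We reduce modulo $p$ and compare Riemann--Roch for $\mathcal{E}_s^{(p_s^t)}(m)$ restricted to complete intersections with Langer's bound on global sections. We fix a very ample $H$ (replacing $H$ by a multiple if necessary; both sides of the inequality scale consistently). We choose a thickening $(\widetilde{X},\widetilde{H},\widetilde{\mathcal{E}})$ over $\Spec A$ and let $s$ be a general closed point. Since $\mathcal{E}$ is torsion free and $X$ is smooth, $\mathcal{E}_s$ is torsion free. Moreover $F_s$ is flat, so $\mathcal{E}_s^{(p_s^t)}$ is torsion free of rank $r$ and $\ch_e(\mathcal{E}_s^{(p_s^t)})=p_s^{te}\ch_e(\mathcal{E}_s)$.

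\textbf{Step 1: reduction to an $e$-dimensional complete intersection.} Let $Y=H_1\cap\cdots\cap H_{d-e}$ be a general complete intersection of members of $|H|$; it is smooth of dimension $e$. By Proposition \ref{restriction}, applied $d-e$ times, we get $\phi^t_{sp}(\mathcal{E}|_Y)\le \phi^t_{sp}(\mathcal{E})\le 1$. For $e\ge 2$, Mehta--Ramanathan type restriction keeps $\mu^+$ under control: after a thickening, $\mu^+_{H_s}(\mathcal{E}_s|_{Y_s})$ is bounded by $\mu^+_H(\mathcal{E})$ up to an error that vanishes as we pass to general members. We could alternatively avoid restriction altogether by working on $X$ with $H^{d-e}$ inserted into Riemann--Roch, but restriction is cleaner. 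By Remark \ref{rm3.6}, there is an $m$, independent of $s$, such that $H^j(Y_s,(\mathcal{E}|_Y)_s^{(p_s^t)}(m))=0$ for all $j\ge 2$ and almost all $s$.

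\textbf{Step 2: Riemann--Roch versus Langer.} Since the cohomology in degrees $\ge 2$ vanishes, we have
$$h^0\big(Y_s,\mathcal{E}_s^{(p_s^t)}(m)|_{Y_s}\big)\ge\chi\big(Y_s,\mathcal{E}_s^{(p_s^t)}(m)|_{Y_s}\big)=p_s^{te}H^{d-e}\ch_e(\mathcal{E})+O(p_s^{t(e-1)}),$$
where the last equality uses Hirzebruch--Riemann--Roch as in the proof of Theorem \ref{Chern1}. On the other side, Theorem \ref{Langer1} on $Y_s$ bounds the left-hand side by $rH^d\binom{\mu^+/H^d+e+f(r)}{e}$ when $\mu^+:=\mu^+_{H}(\mathcal{E}_s^{(p_s^t)}(m)|_{Y_s})\ge0$, and by $0$ otherwise. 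Theorem \ref{Langer2} gives $\mu^+\le p_s^t\big(\mu^+_H(\mathcal{E})+\tfrac{r-1}{p_s-1}C\big)+mH^d$, where $C$ is a constant that is uniform in $s$ because $T_{Y}(B)$ is globally generated for a fixed $B$ and this property spreads out over the thickening. The binomial coefficient is then at most $\frac{(\mu^+)^e}{e!(H^d)^e}+O((\mu^+)^{e-1})$. Dividing by $p_s^{te}$ and letting $p_s\to\infty$ gives $H^{d-e}\ch_e(\mathcal{E})\le \frac{r(\mu^+_H(\mathcal{E}))^e}{e!(H^d)^{e-1}}$ if $\mu^+_H(\mathcal{E})\ge0$. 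If $\mu^+_H(\mathcal{E})<0$, then $\mu^+<0$ for $p_s\gg0$, so $h^0=0$ and $H^{d-e}\ch_e(\mathcal{E})\le0$. Combining the two cases yields the stated maximum.

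\textbf{Main obstacle.} The delicate point is controlling $\mu^+$ of the restriction of $\mathcal{E}_s$ to $Y_s$ uniformly in $s$. Restriction theorems in characteristic $p$ are not uniform in general. I would first try to avoid restricting the sheaf at all. Instead, I would keep the vanishing on $X$ itself: $\phi^t_{sp}\le1$ gives $H^j(X_s,\mathcal{E}_s^{(p_s^t)}(m))=0$ for $j\ge2$, and restriction then only appears through exact sequences $0\to\mathcal{G}(-1)\to\mathcal{G}\to\mathcal{G}|_{H}\to0$. Going down one hypersurface at a time preserves the vanishing, and $h^0$ on $Y_s$ is bounded by $h^0$ of an iterated twist on $X_s$. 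With this approach Langer's bound is applied on $X_s$, where Theorem \ref{Langer2} is stated, with the factor $H^{d-e}$ arising from the leading coefficient in $m$. Concretely, one compares $\chi(\mathcal{E}_s^{(p_s^t)}(m+n))$ as a polynomial, taking $n$ of order $p_s^t$. I expect arranging these degree bookkeeping estimates, so that exactly the coefficient $H^{d-e}\ch_e$ is isolated, to be the main technical work.
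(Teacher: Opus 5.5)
Your Steps 1 and 2 follow the paper's argument. You cut down to a general smooth complete intersection $Y$ of dimension $e$ (Proposition \ref{restriction} keeps $\phi^t_{sp}\le 1$). You bound $h^0$ from below by $\chi$ using the vanishing of $H^{\ge 2}$, and from above by Theorem \ref{Langer1} together with the uniform Frobenius estimate of Theorem \ref{Langer2}. Then you let $p_s\to\infty$.

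The gap is the step you flag yourself: you need $\mu^+_{H_s}\big((\mathcal{E}|_Y)_s\big)=\mu^+_H(\mathcal{E})$ for almost all $s$. The phrase ``bounded up to an error that vanishes as we pass to general members'' does not prove this. Your worry about non-uniform restriction theorems in characteristic $p$ is misplaced, because no characteristic-$p$ restriction theorem is needed. The missing idea is to restrict \emph{before} reducing modulo $p$, which is what the paper does.
\begin{itemize}
\item First pass to $\bar{k}$. This is harmless, since HN filtrations and $\phi^t_{sp}$ are invariant under field extension.
\item Replace $H$ by a large multiple, which is legitimate by the scaling you observed.
\item Apply the restriction theorem in characteristic zero (Mehta--Ramanathan, or Langer's effective version cited in the paper). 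For a general complete intersection $Y$ of members of $|H|$, it shows that $\mathcal{E}|_Y$ is torsion free and that the HN filtration of $\mathcal{E}$ restricts to that of $\mathcal{E}|_Y$.
\item Since $(H|_Y)^{e-1}c_1(\mathcal{F}|_Y)=H^{d-1}c_1(\mathcal{F})$, this gives $\mu^+_H(\mathcal{E}|_Y)=\mu^+_H(\mathcal{E})$ exactly.
\item Now replace $(X,\mathcal{E})$ by $(Y,\mathcal{E}|_Y)$, and only then choose a thickening, spreading out the HN filtration as well.
\item By generic flatness and openness of torsion-freeness and of slope semistability in flat families, the spread-out filtration is the HN filtration of $(\mathcal{E}|_Y)_s$ for almost all $s$. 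Hence $\mu^+_{H_s}\big((\mathcal{E}|_Y)_s\big)=\mu^+_H(\mathcal{E})$.
\end{itemize}
With this inserted, your Step 2 goes through as written, with $\dim Y=e$ and $(H|_Y)^e=H^d$.

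Your fallback of avoiding restriction altogether would not work.
\begin{itemize}
\item The sequence $0\to\mathcal{G}(-1)\to\mathcal{G}\to\mathcal{G}|_{H_s}\to 0$ only gives $h^0(\mathcal{G}|_{H_s})\le h^0(\mathcal{G})+h^1(\mathcal{G}(-1))$. The hypothesis $\phi^t_{sp}\le 1$ gives no control of $H^1$.
\item Suppose instead you compare $\chi\big(\mathcal{E}_s^{(p_s^t)}(m+\lambda p_s^t)\big)$ with Langer's bound on $X_s$, divide by $p_s^{td}$ and let $p_s\to\infty$. For $\mu^+_H(\mathcal{E})\ge 0$ you obtain only
$$\sum_{e=0}^{d}\frac{\lambda^{d-e}}{(d-e)!}\left(\frac{r(\mu^+_H(\mathcal{E}))^e}{e!(H^d)^{e-1}}-H^{d-e}\ch_e(\mathcal{E})\right)\ge 0\quad\text{for all }\lambda\ge 0.$$
\item This is a single polynomial inequality. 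Its value at $\lambda=0$ is the case $e=d$, but the individual coefficients for $1<e<d$ cannot be extracted from it.
\end{itemize}
So restricting the sheaf to $Y$ is unavoidable in this method, and it has to be done in characteristic zero as described above.
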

\begin{proof}
Since the Harder-Narasimhan filtration and $F^t$-semipositivity are stable under base field extension, we may assume that $k$ is algebraically closed. Noting that the conclusion is invariant if we replace $H$ by $nH$ for any $n\geq1$, thus we can assume that $H$ is sufficiently ample so that one can take $$H_1, \cdots H_{d-e}\in |H|$$ satisfying that $X_e:=H_1\cap\cdots\cap H_{d-e}$ is a smooth $e$-dimensional subvariety. By the restriction
theorem (e.g. \cite[Theorem 5.2]{Langer1}) and Proposition \ref{restriction}, one can also assume that $\mathcal{E}|_{X_e}$ is torsion free, $\mu_H^+(\mathcal{E}|_{X_e})=\mu_H^+(\mathcal{E})$ and $\phi^t_{sp}(\mathcal{E}|_{X_e})\leq1$.

Without loss of generality, we may assume that $e=d$, and thus $X_e=X$. Let $(\widetilde{X}\rightarrow\Spec A, \widetilde{\mathcal{E}}, \widetilde{H})$ be a thickening of $(X, \mathcal{E}, H)$. By the openness of semistability, one can shrink $\Spec A$ so that $\mu_{H_s}^+(\mathcal{E}_s)=\mu_H^+(\mathcal{E})$ and $H_s$ is very ample for any $s\in\Spec A$. By the definition of $\phi^t_{sp}(\mathcal{E})$, there exists an integer $m$ such that $H^i(X_s, \mathcal{E}_s^{(p_s^t)}(mH_s))=0$ for $i\geq2$ and almost all $s$. Hence Riemann-Roch theorem gives
\begin{eqnarray}\label{5.1}
\nonumber h^0(X_s, \mathcal{E}_s^{(p_s^t)}(mH_s)) &\geq& \chi(X_s, \mathcal{E}_s^{(p_s^t)}(mH_s))  \\
 \nonumber  &=& \ch(\mathcal{E}_s^{(p_s^t)}(mH_s))\Td(X_s)\\
 \nonumber  &=& \ch_d(\mathcal{E}_s^{(p_s^t)}(mH_s))+\sum_{j=1}^d\ch_{d-j}(\mathcal{E}_s^{(p_s^t)}(mH_s))\Td_j(X_s)\\
   &=& p_s^{td}\ch_d(\mathcal{E}_s)+O(p_s^{t(d-1)}).
\end{eqnarray}
Let $B$ be a nef divisor on $X$ such that $T_X(B)$ is globally generated. Then $T_{X_s}(B_s)$ is aslo globally generated for a general $s\in \Spec A$. From Theorem \ref{Langer1} and \ref{Langer2} it follows that
\begin{eqnarray*}
&&h^0(X_s, \mathcal{E}_s^{(p_s^t)}(mH_s))\\
&\leq&\max\left\{rH_s^d\binom{\mu_{H_s}^+(\mathcal{E}_s^{(p_s^t)})/H_s^d+m+d+f(r)}{d},0\right\}\\
&\leq&\max\left\{rH_s^d\binom{p_s^t\Big(\mu_{H_s}^+(\mathcal{E}_s)+\frac{r-1}{p_s-1}B_sH_s^{d-1}\Big)/H_s^d+m+d+f(r)}{d},0\right\}\\
&\leq&\max\left\{\frac{rp_s^{td}}{d!(H_s^d)^{d-1}}\Big(\mu_{H_s}^+(\mathcal{E}_s)+\frac{r-1}{p_s-1}B_sH_s^{d-1}\Big)^d+O(p_s^{t(d-1)}),0\right\}.
\end{eqnarray*}
This and (\ref{5.1}) imply that
$$\ch_d (\mathcal{E}_s)p_s^{td}\leq\max\left\{\frac{rp_s^{td}}{d!(H_s^d)^{d-1}}\Big(\mu_{H_s}^+(\mathcal{E}_s)+\frac{r-1}{p_s-1}B_sH_s^{d-1}\Big)^d,
0\right\}+O(p_s^{t(d-1)}).$$
The desired inequality follows as $p_s$ approaches infinity.
\end{proof}

\section{Positivity of Hodge bundles and applications}\label{S6}
The aim of this section is to prove the main results and their applications. We let $k$ be a perfect field throughout the section.

\subsection{Decomposition of de Rham complexes}
Before stating the corresponding results let us recall the definition of a semistable reduction (see \cite[Definition 1.1]{Ill}).
\begin{definition}
Let $S$ be a scheme, $X$ and $Y$ smooth $S$-schemes, and $f: X\rightarrow Y$ an $S$-morphism. Let $E\subset Y$ be a divisor relatively simple normal crossing over $S$, and $E_X=f^{-1}(E)=X\times_YE$. One says that $f: X\rightarrow Y$ is $E$-semistable (or semistable for brevity) if, locally for the \'etale topology, $f$ is the product of $S$-morphisms of one of the following types:
\begin{enumerate}
  \item $pr_1: \mathbb{A}_S^n\rightarrow\mathbb{A}_S^1$, $E=\emptyset$;
  \item $h: \mathbb{A}_S^n\rightarrow\mathbb{A}_S^1$, $h^*y=x_1\cdots x_n$, where $\mathbb{A}_S^n=S[x_1,\ldots, x_n]$, $\mathbb{A}_S^1=S[y]$ and $E=(y)$.
\end{enumerate}
\end{definition}

From the definition, one sees that $f$ is $E$-semistable if $f$ is smooth. For $E=\emptyset$, $f$ is $E$-semistable if and
only if $f$ is smooth.
\begin{remark}\label{E-semistable}
Here is a list of basic properties of semistable morphisms in \cite[\S1.2]{Ill}.
\begin{enumerate}
   \item If $f$ is $E$-semistable, and if $(f^{\prime}, E^{\prime})$ is deduced from $(f, E)$ by a base change
$S' \rightarrow S$, then $f^{\prime}$ is $E^{\prime}$-semistable.

  \item If $f$ is $E$-semistable, and $g: Y^{\prime} \rightarrow Y$ is smooth, then $f^{\prime}$ is $E^{\prime}$-semistable, where
$f^{\prime}=f \times_Y Y^{\prime}, E^{\prime}= g^{-1}(E)$.

\item Let $(X_i)_{i\in I}$ (resp. $(Y_i)_{i\in I}$) be a finite non-empty family of smooth $S$-schemes, $f_i: X_i \rightarrow Y_i$ an $S$-morphism, and $E_i \subset Y_i$ a relative divisor with normal crossings. Let $X = \Pi X_i$, $Y = \Pi Y_i$ and $E = \sum pr_i^{-1}(E_i)$. If $f_i$ is $E_i$-semistable for every $i$, then $\Pi f_i$ is $E$-semistable.
\end{enumerate}
\end{remark}

\begin{definition}\label{def6.3}
Let $f: X\rightarrow Y$ be an $E$-semistable $S$-morphism. Let $\Omega^{\bullet}_{X/S}(\log E_X)$ (resp. $\Omega^{\bullet}_{Y/S}(\log E))$ be the de Rham complex of $X$ (resp. $Y$) over $S$ with logarithmic poles along $E_X$ (resp. $E$). We define the coherent sheaf $\Omega^{1}_{X/Y}(\log E_X/E)$ to be the cokernel of the natural injection $f^*\Omega_{Y/S}^1(\log E)\subset \Omega_{X/S}^1(\log E_X)$. Setting $\Omega^{i}_{X/Y}(\log E_X/E)=\wedge^i\Omega^{1}_{X/Y}(\log E_X/E)$ and defining the differential by passage to the quotient of that of the complex $\Omega^{\bullet}_{X/S}(\log E_X)$, we obtain the relative de Rham complex $\Omega^{\bullet}_{X/Y}(\log E_X/E)$ with relative logarithmic poles along $E_X$ over $E$.
\end{definition}

It is easy to see that if $f$ is smooth, then $\Omega^{\bullet}_{X/Y}(\log E_X/E)=\Omega^{\bullet}_{X/Y}$.

\begin{lemma}\label{seq}
With notation as in Definition \ref{def6.3}, there is an exact sequence of locally free sheaves of finite type:
$$0\rightarrow f^*\Omega_{Y/S}^1(\log E)\rightarrow\Omega_{X/S}^1(\log E_X)\rightarrow\Omega^{1}_{X/Y}(\log E_X/E)\rightarrow0.$$
\end{lemma}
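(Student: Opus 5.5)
The statement is local for the étale topology on $X$, so I will reduce to the two model cases in the definition of $E$-semistability and to products of them. All three sheaves are formed compatibly with étale base change and étale localization. This holds for $f^*\Omega^1_{Y/S}(\log E)$, for $\Omega^1_{X/S}(\log E_X)$, and, by Definition~\ref{def6.3}, for their cokernel $\Omega^1_{X/Y}(\log E_X/E)$. Exactness and local freeness can also be checked étale locally. So it suffices to prove the claim when $f$ is a product of morphisms of type (1) or (2) over $S$, possibly composed with an étale map.

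The key step is an explicit computation in the model cases. For a factor of type (1), $pr_1:\mathbb{A}^n_S\to\mathbb{A}^1_S$ with $E=\emptyset$, the log forms are the ordinary forms. The map $f^*\Omega^1_{Y/S}\to\Omega^1_{X/S}$ sends $dy\mapsto dx_1$. This is the inclusion of a direct summand of the free module on $dx_1,\dots,dx_n$, and the cokernel is free on $dx_2,\dots,dx_n$.

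For a factor of type (2), $h^*y=x_1\cdots x_n$ with $E=(y)$ and $E_X=(x_1\cdots x_n)$. Here $\Omega^1_{Y/S}(\log E)$ is free on $dy/y$, and $\Omega^1_{X/S}(\log E_X)$ is free on $dx_1/x_1,\dots,dx_n/x_n$. The pullback sends
$$dy/y\mapsto \sum_{i=1}^n dx_i/x_i.$$
This vector is part of a basis, for instance together with $dx_2/x_2,\dots,dx_n/x_n$. Hence the map is injective with image a direct summand, and the cokernel is free of rank $n-1$.

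For a product of such factors, the log cotangent sheaves of $X$ and of $Y$ are direct sums of the pullbacks of those of the factors. Here $E=\sum pr_i^{-1}(E_i)$ as in Remark~\ref{E-semistable}(3). The map is then the direct sum of the factor maps, so it is again a split injection with locally free cokernel.

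Finally I will pass from the local models back to $f$. Étale maps identify log differentials: a scheme étale over the model has log cotangent sheaf equal to the pullback of the model's, with the pulled-back divisor. Because of this, the split injectivity and the local freeness of the cokernel descend, and the exact sequence holds globally. Local freeness is of finite type since all ranks are finite.

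The only delicate point is justifying this compatibility of logarithmic differentials with étale maps and with products. I would cite the standard étale-local description of $\Omega^1(\log)$ for relative normal crossings divisors, as in \cite[\S1]{Ill}, rather than reprove it.
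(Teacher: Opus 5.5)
Your proposal is correct and follows the standard route. The étale-local reduction to the model morphisms is the verification the paper relies on: for type (1) the map is the split injection $dy\mapsto dx_1$, for type (2) it is $dy/y\mapsto\sum_i dx_i/x_i$, whose coefficient vector $(1,\dots,1)$ is unimodular over any base, and you combine this with the compatibility of logarithmic differentials with étale maps and products. The paper gives no argument of its own; it only cites \cite[\S 1.3]{Ill}, where the statement is checked on these local models.
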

\begin{proof}
See \cite[\S 1.3]{Ill}.
\end{proof}
Taking the top wedge product in the exact sequence above gives rise to an isomorphism $$\Omega^{\dim f}_{X/Y}(\log E_X/E)\cong\omega_{X/Y}.$$

Assume that $\chr(k)=p>0$, and Let $f: X\rightarrow Y$ be an $E$-semistable $k$-morphism. The relative Frobenius morphism $F$ of $X$ over $Y$ fits into the commutative diagram
\begin{eqnarray*}
\xymatrix{
 X \ar@/_/[ddr]_f \ar[dr]^{F} \ar@/^/[drr]^{F_X} \\
  & X^{\prime} \ar[d]^{f^{\prime}} \ar[r]_g  & X \ar[d]_f            \\
  & Y \ar[r]^{F_Y} & Y.                       }
\end{eqnarray*}
Set $\Omega^{i}_{X^{\prime}/Y}(\log E^{\prime}_{X}/E)=g^*\Omega^{i}_{X/Y}(\log E_{X}/E)$. The Cartier isomorphism gives:
\begin{proposition}\label{Cartier}
For any $i\geq0$, we have an isomorphism $$C^{-1}:\Omega^{i}_{X^{\prime}/Y}(\log E^{\prime}_{X}/E)\rightarrow\mathcal{H}^i(F_*\Omega^{\bullet}_{X/Y}(\log E_X/E)).$$
\end{proposition}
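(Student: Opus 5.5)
The plan is to reduce to the classical Cartier isomorphism for smooth morphisms by working étale locally, as in \cite[\S 2]{Ill}. First I define $C^{-1}$ globally. The complex $F_*\Omega^{\bullet}_{X/Y}(\log E_X/E)$ is a complex of $\mathcal{O}_{X'}$-modules with $\mathcal{O}_{X'}$-linear differential, because $d(g^*a\cdot\omega)=g^*a\,d\omega$ for local sections $a$ of $\mathcal{O}_X$. The map $C^{-1}$ should be the unique morphism of graded $\mathcal{O}_{X'}$-algebras
$$\bigoplus_i \Omega^{i}_{X^{\prime}/Y}(\log E^{\prime}_{X}/E)\rightarrow\bigoplus_i\mathcal{H}^i(F_*\Omega^{\bullet}_{X/Y}(\log E_X/E))$$
determined in degree one by $g^*(dx)\mapsto[x^{p-1}dx]$ and $g^*(d\log x)\mapsto[d\log x]$, where $x$ is a local section of $\mathcal{O}_X$ (resp. an invertible section of $\mathcal{O}_X(-E_X)$ locally defining $E_X$). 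I would check the required identities in the target: additivity modulo exact forms, via $(x+y)^{p-1}d(x+y)-x^{p-1}dx-y^{p-1}dy=d\big(\sum_{0<j<p}\frac{1}{p}\binom{p}{j}x^jy^{p-j}\big)$; that $d\log$ closed forms satisfy $d\log(uv)=d\log u+d\log v$; and compatibility with the relations $dy=0$ and $d\log y=0$ for $y$ pulled back from $Y$. The last point works because pulled back relations on $X'=X\times_{Y,F_Y}Y$ correspond to $F_Y^*$ of the relations on $Y$, and $y^{p-1}dy$ vanishes in the quotient $\Omega^1_{X/Y}(\log E_X/E)$. The map is multiplicative since cohomology of a dga is a graded algebra, and the squares of odd classes vanish. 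Hence the map extends from degree one by the universal property of the exterior algebra, using Lemma \ref{seq} and the local freeness of $\Omega^1_{X/Y}(\log E_X/E)$.

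Second, I would show $C^{-1}$ is an isomorphism. This is an étale local question on $X$, and both sides are compatible with étale base change on $X$ and on $Y$ (the relative Frobenius commutes with étale maps). So I may assume $f$ is a product of the models in the definition of $E$-semistable. By a Künneth argument, the relative log de Rham complex of a product over a product base is the tensor product of the factor complexes, and cohomology commutes with tensor products of complexes of free modules over the Frobenius twist. Both sides are therefore multiplicative for products, and it suffices to treat the two model cases. For type (1), $f=pr_1$ is smooth and the statement is the usual Cartier isomorphism \cite{DI}.

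For type (2), with $h^*y=x_1\cdots x_n$, the sheaf $\Omega^1_{X/Y}(\log E_X/E)$ is free on $d\log x_1,\dots,d\log x_n$ modulo $\sum_i d\log x_i$. The complex $\Omega^{\bullet}_{X/Y}(\log)$ is the quotient of the absolute log complex of $\mathbb{A}^n$, which is a tensor product of one-variable log complexes, by the ideal generated by $\theta=\sum d\log x_i$. I would compute the cohomology explicitly by grading by monomials $x^a$, $a\in\mathbb{N}^n$, over $\mathcal{O}_{X'}$, which is spanned by the monomials $x^{pb}$ together with $y$. For exponents $a\not\equiv 0 \bmod p$, the differential on the $a$-graded piece is wedge with $\sum a_i\,d\log x_i$, which is nonzero modulo $\theta$, so the Koszul piece is acyclic. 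For $a\equiv 0$, the differential vanishes and the piece is exactly $\wedge^\bullet$ of the degree-one part. This shows $C^{-1}$ is an isomorphism, as in \cite[2.1]{Ill}.

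The main obstacle is this monomial computation in type (2). The subtlety is that a monomial with some $a_i=0$ is not invertible in every direction, so the Koszul complex is not literally over a field. Following Illusie, I would handle this by working over the base ring with $\mathbb{Z}^n$-grading on the localization and then checking that the cohomology is preserved under the log structure.
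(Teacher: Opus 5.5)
The paper gives no argument of its own here; it simply cites \cite[Proposition 1.5]{Ill}. Your overall route is the standard one: define $C^{-1}$ by the usual formulas, reduce \'etale locally and by K\"unneth to the two local models, then compute. The construction of $C^{-1}$ and the reductions are fine. The gap is in the type (2) computation, which is the only genuinely new part.

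You claim that for $a\not\equiv 0 \pmod p$ the form $\eta_a=\sum_i a_i\,d\log x_i$ is nonzero modulo $\theta=\sum_i d\log x_i$. That is false. In fact $\eta_a\in k\theta$ exactly when $a\equiv c(1,\ldots,1)\pmod p$ for some $c\in\mathbb{F}_p$. For instance, take $a=(1,\ldots,1)$. Then $\eta_a=\theta$, the piece $x_1\cdots x_n\cdot\wedge^\bullet(V/k\theta)$ has zero differential, and $[x_1\cdots x_n]=y\cdot[1]\neq 0$ in $\mathcal{H}^0$. Your bookkeeping is the one for the absolute complex $\Omega^\bullet_X(\log E_X)$, where indeed $\eta_a=0$ iff $a\equiv 0$. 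It would give cohomology $\bigoplus_b x^{pb}\otimes\wedge^\bullet(V/k\theta)$. That is not $\Omega^\bullet_{X'/Y}(\log E'_X/E)$, because $\mathcal{O}_{X'}=k[x_1,\ldots,x_n,s]/(x_1\cdots x_n-s^p)$ and $s$ acts on $F_*\mathcal{O}_X$ by $y=x_1\cdots x_n$, which is not a $p$-th power. So, as written, the acyclicity step is false, and the resulting count is inconsistent with the statement. It is even inconsistent with your own remark that $y\in\mathcal{O}_{X'}$.

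The repair is short. Since $a_i\ge 0$, the condition $a\equiv c\mathbf{1}$ with $0\le c<p$ forces $a=c\mathbf{1}+pb$ with $b\in\mathbb{N}^n$, i.e.\ $x^a=y^c(x^b)^p$. These monomials are exactly the images of the $k$-basis $\{x^b s^c : b\in\mathbb{N}^n,\ 0\le c<p\}$ of $\mathcal{O}_{X'}$ under $\mathcal{O}_{X'}\to F_*\mathcal{O}_X$, which is therefore injective. Hence
\[
\mathcal{H}^\bullet(F_*\Omega^\bullet_{X/Y}(\log E_X/E))=\mathcal{O}_{X'}\otimes_k\wedge^\bullet(V/k\theta),
\]
and $C^{-1}$, which sends $g^*d\log x_i\mapsto[d\log x_i]$, is an isomorphism. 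This is precisely why the twist is $X\times_{Y,F_Y}Y$ rather than $X^{(p)}$.

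Two smaller remarks. First, the ``main obstacle'' you name does not exist. Because $\Omega^1_X(\log E_X)$ is free on the $d\log x_i$, each $a$-graded piece is literally the Koszul complex $(x^a\cdot\wedge^\bullet(V/k\theta),\ \bar\eta_a\wedge)$ of finite-dimensional $k$-vector spaces, so no localization is needed. Second, $F_*\Omega^i_{X/Y}(\log E_X/E)$ is not locally free over $\mathcal{O}_{X'}$: $X'$ is singular when $n\ge 2$, so $F$ is not flat. The K\"unneth step should therefore be done over the base field $k$, where it holds for arbitrary complexes.
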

\begin{proof}
See \cite[Proposition 1.5]{Ill}.
\end{proof}

The following decomposition is proved by Illusie.

\begin{theorem}\label{Illusie-de}
Suppose that $\chr(k)=p>0$. Let $f: (X, E_X)\rightarrow (Y, E)$ be an $E$-semistable $k$-morphism, and $\widehat{f}: (\widehat{X}, \widehat{E}_{\widehat{X}})\rightarrow (\widehat{Y}, \widehat{E})$ a semistable lifting of it over $W_2(k)$. If there exists a lifting $\widehat{F}_Y: \widehat{Y}\rightarrow \widehat{Y}$ of $F_Y$ such that $\widehat{F}_Y^{-1}(\widehat{E})=p\widehat{E}$, then we have a canonical isomorphism in $\D(X^{\prime})$:
$$\phi_{(\widehat{f}, \widehat{F}_Y)}: \bigoplus_{i<p}\Omega^{i}_{X^{\prime}/Y}(\log E^{\prime}_{X}/E)[-i]
\xrightarrow{\sim}\tau_{<p}F_{*}\Omega^{\bullet}_{X/Y}(\log E_X/E),$$ which induces the isomorphism $C^{-1}$ on $\mathcal{H}^i$.
\end{theorem}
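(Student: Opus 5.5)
The plan is to follow Deligne–Illusie, carried out relative to the base $Y$. The lifting $\widehat{F}_Y$ is used to build local Frobenius liftings on $\widehat{X}$ that are compatible with $\widehat{F}_Y$. From these liftings we produce the degree-one map; the higher-degree maps then come from it by products.

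First, the local construction. Cover $\widehat{X}$ by étale-local charts of the semistable types (1) and (2). On each open $\widehat{U}_\alpha$ we choose a lifting $\widehat{F}_\alpha:\widehat{U}_\alpha\to\widehat{X}'$ of the relative Frobenius $F$ over $\widehat{Y}$, where $\widehat{X}'=\widehat{X}\times_{\widehat{Y},\widehat{F}_Y}\widehat{Y}$. In the type (2) chart we can write it down explicitly. Take $x_i\mapsto x_i^p(1+p u_i)$ subject to the constraint that $\prod(1+pu_i)$ matches the unit $\widehat{F}_Y^*y/y^p$; such a unit exists because $\widehat{F}_Y^{-1}(\widehat{E})=p\widehat{E}$. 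This choice makes the lifting compatible with the log structures. Then $\widehat{F}_\alpha^*$ is divisible by $p$ on $\Omega^1_{X'/Y}(\log E'_X/E)$. We set $\varphi_\alpha=\frac1p\widehat{F}_\alpha^*$, reduced mod $p$. Each $\varphi_\alpha$ sends forms to closed forms in $F_*\Omega^1_{X/Y}(\log E_X/E)$, and it induces $C^{-1}$ on $\mathcal{H}^1$; this is checked on dlog generators, where $\frac1p\widehat{F}^*\,d\log x_i=d\log x_i+d u_i$.

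Second, we glue. On overlaps, $\widehat{F}_\alpha$ and $\widehat{F}_\beta$ differ by a derivation, which gives $h_{\alpha\beta}:\Omega^1_{X'/Y}(\log)\to F_*\mathcal{O}_X$ with $\varphi_\beta-\varphi_\alpha=dh_{\alpha\beta}$ and the cocycle condition. Using the Čech complex, the pair $(\varphi_\alpha,h_{\alpha\beta})$ defines a morphism $\Omega^1_{X'/Y}(\log E'_X/E)[-1]\to F_*\Omega^\bullet_{X/Y}(\log E_X/E)$ in $\D(X')$. The relative log torsor of liftings has to be handled with care here: the obstruction-free existence of local liftings uses the étale-local product structure together with the compatibility with $\widehat{F}_Y$. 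This is exactly where the hypothesis $\widehat{F}_Y^{-1}(\widehat{E})=p\widehat{E}$ enters.

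Third, we extend to higher degrees. For $i<p$, use the antisymmetrization $\Omega^i=\wedge^i\Omega^1\to(\Omega^1)^{\otimes i}$, which is a split injection because $i!$ is invertible. Compose it with the product $(\varphi^1)^{\otimes i}$ into $F_*\Omega^\bullet$, using the multiplicative structure of the Čech–de Rham complex. This gives maps $\Omega^i[-i]\to F_*\Omega^\bullet$ inducing $C^{-1}$ on $\mathcal{H}^i$. Summing over $i<p$, we get a map into $\tau_{<p}$ that is an isomorphism on all cohomology sheaves by Proposition \ref{Cartier}, hence a quasi-isomorphism. Canonicity, meaning independence of the local choices up to homotopy, follows from the usual argument with the homotopies $h_{\alpha\beta}$.

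I expect the main obstacle to be the first step: showing that the local liftings over $\widehat{F}_Y$ exist in the semistable (type (2)) charts and preserve the log structure. The other steps are formal once that is in place. If it resists, the first thing I would try is to reduce to Illusie's treatment in \cite[\S2]{Ill} of liftings in the log setting.
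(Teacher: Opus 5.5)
Your proposal is correct and reconstructs the argument behind the paper's proof, which simply cites \cite[Theorem 2.2]{Ill}. That argument is the Deligne--Illusie method: glue local liftings of the relative Frobenius over $\widehat{F}_Y$ that are compatible with the log structures (this is where $\widehat{F}_Y^{-1}(\widehat{E})=p\widehat{E}$ is used), then extend to degrees $i<p$ by antisymmetrized products. One small point to tighten: these log liftings form a torsor under the quasi-coherent sheaf $\mathcal{H}om(\Omega^1_{X'/Y}(\log E'_X/E),F_*\mathcal{O}_X)$, so étale-local existence already gives liftings on Zariski affine opens, and your Čech gluing can be done on an ordinary open cover.
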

\begin{proof}
See \cite[Theorem 2.2]{Ill}.
\end{proof}

\begin{corollary}\label{Illusie-sp}
Let $\widehat{f}: (\widehat{X}, \widehat{E}_{\widehat{X}})\rightarrow (\widehat{Y}, \widehat{E})$ be as in the above theorem. Assume moreover that $f$ is proper. Then:
\begin{enumerate}
  \item For $i+j<p$, the $\mathcal{O}_Y$-modules $R^jf_*\Omega^{i}_{X/Y}(\log E_{X}/E)$ are locally free of finite type, with formation compatible with any base change $Z\rightarrow Y$.
  \item The Hodge spectral sequence $$E_1^{ij}=R^jf_*\Omega^{i}_{X/Y}(\log E_{X}/E)\Rightarrow R^{i+j}f_*\Omega^{\bullet}_{X/Y}(\log E_X/E)$$ satisfies $E_1^{ij}=E_{\infty}^{ij}$ for $i+j<p$.
  \item If $f$ has relative dimension $\leq p$, then (1) and (2) hold for all $(i,j)$.
  \item If $f$ has relative dimension $\leq p$, then the conjugate spectral sequence
  $$_cE_2^{ij}=R^if_*\mathcal{H}^j(\Omega^{\bullet}_{X/Y}(\log E_X/E))\Rightarrow R^{i+j}f_*\Omega^{\bullet}_{X/Y}(\log E_X/E)$$ degenerates in $E_2$.
\end{enumerate}
\end{corollary}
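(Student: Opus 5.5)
The plan is to follow Deligne--Illusie: derive everything from the decomposition of Theorem \ref{Illusie-de} (which already uses the Frobenius lift $\widehat{F}_Y$) together with the Cartier isomorphism of Proposition \ref{Cartier}. Throughout, $f$ is proper, $X'=X\times_{Y,F_Y}Y$, and $f':X'\to Y$ is the base change of $f$ along $F_Y$. By Remark \ref{E-semistable}(1), $f'$ is again $E$-semistable and $\Omega^i_{X'/Y}(\log E'_X/E)=g^*\Omega^i_{X/Y}(\log E_X/E)$.

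\textbf{Step 1 (reduction to a fibrewise length count).} Set $K=Rf'_*F_*\Omega^\bullet_{X/Y}(\log E_X/E)=Rf_*\Omega^\bullet_{X/Y}(\log E_X/E)$; this is a complex of $\mathcal{O}_Y$-modules because the differentials are $f^{-1}\mathcal{O}_Y$-linear. Apply $Rf'_*$ to the decomposition of Theorem \ref{Illusie-de}. This makes $\bigoplus_{i+j=n,\,i<p}R^jf'_*\Omega^i_{X'/Y}(\log E'_X/E)$ a direct summand of $R^n$ of the truncated complex. For $n<p$ that summand is $R^nf_*\Omega^\bullet_{X/Y}(\log E_X/E)$ itself.

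\textbf{Step 2 (local freeness and base change).} Work locally on $Y$, so $Y$ is affine. I would show that the sheaves $R^jf_*\Omega^i$ with $i+j<p$ are locally free and commute with base change by a dimension count at each point $y$. Since $F_Y$ is finite and flat ($Y$ is smooth), flat base change gives $R^jf'_*\Omega^i_{X'/Y}=F_Y^*R^jf_*\Omega^i$. The Deligne--Illusie argument runs on the fibre over $y$: the decomposition restricts to the fibre, because semistability is preserved under the base change of Remark \ref{E-semistable}(1). This gives
\[
\sum_{i+j=n}h^j(X_y,\Omega^i)=h^n_{dR}(X_y/k(y)) \quad (n<p).
\]
Comparing this with the Hodge spectral sequence, which always gives ``$\le$'', forces $E_1$-degeneration fibrewise. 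The degeneration then spreads out. One way is to note that the lengths of $R^n$ of the de Rham complex are computed through the conjugate filtration, whose graded pieces are the Frobenius twists of $R^jf_*\Omega^i$. Then compare the total length of $R^n f_*\Omega^\bullet$ with the Hodge side, using upper semicontinuity of each $h^j(X_y,\Omega^i)$ and Grauert's theorem. Constancy of $\sum_{i+j=n}h^j(\Omega^i_{X_y})$ in $y$, together with semicontinuity of each summand and of the de Rham dimensions, yields constancy of each $h^j$. Cohomology and base change then gives (1), and the degeneration gives (2).

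\textbf{Step 3 (relative dimension $\le p$, and the conjugate sequence).} For (3), when the relative dimension $d\le p$, the only degree not covered by the truncation is $\Omega^d$ with $d=p$. I would use relative duality for $f$, which is valid since $\Omega^d\cong\omega_{X/Y}$ and $\Omega^i\otimes\Omega^{d-i}\to\omega_{X/Y}$ is perfect. This pairs the $(i,j)$ terms with the $(d-i,\dim-j)$ terms, which lie in the range $i+j<p$, exactly as in Deligne--Illusie Cor.~2.4. For (4), the decomposition shows that $\tau_{<p}F_*\Omega^\bullet$ is a direct sum of its cohomology sheaves, which are the $\Omega^j_{X'/Y}[-j]$ by Proposition \ref{Cartier}. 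A decomposable complex gives a conjugate spectral sequence that degenerates at $E_2$. When $d\le p$, the missing top degree $d=p$ is handled by the same duality trick, or by Illusie's observation that $\tau_{\le p}$ still splits in this range.

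\textbf{Main obstacle.} I expect the hardest part to be Step 2's passage from fibrewise degeneration to local freeness along the semistable degenerate fibres over $E$. This needs the base-change compatibility of the log relative complex, and I would rely on Illusie's \S1--2 for that.
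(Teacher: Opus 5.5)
The paper gives no argument of its own here: (1)--(3) are quoted from \cite[Corollary 2.4]{Ill} and (4) from \cite[\S 3.2]{Ill}. Your reconstruction has a genuine gap in Step 2, which is where the content of (1) lies.

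First, the fibrewise decomposition is not available as you claim. Remark~\ref{E-semistable}(1) concerns base change of the ground scheme $S$. It keeps the relative situation $X\to Y$ and says nothing about restricting to a fibre of $Y$. For $y\in E$ the fibre $X_y$ is a singular normal crossings variety. So $X_y\to\Spec k(y)$ is not $E$-semistable in the sense of the definition, and Theorem~\ref{Illusie-de} does not apply to it; you would need a log-smooth version over the standard log point. Nor can you simply restrict the isomorphism of Theorem~\ref{Illusie-de}, because $\tau_{<p}$ does not commute with derived restriction. If you instead first apply $Rf'_*$ and then $-\otimes^L k(y)$, the cone of $\bigoplus_{i<p}F_Y^*Rf_*\Omega^i[-i]\to Rf_*\Omega^\bullet$ lies in degrees $\ge p$. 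Since $k(y)$ has Tor-dimension $\dim Y$, this only gives the fibrewise comparison for $n\le p-1-\dim Y$.

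More seriously, even granting $h^n_{dR}(X_y)=\sum_{i+j=n}h^{ij}(y)$ at every point, local freeness does not follow. All of these functions are upper semicontinuous and can jump up together on a closed subset. The constancy of $\sum_{i+j=n}h^{ij}(y)$ that you invoke is exactly what has to be proved. In characteristic $p$ nothing forces de Rham dimensions of fibres to be locally constant: a coherent sheaf with integrable connection need not be locally free, e.g.\ $F_Y^*M$ with its canonical connection. The conjugate filtration only expresses $R^nf_*\Omega^\bullet$ through the sheaves $F_Y^*R^af_*\Omega^b$ on $Y$, which says nothing about constancy of fibre dimensions.

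What is missing is a length count on non-reduced thickenings rather than at points. For instance, take $y$ closed and $q$ a power of $p$, and set $A_q=\mathcal{O}_{Y,y}/\mathfrak{m}_y^{[q]}$. The twist $X'\times_Y\Spec A_q$ is the base change of $X\times_Y\Spec A_{q/p}$ along the flat map $A_{q/p}\to A_q$ induced by Frobenius, which multiplies lengths by $p^{\dim Y}$. Once the decomposition is known over $A_q$, combine it with the Hodge inequality and the bound $\ell\, H^j(X_{A_q},\Omega^i)\le q^{\dim Y}h^{ij}(y)$. Induction on $q$ then forces equality for all $q$. That kills the differentials of a minimal complex for $Rf_*\Omega^i$ at $y$, giving freeness and base change. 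Some argument of this sort is needed, and it is precisely what the paper imports from Illusie rather than proving.

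A smaller point in Step 3: in relative dimension exactly $p$, the diagonal $i+j=p$ is carried to itself by duality. So you must dualise the perfect complexes $Rf_*\Omega^i$, or add an Euler characteristic count, rather than pair cohomology sheaves as you describe.
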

\begin{proof}
The first three conclusions are just \cite[Corollary. 2.4]{Ill}. The last conclusion is showed in \cite[\S 3.2]{Ill}.
\end{proof}

\begin{corollary}\label{relative-vanishing}
Under the assumptions of Corollary \ref{Illusie-sp}, suppose further that $f$ has relative dimension $d\leq p$. Let $\mathcal{L}$ be an $f$-ample line bundle on $X$. Then $$R^jf_*(\mathcal{L}\otimes\Omega^{i}_{X/Y}(\log E_X/E))=0 ~for~ i+j>d.$$
\end{corollary}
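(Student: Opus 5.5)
Since $f$ is proper, the question is local on $Y$. After shrinking $Y$ we may assume $Y$ is affine. We may also replace $\mathcal{L}$ by a power $\mathcal{L}^{p^n}$: if $\mathcal{L}$ is $f$-ample then so is $\mathcal{L}^{p^n}$, but we need the statement for $\mathcal{L}$ itself, so the plan is to run the Deligne--Illusie/Raynaud argument, which lets us go from $\mathcal{L}^{p}$ back to $\mathcal{L}$.

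Work with the relative Frobenius $F: X\to X'$ over $Y$ and $g: X'\to X$. Put $\mathcal{L}'=g^*\mathcal{L}$, so that $F^*\mathcal{L}'\cong\mathcal{L}^{p}$. By the projection formula,
$$F_*\big(\Omega^{\bullet}_{X/Y}(\log E_X/E)\otimes\mathcal{L}^{p}\big)\cong F_*\Omega^{\bullet}_{X/Y}(\log E_X/E)\otimes\mathcal{L}'.$$
The differentials are $\mathcal{O}_{X'}$-linear, so this is a complex of $\mathcal{O}_{X'}$-modules and the twist is harmless. Since $d\le p$, the complex is concentrated in degrees $\le d$. Theorem \ref{Illusie-de} therefore gives a decomposition
$$F_*\Omega^{\bullet}_{X/Y}(\log E_X/E)\cong\bigoplus_i\Omega^i_{X'/Y}(\log E'_X/E)[-i]$$
in degrees $<p$. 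The case $d=p$ needs a remark: the degree-$p$ part is a single term $\omega$, which can be split off using duality as in Deligne--Illusie. Otherwise, we assume the hypothesis is strong enough to provide the full decomposition, as in Corollary \ref{Illusie-sp}(3).

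Applying $R f'_*$ and using the affineness of $F$, we obtain
$$\bigoplus_{i+j=n}R^jf'_*\big(\Omega^i_{X'/Y}(\log E'_X/E)\otimes\mathcal{L}'\big)\cong R^nf_*\big(\Omega^{\bullet}_{X/Y}(\log E_X/E)\otimes\mathcal{L}^{p}\big).$$
The right-hand side is the abutment of the Hodge-to-de Rham spectral sequence with
$$E_1^{ij}=R^jf_*\big(\Omega^i_{X/Y}(\log E_X/E)\otimes\mathcal{L}^{p}\big).$$
So if $R^jf_*(\Omega^i\otimes\mathcal{L}^{p})=0$ for all $i+j=n$, then $R^jf'_*(\Omega^i_{X'/Y}\otimes\mathcal{L}')=0$ for all $i+j=n$. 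Since $F_Y$ is faithfully flat on the affine $Y$ (over the perfect field $k$), flat base change along $F_Y$ gives
$$R^jf'_*\big(\Omega^i_{X'/Y}(\log E'_X/E)\otimes\mathcal{L}'\big)\cong F_Y^*\,R^jf_*\big(\Omega^i_{X/Y}(\log E_X/E)\otimes\mathcal{L}\big).$$
Faithful flatness then shows the corresponding groups for $\mathcal{L}$ vanish.

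We conclude by descending induction. For fixed $n>d$, relative Serre vanishing shows that $R^jf_*(\Omega^i\otimes\mathcal{L}^{p^m})=0$ for all $j>0$ once $m\gg0$. Moreover, when $j=0$ we have $i=n>d$, so $\Omega^i=0$. Hence the vanishing holds for $\mathcal{L}^{p^m}$ with $m$ large, and iterating the step above $m$ times (each $\mathcal{L}^{p^k}$ is $f$-ample) gives it for $\mathcal{L}$.

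The main obstacle is the decomposition when $d=p$, together with the compatibility of the twisted complex with Illusie's isomorphism. The decomposition itself only requires the $W_2$-lift and the lift of $F_Y$ given in the hypotheses of Corollary \ref{Illusie-sp}, and the twist is compatible with it because $\mathcal{L}'$ is a line bundle on $X'$ and the projection formula holds in $\D(X')$.
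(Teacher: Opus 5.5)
For $d<p$ your argument is correct. It is the standard Deligne--Illusie--Raynaud descent from $\mathcal{L}^{p}$ to $\mathcal{L}$, which is what lies behind \cite[Corollary 2.8]{Ill}; the paper gives no argument of its own here, only this citation. Each step checks out: $F^*g^*\mathcal{L}\cong\mathcal{L}^p$ with the projection formula, $Rf_*=Rf'_*F_*$, the stupid-filtration spectral sequence, flat base change and faithful flatness of $F_Y$, and the start from relative Serre vanishing. This is also the only case the paper needs, since Theorem \ref{direct image} applies the corollary at primes $p_s\gg0$.

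The gap is the boundary case $d=p$, which the statement explicitly includes. Write $\Omega'^i=\Omega^i_{X'/Y}(\log E'_X/E)$, $\omega'=g^*\omega_{X/Y}$ and $K=F_*(\Omega^{\bullet}_{X/Y}(\log E_X/E)\otimes\mathcal{L}^p)$. Theorem \ref{Illusie-de} only splits $\tau_{<p}K$, so for $d=p$ you have just a triangle
\begin{equation*}
\bigoplus_{i<p}\Omega'^i\otimes\mathcal{L}'[-i]\rightarrow K\rightarrow \omega'\otimes\mathcal{L}'[-p]\xrightarrow{+1}.
\end{equation*}
Given $R^nf'_*K=0$ for all $n>d$, this yields only two things:
\begin{itemize}
\item isomorphisms $R^{n-1-p}f'_*(\omega'\otimes\mathcal{L}')\cong\bigoplus_{i<p}R^{n-i}f'_*(\Omega'^i\otimes\mathcal{L}')$ for $n\geq p+2$;
\item a surjection $f'_*(\omega'\otimes\mathcal{L}')\rightarrow\bigoplus_{i<p}R^{p+1-i}f'_*(\Omega'^i\otimes\mathcal{L}')$.
\end{itemize}
Neither forces any of these terms to vanish, so your descent step breaks down.

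Neither remedy you propose closes this.
\begin{itemize}
\item Corollary \ref{Illusie-sp}(3) gives local freeness, base change and $E_1$-degeneration of the untwisted Hodge spectral sequence. It does not give a splitting of $F_*\Omega^{\bullet}_{X/Y}(\log E_X/E)$ in $\D(X')$, and ``we assume the hypothesis is strong enough'' is not an argument.
\item Splitting off the top term ``by duality as in Deligne--Illusie'' is asserted, not proved, and it is not automatic here. In the local model $y=x_1x_2$ one gets $X'=\{x_1'x_2'=t^p\}$, so $X'$ is singular and $F$ is not flat. You would have to establish both the self-duality of $F_*\Omega^{\bullet}_{X/Y}(\log E_X/E)$ and the vanishing of the relevant extension class.
\end{itemize}

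One way around this is never to use degree $p$. Run your descent for $\mathcal{L}^{-1}$ instead. There only hyper-direct images in degrees $\leq p-1$ enter, and in that range $\tau_{<p}$ suffices, so you get vanishing of the $\mathcal{L}^{-1}$-twisted groups for $i+j<\min(d,p)$. Then dualize via the perfect pairing $\Omega^i_{X/Y}(\log E_X/E)\otimes\Omega^{d-i}_{X/Y}(\log E_X/E)\rightarrow\omega_{X/Y}$ to get vanishing for $i+j>\max(d,2d-p)=d$. In the relative setting this duality step itself needs care, for example by working on fibres.
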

\begin{proof}
See \cite[Corollary 2.8]{Ill}.
\end{proof}

We now recall de Rham complexes with coefficients in the Gauss-Manin systems (see \cite[\S 3]{Ill} and \cite[\S 5]{Xie}).
Let $f: X\rightarrow Y$ be an $E$-semistable $k$-morphism, and let $!$ stands for $\dag$ or nothing. We define the graded $\mathcal{O}_Y$-modules
\begin{eqnarray*}
  \mathbb{H}&=&\bigoplus_{i}R^if_*\Omega^{\bullet}_{X/Y}(\log E_X/E);\\
  \mathbb{H}^\dag &=& \bigoplus_{i}R^if_*\left(\Omega^{\bullet}_{X/Y}(\log E_X/E)\otimes\mathcal{O}_X(-E_X)\right).
\end{eqnarray*}
Then there exists the Gauss-Manin connection
$$\nabla: \mathbb{H}^{!}\rightarrow \Omega^1_{Y}(\log E)\otimes\mathbb{H}^{!}$$ which leads to the following complex
$$\Omega^{\bullet}_{Y/S}(\log E)(\mathbb{H}^!)=(\mathbb{H}^!\xrightarrow{\nabla}\Omega^1_{Y}(\log E)\otimes\mathbb{H}^!
\xrightarrow{\nabla}\cdots\xrightarrow{\nabla}\Omega^i_{Y}(\log E)\otimes\mathbb{H}^!\xrightarrow{\nabla}\cdots ).$$
The Hodge filtration on $\mathbb{H}^!$ can be extended to the filtration of $\Omega^{\bullet}_{Y}(\log E)(\mathbb{H}^!)$:
\begin{eqnarray*}
 \Fil^i\Omega^{\bullet}_{Y}(\log E)(\mathbb{H}^!)=(\Fil^i\mathbb{H}^!\xrightarrow{\nabla}\Omega^1_{Y}(\log E)\otimes\Fil^{i-1}\mathbb{H}^!\xrightarrow{\nabla}\\
\cdots\xrightarrow{\nabla}\Omega^j_{Y}(\log E)\otimes\Fil^{i-j}\mathbb{H}^!\xrightarrow{\nabla}\cdots).
\end{eqnarray*}
The associated graded complex of $\Omega^{\bullet}_{Y}(\log E)(\mathbb{H}^!)$ is called Kodaira-Spencer complex:
\begin{eqnarray*}
\gr^{i}\Omega^{\bullet}_{Y}(\log E)(\mathbb{H}^!)=(\gr^i\mathbb{H}^!\xrightarrow{\nabla}\Omega^1_{Y}(\log E)\otimes\gr^{i-1}\mathbb{H}^!\xrightarrow{\nabla}\\
\cdots\xrightarrow{\nabla}\Omega^j_{Y}(\log E)\otimes\gr^{i-j}\mathbb{H}^!\xrightarrow{\nabla}\cdots).
\end{eqnarray*}
If the Hodge spectral sequence
\begin{eqnarray*}
E_1^{ij}=R^jf_*\Omega^i_{X/Y}(\log E_X/E)\Rightarrow  R^{i+j}f_*\Omega^{\bullet}_{X/Y}(\log E_X/E)
\end{eqnarray*}
degenerates, then we have
\begin{eqnarray*}
\gr^i\mathbb{H}^{!}=\left\{
\begin{array}{lcl}
\bigoplus_{j}\mathcal{O}_Y(-E)\otimes R^{j}f_{*}\Omega^i_{X/Y}(\log E_X/E),  & &\mbox{if}~!=\dag,\\
&&\\
\bigoplus_{j}R^{j}f_{*}\Omega^i_{X/Y}(\log E_X/E), & &\mbox{otherwise}.
\end{array}\right.
\end{eqnarray*}

\begin{theorem}\label{Illusie-Xie}
Suppose that $\chr(k)=p>0$. Let $f: (X, E_X)\rightarrow (Y, E)$ be an $E$-semistable $k$-morphism, and $\widehat{f}: (\widehat{X}, \widehat{E}_{\widehat{X}})\rightarrow (\widehat{Y}, \widehat{E})$ a semistable lifting of it over $W_2(k)$. Assume that $f$ is proper and $\dim X<p$, then we have an isomorphism in $\D(Y)$:
$$\phi: \bigoplus_i\gr^{i}\Omega^{\bullet}_{Y}(\log E)(\mathbb{H}^!)\xrightarrow{\sim}F_{Y,*}\Omega^{\bullet}_{Y}(\log E)(\mathbb{H}^!).$$
\end{theorem}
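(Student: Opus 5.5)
The plan is to realize both sides as Frobenius pushforwards of the absolute de Rham complex of $X$ and to split that complex using Theorem \ref{Illusie-de}, in the spirit of Deligne--Illusie and \cite[\S 3]{Ill}, \cite[\S 5]{Xie}. Since $\dim X<p$, the truncations $\tau_{<p}$ are harmless. All statements will also hold with $\mathcal{O}_X(-E_X)$ inserted, which treats the case $!=\dag$; below I write $\Omega^\bullet_X$ for $\Omega^\bullet_{X}(\log E_X)$.

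\textbf{Step 1 (the Katz--Oda filtration).} Filter $\Omega^\bullet_X$ by the subcomplexes $f^*\Omega^{\geq i}_Y(\log E)\wedge\Omega^{\bullet-i}_X$. By Lemma \ref{seq}, the graded pieces are $f^*\Omega^i_Y(\log E)\otimes\Omega^{\bullet}_{X/Y}(\log E_X/E)[-i]$. Applying $Rf_*$ identifies $Rf_*\Omega^\bullet_X$ with the Gauss--Manin complex $\Omega^\bullet_Y(\log E)(\mathbb{H}^!)$, viewed as a filtered object; this is where the connection $\nabla$ arises. The same construction applies to the Hodge filtration: its $\gr^i$ yields the Kodaira--Spencer complexes. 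Local freeness of the $R^jf_*\Omega^i_{X/Y}(\log E_X/E)$ and degeneration of the Hodge spectral sequence come from Corollary \ref{Illusie-sp}(1)--(3), which apply because the relative dimension is $<p$. Together these give
$$Rf_*\Omega^\bullet_X\cong\Omega^\bullet_Y(\log E)(\mathbb{H}^!),\qquad \bigoplus_i\gr^i\Omega^\bullet_Y(\log E)(\mathbb{H}^!)\cong Rf_*\Big(\bigoplus_i\Omega^i_X[-i]\Big).$$
For the second isomorphism I use that the Hodge graded pieces of $\Omega^\bullet_X$ are $\Omega^i_X[-i]$ and that $f$ is compatible with the Hodge filtration.

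\textbf{Step 2 (absolute decomposition).} The semistable lifting $\widehat f$ provides in particular a lifting of $(X,E_X)$ over $W_2(k)$. Since $\dim X<p$, the absolute logarithmic Deligne--Illusie decomposition gives
$$F_{X,*}\Omega^\bullet_X\cong\bigoplus_i\Omega^i_{X}[-i]\quad\text{in }\D(X)$$
(the Frobenius twist is trivial because $k$ is perfect). Concretely, I will take Theorem \ref{Illusie-de} for $X\to\Spec k$ and compose it with the isomorphism $X'\cong X$.

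\textbf{Step 3 (comparing the two sides).} Apply $Rf_*$ to the isomorphism of Step 2 and use $f\circ F_X=F_Y\circ f$ to obtain
$$F_{Y,*}Rf_*\Omega^\bullet_X\cong Rf_*\Big(\bigoplus\Omega^i_X[-i]\Big).$$
By Step 1, the left side is $F_{Y,*}\Omega^\bullet_Y(\log E)(\mathbb{H}^!)$ and the right side is $\bigoplus_i\gr^i\Omega^\bullet_Y(\log E)(\mathbb{H}^!)$. This produces the isomorphism $\phi$.

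\textbf{Main obstacle.} The hardest part is Step 1. One must show that the identification of $Rf_*\Omega^\bullet_X$ with the Gauss--Manin complex holds in $\D(Y)$, and not merely on cohomology sheaves. This requires the Katz--Oda spectral sequence to degenerate compatibly: the $E_1$ page gives the complex $\Omega^\bullet_Y(\log E)(\mathbb{H}^!)$, and I need that complex to represent the pushforward. To get this I would use local freeness and base change from Corollary \ref{Illusie-sp}, then argue as in \cite[\S 3]{Ill}: choose $\widehat{F}_Y$ locally, and use the locally defined quasi-isomorphisms glued via Theorem \ref{Illusie-de}, which is canonical in $(\widehat f,\widehat F_Y)$. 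If the derived-level gluing proves awkward, I would instead follow \cite[\S 5]{Xie} and build $\phi$ directly by composing the maps $\phi_{(\widehat f,\widehat F_Y)}$ with the Gauss--Manin connection. In that approach, checking that the result is a quasi-isomorphism reduces to the Cartier isomorphism on each graded piece, via Proposition \ref{Cartier}.
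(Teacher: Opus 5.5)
There is a genuine gap. It is the step you flag yourself as the main obstacle: Step~1 does not follow from the Katz--Oda filtration, and Steps~2--3 only reduce the theorem to it.

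The Katz--Oda filtration makes $Rf_*\Omega^\bullet_X(\log E_X)$ a filtered object whose spectral sequence has $E_1$-rows $\bigl(\Omega^\bullet_Y(\log E)\otimes R^bf_*\Omega^\bullet_{X/Y}(\log E_X/E),\nabla\bigr)$. At best, after décalage, this exhibits $Rf_*\Omega^\bullet_X(\log E_X)$ as an iterated extension of the complexes $\Omega^\bullet_Y(\log E)\bigl(R^bf_*\Omega^\bullet_{X/Y}(\log E_X/E)\bigr)[-b]$. But $\Omega^\bullet_Y(\log E)(\mathbb H)$ is their direct sum. Identifying the two means splitting these extensions in the derived category, i.e.\ proving a Deligne-type decomposition theorem for $Rf_*$ of the de Rham complex. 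In characteristic $0$ such a splitting comes from hard Lefschetz. In characteristic $p$ nothing in your outline supplies it. Corollary~\ref{Illusie-sp} (local freeness, base change, relative Hodge degeneration) controls only the $E_1$ terms, not the extension data between them.

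Your second isomorphism has the same defect. $Rf_*\Omega^i_X(\log E_X)$ carries the Koszul filtration with graded pieces $\Omega^a_Y(\log E)\otimes Rf_*\Omega^{i-a}_{X/Y}(\log E_X/E)[-a]$, and the Kodaira--Spencer complex is only its $E_1$ page. The connecting morphisms may have components in $\mathrm{Ext}^{\geq 1}$ that the sheaf maps $\theta$ do not see. ``Compatibility of $f$ with the Hodge filtration'' does not remove them. So each identification in Step~1 is a decomposition statement of the same depth as Theorem~\ref{Illusie-Xie} itself.

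Two smaller points:
\begin{enumerate}
\item The differentials of $\Omega^\bullet_X(\log E_X)$ and of $\Omega^\bullet_Y(\log E)(\mathbb H^!)$ are not $\mathcal O$-linear. For Step~3 you therefore need the Step~1 isomorphism to be $\mathcal O_{Y'}$-linear after Frobenius pushforward, not merely an isomorphism of sheaves of $k$-vector spaces.
\item Theorem~\ref{Illusie-de} applied to $X\to\Spec k$ forces $E=\emptyset$, hence $E_X=\emptyset$. For the log poles in Step~2 you need the logarithmic decomposition of \cite[\S4.2]{DI} instead.
\end{enumerate}

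Your suggested repair does not close the gap. Theorem~\ref{Illusie-de} decomposes the relative complex $F_*\Omega^\bullet_{X/Y}(\log E_X/E)$ on $X'$; it says nothing about $Rf_*$ of the absolute complex. Moreover, $\phi_{(\widehat f,\widehat F_Y)}$ depends on the lift $\widehat F_Y$, which exists only locally on $Y$. Globalizing therefore requires controlling how these isomorphisms change with $\widehat F_Y$ on overlaps and how they interact with the Gauss--Manin connection $\nabla$. That is the actual content of \cite[Theorem 4.7]{Ill} and, for $\mathbb H^\dag$, of \cite[Theorem 5.9]{Xie}. Your fallback of building $\phi$ ``as in \cite[\S 5]{Xie}'' is therefore a pointer to the proof rather than a proof.

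The paper gives no argument of its own here either. It simply quotes Illusie for the case where $!$ is nothing and Xie for $!=\dag$.
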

\begin{proof}
The case that $!$ is nothing is prove by Illusie \cite[Theorem 4.7]{Ill}. It is generalized by Xie \cite[Theorem 5.9]{Xie} to the case that $!=\dag$.
\end{proof}

\subsection{Positivity of Hodge bundles}

We prove a more general version of Theorem \ref{main} and Corollary \ref{main-cor} by using the decomposition of de Rham complexes.
\begin{theorem}\label{direct image}
Suppose that $\chr(k)=0$. Let $f: X\rightarrow Y$ be an $E$-semistable $k$-morphism between smooth projective schemes, and $\mathcal{L}$ a ample line bundle on $X$. Then
\begin{enumerate}
  \item $R^if_*\omega_{X/Y}$ is $F^1GG$-semipositive for any $i\geq0$;
  \item $f_*(\omega_{X/Y}\otimes\mathcal{L})$ is $F^1GG$-ample.
\end{enumerate}
In particular, $R^if_*\omega_{X/Y}$ is nef and $f_*(\omega_{X/Y}\otimes\mathcal{L})$ is ample or zero.
\end{theorem}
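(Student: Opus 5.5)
We reduce modulo a general prime and use Illusie's decomposition (Theorem \ref{Illusie-de}) to control $F_Y^*R^if_*\omega_{X/Y}$. Choose a thickening of $(f:X\to Y, E, \mathcal{L}, \mathcal{O}_Y(1))$ over $\Spec A$. Then for almost all closed points $s$ the following hold: $p_s>\dim X$; $f_s$ is $E_s$-semistable; the lifting $\widehat{f}_s$ over $W_2(k_s)$ is semistable, obtained by base change of the thickening along $A\to W_2(k_s)$ after shrinking $\Spec A$; and the hypotheses of Corollary \ref{Illusie-sp} hold. Hence each $R^jf_{s,*}\Omega^i_{X_s/Y_s}(\log E_{X_s}/E_s)$ is locally free with formation commuting with base change. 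In particular
$$F_{Y_s}^*R^if_{s,*}\omega_{X_s/Y_s}\cong R^if'_{s,*}\Omega^d_{X'_s/Y_s}(\log E'_{X_s}/E_s),$$
where $d=\dim f$.

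The key point is to realize this Frobenius pullback as a direct summand of something whose global generation after a twist $\mathcal{O}(m)$, with $m$ independent of $s$, is easy. We do not want to rely on a lift $\widehat{F}_Y$, which generally does not exist. Instead we use the Cartier operator for the top degree. The map $C: \mathcal{H}^d(F_*\Omega^\bullet_{X/Y}(\log))\cong\Omega^d_{X'/Y}(\log)$ gives a surjection of sheaves $F_*\omega_{X/Y}\to\Omega^d_{X'/Y}(\log)\cong g^*\omega_{X/Y}$ on $X'$, namely the relative trace. Applying $R^if'_*$ we want the induced map
$$R^if_{s,*}\omega_{X_s/Y_s}=R^if'_*F_*\omega_{X/Y}\to R^if'_*g^*\omega_{X/Y}=F_{Y_s}^*R^if_{s,*}\omega_{X_s/Y_s}$$
to be surjective. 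The approach is to compose with the inclusion $\tau$ and the splitting of the top piece given by Theorem \ref{Illusie-de}, or, when no lift of $F_Y$ exists, to use that the conjugate spectral sequence degenerates (Corollary \ref{Illusie-sp}(4)). Degeneration makes the edge map $R^{i+d}f_*\Omega^\bullet\to R^if'_*\mathcal{H}^d$ surjective. Combined with the Hodge degeneration, which makes $R^if_*\omega_{X/Y}$ the bottom Hodge piece $\Fil^d$, and Serre duality/the dual argument, this should give surjectivity of the trace on $R^if_*$. I expect this step to be the main obstacle, since one needs to identify the trace map on $R^if_*$ with the conjugate edge map and check that no lift of Frobenius on $Y$ is needed.

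Granting surjectivity $R^if_{s,*}\omega_{X_s/Y_s}\twoheadrightarrow (R^if_*\omega_{X/Y})_s^{(p_s)}$, part (1) follows. Choose $m$ with $R^if_*\omega_{X/Y}(m)$ globally generated in characteristic zero, so that $R^if_{s,*}\omega_{X_s/Y_s}(m)$ is globally generated for almost all $s$ by semicontinuity and base change. Then $(R^if_*\omega_{X/Y})^{(p_s)}_s(m)$ is a quotient of a globally generated sheaf, hence globally generated, which is exactly $F^1GG$-semipositivity.

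For (2) we argue in the same way with $\omega_{X/Y}\otimes\mathcal{L}$. The trace gives $F_{X/Y,*}(\omega_{X/Y}\otimes \mathcal{L}^{p}\otimes\mathcal{M})\to g^*(\omega_{X/Y}\otimes\mathcal{L})\otimes\mathcal{M}'$. Given a locally free $\mathcal{F}$ on $Y$, we use the projection formula to get a surjection
$$f_{s,*}\bigl(\omega_{X_s/Y_s}\otimes\mathcal{L}_s^{p_s}\bigr)\otimes\mathcal{F}_s\twoheadrightarrow \bigl(f_*(\omega\otimes\mathcal{L})\bigr)^{(p_s)}_s\otimes\mathcal{F}_s.$$
Its surjectivity on $f_*$ follows from Corollary \ref{relative-vanishing}, which kills $R^1f_*$ of the kernel; the kernel is $B$-type and is filtered by pieces $\mathcal{L}^{p}\otimes\Omega^{j}$. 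It remains to show that $f_{s,*}(\omega\otimes\mathcal{L}^{p_s})\otimes\mathcal{F}_s$ is globally generated for $p_s\gg0$. Since $\mathcal{L}^{p_s}\otimes f^*\mathcal{F}$ is highly positive, this follows from Castelnuovo–Mumford regularity (Lemma \ref{lemma3.3}), using the relative Kodaira-type vanishing of Corollary \ref{relative-vanishing} together with the Deligne–Illusie vanishing on $X_s$ to get $H^i(Y_s, f_*(\omega\otimes\mathcal{L}^{p_s})\otimes\mathcal{F}_s(-i))=0$. Finally, the statements ``nef'' and ``ample or zero'' follow from Proposition \ref{nef}(1) and Lemma \ref{ample}(2), using local freeness from Corollary \ref{Illusie-sp}.
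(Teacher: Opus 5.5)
Your treatment of (2) is essentially the paper's argument. The composite $f_{s,*}(\mathcal{L}_s^{p_s}\otimes\omega_{X_s/Y_s})\to R^df_{s,*}C^\bullet_s\to F_{Y_s}^*f_{s,*}(\mathcal{L}_s\otimes\omega_{X_s/Y_s})$ used there is exactly your trace map. Its surjectivity uses two inputs, in the appropriate degrees, not Corollary \ref{relative-vanishing} alone: Serre vanishing for the pieces $\mathcal{L}_s^{p_s}\otimes\Omega^j$, and Corollary \ref{relative-vanishing} for the pieces $g_s^*(\mathcal{L}_s\otimes\Omega^j)$.

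The genuine gap is in (1). The surjection $R^if_{s,*}\omega_{X_s/Y_s}\twoheadrightarrow F_{Y_s}^*R^if_{s,*}\omega_{X_s/Y_s}$ that you grant is false in general; without the twist by $\mathcal{L}^{p_s}$ nothing kills its cokernel. By Corollary \ref{Illusie-sp} both sheaves are locally free of the same rank, so a surjection would be an isomorphism. That gives $\det(R^if_*\omega_{X/Y})_s^{\otimes(p_s-1)}\cong\mathcal{O}_{Y_s}$, hence $c_1(R^if_*\omega_{X/Y})\cdot H^{\dim Y-1}=0$. For a non-isotrivial semistable elliptic surface $f\colon X\to Y$ over a curve one has $\deg f_*\omega_{X/Y}>0$, so this fails for every $s$. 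Concretely, the map $f_{s,*}\omega\to F_{Y_s}^*f_{s,*}\omega$ is multiplication by the Hasse invariant, which vanishes at the supersingular fibres; in general this trace is dual to the relative Hasse--Witt map. Your proposed derivation cannot repair this. The conjugate edge map $R^{i+d}f_*\Omega^\bullet\twoheadrightarrow F_Y^*R^if_*\omega_{X/Y}$ is indeed surjective, but its restriction to the Hodge piece $\Fil^d=R^if_*\omega_{X/Y}$ is precisely the non-surjective trace, and surjectivity does not pass to subobjects.

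The missing idea is to keep the whole de Rham cohomology as the source, which is what the paper does. First, Corollary \ref{Illusie-sp}(4) and Proposition \ref{Cartier} give a surjection $R^{d+i}f_{s,*}\Omega^\bullet_{X_s/Y_s}(\log E_{X_s}/E_s)\twoheadrightarrow F_{Y_s}^*R^if_{s,*}\omega_{X_s/Y_s}$. Second, Hodge degeneration (Corollary \ref{Illusie-sp}(3)) bounds $h^j(\mathcal{O}_{Y_s}(m)\otimes R^{d+i}f_{s,*}\Omega^\bullet)$ by $\sum_l h^j(\mathcal{O}_{Y_s}(m)\otimes R^lf_{s,*}\Omega^{d+i-l}_{X_s/Y_s}(\log E_{X_s}/E_s))$. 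The right-hand side involves only reductions of finitely many fixed characteristic-zero sheaves. So by semicontinuity a single $m_0$ makes $\mathcal{O}_{Y_s}(m_0)\otimes R^{d+i}f_{s,*}\Omega^\bullet$ $0$-regular, hence globally generated, for almost all $s$. Its quotient $\mathcal{O}_{Y_s}(m_0)\otimes F_{Y_s}^*R^if_{s,*}\omega_{X_s/Y_s}$ is then globally generated too. Since the source is the full de Rham bundle rather than its Hodge piece $R^if_*\omega_{X/Y}$, the rank and determinant obstruction above does not arise.
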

\begin{proof}
Set $\dim f=d$. Take a very ample line bundle $\mathcal{O}_Y(1)$ on $Y$, and choose a thickening $$(\widetilde{f}: \widetilde{X}\rightarrow \widetilde{Y}, \widetilde{\mathcal{L}}, \widetilde{E}, \mathcal{O}_{\widetilde{Y}}(1))$$ over $\Spec A$. For a general $s\in\Spec A$, we have the commutative diagram
\begin{eqnarray}\label{diag6.1}
\xymatrix{
 X_s \ar@/_/[ddr]_{f_s} \ar[dr]^{F_s} \ar@/^/[drr]^{F_{X_s}} \\
  & X_s^{\prime} \ar[d]^{f_s^{\prime}} \ar[r]_{g_s}  & X_s \ar[d]_{f_s}            \\
  & Y_s \ar[r]^{F_{Y_s}} & Y_s.                       }
\end{eqnarray}
Since $F_s$ is a finite morphism, by Proposition \ref{Cartier} one sees that
\begin{eqnarray*}
 R^if_{s,*}\mathcal{H}^j(\Omega^{\bullet}_{X_s/Y_s}(\log E_{X_s}/E_s)) &\cong& R^if^{\prime}_{s,*}\mathcal{H}^j(F_{s,*}\Omega^{\bullet}_{X_s/Y_s}(\log E_{X_s}/E_s)) \\
   &\cong&  R^if^{\prime}_{s,*}g_s^*\Omega^{j}_{X_s/Y_s}(\log E_{X_s}/E_s) \\
   &\cong&  F^*_{Y_s}R^if_{s,*}\Omega^{j}_{X_s/Y_s}(\log E_{X_s}/E_s).
\end{eqnarray*}
In particular, we have $$R^if_{s,*}\mathcal{H}^d(\Omega^{\bullet}_{X_s/Y_s}(\log E_{X_s}/E_s))\cong F^*_{Y_s}R^if_{s,*}\omega_{X_s/Y_s} .$$
Hence, from Corollary \ref{Illusie-sp} (4), one obtains a surjective morphism
$$R^{d+i}f_{s,*}\Omega^{\bullet}_{X_s/Y_s}(\log E_{X_s}/E_s)\rightarrow F^*_{Y_s}R^if_{s,*}\omega_{X_s/Y_s}$$ for almost all $s\in\Spec A$.

On the other hand, by Corollary \ref{Illusie-sp} (3) one infers that
\begin{eqnarray*}
&& h^j(Y_s, \mathcal{O}_{Y_s}(m)\otimes R^{d+i}f_{s,*}\Omega^{\bullet}_{X_s/Y_s}(\log E_{X_s}/E_s)) \\
&\leq&\sum_{l=0}^d h^j(Y_s, \mathcal{O}_{Y_s}(m)\otimes R^{l}f_{s,*}\Omega^{d+i-l}_{X_s/Y_s}(\log E_{X_s}/E_s)).
\end{eqnarray*}
Thus by semicontinuity we can find an integer $m_0$ such that $$h^j(Y_s, \mathcal{O}_{Y_s}(m_0-j)\otimes R^{d+i}f_{s,*}\Omega^{\bullet}_{X_s/Y_s}(\log E_{X_s}/E_s))=0$$ for almost all $s$. This implies that $$\mathcal{O}_{Y_s}(m_0)\otimes R^{d+i}f_{s,*}\Omega^{\bullet}_{X_s/Y_s}(\log E_{X_s}/E_s)$$ is globally generated. So is $\mathcal{O}_{Y_s}(m_0)\otimes F^*_{Y_s}R^if_{s,*}\omega_{X_s/Y_s}$. One deduces the first conclusion.

For the second conclusion, we consider the complex $\mathcal{L}^{p_s}_s\otimes\Omega^{\bullet}_{X_s/Y_s}(\log E_{X_s}/E_s)$ for a general closed point $s\in\Spec A$, where $p_s=\chr(A/s)$. One sees that
$$F_{s,*}(\mathcal{L}^{p_s}_s\otimes\Omega^{\bullet}_{X_s/Y_s}(\log E_{X_s}/E_s))\cong g_s^*\mathcal{L}_s\otimes F_{s,*}\Omega^{\bullet}_{X_s/Y_s}(\log E_{X_s}/E_s).$$ From Proposition \ref{Cartier}, it follows that
$$\mathcal{H}^i(F_{s,*}(\mathcal{L}^{p_s}_s\otimes\Omega^{\bullet}_{X_s/Y_s}(\log E_{X_s}/E_s)))
\cong g_s^*(\mathcal{L}_s\otimes\Omega^{i}_{X_s/Y_s}(\log E_{X_s}/E_s)).$$
Let $C_s^{\bullet}:=\mathcal{L}^{p_s}_s\otimes\Omega^{\bullet}_{X_s/Y_s}(\log E_{X_s}/E_s)$.
Truncating the complex $F_{s,*}C_s^{\bullet}$, one obtains the exact triangles
\begin{eqnarray*}
\tau_{\leq d-1}F_{s,*}C^{\bullet}_s\rightarrow &F_{s,*}C_s^{\bullet}&\rightarrow g_s^*(\mathcal{L}_s\otimes\omega_{X_s/Y_s})[-d],\\
\tau_{\leq d-2}F_{s,*}C^{\bullet}_s\rightarrow &\tau_{\leq d-1}F_{s,*}C^{\bullet}_s&\rightarrow g_s^*(\mathcal{L}_s\otimes\Omega^{d-1}_{X_s/Y_s}(\log E_{X_s}/E_s))[-d+1],\\
&\vdots&\\
\tau_{\leq 1}F_{s,*}C^{\bullet}_s\rightarrow &\tau_{\leq 2}F_{s,*}C^{\bullet}_s&\rightarrow g_s^*(\mathcal{L}_s\otimes\Omega^{2}_{X_s/Y_s}(\log E_{X_s}/E_s))[-2],\\
g_s^*\mathcal{L}_s\rightarrow &\tau_{\leq 1}F_{s,*}C^{\bullet}_s&\rightarrow g_s^*(\mathcal{L}_s\otimes\Omega^{1}_{X_s/Y_s}(\log E_{X_s}/E_s))[-1].
\end{eqnarray*}
By Corollary \ref{relative-vanishing}, one infers
\begin{eqnarray*}
&&R^if^{\prime}_{s,*}(g_s^*(\mathcal{L}_s\otimes\Omega^{j}_{X_s/Y_s}(\log E_{X_s}/E_s))[-j]) \\
  &\cong&F^*_{Y_s}R^{i-j}f_{s,*}(\mathcal{L}_s\otimes\Omega^{j}_{X_s/Y_s}(\log E_{X_s}/E_s))\\
  &=&0
\end{eqnarray*}
for $i>d$. Hence applying the functor $f^{\prime}_{s,*}$ to the exact triangles above, we have
$$R^{i}f^{\prime}_{s,*}\tau_{\leq j}F_{s,*}C^{\bullet}_s=0$$ for $i>d$. This yields a surjective morphism
\begin{eqnarray*}
\psi_s: &&R^df_{s,*}(\mathcal{L}^{p_s}_s\otimes\Omega^{\bullet}_{X_s/Y_s}(\log E_{X_s}/E_s))\xrightarrow{\sim} R^df^{\prime}_{s,*}F_{s,*}C^{\bullet}_s\\
&\rightarrow& f^{\prime}_{s,*}g_s^*(\mathcal{L}_s\otimes\omega_{X_s/Y_s})\xrightarrow{\sim} F_{Y_s}^*f_{s,*}(\mathcal{L}_s\otimes\omega_{X_s/Y_s}).
\end{eqnarray*}

Let $l$ be any fixed integer. It turns out that the second conclusion follows if one can show the global generation of $$\mathcal{O}_{Y_s}(l)\otimes R^df_{s,*}(\mathcal{L}^{p_s}_s\otimes\Omega^{\bullet}_{X_s/Y_s}(\log E_{X_s}/E_s))$$ for almost all $s$. To this end, we use the following exact triangles
\begin{eqnarray*}
C_s^{\geq1}\rightarrow &C_s^{\bullet}&\rightarrow \mathcal{L}^{p_s}_s,\\
C_s^{\geq2}\rightarrow &C_s^{\geq1}&\rightarrow \mathcal{L}^{p_s}_s\otimes\Omega^{1}_{X_s/Y_s}(\log E_{X_s}/E_s)[-1],\\
&\vdots&\\
\mathcal{L}^{p_s}_s\otimes\omega_{X_s/Y_s}[-d]\rightarrow &C_s^{\geq d-1}&\rightarrow \mathcal{L}^{p_s}_s\otimes\Omega^{d-1}_{X_s/Y_s}(\log E_{X_s}/E_s)[-d+1].
\end{eqnarray*}
By Serre's vanishing and semicontinuity, one sees that
\begin{eqnarray*}
&&R^if_{s,*}(\mathcal{L}^{p_s}_s\otimes\Omega^{j}_{X_s/Y_s}(\log E_{X_s}/E_s)[-j])\\
&=&R^{i-j}f_{s,*}(\mathcal{L}^{p_s}_s\otimes\Omega^{j}_{X_s/Y_s}(\log E_{X_s}/E_s))\\
&=&0
\end{eqnarray*}
for $i>j$ and almost all $s$. Therefore, applying the functor $f_{s,*}$ to the above exact triangles, one obtains
$$R^df_{s,*}C_s^{\bullet}\cong R^df_{s,*}C_s^{\geq1}\cong\cdots \cong R^df_{s,*}C_s^{\geq d-1}$$ and a surjective map
$$f_{s,*}(\mathcal{L}^{p_s}_s\otimes\omega_{X_s/Y_s})\twoheadrightarrow R^df_{s,*}C_s^{\geq d-1}.$$
Since $R^if_{s,*}(\mathcal{L}^{p_s}_s\otimes\omega_{X_s/Y_s})=0$ for $i>0$, one infers that
\begin{eqnarray*}
&&H^i(Y_s, \mathcal{O}_{Y_s}(l-i)\otimes f_{s,*}(\mathcal{L}^{p_s}_s\otimes\omega_{X_s/Y_s}))\\
&\cong& H^i(X_s, f_s^*\mathcal{O}_{Y_s}(l-i)\otimes \mathcal{L}^{p_s}_s\otimes\omega_{X_s/Y_s})\\
&=&0
\end{eqnarray*}
when $p_s\gg0$. Thus $\mathcal{O}_{Y_s}(l)\otimes f_{s,*}(\mathcal{L}^{p_s}_s\otimes\omega_{X_s/Y_s})$ is globally generated for almost all $s$. So is $\mathcal{O}_{Y_s}(l)\otimes R^df_{s,*}D_s^{\bullet}$. This completes the proof.
\end{proof}

\begin{corollary}\label{direct image1}
Let $f: X\rightarrow Y$ be a surjective morphism between complex smooth projective varieties. Let $\mathcal{L}$ be a semiample line bundle on $X$. Then $(f_*(\omega_{X/Y}\otimes\mathcal{L}))^{**}$ is weakly $F^1GG$-semipositive. In particular, $f_*(\omega_{X/Y}\otimes\mathcal{L})$ is weakly $F^1GG$-semipositive if $f$ is flat.
\end{corollary}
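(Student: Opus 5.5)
The plan is to follow Viehweg's covering trick together with weak semistable reduction, reducing to Theorem \ref{direct image} (2) (or (1)) and then descending along finite covers with Lemma \ref{pullback2}.

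\textbf{Step 1: reduce to $\mathcal{L}$ trivial.} Since $\mathcal{L}$ is semiample, $\mathcal{L}^N$ is globally generated for some $N$. Take a general section and the associated cyclic cover $X'\to X$ obtained by extracting an $N$-th root, followed by a resolution. The sheaf $\omega_X\otimes\mathcal{L}$ is then a direct summand of the pushforward of $\omega_{X'}$ (one eigensheaf of $\mathcal{O}_{X'}$ twisted by $\omega$). Hence $f_*(\omega_{X/Y}\otimes\mathcal{L})$ is a direct summand of $f'_*\omega_{X'/Y}$. Direct summands are quotients, so by Lemma \ref{lemma3.8}(5) it suffices to treat $\mathcal{L}=\mathcal{O}_X$, with $X'$ replacing $X$ (possibly no longer connected over $Y$, which is harmless).

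\textbf{Step 2: semistable reduction.} Apply the weak semistable reduction of Abramovich--Karu, or Viehweg's construction [Vie95, \S2]. This produces a finite flat cover $\tau:Y'\to Y$ from a smooth projective $Y'$ (after a Kawamata covering; one may need to allow $Y'$ only generically finite, and then restrict to the open set where it is finite), together with a model $f'':X''\to Y'$. Over a dense open $V\subset Y$ this $f''$ is $E$-semistable, and there is a natural map $f''_*\omega_{X''/Y'}\to\tau^*f_*\omega_{X/Y}$ which is an isomorphism on $\tau^{-1}(V)$. This is the standard base-change property of $f_*\omega_{X/Y}$ under semistable reduction.

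To obtain a genuinely smooth projective semistable model over all of $Y'$, as Theorem \ref{direct image} requires, is hard. So I would instead work over $\tau^{-1}(V)$ and redo the argument of Theorem \ref{direct image}(1) with global generation only on that open set. The mod-$p$ argument there is local over $Y$ for the decomposition, while global generation only needs the vanishing of $H^j$ on $Y'_s$, which holds by Serre vanishing for the coherent sheaves $R^lf''_*\Omega^{\bullet}$ extended to $Y'$. This yields that $f''_*\omega_{X''/Y'}$ is weakly $F^1GG$-semipositive on $\tau^{-1}(V)$.

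\textbf{Step 3: descend.} On $\tau^{-1}(V)$ the sheaf $\tau^*(f_*\omega_{X/Y})^{**}$ agrees with $f''_*\omega_{X''/Y'}$, so it is weakly $F^1GG$-semipositive on that open set. Lemma \ref{pullback2}, in its weak version and valid for finite morphisms of smooth quasi-projective schemes, applied to $\tau$ over $V$ then gives the weak $F^1GG$-semipositivity of $(f_*\omega_{X/Y})^{**}$ on a dense open subset. Here the reflexive hull is used so that the sheaf is locally free, hence commutes with pullback, off codimension two. For the flat case, $f_*(\omega_{X/Y}\otimes\mathcal{L})$ is torsion free and agrees with its double dual on a big open set. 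Weak positivity only concerns a dense open set, so the same conclusion holds for it directly.

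\textbf{Main obstacle.} The hardest step is Step 2: making Illusie's decomposition, which needs a semistable morphism of smooth schemes with $W_2$-liftings, available for a general reduction over a dense open of $Y'$. I would first try to shrink $Y'$ to an open $V'$ over which $f''$ is smooth. There the relative de Rham complex is the ordinary one, Deligne--Illusie applies directly to a thickening, and the only global input needed is Serre vanishing for coherent extensions of the Hodge sheaves to $Y'$.
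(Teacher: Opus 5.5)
Your Step 2 has a genuine gap. Semistability over a dense open $V'\subset Y'$ whose complement may contain a divisor $D$ is not enough. Shrinking further to the smooth locus of $f''$, as in your last paragraph, makes the semistable reduction pointless and does not help.

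Weak $F^1GG$-semipositivity needs \emph{global} sections of $\mathcal{O}_{Y'_s}(m)\otimes F^*_{Y'_s}\mathcal{E}_s$ on the projective $Y'_s$ that generate on an open set. In the proof of Theorem~\ref{direct image}(1) these come from a surjection $R^{d}f_{s,*}\Omega^{\bullet}_{X_s/Y_s}\to F^*_{Y_s}f_{s,*}\omega_{X_s/Y_s}$ that is defined on all of $Y_s$. That surjection needs the Cartier isomorphism and the $E_2$-degeneration of the conjugate spectral sequence everywhere, i.e.\ semistability everywhere. In your version, global sections of a coherent extension of $R^df''_*\Omega^{\bullet}$ only give sections of $F^*(f''_*\omega_{X''/Y'})(m)$ over $V'_s$, and these need not extend across $D_s$. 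After Frobenius pullback, two extensions of the same sheaf on $V'$ differ by twists like $\mathcal{O}(\pm p_s kD_s)$, so no twist independent of $s$ can fix this.

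In fact your argument never looks at the boundary. It would therefore equally ``prove'' that $f''_*\omega_{X''/Y'}\otimes\mathcal{O}_{Y'}(-kD)$ is weakly $F^1GG$-semipositive for every $k\geq0$. This is absurd by Proposition~\ref{nef}(3) (take determinants and intersect with $H^{\dim Y'-1}$) as soon as $f''_*\omega_{X''/Y'}\neq0$.

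The missing ingredient is semistable reduction \emph{in codimension one}, as in \cite[Proposition 6.1]{Vie83}. It gives an open $Y_0\subset Y$ with $\codim(Y\setminus Y_0)\geq2$ and a non-singular covering $\tau:Y'_0\to Y_0$ over which the desingularized base change is semistable everywhere. The paper then argues as follows:
\begin{itemize}
\item Theorem~\ref{direct image} runs over $Y'_0$.
\item Sections over big open subsets extend by reflexivity \cite[Proposition 1.6]{Hart2}.
\item Lemma~\ref{pullback2} descends along $\tau$.
\item One concludes with $(f_*\omega_{X/Y})^{**}\cong j_*f_{0,*}\omega_{X_0/Y_0}$.
\end{itemize}
This is exactly where the double dual in the statement comes from.

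Your flat case is also wrong, and it never uses flatness. Unlike Viehweg's weak positivity, the definition of weak $F^1GG$-semipositivity involves no reflexive hull, so it is not determined by the sheaf off codimension two. For example, $\mathcal{I}_x\subset\mathcal{O}_{\mathbb{P}^2}$ has $\mathcal{I}_x^{**}=\mathcal{O}_{\mathbb{P}^2}$. But $F^*\mathcal{I}_{x,s}=\mathcal{I}_{x,s}^{[p_s]}$ satisfies $H^0(\mathcal{I}_{x,s}^{[p_s]}(m))=0$ once $p_s>m$, so $\mathcal{I}_x$ is not weakly $F^1GG$-semipositive. What you need is that $f_*(\omega_{X/Y}\otimes\mathcal{L})$ is actually reflexive when $f$ is flat \cite[Corollary 5.26]{Horing}, so that it equals its double dual. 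Your Step 1 (cyclic cover and direct summand) is fine and agrees with the paper.
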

\begin{proof}
We firstly show the conclusion for $\mathcal{L}=\mathcal{O}_X$.

By \cite[Proposition 6.1]{Vie83} (see also \cite{AK, ALT}), one can find an open subvariety $Y_0$ of $Y$ and a non-singular covering $\tau: Y^{\prime}_0\rightarrow Y_0$ such that $\codim(Y-Y_0)\geq2$, $f_*\omega_{X/Y}$ is locally free on $Y_0$ and the induced morphism $f^{\prime}_0: X_0^{\prime}\rightarrow Y^{\prime}_0$ is semistable, where $X_0^{\prime}$ is a desingularization of $X\times_Y Y^{\prime}_0$. We have the commutative diagram
$$\xymatrix{
 X_0^{\prime}  \ar[d]_{f_0^{\prime}} \ar[r]^{\tau^{\prime}}  & X_0 \ar[d]^{f_0}            \\
 Y_0^{\prime} \ar[r]^{\tau} & Y_0.                       }$$
It turns out that the proof of Theorem \ref{direct image} still works for complex smooth quasi-projective varieties. Thus applying Theorem \ref{direct image} to $f^{\prime}_0$, one sees that $f^{\prime}_{0,*}\omega_{X_0^{\prime}/Y_0^{\prime}}$ is $F^1GG$-semipositive. By the base change theorem (see \cite{Vie83} and \cite[(4.10)]{Mori}), one obtains an inclusion $$f^{\prime}_{0,*}\omega_{X_0^{\prime}/Y_0^{\prime}}\subset\tau^*f_{0,*}\omega_{X_0/Y_0}.$$ And it is an equality over an open subvariety of $Y^{\prime}_0$. These imply that $\tau^*f_{0,*}\omega_{X_0/Y_0}$ is weakly $F^1GG$-semipositive. Thus $f_{0,*}\omega_{X_0/Y_0}$ is weakly $F^1GG$-semipositive by Lemma \ref{pullback2}. From \cite[Prop. 1.6]{Hart2}, one sees
$$(f_*\omega_{X/Y})^{**}\cong j_*((f_*\omega_{X/Y})^{**}|_{Y_0})\cong j_*((f_*\omega_{X/Y})|_{Y_0})\cong j_*f_{0,*}\omega_{X_0/Y_0},$$ where
$j: Y_0\hookrightarrow Y$ is the inclusion map. Hence $(f_*\omega_{X/Y})^{**}$ is weakly $F^1GG$-semipositive.

We now prove the conclusion for $(f_*(\omega_{X/Y}\otimes\mathcal{L}))^{**}$. Since $\mathcal{L}$ is semiample, $\mathcal{L}^N$ is generated by global sections for some $N>0$. By Bertini's theorem, one finds a non-singular divisor $D$ with $\mathcal{L}^N=\mathcal{O}_X(D)$. Let $Z$ be the cyclic cover obtained by taking the $N$-th root out of $D$ and let $g:Z \rightarrow Y$ be the induced map. Then one finds that $(g_*\omega_{Z/Y})^{**}$ is weakly $F^1GG$-semipositive and it contains $(f_*(\omega_{X/Y}\otimes\mathcal{L}))^{**}$ as a direct summand. This implies $(f_*(\omega_{X/Y}\otimes\mathcal{L}))^{**}$ is weakly $F^1GG$-semipositive.

When $f$ is flat, by \cite[Corollary 5.26]{Horing}, one sees that $f_*(\omega_{X/Y}\otimes\mathcal{L})$ is reflexive. Hence $$f_*(\omega_{X/Y}\otimes\mathcal{L})\cong (f_*(\omega_{X/Y}\otimes\mathcal{L}))^{**}$$ is weakly $F^1GG$-semipositive.
\end{proof}

We give a more general version of Theorem \ref{main-toric} and Corollary \ref{main-toric-cor}.
\begin{theorem}\label{toric}
Suppose that $\chr(k)=0$. Let $f: X\rightarrow Y$ be a smooth morphism between smooth projective $k$-schemes. Assume further that $Y$ is a toric $k$-scheme.
Then $R^jf_*\Omega^i_{X/Y}$ is $F^1$-semipositive for any $i,j\geq0$.
\end{theorem}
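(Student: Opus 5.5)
We have to show that for a general reduction mod $p$ there is a uniform $m_0$ with $H^b(Y_s,\mathcal{O}_{Y_s}(m)\otimes F_{Y_s}^*R^jf_{s,*}\Omega^i_{X_s/Y_s})=0$ for all $b>0$ and $m\ge m_0$. The plan is to combine Illusie's decomposition of the Gauss--Manin de Rham complex (Theorem \ref{Illusie-Xie}) with the toric splitting used in Theorem \ref{Bott}, which lets us work on $Y_s$ with $E=\emptyset$.

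Set $\mathbb{H}=\bigoplus_n R^nf_*\Omega^\bullet_{X/Y}$ and choose a thickening of $(f,\mathcal{O}_Y(1))$ over $\Spec A$. For general $s$ we have $p_s>\dim X$. Then $f_s$ is smooth and lifts to $W_2(k_s)$, since the thickening lifts. The toric variety $Y_s$ lifts to $W_2$ together with a Frobenius lift. By Corollary \ref{Illusie-sp} the Hodge spectral sequence degenerates and the sheaves $\mathcal{H}^{ij}:=R^jf_{s,*}\Omega^i_{X_s/Y_s}$ are locally free. Hence $\gr\mathbb{H}_s=\bigoplus\mathcal{H}^{ij}$, and the Kodaira--Spencer complexes have terms $\Omega^a_{Y_s}\otimes\mathcal{H}^{i-a,j+a}$.

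The key input is the relation between the Frobenius pullback and the Gauss--Manin system. In characteristic $p$ the module $\mathbb{H}_s$ carries Katz's $p$-curvature-zero description via the conjugate filtration. Indeed $\gr^{\mathrm{conj}}_n\mathbb{H}_s\cong F_{Y_s}^*\mathcal{H}^{n,\ast}$, which follows from Proposition \ref{Cartier} and the conjugate degeneration in Corollary \ref{Illusie-sp}(4), as in the proof of Theorem \ref{direct image}. Moreover, by Cartier descent the conjugate graded pieces are the Frobenius pullbacks with their canonical connection. For $\mathcal{G}=F_{Y_s}^*\mathcal{H}$ with its canonical connection, the de Rham complex satisfies
$$F_{Y_s,*}\big(\Omega^\bullet_{Y_s}\otimes F_{Y_s}^*\mathcal{H}\big)\cong F_{Y_s,*}\Omega^\bullet_{Y_s}\otimes\mathcal{H}\cong\bigoplus_a\Omega^a_{Y_s}\otimes\mathcal{H}[-a]$$
by the projection formula and Deligne--Illusie.

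Combining this with Theorem \ref{Illusie-Xie} and twisting by $\mathcal{O}_{Y_s}(m)$, where the twist commutes with $F_{Y_s,*}$ up to the substitution $F^*\mathcal{O}(m)=\mathcal{O}(pm)$, gives
$$\bigoplus H^{b}(Y_s,\Omega^a_{Y_s}\otimes\mathcal{H}^{ij}(m))\ \text{``}\le\text{''}\ \mathbb{H}^\ast(\Omega^\bullet_{Y_s}(\mathbb{H}_s)(p m)).$$
It is simpler, however, to use the toric splitting directly. By \cite[Theorem 2]{BTLM}, $\Omega^a_{Y_s}$ is a direct summand of $F_{Y_s,*}\Omega^a_{Y_s}$. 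Hence
$$H^b(Y_s,\Omega^a_{Y_s}\otimes F_{Y_s}^*\mathcal{H}(m))\hookrightarrow H^b\big(Y_s,F_{Y_s,*}\Omega^a_{Y_s}\otimes F_{Y_s}^*\mathcal{H}(m)\big),$$
and $F_{Y_s,*}\Omega^a_{Y_s}\otimes F_{Y_s}^*\mathcal{H}(m)$ is not directly simpler. So the plan is to run the argument of Theorem \ref{thm2.13} in reverse. The hypercohomology $\mathbb{H}^n(Y_s,\Omega^\bullet_{Y_s}(\mathbb{H}_s)\otimes\mathcal{O}(p_sm))$ is bounded on one side by the Hodge-type terms $H^{n-a}(\Omega^a_{Y_s}\otimes\gr\mathbb{H}_s(m))$, using Theorem \ref{Illusie-Xie} and $F_*(\mathcal{O}(pm)\otimes K)=\mathcal{O}(m)\otimes F_*K$. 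On the other side it is filtered by the conjugate pieces, whose twisted de Rham complexes decompose as $\bigoplus_a\Omega^a\otimes\mathcal{H}(m)[-a]$. The bottom piece $a=0$ then yields a map to $H^b(F_{Y_s}^*\mathcal{H}^{ij}(p_s m'))$ that we must control. Serre vanishing in the uniform form, via semicontinuity applied to the finitely many $\Omega^a_Y\otimes\mathcal{H}^{ij}$, gives a uniform $m_0$ with $H^{b}(\Omega^a_{Y_s}\otimes\mathcal{H}^{ij}_s(m))=0$ for $b>0$ and $m\ge m_0$. Feeding this in yields vanishing of the high hypercohomology, and hence of $H^b(\mathcal{O}(m)\otimes F_{Y_s}^*\mathcal{H}^{ij})$.

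The main obstacle is extracting vanishing of the individual term $H^b(F^*\mathcal{H}^{ij}(m))$ from vanishing of a hypercohomology in which it appears only as one graded piece, since there are differentials and extensions. I would handle this by descending induction on the conjugate/Hodge degree $i$: start from $\mathcal{H}^{d,j}$, the top quotient $\gr^{\mathrm{conj}}$, exactly as in the proof of Theorem \ref{direct image}, and peel off one piece at a time using the exact triangles. The toric splitting of $\Omega^a_{Y_s}\to F_*\Omega^a_{Y_s}$ is used to kill the $a>0$ contributions. This works because those terms involve $\Omega^a_{Y_s}\otimes F^*(\cdot)$, which injects into cohomology we already know vanishes, as in Theorem \ref{Bott}.
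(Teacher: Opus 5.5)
\textbf{Verdict: there is a genuine gap.} Nothing in your argument isolates $H^b(Y_s,F_{Y_s}^*\mathcal{H}^{ij}(m))$, and the mechanism you propose for doing so cannot work.

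\emph{Your ingredients cannot suffice on their own.} You use the conjugate filtration of $R^nf_{s,*}\Omega^\bullet_{X_s/Y_s}$ with graded pieces $F_{Y_s}^*\mathcal{H}^{i,n-i}$, Theorem \ref{Illusie-Xie}, and Deligne--Illusie on $Y_s$. None of these needs a Frobenius lift, so all are available for every smooth family. Remark \ref{Mumford} shows they cannot be enough. There, $F_{Y_s}^*f_{s,*}\omega_{X_s/Y_s}$ is the top conjugate quotient of $R^1f_{s,*}\Omega^\bullet_{X_s/Y_s}$, an extension of Hodge bundles with uniformly vanishing twisted higher cohomology. Yet $f_*\omega_{X/Y}$ is not $F^1$-semipositive.

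\emph{The descending induction is circular.} Take $0\to K\to V\to Q\to 0$ with $H^b(V(m))=0$ for $b>0$. This only gives $H^b(Q(m))\cong H^{b+1}(K(m))$, and $K$ is built from the remaining Frobenius-pulled-back pieces. This is exactly why the quotient argument in Theorem \ref{direct image} yields only global generation, a property that does pass to quotients.

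\emph{The BTLM splitting points the wrong way.} The split injection $\Omega^a_{Y_s}\hookrightarrow F_{Y_s,*}\Omega^a_{Y_s}$ gives $H^b(\Omega^a_{Y_s}\otimes\mathcal{G})\hookrightarrow H^b(\Omega^a_{Y_s}\otimes F_{Y_s}^*\mathcal{G})$. That transfers vanishing from the Frobenius pullback down to $\mathcal{G}$, not up. It is the tool for Theorem \ref{Bott} once $F^1$-semipositivity is already known.

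\emph{The Gauss--Manin route gives no information about the pullback.} After $F_{Y_s,*}$ and Deligne--Illusie, the twisted de Rham complexes of the conjugate pieces involve only the terms $\Omega^a_{Y_s}\otimes\mathcal{H}(m)$, with no Frobenius. So they say nothing about $H^b(F_{Y_s}^*\mathcal{H}(m))$ itself.

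\textbf{The missing idea} is the one place where the toric hypothesis genuinely enters. The Frobenius lift $\widehat{F}_{Y_s}$, which you mention but never use, is exactly the hypothesis of Illusie's relative decomposition, Theorem \ref{Illusie-de}. That theorem gives $F_{s,*}\Omega^\bullet_{X_s/Y_s}\cong\bigoplus_i g_s^*\Omega^i_{X_s/Y_s}[-i]$ in $\D(X_s')$. Applying $Rf'_{s,*}$ and flat base change along $F_{Y_s}$, the conjugate filtration splits:
\[ R^nf_{s,*}\Omega^\bullet_{X_s/Y_s}\cong\bigoplus_i F_{Y_s}^*R^{n-i}f_{s,*}\Omega^i_{X_s/Y_s}. \]
So each $F_{Y_s}^*\mathcal{H}^{ij}$ is a direct summand of $R^{i+j}f_{s,*}\Omega^\bullet_{X_s/Y_s}$. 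By Hodge degeneration (Corollary \ref{Illusie-sp}(3)), that sheaf is an iterated extension of the finitely many Hodge bundles $\mathcal{H}^{a,i+j-a}$. Uniform Serre vanishing for these, via semicontinuity, gives
\[ h^l\big(F_{Y_s}^*\mathcal{H}^{ij}(m)\big)\le\sum_a h^l\big(\mathcal{H}^{a,i+j-a}(m)\big)=0 \]
for $l>0$ and $m\ge m_0$. That is the whole proof; neither the Gauss--Manin complex nor the BTLM splitting is needed.
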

\begin{proof}
Set $\dim f=d$. Take a very ample line bundle $\mathcal{O}_Y(1)$ on $Y$, and choose a thickening $$(\widetilde{f}: \widetilde{X}\rightarrow \widetilde{Y}, \mathcal{O}_{\widetilde{Y}}(1))$$ over $\Spec A$. For a general $s\in\Spec A$, we still have the commutative diagram (\ref{diag6.1}). By \cite[Prop. 4.2.2]{AWZ}, one sees that $Y_s$ is a toric $A/s$-scheme for almost all $s$. Hence there exists a lifting of the Frobenius map $F_{Y_s}$ of $Y_s$ over $\Spec (A/s^2)$ by \cite[3.4]{BTLM}. From Theorem \ref{Illusie-de}, one obtains an isomorphism
$$\bigoplus_{i=0}^dg_s^*\Omega^i_{X_s/Y_s}[-i]\cong F_{s,*}\Omega^{\bullet}_{X_s/Y_s}.$$ This implies
\begin{eqnarray*}
  R^jf_{s,*}\Omega^{\bullet}_{X_s/Y_s}&\cong& R^jf^{\prime}_{s,*}(F_{s,*}\Omega^{\bullet}_{X_s/Y_s})\\
  &\cong& \bigoplus_{i=0}^dF^*_{Y_s} (R^{j-i}f_{s,*} \Omega^i_{X_s/Y_s}).
\end{eqnarray*}
From Corollary \ref{Illusie-sp} (3), one sees that there exists an $m_0$ such that
\begin{eqnarray*}
&&\sum_{i=0}^dh^l(Y_s, \mathcal{O}_{Y_s}(m)\otimes F^*_{Y_s} (R^{j}f_{s,*} \Omega^i_{X_s/Y_s}))\\
&=& h^{l}(Y_s, \mathcal{O}_{Y_s}(m)\otimes R^{i+j}f_{s,*}\Omega^{\bullet}_{X_s/Y_s})\\
&\leq&\sum_{a=0}^d h^{l}(Y_s, \mathcal{O}_{Y_s}(m)\otimes R^{i+j-a}f_{s,*}\Omega^{a}_{X_s/Y_s})\\
&=&0
\end{eqnarray*}
for $l>0$, $m>m_0$ and almost all $s$. Therefore $R^jf_*\Omega^i_{X/Y}$ is $F^1$-semipositive.
\end{proof}

\begin{corollary}
Under the situation of Theorem \ref{toric}, we have $$H^b(Y, \Omega^a_Y\otimes R^jf_*\Omega^i_{X/Y}\otimes \mathcal{L})=0$$ for any $b>0$ and ample line bundle $\mathcal{L}$. And for any closed subscheme $D\subset Y$ of dimension $e$, we have $D\cdot\ch_e(R^jf_*\Omega^i_{X/Y})\geq0$ .
\end{corollary}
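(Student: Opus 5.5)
The corollary follows by combining Theorem \ref{toric} with the general results on $F^t$-semipositive sheaves proved earlier. Write $\mathcal{E}:=R^jf_*\Omega^i_{X/Y}$. By Theorem \ref{toric}, $\mathcal{E}$ is $F^1$-semipositive, i.e.\ $\phi^1_{sp}(\mathcal{E})=0$. Since $f$ is smooth and projective, Corollary \ref{Illusie-sp} (1), applied to a general reduction modulo $p$, shows that $\mathcal{E}$ is locally free. The same conclusion follows in characteristic zero from the local freeness of Hodge bundles for smooth projective morphisms, given degeneration of the Hodge-to-de Rham spectral sequence.

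For the vanishing, take $\mathcal{L}$ ample on $Y$. By Lemma \ref{lemma2.9}, $\mathcal{L}$ is $F^1$-ample. Theorem \ref{tensor}, applied with the $F^1$-ample sheaf $\mathcal{L}$ and $\mathcal{F}=\mathcal{E}$ (one of them is locally free), gives
$$\phi^1_a(\mathcal{E}\otimes\mathcal{L})\leq\phi^1_{sp}(\mathcal{E})=0,$$
so $\mathcal{E}\otimes\mathcal{L}$ is $F^1$-ample. Since $Y$ is a smooth projective toric $k$-scheme, the Bott type vanishing of Theorem \ref{Bott} applies with $t=1$ and the coherent sheaf $\mathcal{E}\otimes\mathcal{L}$. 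It yields $H^b(Y,\Omega^a_Y\otimes\mathcal{E}\otimes\mathcal{L})=0$ for all $b>\phi^1_a(\mathcal{E}\otimes\mathcal{L})=0$ and all $a\geq0$, which is exactly the first claim.

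For the Chern character inequality, apply Theorem \ref{Chern1}. The sheaf $\mathcal{E}$ is a locally free $F^1$-semipositive sheaf on the smooth projective scheme $Y$ over a field of characteristic zero. Hence $D\cdot\ch_e(\mathcal{E})\geq0$ for every closed subscheme $D\subset Y$ of dimension $e$.

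The only point needing care is the local freeness of $\mathcal{E}$, which Theorem \ref{Chern1} requires in its second statement. It is supplied by Corollary \ref{Illusie-sp} (1) and (3) on the reductions, since the relative dimension is at most $p_s$ for $p_s\gg0$, together with the fact that local freeness of $\widetilde{\mathcal{E}}$ on an open part of $\Spec A$ forces local freeness over $k$. Everything else is a direct citation of earlier results.
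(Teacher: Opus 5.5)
Your proposal is correct and follows the paper's own proof: Theorem \ref{toric}, Lemma \ref{lemma2.9} and Theorem \ref{tensor} make $R^jf_*\Omega^i_{X/Y}\otimes\mathcal{L}$ an $F^1$-ample sheaf, so Theorem \ref{Bott} gives the vanishing, and Theorem \ref{Chern1} gives the Chern character inequalities. You also check that $R^jf_*\Omega^i_{X/Y}$ is locally free, which the second part of Theorem \ref{Chern1} requires; the paper leaves this implicit, and it follows most cleanly from Hodge-to-de Rham degeneration and base change for smooth projective morphisms in characteristic zero.
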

\begin{proof}
By Theorem \ref{tensor} and \ref{toric}, one sees $R^jf_{*}\Omega^i_{X/Y}\otimes \mathcal{L}$ is $F^1$-ample. The desired vanishing follows from Theorem \ref{Bott}. The inequalities of Chern characters follows from Theorem \ref{Chern1}.
\end{proof}

We now prove a generalization of Theorem \ref{main1} and Corollary \ref{main1-cor}.
\begin{theorem}\label{direct image2}
Suppose that $\chr(k)=0$. Let $f: X\rightarrow Y$ be an $E$-semistable $k$-morphism between smooth projective schemes. If $\Omega^1_Y$ is $F^1$-semipositive, then both $R^if_*\omega_X$ and $R^if_*\omega_X(E_X)$ are $F^1$-semipositive.
\end{theorem}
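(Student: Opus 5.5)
The plan is to use the Illusie–Xie decomposition of the de Rham complex with coefficients in the Gauss–Manin system (Theorem \ref{Illusie-Xie}), in the same way the proof of Theorem \ref{toric} used Theorem \ref{Illusie-de}. Put $n=\dim X$ and $d=\dim f$, and fix a very ample $\mathcal{O}_Y(1)$. Choose a thickening of $(f, E, \mathcal{O}_Y(1))$ over $\Spec A$. For general $s$ the reduction $f_s$ is $E_s$-semistable, lifts to $W_2$ (take the reduction of the thickening modulo $s^2$), and satisfies $\dim X_s<p_s$. Both the Hodge spectral sequence and the conjugate spectral sequence degenerate (Corollary \ref{Illusie-sp}), so $\gr^i\mathbb{H}^!_s$ is $\bigoplus_j R^jf_{s,*}\Omega^i_{X_s/Y_s}(\log E_{X_s}/E_s)$, twisted by $\mathcal{O}(-E_s)$ when $!=\dag$.

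The first step is to identify the top-degree graded pieces with the sheaves in the statement. Consider the Kodaira–Spencer complex $\gr^{n}\Omega^\bullet_{Y}(\log E)(\mathbb{H}^!)$. Since $\gr^a\mathbb{H}^!=0$ for $a>d$, its only possibly nonzero term sits in degree $\dim Y=n-d$, namely $\omega_Y(E)\otimes\gr^d\mathbb{H}^!$. For $!=\dag$ this is $\omega_Y\otimes\bigoplus_i R^if_*\omega_{X/Y}\cong \bigoplus_i R^if_*\omega_X$, using $\omega_X=\omega_{X/Y}\otimes f^*\omega_Y$. For $!$ empty it is $\omega_Y(E)\otimes\bigoplus_iR^if_*\omega_{X/Y}=\bigoplus_i R^if_*(\omega_X(E_X))$. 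By Theorem \ref{Illusie-Xie}, $F_{Y_s,*}\Omega^\bullet_{Y_s}(\log E_s)(\mathbb{H}^!_s)$ is quasi-isomorphic to $\bigoplus_a \gr^a$. Hence the shifted sheaf $\bigoplus_i R^if_{s,*}\omega_{X_s}[-(n-d)]$ (resp.\ with $E_{X_s}$) is a direct summand of $F_{Y_s,*}K_s^\bullet$, where $K_s^\bullet=\Omega^\bullet_{Y_s}(\log E_s)(\mathbb{H}^!_s)$.

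The second step transports the twist. Tensor with $\mathcal{O}_{Y_s}(m)$ and use the projection formula $\mathcal{O}(m)\otimes F_{Y_s,*}K_s^\bullet\cong F_{Y_s,*}(K_s^\bullet\otimes\mathcal{O}(pm))$, together with finiteness of $F_{Y_s}$. This gives, for every $l$,
$$h^{l}(Y_s, \mathcal{O}(m)\otimes R^if_{s,*}\omega_{X_s})\le \mathbb{H}^{l+n-d}\bigl(Y_s, K_s^\bullet\otimes\mathcal{O}(p_sm)\bigr).$$
The hypercohomology spectral sequence bounds the right side by $\sum_{a}h^{l+n-d-a}(Y_s,\Omega^a_{Y_s}(\log E_s)\otimes\mathbb{H}^!_s(p_sm))$. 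This wording does not yet produce a Frobenius pullback on the left, so the construction has to be reversed: I would also apply Theorem \ref{Illusie-Xie} with the $\mathbb{H}^!$ of the base change along $F_{Y_s}$, or more simply read it as follows. The graded pieces on the right of Theorem \ref{Illusie-Xie} are computed from $\mathbb{H}^!$ viewed as a Frobenius-twisted object, because the decomposition comes from the Cartier isomorphism. Under that reading, $\mathbb{H}_s^!\otimes\Omega^a$ with Frobenius structure is $F_{Y_s}^*(\cdot)\otimes\Omega^a$ on the conjugate side. The task is therefore to bound $h^l(\mathcal{O}(m)\otimes F_{Y_s}^*R^if_{s,*}\omega_{X_s})$ by cohomology of the terms $\Omega^a_{Y_s}(\log E_s)\otimes F^*_{Y_s}(\text{Hodge bundles})(m)$ on the conjugate filtration side.

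The third step is the vanishing. To kill the terms $\Omega^a_{Y_s}(\log E_s)\otimes(\cdots)(m)$ in positive degree with a uniform $m$, use the hypothesis that $\Omega^1_Y$ is $F^1$-semipositive, hence so are its exterior powers (Corollary \ref{exterior}). Combine this with Theorem \ref{tensor2}. The log forms differ from $\Omega^a_Y$ only along $E$; I would handle this by the residue sequences, inducting on the components of $E$ as in the proof of Theorem \ref{KV}, with restriction controlled by Proposition \ref{pullback}. Serre vanishing then supplies the bounded regularity for the remaining finitely many coherent Hodge bundles uniformly in $s$, by semicontinuity.

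The main obstacle is exactly the point where the Frobenius twist has to land on $R^if_*\omega_X$ rather than on $\Omega^a_Y$. Using Theorem \ref{Illusie-Xie}, $F^*_{Y_s}(\Omega^a\otimes\gr)$ is related to $F_{Y_s,*}$ of the whole complex. One has to check carefully that the top graded piece appears with an $F^*_{Y_s}$. My first attempt would combine the conjugate filtration (Cartier, Proposition \ref{Cartier}), which produces $F^*_{Y_s}$, with $F^1$-semipositivity of $\Omega^a_Y$, which controls precisely the $F^*_{Y_s}\Omega^a_{Y_s}$-twisted terms arising from the Cartier isomorphism on $Y_s$.
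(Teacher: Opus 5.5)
There is a genuine gap, at exactly the point you flag.

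Your first step is correct and matches the paper. The top Kodaira--Spencer piece $\gr^n\Omega^\bullet_Y(\log E)(\mathbb{H}^!)$ is $\bigoplus_iR^if_*\omega_X[-e]$ for $!=\dag$ and $\bigoplus_iR^if_*\omega_X(E_X)[-e]$ otherwise, with $e=\dim Y$. Theorem \ref{Illusie-Xie} makes it a direct summand of $F_{Y_s,*}K_s^\bullet$.

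After that, nothing yields $F^1$-semipositivity:
\begin{itemize}
\item Your second-step estimate bounds $h^l(\mathcal{O}_{Y_s}(m)\otimes R^if_{s,*}\omega_{X_s})$, i.e.\ the reduction of the sheaf itself and not its Frobenius pullback. So it only gives a Koll\'ar-type vanishing.
\item The proposed ``reading'' that would repair this is not valid. The graded side of the Illusie--Xie decomposition carries no $F^*_{Y_s}$. In Illusie's formulation the only twist is by the Frobenius of the perfect field $k$, which changes no dimensions. Base change of $f$ along $F_{Y_s}$ does not help either, since $y^p=x_1\cdots x_n$ is no longer semistable.
\item The conjugate filtration exhibits $F^*_{Y_s}R^if_{s,*}\omega_{X_s/Y_s}$ only as a quotient of $R^{d+i}f_{s,*}\Omega^\bullet_{X_s/Y_s}(\log E_{X_s}/E_s)$. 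The kernel is filtered by other Frobenius-twisted Hodge bundles. That is the proof of Theorem \ref{direct image}, and it gives global generation, not vanishing of higher cohomology.
\item Bounding by cohomology of $\Omega^a_{Y_s}(\log E_s)\otimes F^*_{Y_s}(\mbox{Hodge bundles})(m)$ is circular. $F^1$-semipositivity of such Frobenius-twisted Hodge bundles is what is to be proved, and for $f_*\omega_{X/Y}$ it fails in general (Remark \ref{Mumford}). Theorem \ref{tensor2} would need both factors to be $F^1$-semipositive.
\item You apply the hypothesis to the wrong factor. An untwisted $\Omega^a_{Y_s}(\log E_s)$ is the reduction of a fixed sheaf, so Serre vanishing plus semicontinuity handles it. No residue induction is needed.
\end{itemize}

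The missing idea is to apply $F^*_{Y_s}$ to the Illusie--Xie isomorphism itself. The differentials of $F_{Y_s,*}K_s^\bullet$ are $\mathcal{O}_{Y_s}$-linear and $F_{Y_s}$ is flat. Hence $F^*_{Y_s}\gr^n=\bigoplus_iF^*_{Y_s}R^if_{s,*}\omega_{X_s}[-e]$ is a direct summand of $F^*_{Y_s}F_{Y_s,*}K_s^\bullet$, and the stupid filtration gives
\[
h^a\bigl(\mathcal{O}_{Y_s}(m)\otimes F^*_{Y_s}R^if_{s,*}\omega_{X_s}\bigr)\leq\sum_{j,\,\mathcal{W}} h^{a+e-j}\bigl(\mathcal{O}_{Y_s}(m)\otimes F^*_{Y_s}F_{Y_s,*}\mathcal{W}_s\bigr).
\]
Here $\mathcal{W}$ runs over the locally free sheaves $\Omega^j_Y(\log E)\otimes\gr\mathbb{H}^!$ coming from characteristic zero.

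The Frobenius now sits in $F^*_{Y_s}F_{Y_s,*}$. The paper controls this with X.~Sun's canonical filtration of $F^*_{Y_s}F_{Y_s,*}\mathcal{W}_s$, whose graded pieces are $\mathcal{W}_s\otimes\T^l\Omega^1_{Y_s}$. It then resolves $\T^l\Omega^1_{Y_s}$ by the sheaves $\Sym^{l-jp_s}\Omega^1_{Y_s}\otimes F^*_{Y_s}\Omega^j_{Y_s}$.

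This is where the hypothesis actually enters: it bounds the regularity of $F^*_{Y_s}\Omega^j_{Y_s}$. The symmetric powers $\Sym^i\Omega^1_{Y_s}$, with $i$ up to about $e(p_s-1)$, must also be controlled uniformly in $i$ and $s$. The paper does this via Theorem \ref{KV} applied to $\omega_Y\otimes\Sym^i\Omega^1_Y$.
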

\begin{proof}
Set $\dim X=n$ and $\dim Y=e$. Take a very ample line bundle $\mathcal{O}_Y(1)$ on $Y$, and choose a thickening $$(\widetilde{f}: \widetilde{X}\rightarrow \widetilde{Y}, \widetilde{E}, \mathcal{O}_{\widetilde{Y}}(1))$$ over $\Spec A$. Let $s\in\Spec A$ be a general closed point and $p_s=\chr(A/s)$. Then by Theorem \ref{Illusie-Xie} we have an isomorphism
$$\bigoplus_i\gr^i\Omega^{\bullet}_{Y_s}(\log E_s)(\mathbb{H}_s^!)\cong F_{Y_s,*}\Omega^{\bullet}_{Y_s}(\log E_s)(\mathbb{H}_s^!).$$
From the Hodge spectral sequence
\begin{eqnarray*}
&&E_1^{ij}=H^{i+j}(Y_s, \mathcal{O}_{Y_s}(m)\otimes F^*_{Y_s}F_{Y_s,*}\gr^i\Omega^{\bullet}_{Y_s}(\log E_s)(\mathbb{H}_s^!))\\
&\Rightarrow&H^{i+j}(Y_s, \mathcal{O}_{Y_s}(m)\otimes F^*_{Y_s}F_{Y_s,*}\Omega^{\bullet}_{Y_s}(\log E_s)(\mathbb{H}_s^!))
\end{eqnarray*}
and Kodaira-Spencer complex, it follows that
\begin{eqnarray*}
&&h^a(Y_s, \mathcal{O}_{Y_s}(m)\otimes F^*_{Y_s}\gr^n\Omega^{\bullet}_{Y_s}(\log E_s)(\mathbb{H}_s^!))\\
&\leq&\sum_ih^a(Y_s, \mathcal{O}_{Y_s}(m)\otimes F^*_{Y_s}\gr^i\Omega^{\bullet}_{Y_s}(\log E_s)(\mathbb{H}_s^!))\\
&=& h^a(Y_s, \mathcal{O}_{Y_s}(m)\otimes F^*_{Y_s}F_{Y_s,*}\Omega^{\bullet}_{Y_s}(\log E_s)(\mathbb{H}_s^!))\\
&\leq&\sum_{i} h^{a}(Y_s, \mathcal{O}_{Y_s}(m)\otimes F^*_{Y_s}F_{Y_s,*}\gr^i\Omega^{\bullet}_{Y_s}(\log E_s)(\mathbb{H}_s^!))\\
&\leq&\sum_{i, j}h^{a-j}(Y_s, \mathcal{O}_{Y_s}(m)\otimes F^*_{Y_s}F_{Y_s,*}(\Omega^{j}_{Y_s}(\log E_s)\otimes\gr^{i-j}\mathbb{H}_s^!)).\\
\end{eqnarray*}
An easy computation shows
\begin{eqnarray*}
\gr^n\Omega_{Y_s}^{\bullet}(\log E_s)(\mathbb{H}_s^{!})=\left\{
\begin{array}{lcl}
\bigoplus_{i\geq0}R^if_{s,*}\omega_{X_s}[-e],  & &\mbox{if}~!=\dag,\\
&&\\
\bigoplus_{i\geq0}R^if_{s,*}\omega_{X_s}(E_{X_s})[-e], & &\mbox{otherwise}.
\end{array}\right.
\end{eqnarray*}
Thus, when $!$ is nothing, one obtains
\begin{eqnarray}\label{6.2}
  &&h^a( \mathcal{O}_{Y_s}(m)\otimes F^*_{Y_s}R^if_{s,*}\omega_{X_s}(E_{X_s}))\\
\nonumber   &\leq& h^{a+e}( \mathcal{O}_{Y_s}(m)\otimes F^*_{Y_s}\gr^n\Omega^{\bullet}_{Y_s}(\log E_s)(\mathbb{H}_s)) \\
\nonumber   &\leq&\sum_{l, j}h^{a+e-j}( \mathcal{O}_{Y_s}(m)\otimes F^*_{Y_s}F_{Y_s,*}(\Omega^{j}_{Y_s}(\log E_s)\otimes\gr^{l-j}\mathbb{H}_s))\\
\nonumber    &\leq&\sum_{l, j, r}h^{a+e-j}(\mathcal{O}_{Y_s}(m)\otimes F^*_{Y_s}F_{Y_s,*}(\Omega^{j}_{Y_s}(\log E_s)
    \otimes R^rf_{s,*}\Omega^{l-j}_{X_s/Y_s}(\log E_{X_s}/E_s))).
\end{eqnarray}

On the other hand, for any locally free sheaf $\widetilde{\mathcal{W}}$ on $\widetilde{Y}$, Xiaotao Sun gives the following filtration in \cite[Theorem 3.7]{SunX}:
$$0=\mathcal{V}_{e(p_s-1)+1}\subset \mathcal{V}_{e(p_s-1)}\subset\cdots\subset \mathcal{V}_1\subset \mathcal{V}_0=F^*_{Y_s}F_{Y_s, *}\mathcal{W}_s,$$
where $\mathcal{V}_j/\mathcal{V}_{j+1}\cong\mathcal{W}_s\otimes \T^j\Omega^1_{Y_s}$ for $0\leq j\leq e(p_s-1)$. The sheaf $\T^l\Omega^1_{Y_s}$ is suited in the exact sequence
\begin{eqnarray*}
0\rightarrow\Sym^{l-l(p_s)p_s}\Omega^1_{Y_s}\otimes F^*_{Y_s}\Omega^{l(p_s)}_{Y_s}\rightarrow \Sym^{l-(l(p_s)-1)p_s}\Omega^1_{Y_s}\otimes F^*_{Y_s}\Omega^{l(p_s)-1}_{Y_s}\\
\rightarrow \cdots\rightarrow\Sym^{l-p_s}\Omega^1_{Y_s}\otimes F^*_{Y_s}\Omega^{1}_{Y_s}\rightarrow\Sym^l\Omega^1_{Y_s}\rightarrow\T^l\Omega^1_{Y_s}\rightarrow0
\end{eqnarray*}
here $l(p_s)\geq0$ is the integer such that $0\leq l-l(p_s)p_s<p_s$.
This implies that
\begin{eqnarray}\label{6.3}
 \nonumber h^b( \mathcal{O}_{Y_s}(m)\otimes F^*_{Y_s}F_{Y_s, *}\mathcal{W}_s) &\leq&
  \sum_{l} h^{b}( \mathcal{O}_{Y_s}(m)\otimes \mathcal{W}_s\otimes \T^l\Omega^1_{Y_s})\\
   &\leq& \sum_{l,j} h^{b+j}( \mathcal{O}_{Y_s}(m)\otimes \mathcal{W}_s\otimes \Sym^{l-jp_s}\Omega^1_{Y_s}\otimes F^*_{Y_s}\Omega^{j}_{Y_s}).
\end{eqnarray}

\begin{claim}\label{claim6.14}
One can find an integer $m_0$ such that $$h^b(\mathcal{O}_{Y_s}(m)\otimes \mathcal{W}_s\otimes \Sym^{i}\Omega^1_{Y_s}\otimes F^*_{Y_s}\Omega^{j}_{Y_s})=0$$ for any $m>m_0$, $b>0$, $i\geq0$, $j\geq0$ and almost all $s$.
\end{claim}
Combining this, (\ref{6.2}) and (\ref{6.3}), it follows that there is an integer $m_0$ such that $$h^a( \mathcal{O}_{Y_s}(m)\otimes F^*_{Y_s}R^if_{s,*}\omega_{X_s}(E_{X_s}))=0$$ for any $m>m_0$, $a>0$, $i\geq0$ and almost all $s$. Hence $R^if_*\omega_{X}(E_X)$ is $F^1$-semipositive. Similarly, one shows $R^if_*\omega_{X}$ is $F^1$-semipositive by taking $!=\dag$.

Turning to Claim \ref{claim6.14}, since $\Omega^1_Y$ is $F^1$-semipositive, so are $\Omega^j_Y$ and $\Sym^{i}\Omega^1_Y$ for any $i, j\geq0$.
These imply that $\reg_0(F^*_{Y_s}\Omega^{j}_{Y_s})$ are bounded for all $j\geq0$ and almost all $s$, and $$H^a(Y, \omega_{Y}\otimes\mathcal{O}_{Y}(m)\otimes\Sym^{i}\Omega^1_Y)=0$$ for any $a>0, m>0$ and $i\geq0$ by Theorem \ref{KV}. Thus $\reg_0(\omega_{Y}\otimes\Sym^{i}\Omega^1_Y)\leq e+1$. So is $\reg_0(\omega_{Y_s}\otimes\Sym^{i}\Omega^1_{Y_s})$ by semicontinuity. From Proposition \ref{tensor-CM}, it follows that
\begin{eqnarray*}
&&\reg_0(\mathcal{W}_s\otimes \Sym^{i}\Omega^1_{Y_s}\otimes F^*_{Y_s}\Omega^{j}_{Y_s})\\
&\leq&\reg_0(\mathcal{W}_s\otimes\omega^{-1}_{Y_s})+\reg_0(\omega_{Y_s}\otimes\Sym^{i}\Omega^1_{Y_s})+\reg_0(F^*_{Y_s}\Omega^{j}_{Y_s})+2(e-1)(R-1)\\
&\leq&\reg_0(\mathcal{W}\otimes\omega^{-1}_{Y})+e+1+\reg_0(F^*_{Y_s}\Omega^{j}_{Y_s})+2(e-1)(R-1),
\end{eqnarray*}
where $R=\max\{1, \reg_0(\mathcal{O}_Y)\}$.
Hence $\reg_0(\mathcal{W}_s\otimes \Sym^{i}\Omega^1_{Y_s}\otimes F^*_{Y_s}\Omega^{j}_{Y_s})$ are bounded for all $i\geq0$, $j\geq0$ and almost all $s$. This completes the proof of Claim \ref{claim6.14}.
\end{proof}

\begin{corollary}
Under the situation of the theorem above, we have
\begin{eqnarray*}
&& H^b(X, \Omega_Y^a(\log E)(\lceil A\rceil)\otimes R^if_*\omega_X)  \\
  &=& H^b(X, \Omega_Y^a(\log E)(\lceil A\rceil-E)\otimes R^if_*\omega_X)\\
  &=&0
\end{eqnarray*}
for $a+b>\dim Y$, where $A$ is an ample $\mathbb{Q}$-divisor on $Y$ such that $\lceil A\rceil-A\leq E$. And for any closed subscheme $D\subset Y$ of dimension $e$, we have $$D\cdot\ch_e(R^if_*\omega_X)\geq0,~ D\cdot\ch_e(\mathcal{O}_Y(E)\otimes R^if_*\omega_X)\geq0.$$
\end{corollary}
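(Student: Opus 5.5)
The plan is to derive both assertions from Theorem \ref{direct image2}, which says that $R^if_*\omega_X$ and $R^if_*\omega_X(E_X)$ are $F^1$-semipositive. Combined with Theorem \ref{KV} this gives the vanishing; combined with Theorem \ref{Chern1} it gives the Chern character inequalities. We may assume $k$ is algebraically closed: Lemma \ref{lemma3.10} shows that $F^1$-semipositivity is stable under field extension, and the cohomology dimensions and intersection numbers do not change under flat base change. In the vanishing statements the cohomology is taken on $Y$, since the sheaves live on $Y$.

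For the vanishing, let $\mathcal{E}:=R^if_*\omega_X$. This is locally free by Corollary \ref{Illusie-sp}, applied after reduction modulo $p$ for general $s$, or by Koll\'ar's theorem in characteristic zero. By Theorem \ref{direct image2} we have $\phi^1_{sp}(\mathcal{E})=0$. The divisor $E$ is a simple normal crossings divisor on $Y$ because $f$ is $E$-semistable, and by hypothesis $\lceil A\rceil-A\leq E$. Apply Theorem \ref{KV} with $D=E$, $t=1$, first taking $G=E$ and then $G=\lceil A\rceil-A$ rounded up, i.e.\ the reduced divisor supported on the fractional part, which satisfies $\lceil A\rceil -A\le G\le E$. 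Taking $G=E$ gives
\[
H^b(Y,\Omega^a_Y(\log E)(\lceil A\rceil-E)\otimes\mathcal{E})=0 \quad\text{for } a+b>\dim Y.
\]
For the version without the twist by $-E$, rewrite
\[
\Omega^a_Y(\log E)(\lceil A\rceil)\otimes R^if_*\omega_X=\Omega^a_Y(\log E)(\lceil A\rceil-E)\otimes R^if_*\omega_X(E_X).
\]
Here I use the projection formula $R^if_*\omega_X(E_X)=\mathcal{O}_Y(E)\otimes R^if_*\omega_X$, which holds since $E_X=f^*E$. Theorem \ref{direct image2} says $R^if_*\omega_X(E_X)$ is $F^1$-semipositive, so Theorem \ref{KV} with $G=E$ applies to $\mathcal{E}'=R^if_*\omega_X(E_X)$ and gives the first vanishing.

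For the Chern character inequalities, apply Theorem \ref{Chern1} to the locally free $F^1$-semipositive sheaves $R^if_*\omega_X$ and $\mathcal{O}_Y(E)\otimes R^if_*\omega_X\cong R^if_*\omega_X(E_X)$. For every closed subscheme $D$ of dimension $e$ this gives $D\cdot\ch_e\geq 0$. Note that the $e$ here is the dimension of $D$, not $\dim Y$.

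The main point to check is local freeness of $R^if_*\omega_X$, since Theorems \ref{KV} and \ref{Chern1} require it. I would deduce it from $R^if_*\omega_X=\omega_Y\otimes R^if_*\omega_{X/Y}$ together with local freeness of $R^if_*\omega_{X/Y}$. The latter follows from Corollary \ref{Illusie-sp}(3) on general reductions modulo $p$ plus semicontinuity, or directly from the Hodge-theoretic result in characteristic zero.
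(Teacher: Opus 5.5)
Your proposal is correct and is essentially the paper's argument. The paper's proof simply cites Theorem \ref{KV} for the vanishing and Theorem \ref{Chern1} for the Chern character inequalities, applied to the $F^1$-semipositive sheaves $R^if_*\omega_X$ and $R^if_*\omega_X(E_X)$ from Theorem \ref{direct image2}. Your rewriting $\Omega^a_Y(\log E)(\lceil A\rceil)\otimes R^if_*\omega_X\cong\Omega^a_Y(\log E)(\lceil A\rceil-E)\otimes R^if_*\omega_X(E_X)$, so that only $G=E$ is needed (the choice $G=0$ is not admissible when $A$ is non-integral), and your check of local freeness spell out details the paper leaves implicit; the second choice of $G$ you mention is never used and can be dropped.
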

\begin{proof}
The vanishing follows from Theorem \ref{KV}. The positivity of Chern characters follows from Theorem \ref{Chern1}.
\end{proof}

\begin{example}\label{eg6.17}
Here are some examples of varieties with $F^1$-semipositive cotangent bundles. Thus Theorem \ref{direct image2} holds when $Y$ is one of these examples.
\begin{enumerate}
  \item Let $A$ be a complex abelian variety of dimension $n$. Then $\Omega^1_A=\mathcal{O}^{\oplus n}_A$ is $F^1$-semipositive.
  \item Let $C_1, \cdots, C_n$ be complex smooth projective curves with positive genus and $X=C_1\times\cdots\times C_n$. Then $\Omega^1_{C_i}=\omega_{C_i}$ is $F^1$-semipositive. So is $$\Omega^1_X\cong\bigoplus_i \pr_i^*\omega_{C_i},$$ where $\pr_i$ is the projection.
  \item Let $f: X\rightarrow Y$ be a nonconstant family of smooth curves. By \cite[Theorem 6.33]{HM}, one sees that $\omega_{X/Y}$ is ample. In particular, it is $F^1$-semipositive. If $\Omega^1_Y$ is $F^1$-semipositive, from Lemma \ref{seq}, it follows that $\Omega^1_X$ is also $F^1$-semipositive.
\end{enumerate}
\end{example}

\begin{remark}\label{Mumford}
Example \ref{eg6.17} (3) shows that $f_*\omega_{X/Y}$ may not be $F^1$-semipositive under the situation of Theorem \ref{direct image}. In fact, let $f: X\rightarrow Y$ be a nonconstant family of smooth curves and let $\dim Y=e$. By Mumford's formula \cite[Chapter XVII, (5.7)]{ACG}, one sees that
\begin{eqnarray*}
\ch_e(f_*\omega_{X/Y})= \left\{
\begin{array}{lcl}
0,  & &\mbox{if}~ 2|e,\\
&&\\
\frac{2(-1)^{\frac{e-1}{2}}\zeta(e+1)}{(2\pi)^{e+1}}K^{e+1}_{X/Y}, & &\mbox{otherwise},
\end{array}\right.
\end{eqnarray*}
where $\zeta$ is the Riemann zeta function. Since $K_{X/Y}$ is ample, one finds $K^{e+1}_{X/Y}>0$. Hence $\ch_e(f_*\omega_{X/Y})<0$ when $\frac{e-1}{2}$ is odd. This implies that $f_*\omega_{X/Y}$ is not $F^1$-semipositive by Theorem \ref{Chern1}.
\end{remark}

\subsection{Applications}
Our main results can be used to study the images of Fano varieties under semistable morphisms and slope inequality.
\begin{theorem}\label{Fano}
Let $f: X\rightarrow Y$ be a surjective morphism between
complex smooth projective varieties. If $f$ is semistable and $-K_X$ is ample, then $-K_Y$ is also ample. If $-K_X$ is semiample, then $-K_Y$ is pseudoeffective.
\end{theorem}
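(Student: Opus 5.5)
The idea is to apply Theorem \ref{direct image} (2) with $\mathcal{L}=\omega_X^{-1}$. First I check that this choice is admissible.

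Since $-K_X$ is ample, $\mathcal{L}:=\omega_X^{-1}$ is an ample line bundle on $X$. Then
$$\omega_{X/Y}\otimes\mathcal{L}=\omega_X\otimes f^*\omega_Y^{-1}\otimes\omega_X^{-1}=f^*\omega_Y^{-1}.$$
The morphism $f$ is surjective between smooth projective varieties, and a semistable morphism has connected fibres on a dense open set. If the fibres are not connected, I would pass to the Stein factorization $Y'\to Y$, which is étale because $f$ is semistable and hence flat with reduced fibres; alternatively the argument can be adapted. Assuming connected fibres, $f_*\mathcal{O}_X=\mathcal{O}_Y$. The projection formula then gives
$$f_*(\omega_{X/Y}\otimes\mathcal{L})=\omega_Y^{-1}.$$
This sheaf is nonzero. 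By Theorem \ref{direct image} (2) it is $F^1GG$-ample, so it is ample by Lemma \ref{ample}. Thus $-K_Y$ is ample.

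In the Stein factorization case, note that $-K_{Y'}$ is the pullback of $-K_Y$ under the finite étale map $Y'\to Y$. Ampleness descends under finite surjective maps, so ampleness of $-K_{Y'}$ gives ampleness of $-K_Y$. The main subtlety of this part is therefore only the connectedness of the fibres.

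For the semiample case, I take $\mathcal{L}=\omega_X^{-1}$, which is now semiample. By Corollary \ref{direct image1}, $(f_*(\omega_{X/Y}\otimes\mathcal{L}))^{**}$ is weakly $F^1GG$-semipositive. As above, this sheaf is $\omega_Y^{-1}$, which is already reflexive. By Proposition \ref{nef} (3), $\omega_Y^{-1}$ is weakly positive.

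It remains to show that a weakly positive line bundle is pseudoeffective. Fix an ample $H$. By weak positivity, for each $\alpha$ there is $\beta$ such that $\omega_Y^{-\alpha\beta}(\beta H)$ is generically globally generated. In particular it has a nonzero section, so $\alpha(-K_Y)+H$ is effective. Dividing by $\alpha$ and letting $\alpha\to\infty$ shows that $-K_Y$ is a limit of effective classes, hence pseudoeffective.

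Here the main obstacle is that Corollary \ref{direct image1} does not need semistability, so this part holds for any surjective $f$. One must still check that $f_*\mathcal{O}_X=\mathcal{O}_Y$, or handle the case where it fails. If $f_*\mathcal{O}_X\neq\mathcal{O}_Y$, the sheaf $f_*\mathcal{O}_X$ still contains $\mathcal{O}_Y$ as a direct summand via the trace map, so $\omega_Y^{-1}$ is a direct summand of $f_*(\omega_{X/Y}\otimes\mathcal{L})$. Since $f_*(\omega_{X/Y}\otimes\mathcal{L})=\omega_Y^{-1}\otimes f_*\mathcal{O}_X$, the summand $\omega_Y^{-1}$ is a quotient of it, and weak $F^1GG$-semipositivity passes to quotients by Lemma \ref{lemma3.8} (5). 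The same quotient argument also handles the ample case without passing to the Stein factorization, because $F^1GG$-ampleness passes to quotients as well.
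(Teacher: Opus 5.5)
Your proposal is correct and follows the paper's proof: Theorem \ref{direct image}~(2) with $\mathcal{L}=\omega_X^{-1}$ plus Lemma \ref{ample} for the ample case, and Corollary \ref{direct image1} plus Proposition \ref{nef}~(3) for the semiample case. You are more careful than the paper, which tacitly assumes $f_*\mathcal{O}_X=\mathcal{O}_Y$ and skips the step from weak positivity to pseudoeffectivity. Your trace-splitting quotient argument handles the first point; in the semiample case, apply it to the double dual, which still contains the reflexive summand $\omega_Y^{-1}$. With that, you can drop the incorrect aside that semistable morphisms have generically connected fibres (finite \'etale covers are counterexamples).
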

\begin{proof}
If $f$ is semistable and $-K_X$ is ample, by Theorem \ref{direct image}, one sees $$\omega^{-1}_Y=f_*(\omega_{X/Y}\otimes\omega^{-1}_X)$$ is ample. Similarly, the second conclusion follows from Corollary \ref{direct image1} and Proposition \ref{nef}.
\end{proof}


\begin{theorem}\label{slope}
Let $f: X\rightarrow Y$ be a semistable morphism over $\mathbb{C}$ with $\dim X=n$. Assume the general fiber of $f$ is a smooth curve of genus $g$, $K_X$ is nef and $\Omega^1_Y$ is $F^1$-semipositive, then $$K^n_X\geq 2n!\left(\frac{g-1}{g+n-2}\right)\ch_{n-1}(f_*\omega_X).$$
\end{theorem}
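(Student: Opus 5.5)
We follow Zhang's proof of the slope inequality (\ref{zhang}), which uses a Xiao-type argument via the Harder--Narasimhan filtration. The change is that we work with $\omega_X$ and $f_*\omega_X$ in place of the relative objects. The needed positivity input then comes from Theorem \ref{direct image2} rather than from nefness alone.

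\emph{Step 1: positivity input.} Since $\Omega^1_Y$ is $F^1$-semipositive, Theorem \ref{direct image2} gives that $\mathcal{E}:=f_*\omega_X$ is $F^1$-semipositive, of rank $g$ on $Y$ with $\dim Y=n-1$. By Theorem \ref{Chern1}, $\ch_{n-1}(\mathcal{E})\geq 0$. Moreover $D\cdot\ch_e(\mathcal{E})\geq 0$ for every $e$-dimensional subvariety $D$. By Proposition \ref{pullback}, the pullbacks of $\mathcal{E}$ to subvarieties remain $F^1$-semipositive. Corollary \ref{exterior} shows the same for its symmetric powers $\Sym^k\mathcal{E}$.

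\emph{Step 2: the Riemann--Roch/Frobenius comparison.} Pass to a general reduction modulo $p$ and compare
\[
\chi\big(Y_s,(F^t_{Y_s})^*\Sym^k\mathcal{E}_s(m)\big)
\quad\text{with}\quad
h^0\big(X_s,\,k\,p_s^t K_{X_s}+f_s^*\mathcal{O}(m)\big).
\]
By $F^1$-semipositivity, the higher cohomology on the left vanishes for $m$ fixed and $p_s$ large. Hence the leading term of $h^0\big((F^1)^*\Sym^k\mathcal{E}_s(m)\big)$ is $p_s^{n-1}\ch_{n-1}(\Sym^k\mathcal{E})$, as in the proof of Theorem \ref{Chern1}. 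The multiplication map $\Sym^k f_*\omega_X\to f_*\omega_X^{k}$ is generically injective on the fiber curve when the fiber is non-hyperelliptic. In the hyperelliptic case we use the image subsheaf instead, which is where the constant $g+n-2$ enters, as in Zhang. Composing with the Frobenius pullback gives a lower bound for $h^0$ of $F^*_{X_s}$ of a multiple of $K_{X_s}$, up to $f^*\mathcal{O}(m)$.

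\emph{Step 3: asymptotics.} Since $K_X$ is nef, $h^0(lK_{X_s}+\ldots)\leq \frac{l^n}{n!}K_X^n+O(l^{n-1})$. We compute $\ch_{n-1}(\Sym^k\mathcal{E})$ as a polynomial in $k$ and compare leading coefficients, letting $p_s\to\infty$ first and then $k\to\infty$. Optimizing over $k$ and the filtration yields $K_X^n\geq 2n!\frac{g-1}{g+n-2}\ch_{n-1}(\mathcal{E})$. If instead one reduces to Zhang's inequality directly, the identity $K_X=K_{X/Y}+f^*K_Y$ must be handled, where $f^*K_Y$ is nef because $\Omega^1_Y$ is nef. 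In that route we expand $K_X^n$ and $\ch_{n-1}(f_*\omega_X)=\ch_{n-1}(f_*\omega_{X/Y}\otimes\omega_Y)$ binomially. We then bound each mixed term $K_{X/Y}^{a}f^*K_Y^{n-a}$ against $\ch_{a-1}(f_*\omega_{X/Y})K_Y^{n-a}$ by applying the inequality on complete intersections of $|K_Y|$-type divisors, restricting via Theorem \ref{restriction}.

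The main obstacle is Step 3. One must control the cross terms, or equivalently show that the Xiao/Zhang leading-coefficient argument goes through with $\omega_X$ twisted by the non-relative $f^*K_Y$. I would first try the restriction approach: apply Zhang's inequality (case (1) or its semistable version) to the induced families over general complete intersection subvarieties $D\subset Y$. Theorem \ref{Chern1} ensures that each piece $D\cdot\ch_e$ is nonnegative, so the contributions can be summed.
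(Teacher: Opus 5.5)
There is a genuine gap. Your Steps 2--3 never use the ingredient that actually produces the constant $\frac{g-1}{g+n-2}$. That ingredient is Zhang's relative Noether (Clifford-type) upper bound for sections on a fibration by curves, \cite[Theorem 1.6]{Z2}: for a line bundle $L$ on an $n$-dimensional variety fibred in curves with $L\cdot C=2g-2$, one has $h^0(L)\le(\frac{1}{2n!}+\frac{n-1}{n!(2g-2)})L^n+g\sum_i B_i$, and $\frac{1}{2n!}+\frac{n-1}{n!(2g-2)}=\frac{g+n-2}{2n!(g-1)}$. The asymptotic bound $h^0(lK_{X_s}+\cdots)\le\frac{l^n}{n!}K_X^n+O(l^{n-1})$ you invoke cannot produce this factor. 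Nor does $g+n-2$ come from the hyperelliptic case.

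Several intermediate claims also fail:
\begin{itemize}
\item $\Sym^kH^0(C,K_C)\to H^0(C,kK_C)$ is not injective for non-hyperelliptic $C$ of genus $g\ge3$ and $k$ large. The source has dimension $\binom{g+k-1}{k}$ against $(2k-1)(g-1)$; by Noether's theorem the map is in fact surjective. So $\Sym^k f_*\omega_X$ does not embed in $f_*\omega_X^{\otimes k}$, and you lose control of $\ch_{n-1}$ of the image.
\item Passing to $h^0(X_s,kp_s^tK_{X_s}+\cdots)$ through the absolute Frobenius of $X_s$ puts growth $p_s^{tn}K_X^n$ on one side against $p_s^{t(n-1)}\ch_{n-1}$ on the other. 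The resulting inequality is vacuous as $p_s\to\infty$.
\end{itemize}

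Your Step 1 is the right input, and the paper's proof keeps it. The difference is that it base changes only along $F_{Y_s}$. Set $X'_s=Y_s\times_{F_{Y_s}}X_s$, with $g_s\colon X'_s\to X_s$ of degree $p_s^{n-1}$. Let $X''_s$ be its normalization and $L_{m,s}=\sigma_s^*g_s^*K_{X_s}+m(f''_s)^*H_s$. Flat base change then gives $h^0((F^*_{Y_s}f_{s,*}\omega_{X_s})(mH_s))\le h^0(X''_s,L_{m,s})$.
\begin{itemize}
\item By $F^1$-semipositivity of $f_*\omega_X$, the left side equals $\chi=p_s^{n-1}\ch_{n-1}(f_*\omega_X)+O(p_s^{n-2})$.
\item Zhang's bound gives the right side $\le\frac{g+n-2}{2n!(g-1)}p_s^{n-1}K_X^n+O(p_s^{n-2})$, since $L_{m,s}^n=p_s^{n-1}K_X^n+O(p_s^{n-2})$ and each $B_i$ is $O(p_s^{n-2})$.
\end{itemize}
Dividing by $p_s^{n-1}$ and letting $p_s\to\infty$ gives the inequality. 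No symmetric powers or limit in $k$ are needed.

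Your fallback via $K_X=K_{X/Y}+f^*K_Y$ does not close the gap either. (\ref{zhang}) requires $K_{X/Y}$ nef and special bases, which are not assumed. Summing the termwise bounds using $\frac{g-1}{g+a-2}\ge\frac{g-1}{g+n-2}$ needs $\ch_{a-1}(f_*\omega_{X/Y})\cdot K_Y^{n-a}\ge0$. Theorem \ref{Chern1} does not supply this, because $f_*\omega_{X/Y}$ is not $F^1$-semipositive; Remark \ref{Mumford} gives negative $\ch_e(f_*\omega_{X/Y})$.
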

\begin{proof}
Take a very ample divisor $H$ on $Y$ and a thickening $(\widetilde{f}: \widetilde{X}\rightarrow \widetilde{Y}, \widetilde{H})$ over $\Spec A$.
For a general $s\in\Spec A$, we have the commutative diagram
\begin{eqnarray*}
\xymatrix{
X_s^{\prime\prime}\ar[r]^{\sigma_s} \ar[dr]_{f_s^{\prime\prime}} & X_s^{\prime} \ar[d]^{f_s^{\prime}} \ar[r]^{g_s}  & X_s \ar[d]_{f_s}            \\
  & Y_s \ar[r]^{F_{Y_s}} & Y_s,                      }
\end{eqnarray*}
where $X_s^{\prime}=Y_s\times_{F_{Y_s}} X_s$ and $\sigma_s: X_s^{\prime\prime}\rightarrow X_s^{\prime}$ is the normalization of $X_s^{\prime}$. Let $C$ be a general fiber of $f_s^{\prime\prime}$ and $L_{m, s}=\sigma_s^*g_s^*K_{X_s}+m(f_s^{\prime\prime})^*H_s$. One sees that $L_{m,s}C=2g-2$.
Applying \cite[Theorem 1.6]{Z2} for $f_s^{\prime\prime}$ and $L_{m,s}$, we have
$$h^0(X_s^{\prime\prime}, \mathcal{O}(L_{m,s}))\leq\left(\frac{1}{2n!}+\frac{n-1}{n!(2g-2)}\right)L_{m,s}^n+g\sum_{i=1}^{n-1}B_i(X_s^{\prime\prime}, H_s, L_{m,s}),$$
where $$B_i(X_s^{\prime\prime}, H_s, L_{m, s})=2^{n-2}\frac{L_{m,s}^n}{2g-2}\frac{L_{m,s}^{n-i}((f_s^{\prime\prime})^*H_s)^i}{L_{m,s}^{n-i+1}((f_s^{\prime\prime})^*H_s)^{i-1}}.$$
Let $p_s=\chr(A/s)$. Since $F^*_{Y_s}H_s=p_sH_s$, a simple computation shows that
\begin{eqnarray*}
  L_{0,s}^n &=& p_s^{n-1}K^n_{X_s},\\
  L_{0,s}^{n-i}((f_s^{\prime\prime})^*H_s)^i&=&p_s^{n-i-1}K^{n-i}_{X_s}(f^*_sH_s)^i,\\
  L_{m,s}^n&=&p_s^{n-1}K^n_{X_s}+O(p_s^{n-2}),\\
  L_{m,s}^{n-i}((f_s^{\prime\prime})^*H_s)^i&=&p_s^{n-i-1}K^{n-i}_{X_s}(f^*_sH_s)^i+O(p_s^{n-i-2}).
\end{eqnarray*}
Substituting them into the inequality above yields:
\begin{equation}\label{6.4}
h^0(X_s^{\prime\prime}, \mathcal{O}(L_{m,s}))\leq\left(\frac{1}{2n!}+\frac{n-1}{n!(2g-2)}\right)p_s^{n-1}K_{X_s}^n+O(p^{n-2}_s).
\end{equation}

On the other hand, from the projection formula, one infers
\begin{eqnarray}\label{6.5}
 h^0((F^*_{Y_s}f_{s,*}\omega_{X_s})(mH_s))&=&h^0((f^{\prime}_{s,*}g_s^*\omega_{X_s})(mH_s)) \\
 \nonumber &=&h^0(g_s^*\omega_{X_s}(m(f^{\prime}_s)^*H_s))\\
 \nonumber &\leq&h^0(\mathcal{O}(L_{m, s}))
\end{eqnarray}
By Theorem \ref{direct image2}, one finds that $f_*\omega_X$ is $F^1$-semipositivie. Hence one can find an integer $m$ such that
$$H^i(Y_s, (F^*_{Y_s}f_{s,*}\omega_{X_s})(mH_s))=0$$ for $i>0$ and almost all $s$. This and the Riemann-Roch theorem give
\begin{eqnarray}\label{6.6}
  h^0((F^*_{Y_s}f_{s,*}\omega_{X_s})(mH_s)) &=& \chi((F^*_{Y_s}f_{s,*}\omega_{X_s})(mH_s)) \\
\nonumber    &=& \ch((F^*_{Y_s}f_{s,*}\omega_{X_s})(mH_s))\Td(Y_s)\\
 \nonumber   &=& p^{n-1}_s\ch_{n-1}(f_{s,*}\omega_{X_s})+O(p_s^{n-2}).
\end{eqnarray}
Combining (\ref{6.4}), (\ref{6.5}) and (\ref{6.6}), we obtain
$$\ch_{n-1}(f_{s,*}\omega_{X_s})\leq\left(\frac{1}{2n!}+\frac{n-1}{n!(2g-2)}\right)K_{X_s}^n+\frac{N}{p_s},$$
here $N$ is a constant independent of $p_s$. Letting $p_s\rightarrow+\infty$, one gets the desired inequality.
\end{proof}


\bibliographystyle{amsplain}

\end{document}